\titleformat{\section}[hang]
{\normalfont\Large\bfseries}
{\thesection.}{0.5em}{}
\titlespacing*{\section}{0pc}{2pc}{0.25pc}
\titleformat{\subsection}[runin]
{\normalfont\large\bfseries}
{\thesubsection}{0.5em}{}
\titlespacing{\subsection}{0pc}{1.5pc}{0.5pc}
\newcommand{\Aut}{\text{Aut}}
\newcommand{\N}{\mathbb{N}}
\newcommand{\Z}{\mathbb{Z}}
\newcommand{\Q}{\mathbb{Q}}
\newcommand{\R}{\mathbb{R}}
\newcommand{\C}{\mathbb{C}}
\renewcommand{\H}{\mathcal{H}}
\newcommand{\K}{\mathcal{K}}
\newcommand{\vphi}{\varphi}
\newcommand{\eig}{\text{eig}}
\newcommand{\Tr}{\text{Tr}}
\newcommand{\<}{\left\langle}
\renewcommand{\>}{\right\rangle}
\newcommand{\dom}{\text{dom}}
\newcommand{\mc}[1]{\mathcal{#1}}
\newcommand{\Ad}[1]{\text{Ad}\left(#1\right)}
\newcommand{\E}{\mathcal{E}}
\newcommand{\Ind}[1]{\operatorname{Ind}#1}
\renewcommand{\S}{\operatorname{S}}
\newcommand{\Sd}{\operatorname{Sd}}
\newcommand{\Sp}{\operatorname{Sp}}
\newtheorem{thm}{Theorem}[section]
\newtheorem{thmalpha}{Theorem}
\newtheorem{prop}[thm]{Proposition}
\newtheorem{lem}[thm]{Lemma}
\newtheorem*{lem*}{Lemma}
\newtheorem{cor}[thm]{Corollary}
\theoremstyle{definition}
\newtheorem{defi}[thm]{Definition}
\newtheorem{ex}[thm]{Example}
\newtheorem{rem}[thm]{Remark}
\title{Murray--von Neumann Dimension for Strictly Semifinite Weights}
\author{Aldo Garcia Guinto$^\circ$}
\address{$^\circ$Department of Mathematics, Michigan State University\hfill \url{garci575@msu.edu}}
\author{Matthew Lorentz$^\Delta$}
\address{$^\Delta$Department of Mathematics, Michigan State University\hfill \url{lorentzm@msu.edu}}
\author{Brent Nelson$^\square$}
\address{$^\square$Department of Mathematics, Michigan State University \hfill \url{brent@math.msu.edu}}
\date{}
\begin{document}

\maketitle

\begin{abstract}
Given a von Neumann algebra $M$ equipped with a faithful normal strictly semifinite weight $\varphi$, we develop a notion of Murray--von Neumann dimension over $(M,\varphi)$ that is defined for modules over the basic construction associated to the inclusion $M^\vphi \subset M$. For $\varphi=\tau$ a faithful normal tracial state, this recovers the usual Murray--von Neumann dimension for finite von Neumann algebras. If $M$ is either a type $\mathrm{III}_\lambda$ factor with $0<\lambda <1$ or a full type $\mathrm{III}_1$ factor with $\Sd(M)\neq \R$, then amongst extremal almost periodic weights the dimension function depends on $\varphi$ only up to scaling. As an application, we show that if an inclusion of diffuse factors with separable preduals $N\subset M$ is with expectation $\E$ and admits a compatible extremal almost periodic state $\vphi$, then this dimension quantity bounds the index $\Ind{\E}$, and in fact equals it when the modular operators $\Delta_\vphi$ and $\Delta_{\vphi|_N}$ have the same point spectrum. In the pursuit of this result, we also show such inclusions always admit Pimsner--Popa orthogonal bases.
\end{abstract}


\section*{Introduction}

The notion of Murray--von Neumann dimension is as old as the theory of von Neumann algebras itself, having been introduced by the progenitors of the field in \cite{MvN36}. In modern terms, given a von Neumann algebra $M$ equipped with a faithful normal tracial state $\tau$, any Hilbert space $M$-module $\H$ can be identified with an $(M\otimes \C)$-submodule of $L^2(M,\tau)\otimes \K$ (for some auxiliary Hilbert $\K$) in such a way that the projection $p$ onto it lies in $(M\otimes \C)' = (J_\tau M J_\tau) \bar\otimes B(\K)$. The Murray--von Neumann dimension of $\H$ is then defined as
    \[
        \dim_{(M,\tau)}\H:=(\tau\otimes \Tr)[ (J_\tau\otimes 1) p (J_\tau\otimes 1)].
    \]
Notably, this dimension being an invariant of the isomorphism class of the $M$-module $\H$ is a consequence of $\tau\otimes \Tr$ being tracial. Murray--von Neumann dimension is ubiquitous in the von Neumann algebra literature and has been a vital tool for studying the structure of finite von Neumann algebras by way of defining invariants. Highlights of this include (but are certainly not limited to):  $L^2$-Betti numbers for groups due to Atiyah \cite{Ati76} and Singer \cite{Sin77} (see also \cite{CG86});  indices for inclusions of type $\mathrm{II}_1$ factors due to Jones' \cite{Jon83}; and $L^2$-Betti numbers for finite von Neumann algebras due to Connes and Shlyakhtenko \cite{CS05}. Additionally, the dimension function has been extended in a number of ways: to abstract $M$-modules (i.e. those without any Hilbert space structure) by L\"{u}ck \cite{Luc98}; and to semifinite von Neumann algebras $M$ equipped with faithful normal semifinite tracial weights by Petersen \cite{Pet13}. These extensions were used in \cite{KPV15} to provide a notion of $L^2$-Betti numbers for unimodular locally compact groups.

In this article, we offer a further extension of Murray--von Neumann dimension to von Neumann algebras admitting faithful normal \emph{strictly semifinite} weights, where one says a weight $\varphi$ on $M$ is strictly semifinite if its restriction to the centralizer subalgebra $M^\vphi$ is semifinite (see Section~\ref{sec:strictly_semifinite_weights}). Thus this class includes all $\sigma$-finite von Neumann algebras and all semifinite von Neumann algebras. The key idea is to consider the basic construction associated to the inclusion $M^\vphi \subset M$, which necessarily admits a faithful normal conditional expectation (see Lemma~\ref{lem:strictly_semifinite_characterization}). Since this basic construction, denoted $\<M,e_\vphi\>$, is the commutant of $J_\vphi M^\vphi J_\vphi$ in $B(L^2(M,\vphi))$, it follows that any Hilbert space $\<M,e_\vphi\>$-module can be identified with $p\cdot L^2(M,\vphi)\otimes \K$ for some $p\in (J_\vphi M^\vphi J_\vphi)\bar\otimes B(\K)$, and moreover one can show that the quantity
    \[
        (\vphi\otimes \Tr)\left[(J_\vphi\otimes 1) p (J_\vphi\otimes 1)\right]
    \]
is an invariant of the isomorphism class of the $\<M,e_\vphi\>$-module since $\vphi$ is tracial on $M^\vphi$. We thus say a Hilbert space $\H$ is an $(M,\vphi)$-module if it is an $\<M,e_\vphi\>$-module and define its Murray--von Neumann dimension $\dim_{(M,\vphi)}\H$ as the above quantity (see Section~\ref{sec:definitions_and_properties}).

In addition to ensuring the dimension is constant over isomorphism classes of modules, the basic construction $\<M,e_\vphi\>$ is also natural to consider in many cases. For example, it simply equals $M$ when $\vphi$ is tracial, while for \emph{extremal almost periodic} weights it is isomorphic to the associated discrete core (see Section~\ref{sec:discrete_cores} and Lemma~\ref{lem:factor_levels}). Here a faithful normal semifinite weight $\vphi$ is said to be extremal if the centralizer $M^\vphi$ is a factor, and $\vphi$ is said to be almost periodic if the modular operator $\Delta_\vphi$ is diagonalizable (see Section~\ref{sec:almost_periodic} and note that the latter condition implies strict semifiniteness by \cite[Proposition 1.1]{Con74}). In particular, for a type $\mathrm{III}_\lambda$ factor $M$ with $0<\lambda<1$, an extremal almost periodic weight $\vphi$ is known as a \emph{generalized trace} (see \cite[Definition 4.3.1]{Con73}) and $\<M,e_\vphi\>$ is isomorphic to the usual discrete core $M\rtimes_{\sigma^\vphi} (\R/t_0\Z)$ where $t_0=\frac{2\pi}{\log{\lambda}}$. In general, $\<M,e_\vphi\>$ is a semifinite von Neumann algebra containing $M$ with two crucial properties: it can be faithfully represented on $L^2(M,\vphi)$ as an algebra containing $M$; and it admits a unique faithful normal semifinite tracial weight $\tau$ satisfying $\tau(e_\vphi x e_\vphi) = \vphi(x)$ for all $x\in M_+$ (see Proposition~\ref{prop:basic_construction}).

$(M,\vphi)$-modules and the associated Murray--von Neumann dimension function exhibit the expected properties under equivariant maps and direct sums, and additionally behave well under compressions, amplifications, and actions by locally compact groups (see Section~\ref{sec:definitions_and_properties}). They also extend Petersen's theory in the case of Hilbert space modules over semifinite von Neumann algebras (see Proposition~\ref{prop:semifinite_dimension_achived_by_projection}). Conversely, the Murray--von Neumann dimension of an $(M,\vphi)$-module can also be computed by applying Petersen's theory to the unique tracial weight $\tau$ discussed above (see Theorem~\ref{thm:dimension_amplification_compression_formula}).

One significant departure from the case of tracial weights is the following: $(M,\vphi)$-modules need not be $(M,\psi)$-modules for distinct faithful normal strictly semifinite weights $\vphi$ and $\psi$, unless there is some compatibility between the basic constructions, such as, for example, a normal unital $*$-homomorphism $\theta\colon \<M,e_\psi\>\to \<M,e_\vphi\>$. Even amongst weights admitting such compatibility, Murray--von Neumann dimension as a function on $(M,\vphi)$-modules can still vary wildly from one weight to another. On the other hand, this variation still occurs amongst tracial weights---unless one considers a semifinite factor where the variation is limited to a scaling constant. Of course this is due to the uniqueness up to scaling of tracial weights on semifinite factors, and Connes established the analogue of this for type $\mathrm{III}_{\lambda}$ factors with $0<\lambda <1$ and full factors $M$ with $\Sd(M)\neq \R_+$ (see \cite[Theorem 4.3.2]{Con73} and \cite[Theorem 4.7]{Con74}, respectively). He showed that (infinite) extremal almost periodic weights are unique up to scaling \emph{and} inner automorphisms. Using the uniqueness of such weights, we can show the following:

\begin{thmalpha}[{Theorem~\ref{thm:dim_scaling_constant}}]\label{thmalpha:theorem_A}
Let $M$ be a factor with separable predual, and let $\vphi$ and $\psi$ be extremal almost periodic weights. Suppose that either:
    \begin{enumerate}[label=(\alph*)]
    \item $M$ is type $\mathrm{III}_\lambda$ with $0<\lambda <1$; or
    \item $M$ is full.
    \end{enumerate}
Then there exists a $*$-isomorphism $\theta\colon \<M,e_\psi\> \to \<M,e_\vphi\>$ and a constant $c_{\psi,\vphi}>0$ so that for all left $(M,\vphi)$-modules $(\H,\pi)$ one has
    \[
        \dim_{(M,\psi)}(\H, \pi\circ \theta) = c_{\psi,\vphi} \dim_{(M,\vphi)}(\H, \pi).
    \]
\end{thmalpha}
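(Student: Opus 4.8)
The plan is to reduce the statement to the uniqueness of the trace on a semifinite factor. The key structural input is that, under either hypothesis (a) or (b), the basic construction $\langle M, e_\vphi\rangle$ is a semifinite \emph{factor} isomorphic to the discrete core of $M$ (Lemma~\ref{lem:factor_levels}), and likewise for $\psi$. First I would invoke Connes' uniqueness theorems for extremal almost periodic weights---\cite[Theorem 4.3.2]{Con73} in case (a) and \cite[Theorem 4.7]{Con74} in case (b)---which assert that $\vphi$ and $\psi$ coincide after rescaling one of them by a positive scalar and applying an inner automorphism $\alpha$ of $M$. Since rescaling a weight leaves the basic construction algebra unchanged (it only rescales the canonical trace), while conjugating a weight by an automorphism $\alpha$ produces a canonical spatial isomorphism $\langle M, e_{\vphi\circ\alpha}\rangle \cong \langle M, e_\vphi\rangle$ extending $\alpha$ on $M$, this equivalence assembles into a $*$-isomorphism $\theta\colon \langle M,e_\psi\rangle \to \langle M, e_\vphi\rangle$.

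With $\theta$ in hand, the next step is to compare the canonical traces. By Proposition~\ref{prop:basic_construction} each basic construction carries a distinguished faithful normal semifinite tracial weight $\tau_\vphi$ (resp.\ $\tau_\psi$), determined by $\tau_\vphi(e_\vphi x e_\vphi) = \vphi(x)$. The pullback $\tau_\vphi\circ\theta$ is then another faithful normal semifinite tracial weight on $\langle M, e_\psi\rangle$, so by uniqueness of the trace on a semifinite factor there is a constant $c>0$ with $\tau_\vphi \circ\theta = c\,\tau_\psi$. This is precisely where the factoriality supplied by Lemma~\ref{lem:factor_levels}, and hence the hypotheses on $M$, is essential: without it the ratio of the two traces could vary over the center and no single scaling constant would exist.

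Finally I would transfer this relation to the dimension functions. Using Theorem~\ref{thm:dimension_amplification_compression_formula}, $\dim_{(M,\vphi)}(\H,\pi)$ equals Petersen's dimension of $(\H,\pi)$ viewed as a module over $(\langle M,e_\vphi\rangle, \tau_\vphi)$, and similarly for $\psi$. Two elementary properties of Petersen's dimension then close the argument: it is invariant under transport of structure along the isomorphism $\theta$, and it scales linearly in the trace, i.e.\ $\dim_{(\langle M,e_\vphi\rangle,\, s\tau_\vphi)} = s\,\dim_{(\langle M,e_\vphi\rangle,\, \tau_\vphi)}$ for $s>0$ (both are immediate from the defining formula). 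Transporting $(\H,\pi\circ\theta)$ along $\theta$ replaces the trace $\tau_\psi$ by $\tau_\psi\circ\theta^{-1} = c^{-1}\tau_\vphi$, and the scaling property then yields $\dim_{(M,\psi)}(\H,\pi\circ\theta) = c^{-1}\dim_{(M,\vphi)}(\H,\pi)$; setting $c_{\psi,\vphi} = c^{-1}$ completes the proof.

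I expect the main obstacle to be the first step: extracting from Connes' uniqueness theorems a genuine $*$-isomorphism of basic constructions, rather than merely an abstract equivalence of weights. One must verify that the scalar-times-inner-automorphism relation between $\vphi$ and $\psi$ is implemented spatially on $L^2(M,\vphi)$ in such a way that $J_\psi M^\psi J_\psi$ is carried onto $J_\vphi M^\vphi J_\vphi$, so that the commutants---and hence the basic constructions $\langle M,e_\psi\rangle$ and $\langle M,e_\vphi\rangle$---are identified. The cases (a) and (b) draw on different theorems of Connes and should be treated separately (and, if the cited uniqueness is phrased only for infinite weights, one first reduces to that case), but both ultimately reduce to this spatial implementation once the weights are known to be equivalent up to scaling and an inner automorphism.
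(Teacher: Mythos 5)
Your overall architecture matches the paper's: both arguments construct a $*$-isomorphism $\theta$, compare the canonical tracial weights of Proposition~\ref{prop:basic_construction}.\ref{part:tracial_weight} using the factoriality of $\<M,e_\vphi\>$ supplied by Lemma~\ref{lem:factor_levels}, and then transfer the resulting scalar to dimensions via Theorem~\ref{thm:dimension_amplification_compression_formula} together with invariance under transport of structure and linearity of the dimension in the trace. Your second and third steps are essentially verbatim what the paper does. Where you diverge is in the construction of $\theta$: the paper never invokes the uniqueness theorems for weights. Instead it first shows $\Sd(\vphi)=\Sd(\psi)=:\Lambda$ (via \cite[Theorem 4.2.6]{Con73} in case (a) and \cite[Lemma 4.8]{Con74} in case (b)), then shows the point modular extensions $\sigma^{(\vphi,\Lambda)}$ and $\sigma^{(\psi,\Lambda)}$ of $\widehat{\Lambda}$ are cocycle equivalent (the Connes cocycle derivative theorem in case (a), where $\widehat{\Lambda}=\R/T\Z$; \cite[Lemma 4.2]{Con74} in case (b)), and finally identifies the resulting isomorphic crossed products with the basic constructions via Lemma~\ref{lem:factor_levels}. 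Your route through the uniqueness of extremal almost periodic weights up to scaling and inner automorphism is the one sketched in the paper's introduction, and the spatial implementation you flag as the main obstacle does work: if $\psi=c\,\vphi\circ\alpha$, the canonical unitary $L^2(M,\psi)\to L^2(M,\vphi)$ induced by $\alpha$ conjugates $J_\psi M^\psi J_\psi$ onto $J_\vphi M^\vphi J_\vphi$, hence the commutants onto each other, and carries $e_\psi$ to $e_\vphi$.

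The one substantive gap is the reduction you defer parenthetically. Connes' uniqueness theorems concern \emph{infinite} extremal almost periodic weights (generalized traces in case (a)), and finite ones genuinely occur (e.g.\ Powers states); since $\psi=c\,\vphi\circ\alpha$ forces $\psi(1)=c\,\vphi(1)$, a finite weight is never related in this way to an infinite one, so the finite and mixed cases are not covered by a direct citation. The reduction is doable in the type $\mathrm{III}$ case (the semifinite case being immediate since both weights are then tracial): pass to $\vphi\otimes\Tr$ on $M\bar\otimes B(\ell^2)\cong M$ and use $\<M\bar\otimes B(\ell^2),e_{\vphi\otimes\Tr}\>\cong\<M,e_\vphi\>\bar\otimes B(\ell^2)\cong\<M,e_\vphi\>$, the last step because $\<M,e_\vphi\>$ is a type $\mathrm{II}_\infty$ factor. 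But this requires tracking the canonical traces through these identifications, and you have not carried it out. The paper's cocycle-equivalence route sidesteps the issue entirely, since the cocycle statements it cites hold for arbitrary faithful normal semifinite weights.
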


Connes' uniqueness theorems imply $M^\vphi$ and $M^\psi$ are stably isomorphic (and just isomorphic if both weights are infinite), and so the existence of the $*$-isomorphism $\theta$ in the above theorem follows from identifying $\<M,e_\vphi\>$ and $\<M,e_\psi\>$ with amplifications of $M^\vphi$ and $M^\psi$, respectively (see Lemma~\ref{lem:factor_levels}). Then the constant $c_{\psi,\vphi}$ is obtained by comparing the unique tracial weights on $\<M,e_\vphi\>$ and $\<M,e_\psi\>$ that recover $\vphi$ and $\psi$, respectively (see Proposition~\ref{prop:basic_construction}.\ref{part:tracial_weight}).

As an application, we explore the connection between the index in subfactor theory and our extension of Murray--von Neumann dimension. For inclusions of type $\mathrm{II}_1$ factors $N\subset M$, the connection is explicit and has been known since the index was first defined in \cite{Jon83}: $[M:N]=\dim_N L^2(M)$. For more general inclusions of factors, the index depends on a choice of faithful normal conditional expectation $\E\colon M\to N$ and can be defined in several equivalent ways: see \cite{PP86} for a probabilistic definition; see \cite{Kos86} for a definition via spatial derivatives; and see \cite{Lon89} for a definition via the continuous cores. For inclusions $N\subset M$ admitting an extremal almost periodic state, we show that the index can be bounded by the Murray--von Neumann dimension of $L^2(M,\vphi\circ\E)$ over $(N,\vphi)$, and in certain cases the quantities are equal. Note that, unlike in the tracial case, $L^2(M,\vphi\circ \E)$ is not canonically an $(N,\vphi)$-module, so it is necessary to first exhibit a normal unital $*$-homomorphism from $\<N,e_\vphi\>$ to $\<M,e_{\vphi\circ\E}\>$.

\begin{thmalpha}[{Theorem~\ref{thm:dimension_vs_index}}]\label{thmalpha:theorem_B}
Let $N\overset{\E}{\subset} M$ be an inclusion of diffuse factors with separable preduals, and let $\vphi$ be an extremal almost periodic state on $N$ such that $\vphi\circ \E$ is also an extremal almost periodic state. Let $C\subset \text{Sd}(\vphi\circ \E)$ be a transversal of coset representatives for $\text{Sd}(\vphi)$ and let $\pi_C \colon \<N,e_\vphi\> \to \<M,e_{\vphi\circ \E}\>$ be the unique representation satisfying $\pi_C|_N=\text{id}$ and
    \[
        \pi_C(e_\vphi) = \sum_{\mu\in C} 1_{\{\mu\}}(\Delta_{\vphi\circ \E})
    \]
(see Theorem~\ref{thm:subfactors_give_modules}). Then one has
    \begin{align}\label{ineq:intro_dimension_index_relation}
        \inf(C) \Ind{\E} \leq \dim_{(N,\vphi)}(L^2(M,\vphi\circ \E),\pi_C) \leq \sup(C) \Ind{\E}.
    \end{align}
Moreover, the following are equivalent:
    \begin{enumerate}[label=(\roman*)]
    \item $\Ind{\E}<\infty$;
    \item $[\text{Sd}(\vphi\circ \E)\colon \text{Sd}(\vphi)]<\infty$ and $\dim_{(N,\vphi)}(L^2(M,\vphi\circ \E), \pi_C)<\infty$;
    \item $[\text{Sd}(\vphi\circ \E)\colon \text{Sd}(\vphi)]<\infty$ and $[M^{\vphi\circ \E}\colon N^\vphi]<\infty$.
    \end{enumerate}
\end{thmalpha}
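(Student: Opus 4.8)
The plan is to reduce both quantities in \eqref{ineq:intro_dimension_index_relation} to data living on the centralizer inclusion $N^\vphi\subset M^{\vphi\circ\E}$ together with the coset combinatorics of the point spectra, and then compare. Write $\psi=\vphi\circ\E$. Since $\psi\circ\E=\psi$, Takesaki's theorem shows $\sigma^\psi_t(N)=N$ with $\sigma^\psi_t|_N=\sigma^\vphi_t$, so that $N^\vphi=N\cap M^\psi$ and $\E$ restricts to a faithful normal conditional expectation $\E_0\colon M^\psi\to N^\vphi$. Recalling that the Jones projection for the centralizer is the eigenprojection $e_\vphi=1_{\{1\}}(\Delta_\vphi)$, the defining formula $\pi_C(e_\vphi)=\sum_{\mu\in C}1_{\{\mu\}}(\Delta_\psi)$ shows that $\pi_C$ organizes the $\Delta_\psi$-eigenspaces of $L^2(M,\psi)$ according to the cosets $\mu\Sd(\vphi)$, indexed by $C$. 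Using Lemma~\ref{lem:factor_levels} to present $\<N,e_\vphi\>$ and $\<M,e_\psi\>$ as amplifications of the factors $N^\vphi$ and $M^\psi$, and Theorem~\ref{thm:dimension_amplification_compression_formula} to pass to the canonical tracial weights of Proposition~\ref{prop:basic_construction}, I would express $\dim_{(N,\vphi)}(L^2(M,\psi),\pi_C)$ as a trace of a projection computed against the tracial weight on $\<N,e_\vphi\>$ transported through $\pi_C$.

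The heart is to evaluate this trace with an adapted Pimsner--Popa basis. Because $\vphi$ and $\psi$ are states, $\vphi|_{N^\vphi}$ and $\psi|_{M^\psi}$ are faithful normal tracial states, so $N^\vphi\subset M^\psi$ is a trace-preserving inclusion of finite factors which admits a Pimsner--Popa basis $\{b_j\}$ over $N^\vphi$ relative to $\E_0$, against which $[M^\psi\colon N^\vphi]$ is computed in the usual way. For each $\mu\in C$, almost periodicity and extremality of $\psi$ let me choose a modular eigen-isometry $u_\mu\in M$ satisfying $\sigma^\psi_t(u_\mu)=\mu^{it}u_\mu$, realizing the coset representatives via Connes' discrete decomposition; combining these with $\{b_j\}$ yields the family $\{u_\mu b_j\}_{\mu\in C,\,j}$, which I would verify is a Pimsner--Popa basis for $N\subset M$ relative to $\E$. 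Counting this basis in the spectrum-blind manner computing the index gives the factorization
    \[
        \Ind{\E}=[\Sd(\psi)\colon\Sd(\vphi)]\,[M^\psi\colon N^\vphi]=|C|\,[M^\psi\colon N^\vphi],
    \]
whereas evaluating the same basis against the canonical trace on the basic construction---under which the $\mu$-eigenspace of $\Delta_\psi$ is weighted precisely by $\mu$---gives
    \[
        \dim_{(N,\vphi)}(L^2(M,\psi),\pi_C)=\Big(\sum_{\mu\in C}\mu\Big)\,[M^\psi\colon N^\vphi].
    \]
As every $\mu\in C$ lies in $[\inf(C),\sup(C)]$ one has $\inf(C)\,|C|\le\sum_{\mu\in C}\mu\le\sup(C)\,|C|$, and substituting the index factorization into the dimension formula yields \eqref{ineq:intro_dimension_index_relation}.

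For the list of equivalences I would argue (i)$\Rightarrow$(iii)$\Rightarrow$(ii)$\Rightarrow$(i). The factorization of $\Ind{\E}$ as a product of the two positive quantities $[\Sd(\psi)\colon\Sd(\vphi)]$ and $[M^\psi\colon N^\vphi]$ gives (i)$\Leftrightarrow$(iii) at once. If (iii) holds then $[\Sd(\psi)\colon\Sd(\vphi)]=|C|<\infty$, so $\sum_{\mu\in C}\mu<\infty$ and the dimension formula gives $\dim_{(N,\vphi)}(L^2(M,\psi),\pi_C)<\infty$, which is (ii). Finally, if (ii) holds then $C$ is finite, so $\sum_{\mu\in C}\mu\ge|C|\inf(C)>0$, and finiteness of the dimension forces $[M^\psi\colon N^\vphi]<\infty$; the factorization then returns $\Ind{\E}<\infty$, which is (i).

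The main obstacle is the construction and verification of the adapted basis $\{u_\mu b_j\}$ in this non-tracial and possibly infinite setting: one must produce the modular eigen-isometries $u_\mu$ realizing the coset representatives inside $M$ (using that $\psi$ is extremal and almost periodic with separable predual), manage convergence of the reconstruction sums when $C$ or the core basis is infinite, and confirm that the spectral and core contributions decouple so that the index is insensitive to the eigenvalues while the dimension-computing trace weights the $\mu$-eigenspace exactly by $\mu$. Establishing this eigenvalue weighting of the canonical trace through the modular operator is the second delicate point, and it is precisely what produces the constants $\inf(C)$ and $\sup(C)$ in \eqref{ineq:intro_dimension_index_relation} in place of a single scalar.
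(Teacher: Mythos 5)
Your overall architecture is sound and in fact mirrors the route the paper itself sketches in the remark following Theorem~\ref{thm:dimension_vs_index}: establish the two factorizations $\Ind{\E}=[\Sd(\vphi\circ\E)\colon\Sd(\vphi)]\,[M^{\vphi\circ\E}\colon N^\vphi]$ and $\dim_{(N,\vphi)}(L^2(M,\vphi\circ\E),\pi_C)=\bigl(\sum_{\mu\in C}\mu\bigr)[M^{\vphi\circ\E}\colon N^\vphi]$ (the latter is exactly (\ref{eqn:subfactor_dimension_formula}) in Theorem~\ref{thm:subfactors_give_modules}), and then compare using $\inf(C)|C|\le\sum_{\mu\in C}\mu\le\sup(C)|C|$. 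The derivation of the three equivalences from these two identities is correct, including the infinite cases.

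The genuine gap is in the step you yourself flag as the ``main obstacle'': the construction of the adapted Pimsner--Popa basis $\{u_\mu b_j\}$. First, a single modular eigen-isometry $u_\mu$ realizing the coset of $\mu$ does not exist in general: since $\vphi\circ\E$ is a \emph{state}, an element $v\in M^{(\vphi\circ\E,\mu)}$ satisfies $(\vphi\circ\E)(v^*v)=\mu^{-1}(\vphi\circ\E)(vv^*)$, so for $\mu<1$ no partial isometry in $M^{(\vphi\circ\E,\mu)}$ can have full range; one must instead use a whole family $\mathcal{V}_\mu$ of partial isometries with $\sum_{v\in\mathcal{V}_\mu}vv^*=1$ (Lemma~\ref{lem:spectral_projections_of_Delta}). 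Second, even with such a family, the product system $\{v b_j\}$ fails both Pimsner--Popa conditions unless the core basis is adapted to each source projection $v^*v$: the orthogonality within a coset requires $\E(b_j^*v^*vb_k)=\delta_{j=k}p_j$ rather than $\E_0(b_j^*b_k)=\delta_{j=k}p_j$, and the spanning condition requires $\sum_j m_{v,j}e_{N^\vphi}m_{v,j}^*=v^*v$ together with the identity $e_Ne_{\vphi\circ\E}=e_{N^\vphi}$ and a further resummation over the eigen-partial-isometries of $N$ to convert $e_{N^\vphi}$ into $e_N$. This is precisely the content of the paper's Proposition~\ref{prop:existence_PP_basis} (including a separate treatment of the case $v^*v\notin\dom(T)$, which is where separability of the preduals enters), and it is the bulk of the work; your proposal does not supply it. Note also that the paper's primary proof sidesteps the exact index factorization entirely: it uses the eigenoperator basis to build a standard intertwiner $\xi\mapsto\sum_i(e_NJ_\psi m_i^*J_\psi\xi)\otimes\delta_i$ and bounds $\dim_{(N,\vphi)}=\sum_i\nu(i)^{-1}\psi(m_im_i^*)$ termwise between $\inf(C)\Ind{\E}$ and $\sup(C)\Ind{\E}$, obtaining the lower bound $\Ind{\E}\ge|C|$ from the orthogonal system $\bigcup_\mu\mathcal{V}_\mu$ alone; the clean factorization of $\Ind{\E}$ is only justified in the remark via Takesaki duality and multiplicativity of the index over the intermediate algebra, a route you would also need to make precise if you prefer it to the basis count.
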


Observe that when $\Sd(\vphi)=\Sd(\vphi\circ \E)$, one can choose $C=\{1\}$ so that (\ref{ineq:intro_dimension_index_relation}) reduces to an equality. Alternatively, if $\Sd(\vphi)$ is dense in $\R_+$, then the bounds in (\ref{ineq:intro_dimension_index_relation}) can be made arbitrarily tight by choosing an appropriate transversal $C$ (see Remark~\ref{rem:tight_bounds}). On the way to proving the above theorem, we establish that such inclusions always admit Pimsner--Popa orthogonal bases, regardless of whether or not the index is finite (see Proposition~\ref{prop:existence_PP_basis}).

The structure of the paper is as follows. In Section~\ref{sec:preliminaries} we provide preliminaries on weights, discrete cores, Connes invariants, and subfactor theory. In Section~\ref{sec:modules_and_dimension} we analyze the basic construction for $M^\vphi\subset M$, define $(M,\vphi)$-modules and their Murray von Neumann dimension, and establish many properties thereof. In Section~\ref{sec:appliction_index_subfactors} we give our application to the index for subfactors. Finally, in Section~\ref{sec:constructing_extremal_almost_periodic_inclusions} we consider ways to construct inclusions of the type appearing in Theorem~\ref{thmalpha:theorem_B}.

\subsection*{Acknowledgments}

We wish to thank the following people for helpful discussions related to this work: Michael Hartglass, Ben Hayes, Cyril Houdayer, David Jekel, David Penneys, Dima Shlyakhtenko, and Makoto Yamashita. We also thank Srivatsav Kunnawalkam Elayavalli for directing us to the proof of the existence of Pimsner--Popa bases for infinite index inclusions of type $\mathrm{II}_1$ factors in \cite[Theorem 5.21]{Bur17}. The third author was supported by NSF grants DMS-1856683 and DMS-2247047.



\section{Preliminaries}\label{sec:preliminaries}

Throughout $M$ denotes a von Neumann algebra with predual $M_*$. By a representation of $M$ we mean a normal unital $*$-homomorphism $\pi\colon M\to B(\H)$ for some Hilbert space $\H$. Given a subset $S\subset \H$, we write $[S]$ for the projection $p\in B(\H)$ onto $\overline{\text{span}}(S)$. We denote the trace on $B(\H)$ by $\Tr$, which is assumed to be normalized so that $\Tr([\K])=\dim(\K)$ for all closed subspaces $\K\leq \H$.

\subsection{Weights}

A \emph{weight} on a von Neumann algebra $M$ is a map $\vphi\colon M_+\to [0,\infty]$ satisfying for $x,y\in M_+$ and $c\geq 0$
    \[
        \vphi(cx+y) = c\vphi(x) + \vphi(y).
    \]
A weight is said to be: \emph{faithful} if $\vphi(x)=0$ for $x\in M_+$ implies $x=0$; \emph{normal} if $\vphi(\sup_i x_i) =\sup_i \vphi(x_i)$ for any bounded increasing net $(x_i)_{i\in I}\subset M_+$; \emph{semifinite} if $\{x\in M_+\colon \vphi(x)<\infty\}''=M$; and \emph{tracial} if $\vphi(x^*x)=\vphi(xx^*)$ for all $x\in M$. Every von Neumann algebra admits a faithful normal semifinite weight (see \cite[Theorem VII.2.7]{Tak03}), and semifinite von Neumann algebras are precisely those that admit a faithful normal semifinite tracial weight.

We denote\footnote{These sets are traditionally denoted $\mathfrak{n}_\vphi$ and $\mathfrak{m}_\vphi$, respectively, but we feel the above notation more clearly indicates their meaning.}
    \begin{align*}
        \sqrt{\dom}(\vphi)&:=\{x\in M\colon \vphi(x^*x)<\infty\}\\
        \dom(\vphi)&:=\text{span}\{x^*y\colon x,y\in \sqrt{\dom}(\vphi)\}.
    \end{align*}
Then $\vphi$ can be extended to a linear functional on $\dom(\vphi)$, and $\vphi$ is semifinite if and only if $\dom(\vphi)''=M$.  

Associated to a faithful normal semifinite weight $\varphi$ is the GNS semi-cyclic representation $(L^2(M,\varphi),\pi_\varphi)$ where $L^2(M,\varphi)$ is the completion of $\sqrt{\dom}(\varphi)$ with respect to the inner product
	\[
		\<x,y\>_\varphi:= \varphi(y^*x) \qquad x,y\in \sqrt{\dom}(\vphi),
	\]
and $\pi_\varphi$ is a faithful representation of $M$ determined by $\pi_\varphi(x)y=xy$ for $x\in M$ and $y\in \sqrt{\dom}(\varphi)$ (note that $\sqrt{\dom}(\vphi)$ is a left ideal in $M$). When $\vphi$ is a state, this is just the usual GNS representation. Since $\pi_\vphi$ is a faithful representation we will identify $M\cong \pi_\vphi(M)$. The map $\sqrt{\dom}(\varphi)\ni x\mapsto x^*$ is a densely defined closable operator on $L^2(M,\varphi)$, whose closure we denote by $S_\varphi$. If $S_\vphi=J_\vphi\Delta_\varphi^{1/2}$ is its polar decomposition, then $J_\vphi$ is called the \emph{modular conjugation} for $\vphi$ and $\Delta_\vphi$ is called the \emph{modular operator} for $\vphi$. The modular conjugation is an isometric involution, the modular operator is a densely defined, nonsingular, positive operator, and they enjoy the following properties:
	\[
		J_\varphi M J_\vphi = M'\cap B(L^2(M,\vphi)) \qquad \text{ and }\qquad  \Delta_\vphi^{it} M \Delta_\vphi^{-it} =M \qquad t\in \R.
	\]
The maps $\sigma_t^\varphi:= \text{Ad}(\Delta_\varphi^{it})\in \Aut(M)$, $t\in \R$, define a strongly continuous action $\R\overset{\sigma^\vphi}{\curvearrowright} M$ called the \emph{modular automorphism group} of $\vphi$. The set of fixed points
	\[
		M^\vphi:= \bigcap_{t\in \R} M^{\sigma_t^\vphi}
	\]
is a von Neumann subalgebra of $M$ called the \emph{centralizer} with respect to $\sigma^\vphi$, and $\vphi$ is tracial on $\dom(\vphi)\cap M^\vphi$. In the case that $\vphi$ itself is tracial, $\Delta_\vphi=1$ and so $\sigma^\vphi$ is trivial and $M^\vphi=M$. Note that $\vphi|_{M^\vphi_+}$ need not be semifinite, so $M^\vphi$ can still fail to be a semifinite von Neumann algebra; in fact, it can even be type $\mathrm{III}$ (see \cite{Haa77}). We discuss the case when $\vphi|_{M^\vphi_+}$ is semifinite---and hence $M^\vphi$ is a semifinite von Neumann algebra---in Section~\ref{sec:strictly_semifinite_weights} below.  After \cite[Definition 4.7]{Dyk97}, we say a weight is \emph{extremal} if $M^\vphi$ is a factor.

\begin{rem}\label{rem:extremal_ap_on_semifinite_is_tracial}
We point out that if a semifinite von Neumann algebra $M$ admits an extremal faithful normal semifinite weight, then $\vphi$ is necessarily tracial. Indeed, in this case $\sigma_t^\vphi(x)= h^{it} x h^{-it}$ for a positive non-singular self-adjoint operator $h$ affiliated with $M^\vphi$ (see the proof of \cite[Theorem VIII.3.14]{Tak03}), and so
    \[
        M^\vphi = \{ h^{it}\colon t\in \R\}'\cap M.
    \]
By Stone's theorem, every spectral projection $p$ of $h$ belongs to $\{h^{it}\colon t\in \R\}''$ and hence commutes with $M^\vphi$ by the above equality. On the other hand, $p\in M^\vphi$ since $h$ is affiliated with $M^\vphi$. So if $M^\vphi$ is a factor, then we must have $p\in \C$ for all spectral projections of $h$, implying $h\in\C$ and that $\vphi$ is tracial.$\hfill\blacksquare$
\end{rem}

A non-zero element $x\in M$ is an \emph{eigenoperator} of $\sigma^\vphi$ if there exists $\lambda>0$ so that $\sigma_t^\vphi(x) = \lambda^{it} x$ for all $t\in \R$. (For $x\in \sqrt{\dom}(\vphi)$, this is equivalent to $x\in L^2(M,\vphi)$ being an eigenvector of $\Delta_\vphi$ with eigenvalue $\lambda$.) For example, every non-zero element of $M^\vphi$ is an eigenoperator with eigenvalue $1$. We denote the set of eigenoperators with eigenvalue $\lambda$ by $M^{(\vphi,\lambda)}$, and denote
	\[
		M^{(\vphi,\eig)}:=\text{span}\left(\bigcup_{\lambda>0} M^{(\vphi,\lambda)} \right).
	\]
Note that $M^{(\vphi,\eig)}$ is a unital $*$-subalgebra since $1\in M^{(\vphi,1)}$, $[M^{(\vphi,\lambda)}]^*= M^{(\vphi,\frac{1}{\lambda})}$, and $xy\in M^{(\vphi,\lambda\mu)}$ for $x\in M^{(\vphi,\lambda)}$ and $y\in M^{(\vphi,\mu)}$. In the case that $\vphi$ is tracial, we have $M^{(\vphi,\eig)} = M^{(\vphi,1)}\cup\{0\} = M^\vphi=M$.

\subsection{Strictly semifinite weights}\label{sec:strictly_semifinite_weights}

A normal semifinite weight $\vphi$ on $M$ is called \emph{strictly semifinite} if there exists a family $\{\omega_i\in (M_*)_+\colon i\in I\}$ with pairwise orthogonal support projections $s(\omega_i)$ satisfying
    \[
        \vphi(x) = \sum_{i\in I}\omega_i(x)\qquad x\in M_+.
    \]
(This is a modern refinement of terminology originally due to Combes: compare \cite[Definition 4.1]{Com71} to \cite[Definition XII.4.6]{Tak03}.) A version of following result appears as \cite[Proposition 4.3]{Com71} (see also \cite[Exercise VIII.2.1]{Tak03}), but we prove it here under slightly different hypotheses.

\begin{lem}\label{lem:strictly_semifinite_characterization}
For a faithful normal semifinite weight $\vphi$ on $M$, the following are equivalent:
    \begin{enumerate}[label=(\roman*)]
    \item $\vphi$ is strictly semifinite;
    
    \item there exists a family $\{p_i\in M^\vphi\cap \dom(\vphi)\colon i\in I\}$ of pairwise orthogonal projections with
        \[
            \sum_{i\in I} p_i =1;
        \]  
        
    \item $\vphi|_{M^\vphi_+}$ is semifinite;
    \item there exists a faithful normal $\vphi$-preserving conditional expectation $\mc{E}_\vphi\colon M\to M^\vphi$.
    \end{enumerate}
\end{lem}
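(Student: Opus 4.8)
The plan is to prove the four conditions equivalent through a short cycle of implications, treating $(\mathrm{ii})\Leftrightarrow(\mathrm{iii})\Leftrightarrow(\mathrm{iv})$ as the routine core, $(\mathrm{ii})\Rightarrow(\mathrm{i})$ as a direct computation, and $(\mathrm{i})\Rightarrow(\mathrm{ii})$ as the substantive step. First I would record the easy links among (ii), (iii), (iv). For $(\mathrm{ii})\Rightarrow(\mathrm{iii})$: the finite partial sums $P_F=\sum_{i\in F}p_i$ form a net of projections in $M^\vphi\cap\dom(\vphi)$ increasing strongly to $1$, and since $x^{1/2}P_Fx^{1/2}\nearrow x$ for $x\in M^\vphi_+$, normality of $\vphi$ shows $\sqrt{\dom}(\vphi|_{M^\vphi})$ is $\sigma$-weakly dense, i.e. $\vphi|_{M^\vphi}$ is semifinite. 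Conversely $(\mathrm{iii})\Rightarrow(\mathrm{ii})$ is a Zorn's lemma argument: a maximal orthogonal family of $\vphi$-finite projections in $M^\vphi$ must sum to $1$, else semifiniteness would produce a further $\vphi$-finite projection under the complement. For $(\mathrm{iii})\Leftrightarrow(\mathrm{iv})$ I would invoke Takesaki's theorem on the existence of $\vphi$-preserving conditional expectations: $M^\vphi$ is pointwise, hence globally, invariant under $\sigma^\vphi$, so once $\vphi|_{M^\vphi}$ is semifinite the theorem yields $\mc{E}_\vphi$; conversely, given $\mc{E}_\vphi$, the Kadison--Schwarz inequality gives $\mc{E}_\vphi(x)^*\mc{E}_\vphi(x)\le\mc{E}_\vphi(x^*x)$, so $\mc{E}_\vphi$ maps $\sqrt{\dom}(\vphi)$ into $\sqrt{\dom}(\vphi|_{M^\vphi})$, and normality and surjectivity of $\mc{E}_\vphi$ force the latter to be dense in $M^\vphi$. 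Finally, for $(\mathrm{ii})\Rightarrow(\mathrm{i})$ I would set $\omega_i:=\vphi(p_i\,\cdot\,p_i)$; these are bounded normal positive functionals with $s(\omega_i)\le p_i$, hence with pairwise orthogonal supports, and the trace property of $\vphi$ on $M^\vphi$ together with $x^{1/2}P_Fx^{1/2}\nearrow x$ gives $\sum_i\omega_i(x)=\vphi(x)$.

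The heart of the argument, and the main obstacle, is $(\mathrm{i})\Rightarrow(\mathrm{ii})$, since the definition of strict semifiniteness only supplies functionals $\omega_i$ whose supports $e_i:=s(\omega_i)$ are a priori not $\sigma^\vphi$-invariant, whereas (ii) demands projections in the centralizer. The key preliminary computation is that the support property forces $\vphi(e_ixe_i)=\omega_i(x)$ for all $x\in M_+$ (the $j\ne i$ terms vanish because $e_je_i=0$), so in particular $\vphi(e_i)=\|\omega_i\|<\infty$ and $\vphi=\sum_i\vphi(e_i\,\cdot\,e_i)$. I would then recognize the pinching $C(x)=\sum_i e_ixe_i$ as a normal conditional expectation onto $N:=\bigoplus_i e_iMe_i$ (by Tomiyama's theorem), with $\vphi\circ C=\vphi$ following from normality applied to the increasing net of finite subpinchings. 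On $N$ the weight restricts to the direct sum $\bigoplus_i\omega_i$ of bounded functionals, which is manifestly semifinite, so Takesaki's theorem applies to the inclusion $N\subset M$ and yields $\sigma^\vphi_t(N)=N$ with $\sigma^\vphi_t|_N=\sigma^{\vphi|_N}_t$. Since $\vphi|_N$ is a direct sum, its modular automorphism group preserves each block $e_iMe_i$ and therefore fixes each central projection $e_i$; hence $\sigma^\vphi_t(e_i)=e_i$, i.e. $e_i\in M^\vphi\cap\dom(\vphi)$, while $\sum_i e_i=1$ by faithfulness of $\vphi$. This is exactly (ii).

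The delicate point to get right is the legitimacy of invoking Takesaki's theorem in $(\mathrm{i})\Rightarrow(\mathrm{ii})$: the theorem requires $\vphi|_N$ to be semifinite, which is precisely why passing to the auxiliary algebra $N=\bigoplus_i e_iMe_i$ (where the restriction is a sum of bounded functionals) rather than attempting to work with $M^\vphi$ directly is essential, since semifiniteness of $\vphi|_{M^\vphi}$ is not yet available. It is this theorem that converts the orthogonality of the supports $s(\omega_i)$ into the algebraic conclusion that they lie in the centralizer. I would also take care to justify that $C$ is genuinely onto $N$ and that the strong-operator sum defining $C(x)$ behaves well under $\vphi$, so that $\vphi\circ C=\vphi$ is valid for all $x\in M_+$ and not merely on a dense subset.
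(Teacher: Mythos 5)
Your proposal is correct, and it reaches the same endpoints as the paper, but the substantive step $(i)\Rightarrow(ii)$ is handled by a genuinely different (and heavier) mechanism. The paper's argument is a two-line computation: with $p_i=s(\omega_i)$ one has $\vphi(p_i)=\omega_i(1)<\infty$ and, for $x\in\dom(\vphi)$, $\vphi(xp_i)=\omega_i(xp_i)=\omega_i(p_ix)=\vphi(p_ix)$, whence $p_i\in M^\vphi$ by the commutation characterization of the centralizer \cite[Theorem VIII.2.6]{Tak03}. You instead construct the pinching expectation $C(x)=\sum_i e_ixe_i$ onto $N=\bigoplus_i e_iMe_i$, check $\vphi\circ C=\vphi$ and semifiniteness of $\vphi|_N$, invoke Takesaki's theorem to get $\sigma_t^\vphi(N)=N$ with $\sigma_t^\vphi|_N=\sigma_t^{\vphi|_N}$, and conclude that the central projections $e_i$ of $N$ are fixed by the modular group. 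This is valid and has the aesthetic advantage of making Takesaki's expectation theorem the single external input for the whole lemma, but it is considerably longer than what the centralizer characterization buys directly. Your other departure is proving $(ii)\Rightarrow(i)$ by the bare computation $\vphi(x)=\sup_F\sum_{i\in F}\vphi(x^{1/2}p_ix^{1/2})=\sum_i\vphi(p_ixp_i)$, where the paper instead proves $(iii)\Rightarrow(i)$ by extracting a maximal orthogonal family and routing through a conditional expectation onto $(\bigcup_i p_iMp_i)''$; your version is cleaner, but note that the identity $\vphi(p_ixp_i)=\vphi(x^{1/2}p_ix^{1/2})$ is \emph{not} literally ``traciality of $\vphi$ on $M^\vphi$'' as you phrase it, since $x^{1/2}p_i\notin M^\vphi$ in general --- it is the standard consequence of the KMS condition for $\vphi$-finite projections in the centralizer and should be cited as such. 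The implications $(ii)\Rightarrow(iii)$ and $(iii)\Leftrightarrow(iv)$ match the paper's.
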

\begin{proof}$(i)\Rightarrow(ii):$ Let $\{\omega_i\in (M_*)_+\colon i\in I\}$ be as in the definition of strictly semifinite and set $p_i$ to be the support projection of $\omega_i$ for each $i\in I$. These projections are pairwise orthogonal by assumption and the faithfulness of $\vphi$ demands they sum to $1$. Note that $p_i\in \dom(\vphi)$ since $\vphi(p_i)=\omega_i(p_i)=\omega_i(1)<\infty$. Also, for $x\in \dom(\vphi)$ we have
    \[
        \vphi(xp_i) = \omega_i(xp_i) = \omega_i(p_i x) = \varphi(p_ix).
    \]
Hence $p_i\in M^\vphi$ by \cite[Theorem VIII.2.6]{Tak03}.\\

\noindent$(ii)\Rightarrow(iii):$ Let $q\in M^\vphi$ be an arbitrary projection. For a finite subset $F\subset I$, we have
    \[
        \vphi( q\left( \sum_{i\in F} p_i \right) q ) = \sum_{i\in F} \vphi(p_i q p_i) \leq \sum_{i\in F} \vphi(p_i) <\infty.
    \]
Thus
    \[
        q\left( \sum_{i\in F} p_i \right) q \in M^\vphi\cap \dom(\vphi),
    \]
and this net converges strongly to $q$.\\

\noindent$(iii)\Leftrightarrow (iv):$ This follows from \cite[Theorem IX.4.2]{Tak03}.\\

\noindent$(iii)\Rightarrow(i):$ Let $\{p_i\in M^\vphi\cap \dom(\vphi)\colon i\in I\}$ be a maximal family of pairwise orthogonal projections and set $\omega_i:= \vphi(p_i\cdot p_i)\in (M_*)_+$. Suppose, towards a contradiction, that
    \[
        p:=\sum_{i\in I} p_i < 1.
    \]
Let $(x_j)_{j\in J} \subset M^\vphi\cap \dom(\vphi)$ be a net converging strongly to $1-p$. Then $x_j(1-p)\to (1-p)$ strongly, and in particular there exists $j\in J$ large enough so that $x_j(1-p)\neq 0$. For such $j\in J$ we have
    \[
        0< \vphi((1-p) x_j^*x_j(1-p)) = \vphi( x_j(1-p)x_j^*) \leq \vphi(x_jx_j^*)  <\infty.
    \]
Since $(1-p)x_j^*x_j(1-p)$ is non-zero and positive, we can find $t>0$ so that $q:=1_{[t,\infty)}((1-p)x_j^*x_j(1-p))$ is non-zero. But then
    \[
        \vphi(q) \leq t\vphi((1-p)x_j^*x_j(1-p)) <\infty,
    \]
and so $q$ contradicts the maximality of $\{p_i\colon i\in I\}$. We must therefore have $\sum_{i\in I} p_i = 1$.

Now, consider
    \[
        N_0:= \text{span}\left(\bigcup_{i\in I} p_i M p_i\right),
    \]
and $N:=N_0''$. Since $N_0\subset \dom(\vphi)$, it follows that $\vphi|_N$ is semifinite, and since $p_i\in M^\vphi$ for all $i\in I$ we also have that $\sigma_t^\vphi(N)=N$ for all $t\in \R$. Thus there exists a faithful normal $\vphi$-preserving conditional expectation $\mathcal{E}_N\colon M\to N$. Observe that
    \[
        \mathcal{E}_N(x) = \mathcal{E}_N\left(\sum_{i\in I} p_i x\right) = \sum_{i\in I} p_i \mathcal{E}_N(x) = \sum_{i\in I} p_i \mathcal{E}_N(x)p_i,
    \]
for all $x\in M$. Since normality of $\vphi$ is equivalent to complete additivity (see \cite[Theorem VII.1.11]{Tak03}), for $x\in M_+$ one then has
    \[
        \vphi(x) = \vphi(\mathcal{E}_N(x)) = \vphi\left( \sum_{i\in I} p_i \mathcal{E}_N(x) p_i\right) = \sum_{i\in I} \vphi(p_i\mathcal{E}_N(x) p_i) = \sum_{i\in I} \vphi(p_i xp_i) = \sum_{i\in I} \omega_i(x).
    \]
Hence $\vphi$ is strictly semifinite.
\end{proof}

Within this article, we will always invoke strict semifiniteness to assert either that $\vphi|_{M^\vphi_+}$ is semifinite or that there exists a faithful normal $\vphi$-preserving conditional expectation $\E_\vphi\colon M\to M^\vphi$. Thus the reader is encouraged to adopt either of these as the practical definition of strict semifiniteness.

\begin{rem}\label{rem:strictly_semifinite_corners}
Suppose $M$ is a von Neumann algebra equipped with a faithful normal strictly semifinite weight $\vphi$. For any non-zero projection $p\in M^\vphi$ the restriction of $\vphi$ to $pMp$ is faithful normal and strictly semifinite. Indeed, faithfulness and normality are immediate. To see strict semifiniteness, first observe that $pM^\vphi p = (pMp)^{\vphi|_{pMp}}$, then note that the traciality of $\vphi$ on $M^\vphi$ implies $\dom(\vphi|_{M^\vphi})$ is a two-sided ideal. Hence $p\,\dom(\vphi|_{M^\vphi})p$ is $\sigma$-weakly dense in $(pMp)^{\vphi|_{pMp}}$ and lies in $\dom(\vphi)$, yielding the strict semifiniteness of $\vphi|_{pMp}$. Note that $pM^\vphi p = (pMp)^{\vphi|_{pMp}}$ also implies that $\vphi|_{pMp}$ is extremal whenever $\vphi$ is extremal.$\hfill\blacksquare$
\end{rem}

\subsection{Almost periodic weights}\label{sec:almost_periodic}

A faithful normal semifinite weight $\vphi$ on a von Neumann algebra $M$ is called \emph{almost periodic} if the modular operator $\Delta_\vphi$ is diagonalizable:
    \[
        \Delta_\vphi = \sum_{\lambda>0} \lambda 1_{\{\lambda\}}(\Delta_\vphi).
    \]
The sum is really over the point spectrum of $\Delta_\vphi$, and if $M$ has separable predual (equivalently if $L^2(M,\vphi)$ is separable as a Hilbert space) then the sum is necessarily over countably many $\lambda$. This terminology was introduced by Connes and refers to the equivalent characterization that the function
    \[
        \R\ni t\mapsto \varphi(\sigma_t^{\vphi}(x)y)
    \]
is almost periodic (i.e. is a uniform limit of periodic functions) for all $x,y\in M$ (see \cite[Lemma 7]{Con72}). Note that by \cite[Proposition 1.1]{Con74}, an almost periodic weight is automatically strictly semifinite. Moreover, the relative commutant $(M^\vphi)'\cap M$ is contained in $(M^\vphi)'\cap M^\vphi$, the center of $M^\vphi$ (see \cite[Theorem 10]{Con72}). In particular, if $\vphi$ is extremal almost periodic then $M$ is necessarily also a factor since $M'\cap M \subset (M^\vphi)'\cap M \subset (M^\vphi)'\cap M^\vphi=\C$.

The following result is likely known to experts, but we present a proof here for the convenience of the reader.

\begin{lem}\label{lem:ap_iff_eigenoperators_generate}
Let $M$ be a von Neumann algebra equipped with a faithful normal strictly semifinite weight $\vphi$. Then $\vphi$ is almost periodic if and only if $(M^{(\vphi,\eig)})''=M$.
\end{lem}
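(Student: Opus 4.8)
The plan is to prove both implications by comparing the eigenoperators of $\sigma^\vphi$ with the eigenvectors of $\Delta_\vphi$ in $L^2(M,\vphi)$, using the identity $\Delta_\vphi^{it}\pi_\vphi(x)\Delta_\vphi^{-it}=\pi_\vphi(\sigma^\vphi_t(x))$. Writing $\hat x$ for the image in $L^2(M,\vphi)$ of $x\in\sqrt{\dom}(\vphi)$ and $H_\mu:=1_{\{\mu\}}(\Delta_\vphi)L^2(M,\vphi)$, this identity shows that any $x\in M^{(\vphi,\lambda)}$ satisfies $\pi_\vphi(x)H_\mu\subseteq H_{\lambda\mu}$, and that $x\in M^{(\vphi,\lambda)}\cap\sqrt{\dom}(\vphi)$ has $\hat x\in H_\lambda$. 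I will also fix, via strict semifiniteness (Lemma~\ref{lem:strictly_semifinite_characterization}), a family of projections $\{p_i\}\subset M^\vphi\cap\dom(\vphi)$ with $\sum_i p_i=1$, and I will freely use the $\vphi$-preserving expectation $\E_\vphi\colon M\to M^\vphi$.

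For the direction $(M^{(\vphi,\eig)})''=M\Rightarrow\vphi$ almost periodic, let $K$ denote the closed span of all eigenvectors of $\Delta_\vphi$; almost periodicity is exactly the statement $K=L^2(M,\vphi)$, so this is what I would prove. Since every eigenoperator and its adjoint preserve $K$ by the observation above, the projection $P:=[K]$ lies in $\pi_\vphi(M^{(\vphi,\eig)})'$, which under the hypothesis equals $M'$. As each $p_i\in M^\vphi\cap\sqrt{\dom}(\vphi)$ gives $\hat p_i\in H_1\subseteq K$, we get $P\hat p_i=\hat p_i$; and since $P\in M'$ commutes with $\pi_\vphi(x)$, for every $x\in M$ the vector $\widehat{xp_i}=\pi_\vphi(x)\hat p_i$ (recall $xp_i\in\sqrt{\dom}(\vphi)$, a left ideal) again lies in $K$. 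For $y\in\sqrt{\dom}(\vphi)$ the finite sums $\widehat{yq_F}=\sum_{i\in F}\widehat{yp_i}$ with $q_F=\sum_{i\in F}p_i$ therefore lie in $K$ and converge to $\hat y$: setting $b=\E_\vphi(y^*y)\in M^\vphi_+\cap\dom(\vphi)$, bimodularity of $\E_\vphi$ and traciality of $\vphi$ on $M^\vphi$ give $\|\hat y-\widehat{yq_F}\|^2=\vphi((1-q_F)b)$, which tends to $0$ by normality. Hence $\hat y\in K$ for a dense set of vectors, so $K=L^2(M,\vphi)$ and $\Delta_\vphi$ has pure point spectrum.

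For the converse, assume $\Delta_\vphi$ is diagonalizable and set $N:=(M^{(\vphi,\eig)})''$. Since $N\supseteq M^\vphi$ the weight $\vphi|_N$ is semifinite, and $N$ is globally $\sigma^\vphi$-invariant, so Takesaki's theorem furnishes a $\vphi$-preserving expectation $\E_N\colon M\to N$ with Jones projection $e_N$ onto $L^2(N,\vphi|_N)\subseteq L^2(M,\vphi)$. I would prove $N=M$ by showing $e_N=1$, i.e.\ that the images of eigenoperators are dense. Given $x\in\sqrt{\dom}(\vphi)$ and $\lambda$ in the point spectrum, I would form the Bohr means
\[
    y_T:=\frac{1}{2T}\int_{-T}^{T}\lambda^{-it}\sigma^\vphi_t(x)\,dt\in M.
\]
On $L^2(M,\vphi)$ the mean ergodic theorem for the unitary group $\Delta_\vphi^{it}$ gives $\widehat{y_T}\to 1_{\{\lambda\}}(\Delta_\vphi)\hat x$, while the almost periodicity of each function $t\mapsto\vphi(\sigma^\vphi_t(x)w)$ (the functional characterization, \cite[Lemma 7]{Con72}) together with the bound $\|y_T\|\le\|x\|$ forces the $y_T$ to converge $\sigma$-weakly to an eigenoperator $x_\lambda\in M^{(\vphi,\lambda)}\subseteq N$ with $\widehat{x_\lambda}=1_{\{\lambda\}}(\Delta_\vphi)\hat x$. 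Summing over the point spectrum yields $\hat x=\sum_\lambda\widehat{x_\lambda}\in L^2(N,\vphi|_N)$, so $e_N=1$, forcing $\E_N=\mathrm{id}$ and hence $N=M$.

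I expect the main obstacle to be this last step: the mean ergodic theorem only realizes $1_{\{\lambda\}}(\Delta_\vphi)\hat x$ as an $L^2$-vector, and to recognize it as $\widehat{x_\lambda}$ for a bona fide element of $M^{(\vphi,\lambda)}$ one must pass from $L^2$-convergence to $\sigma$-weak convergence of the Bohr means $y_T$. This is where almost periodicity is genuinely used: it guarantees that the means $\lim_T\frac{1}{2T}\int_{-T}^{T}\lambda^{-it}\vphi(\sigma^\vphi_t(x)w)\,dt$ exist for $w$ ranging over a total family of functionals in $M_*$, which (with the uniform bound on $\|y_T\|$) pins down a unique $\sigma$-weak limit $x_\lambda$; one then checks via the closability of the GNS map on bounded sets that $x_\lambda\in\sqrt{\dom}(\vphi)$ and that $\widehat{x_\lambda}$ agrees with the $L^2$-limit. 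By contrast, the forward direction is a comparatively soft commutation argument once the projections $p_i$ are available.
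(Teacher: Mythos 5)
Your argument is correct, and the two directions split as follows when compared with the paper. For the implication $(M^{(\vphi,\eig)})''=M\Rightarrow$ almost periodic, your commutant argument ($[K]\in\pi_\vphi(M^{(\vphi,\eig)})'=\pi_\vphi(M)'$, then push the eigenvectors $\hat p_i$ around by $M$ and approximate $\hat y$ by $\widehat{yq_F}$) is essentially the paper's proof in different clothing: the paper takes $\H=\overline{M^{(\vphi,\eig)}\cdot\sqrt{\dom}(\vphi|_{M^\vphi})}$ and kills $\H^\perp$ using the same net of projections $p_i$ and the same identity $\<xp_i,\xi\>=\<x,J_\vphi p_iJ_\vphi\xi\>$; both reduce to the density of $M\cdot L^2(M^\vphi,\vphi)$, which is exactly where strict semifiniteness enters. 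For the converse, the paper simply cites \cite[Lemma 1.2.3]{Dyk95}, whereas you reprove that lemma via Bohr means $y_T=\frac{1}{2T}\int_{-T}^T\lambda^{-it}\sigma_t^\vphi(x)\,dt$ over $\R$. This works, and you correctly isolate the only delicate point: the mean ergodic theorem produces $1_{\{\lambda\}}(\Delta_\vphi)\hat x$ only as an $L^2$-limit, and one must separately obtain a $\sigma$-weak limit $x_\lambda\in M^{(\vphi,\lambda)}$ (via almost periodicity of $t\mapsto\omega(\sigma_t^\vphi(x))$ for a total set of $\omega\in M_*$---which, note, you can get directly from diagonalizability of $\Delta_\vphi$ by testing against vector functionals at eigenvectors, without invoking Connes' functional characterization---together with the uniform bound $\|y_T\|\le\|x\|$) and then identify $\Lambda_\vphi(x_\lambda)$ with the $L^2$-limit using closedness of the GNS map on bounded sets. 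The cited Dykema lemma sidesteps some of this by averaging over the compact dual group $\widehat{\Lambda}$ rather than over $\R$, where the spectral-subspace projections are manifestly normal; your route is self-contained but buys that at the cost of the extra ergodic-theoretic bookkeeping you describe. No gap, provided that last identification is carried out as you indicate.
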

\begin{proof}
If $\vphi$ is almost periodic then $M^{(\vphi,\eig)}$ is $\sigma$-weakly dense by \cite[Lemma 1.2.3]{Dyk95}. Conversely, suppose $(M^{(\vphi,\eig)})''=M$ and let $\H$ denote the closure of
    \[
        M^{(\vphi,\eig)}\cdot \sqrt{\dom}(\vphi|_{M^\vphi})
    \]
in $L^2(M,\vphi)$, which we note $\Delta_\vphi$ is diagonalizable over. It suffices to show $\H = L^2(M,\vphi)$, so consider $\xi\in \H^\perp$. For any $x\in M$ and $y\in \sqrt{\dom}(\vphi|_{M^\vphi})$ we have  $\<xy,\xi\>_\vphi=0$ by the strong operator topology density of $M^{(\vphi,\eig)}$ in $M$. Using strict semifiniteness, let $(p_i)_{i\in I} \subset \dom(\vphi|_{M^\vphi})$ be net of projections converging strongly to $1\in M$. Then for all $x\in \sqrt{\dom}(\vphi)$, the previous observation gives
    \[
        \<x,\xi\>_\vphi = \lim_{i\to\infty} \<x, J_\vphi p_i J_\vphi \xi\>_\vphi = \lim_{i\to\infty} \< xp_i, \xi\> = 0.
    \]
Thus $\xi=0$ by the density of $\sqrt{\dom}(\vphi)\subset L^2(M,\vphi)$, and hence $\H=L^2(M,\vphi)$.
\end{proof}

\begin{rem}\label{rem:almost_periodic_corners}
Suppose $M$ is a von Neumann algebra admitting an almost periodic weight $\vphi$. For any non-zero projection $p\in M^\vphi$ one has that the restriction of $\vphi$ to $pMp$ is almost periodic and is extremal whenever $\vphi$ is extremal. Indeed, $\vphi|_{pMp}$ is strictly semifinite by Remark~\ref{rem:strictly_semifinite_corners}, and  $(pMp)^{(\vphi|_{pMp},\eig)}$ is $\sigma$-weakly dense in $pMp$ since it contains $p M^{(\vphi,\eig)}p$. Hence $\vphi$ is almost periodic by Lemma~\ref{lem:ap_iff_eigenoperators_generate}, and furthermore is extremal whenever $\vphi$ is extremal by Remark~\ref{rem:strictly_semifinite_corners}. It follows that any type $\mathrm{III}$ factor $M$ admitting an (extremal) almost periodic weight $\vphi$ also admits an (extremal) almost periodic state: as $M$ is purely infinite we can identify $M$ with $pMp$ for any non-zero projection $p\in \dom(\vphi|_{M^\vphi})$ and $\phi(x):= \frac{1}{\vphi(p)}\vphi(pxp)$ is an (extremal) 
almost periodic state on $pMp$ by the above.$\hfill\blacksquare$
\end{rem}

\subsection{Discrete cores}\label{sec:discrete_cores}

Let $\vphi$ be an almost periodic weight on a von Neumann algebra $M$ and suppose $\Lambda \leq \R_+$ is a subgroup containing the point spectrum of $\Delta_\vphi$. Equipping $\Lambda$ with the discrete topology, let $G:=\widehat{\Lambda}$ be its compact Pontryagin dual group. Note that the \emph{transpose} of the inclusion $\iota\colon \Lambda \hookrightarrow \R_+$ is a continuous group homomorphism $\hat{\iota}\colon \R\to G$ defined by dual pairings as follows:
    \[
        (\hat\iota(t) \mid  \lambda ) = (t \mid \iota(\lambda)) \qquad t\in \R,\ \lambda\in \Lambda.
    \]
Here we are identifying $\R\cong \widehat{\R_+}$ via $( t \mid a) = a^{it}$ for $t\in \R$ and $a\in \R_+$, so the above can also be interpreted as saying $\hat\iota(t)$ is the character on $\Lambda$ defined by $[\hat\iota(t)](\lambda)= \lambda^{it}$. The injectivity of $\iota$ implies $\hat\iota(\R)$ is dense in $G$, and if $\Lambda$ happens to be dense in $\R_+$ then $\hat\iota$ will be injective. For each $s\in G$, define
    \[
        U_s:=\sum_{\lambda \in \Lambda} (s\mid \lambda) 1_{\{\lambda\}}(\Delta_\vphi),
    \]
and for $x\in M$ define
    \[
        \sigma_s^{(\vphi,\Lambda)}(x):= U_s x U_s^*.
    \]
It follows that $G\overset{\sigma^{(\vphi,\Lambda)}}{\curvearrowright} M$ is continuous action which satisfies $\sigma_{\hat{\iota}(t)}^{(\vphi,\Lambda)} = \sigma_t^{\vphi}$ and $M^{\sigma^{(\vphi,\Lambda)}} = M^\vphi$. Additionally, for $x\in M^{(\vphi,\lambda)}$ one has $\sigma_s^{(\vphi,\Lambda)}(x) = (s|\lambda) x$ for all $s\in G$. The crossed product
    \[
        M\rtimes_{\sigma^{(\vphi,\Lambda)}} G
    \]
is called a \emph{discrete core} of $M$. When $\Lambda$ is the group generated by the point spectrum of $\Delta_\vphi$, we will call $\sigma^{(\vphi,\Lambda)}$ the \emph{point modular extension} of $\R\overset{\sigma^\vphi}{\curvearrowright}M$ to $G$. We refer the reader to \cite{Dyk95} for additional details and proofs of the above facts.

\subsection{Connes invariants}

In this section we will recall three invariants associated to factors that are each due to Connes. These will be useful to us for determining the extremality of an almost periodic weight. Throughout this section, let $M$ denote a factor.

For a faithful normal semifinite weight $\vphi$ on $M$, let $\S(\vphi)$ denote the spectrum of its modular operator $\Delta_\vphi$. Then the \emph{modular spectrum} of $M$ is the quantity
    \[
        \S(M):= \bigcap \left\{\S(\vphi)\colon \vphi \text{ is a faithful normal semifinite weight on }M \right\}.
    \]
This invariant can take on the following values:
    \[
        \S(M)=\begin{cases}
            \{1\} & \text{if and only if $M$ is semifinite}\\
            \{0,1\} & \text{if and only if $M$ is type $\mathrm{III}_0$}\\
            \{0\}\cup \lambda^{\Z} & \text{if and only if $M$ is type $\mathrm{III}_\lambda$ for $0<\lambda < 1$}\\
            [0,\infty) & \text{if and only if $M$ is type  $\mathrm{III}_1$}
        \end{cases}
    \]
(see \cite[Lemma 3.1.2]{Con73} and \cite[Theorem XII.1.6]{Tak03}).

For an almost periodic weight $\vphi$ on $M$, let $\Sd(\vphi)$ denote the point spectrum of its modular operator $\Delta_\vphi$. Then the \emph{point modular spectrum} of $M$ is the quantity
    \[
        \Sd(M):=\bigcap \left\{\Sd(\vphi)\colon \vphi \text{ is an almost periodic weight on }M \right\},
    \]
where we set $\Sd(M):=\R_+$ if $M$ does not admit any almost periodic weights (such algebras exist by \cite[Corollary 5.3]{Con74}).
This invariant always satisfies $\Sd(M)\subset \S(M)\setminus \{0\}$ (see \cite[Corollary 1.7]{Con74}), and one has $\overline{\Sd(M)}=\S(M)$ whenever $M$ is full with separable predual (see \cite[Corollary 4.11]{Con74}). The point modular spectrum is always a subgroup of $\R_+$ (see \cite[Theorem 1.3.(b)]{Con74}), and any countable subgroup of $\R_+$ can occur as $\Sd(M)$ for $M$ a full factor with separable predual (see \cite[Corollary 4.4]{Con74}). In contrast to the full case, we note that $\Sd(M)=\{1\}$ whenever $M$ is a hyperfinite factor with separable predual (see \cite[Corollary 1.8]{Con74}).

Lastly, let $G\overset{\alpha}{\curvearrowright} M$ be an action of a locally compact abelian group on $M$. Observe that if $p$ is a projection in the fixed point subalgebra $M^\alpha$, then the action of $G$ restricts to the corner $pMp$. We denote this restricted action by $\alpha^{(p)}$. The \emph{Connes spectrum} of $\alpha$ is the quantity
    \[
        \Gamma(\alpha):= \bigcap \left\{\Sp(\alpha^{(p)})\colon p\in M^\alpha\setminus\{0\} \text{ a projection}\right\}.
    \]
Here $\Sp(\alpha^{(p)})$ denotes the \emph{Arveson spectrum} of $\alpha^{(p)}$, and we refer the reader to \cite[Section XI.1]{Tak03} for further details including the general definition. For our purposes it will be sufficient to consider two specific examples. First, for any faithful normal semifinite weight $\vphi$, the action $\R\overset{\sigma^\vphi}{\curvearrowright} M$ has $\Sp(\sigma^\vphi)=\S(\vphi)\setminus \{0\}$ and $\Gamma(\sigma^\vphi) = \S(M)\setminus\{0\}$ (see \cite[Lemma 3.2.2 and Theorem 3.2.1]{Con73}). Second, if $\vphi$ is an almost periodic weight, $\Lambda\leq \R_+$ is a subgroup containing $\Sd(\vphi)$, and $\sigma^{(\vphi,\Lambda)}$ is the extension of $\R \overset{\sigma^\vphi}{\curvearrowright} M$ to the dual of $\Lambda$ (see Section~\ref{sec:discrete_cores}), then $\Sp(\sigma^{(\vphi,\Lambda)})= \Sd(\vphi)$ (see \cite[Equation (1)]{Con74}). One also has the following, which is essentially \cite[Theorem 1.3.(a)]{Con74} but with enough small changes to warrant a proof.

\begin{lem}\label{lem:point_modular_spectrum_is_intersection_Connes_spectrum}
Let $M$ be a factor with separable predual. For each almost periodic weight $\vphi$ denote by $\Lambda_\vphi$ the countable group generated by $\Sd(\vphi)$ in $\R_+$ and let $\sigma^{(\vphi,\Lambda_\vphi)}$ be the point modular extension of $\R \overset{\sigma^\vphi}{\curvearrowright} M$. Then
    \[
        \Sd(M) = \bigcap \left\{ \Gamma(\sigma^{(\vphi, \Lambda_\vphi)})\colon \vphi \text{ is an almost periodic weight on }M \right\}.
    \]
\end{lem}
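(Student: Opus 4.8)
The plan is to rewrite each Connes spectrum $\Gamma(\sigma^{(\vphi,\Lambda_\vphi)})$ as an intersection of point modular spectra over corners and then compare the resulting double intersection with $\Sd(M)=\bigcap_\vphi \Sd(\vphi)$ by proving two inclusions. Concretely, I would first establish the corner formula
    \[
        \Gamma(\sigma^{(\vphi,\Lambda_\vphi)}) = \bigcap_{p} \Sd(\vphi|_{pMp}),
    \]
where $p$ ranges over nonzero projections in $M^\vphi = M^{\sigma^{(\vphi,\Lambda_\vphi)}}$.

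To prove the corner formula, fix such a $p$. Since $p\in M^\vphi$ commutes with every spectral projection of $\Delta_\vphi$, it commutes with each unitary $U_s=\sum_\lambda (s\mid\lambda)1_{\{\lambda\}}(\Delta_\vphi)$, so the restricted action $(\sigma^{(\vphi,\Lambda_\vphi)})^{(p)}$ on $pMp$ is still implemented by the $U_s$ and is precisely the point modular extension $\sigma^{(\vphi|_{pMp},\Lambda_\vphi)}$ of $\R\overset{\sigma^{\vphi|_{pMp}}}{\curvearrowright} pMp$ to $\widehat{\Lambda_\vphi}$ (using $\sigma^{\vphi|_{pMp}}=\sigma^\vphi|_{pMp}$, valid because $p$ is $\sigma^\vphi$-fixed). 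By Remark~\ref{rem:almost_periodic_corners}, $\vphi|_{pMp}$ is almost periodic, and since its eigenoperators are $p M^{(\vphi,\lambda)} p$ we get $\Sd(\vphi|_{pMp})\subseteq \Sd(\vphi)\subseteq \Lambda_\vphi$. The cited identity $\Sp(\sigma^{(\psi,\Lambda)})=\Sd(\psi)$ then yields $\Sp((\sigma^{(\vphi,\Lambda_\vphi)})^{(p)})=\Sd(\vphi|_{pMp})$, and intersecting over $p$ gives the corner formula.

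The inclusion $\bigcap_\vphi \Gamma(\sigma^{(\vphi,\Lambda_\vphi)})\subseteq \Sd(M)$ is then immediate: taking $p=1$ in the corner formula gives $\Gamma(\sigma^{(\vphi,\Lambda_\vphi)})\subseteq \Sd(\vphi)$ for every almost periodic $\vphi$, and intersecting over $\vphi$ recovers $\Sd(M)$. For the reverse inclusion I must show $\Sd(M)\subseteq \Sd(\vphi|_{pMp})$ for each nonzero $p\in M^\vphi$. Here I would use that $pMp$ is again a factor with $\Sd(pMp)=\Sd(M)$: if $M$ is type $\mathrm{III}$ then every nonzero projection is equivalent to $1$, so $\Ad$ by a suitable partial isometry gives $pMp\cong M$ and $\Sd$ agrees; if $M$ is semifinite then both $\Sd(M)$ and $\Sd(pMp)$ equal $\{1\}$, since $\S=\{1\}$ forces $\Sd\subseteq\{1\}$ while $1$ always lies in the point spectrum. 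As $\vphi|_{pMp}$ is an almost periodic weight on the factor $pMp$, one has $\Sd(pMp)\subseteq \Sd(\vphi|_{pMp})$, whence $\Sd(M)=\Sd(pMp)\subseteq \Sd(\vphi|_{pMp})$. Combined with the corner formula this gives $\Sd(M)\subseteq \Gamma(\sigma^{(\vphi,\Lambda_\vphi)})$ for every $\vphi$, and therefore $\Sd(M)\subseteq \bigcap_\vphi \Gamma(\sigma^{(\vphi,\Lambda_\vphi)})$, completing the equality.

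The main obstacle will be the two points where the factoriality of $M$ genuinely enters: correctly identifying the restricted action on a corner with an honest point modular extension so that its Arveson spectrum is $\Sd(\vphi|_{pMp})$, and establishing the corner-invariance $\Sd(pMp)=\Sd(M)$, which is where the type $\mathrm{III}$ versus semifinite dichotomy is used. Everything else is a matter of assembling the cited identities $\Sp(\sigma^{(\psi,\Lambda)})=\Sd(\psi)$ and $\Sd(M)=\bigcap_\vphi\Sd(\vphi)$ with Remark~\ref{rem:almost_periodic_corners}.
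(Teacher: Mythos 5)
Your proof is correct, but the hard inclusion $\Sd(M)\subseteq \Gamma(\sigma^{(\vphi,\Lambda_\vphi)})$ is obtained by a genuinely different route from the paper's. You unwind the definition of the Connes spectrum directly, identifying each restricted action $(\sigma^{(\vphi,\Lambda_\vphi)})^{(p)}$ with the extension $\sigma^{(\vphi|_{pMp},\Lambda_\vphi)}$ of the modular group of $\vphi|_{pMp}$ to $\widehat{\Lambda_\vphi}$ --- legitimate since $p\in M^\vphi$ forces the spectral projections of $\Delta_{\vphi|_{pMp}}$ to be restrictions of those of $\Delta_\vphi$, so $\Sd(\vphi|_{pMp})\subseteq\Sd(\vphi)\subseteq\Lambda_\vphi$ and the cited identity $\Sp(\sigma^{(\psi,\Lambda)})=\Sd(\psi)$ applies on the factor $pMp$ --- and then you reduce everything to the corner-invariance $\Sd(pMp)=\Sd(M)$, which you obtain from the equivalence of all nonzero projections in a $\sigma$-finite type $\mathrm{III}$ factor (separable predual is exactly what licenses this) together with $\Sd\subseteq \S\setminus\{0\}$ and $1\in\Sd(\psi)$ in the semifinite case. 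The paper instead invokes Connes' characterization $\Gamma(\alpha)=\bigcap\left\{\Sp(\beta)\colon \beta \text{ cocycle equivalent to } \alpha\right\}$ and, for each such $\beta$, reconstructs an almost periodic weight $\psi$ on $M$ itself with $\Sd(\psi)=\Sp(\beta)$ via the converse of the cocycle derivative theorem. Your argument is more elementary in that it avoids the cocycle machinery entirely, at the mild cost of a case split on the type of $M$; the paper's argument stays closer to Connes' original proof of Theorem 1.3.(a) in \cite{Con74} and has the feature of exhibiting each relevant spectrum as $\Sd$ of an honest weight on $M$ rather than on a corner. The easy inclusion (take $p=1$, i.e., $\Gamma(\sigma^{(\vphi,\Lambda_\vphi)})\subseteq\Sp(\sigma^{(\vphi,\Lambda_\vphi)})=\Sd(\vphi)$) is the same in both proofs.
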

\begin{proof}
First note that $M$ having separable predual guarantees each $\Lambda_\vphi$ is countable. Now, by definition of the Connes spectrum we have $\Gamma(\sigma^{(\vphi, \Lambda_\vphi)}) \subset \text{Sp}(\sigma^{(\vphi, \Lambda_\vphi)})$, and as noted above the latter equals $\Sd(\vphi)$. This gives the inclusion `$\supset$'. Conversely, \cite[Proposition 2.3.17]{Con73} implies
    \[
        \Gamma(\sigma^{(\vphi,\Lambda_\vphi)}) = \bigcap \left\{\text{Sp}(\alpha)\colon \alpha\text{ is cocycle equivalent to } \sigma^{(\vphi,\Lambda_\vphi)} \right\}.
    \]
We claim that for each action $\alpha$ cocycle equivalent to $\sigma^{(\vphi,\Lambda_\vphi)}$ there exists an almost periodic weight $\psi$ on $M$ with $\Sd(\psi) = \text{Sp}(\alpha)$. This will imply $\Sd(M) \subset \Gamma(\sigma^{(\vphi,\Lambda_\vphi)})$ and complete the proof. Let $\hat{\iota}$ be the transpose of the inclusion map $\iota\colon \Lambda_\vphi \hookrightarrow \R_+$. Then $\alpha\circ \hat{\iota}$ and $\sigma^{\vphi}$ are cocycle equivalent as actions of $\R$ by \cite[Lemma 3.4.3]{Con73}, and from \cite[Theorem 1.2.4]{Con73} there exists a faithful normal semifinite weight $\psi$ satisfying $\sigma^\psi = \alpha\circ \hat{\iota}$ (see also \cite[Theorem VIII.3.8]{Tak03}). It then follows from \cite[Proposition 1.1.(b) and Equation (1)]{Con74} that $\psi$ is almost periodic with $\Sd(\psi) = \text{Sp}(\alpha)$, as claimed.
\end{proof}

The following provides useful criteria for the extremality of almost periodic weights (see also Lemma~\ref{lem:factor_levels} below).

\begin{lem}\label{lem:extremal_from_Gamma}
Let M be a factor with separable predual equipped with an almost periodic weight $\vphi$, and consider the following statements:
    \begin{enumerate}[label=(\roman*)]
        \item $\Sd(\vphi)= \Sd(M)$;
        \item $\Sd(\vphi)$ is a countable subgroup of $\R_+$ and $\Gamma(\sigma^{(\vphi, \Sd(\vphi))}) = \Sd(\vphi)$;
        \item $M^\vphi$ is a factor.
    \end{enumerate}
Then $(i)\Rightarrow (ii) \Leftrightarrow (iii)$. If $M$ is full, then one also has $(iii)\Rightarrow (i)$.
\end{lem}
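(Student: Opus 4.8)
The plan is to prove the implications separately, using Lemma~\ref{lem:point_modular_spectrum_is_intersection_Connes_spectrum} for $(i)\Rightarrow(ii)$ and an explicit eigenvalue description of the Connes spectrum for the equivalence $(ii)\Leftrightarrow(iii)$. Whenever $\Lambda:=\Sd(\vphi)$ is a subgroup, write $G:=\widehat{\Lambda}$ and $\alpha:=\sigma^{(\vphi,\Lambda)}$, and note that $M^\alpha=M^\vphi$ while, since $\widehat{G}=\Lambda$ carries the discrete topology, the $\lambda$-spectral subspace of $\alpha$ is exactly the eigenoperator space $M^{(\vphi,\lambda)}$. Consequently $\Sp(\alpha^{(p)})=\{\lambda\in\Lambda: pM^{(\vphi,\lambda)}p\neq 0\}$ for each nonzero projection $p\in M^\vphi$, and hence
\[
\Gamma(\alpha)=\{\lambda\in\Lambda: pM^{(\vphi,\lambda)}p\neq 0 \text{ for every nonzero projection } p\in M^\vphi\}.
\]
For $(i)\Rightarrow(ii)$: since $\Sd(M)$ is a countable subgroup of $\R_+$ (countable because $M$ has separable predual), the hypothesis $\Sd(\vphi)=\Sd(M)$ makes $\Sd(\vphi)$ a countable subgroup, so $\Lambda_\vphi=\Sd(\vphi)$ and $\sigma^{(\vphi,\Sd(\vphi))}$ is defined. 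Lemma~\ref{lem:point_modular_spectrum_is_intersection_Connes_spectrum} gives $\Sd(M)\subseteq\Gamma(\sigma^{(\vphi,\Sd(\vphi))})$, while $\Gamma(\sigma^{(\vphi,\Sd(\vphi))})\subseteq\Sp(\sigma^{(\vphi,\Sd(\vphi))})=\Sd(\vphi)=\Sd(M)$; these force equality, which is $(ii)$.

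For $(iii)\Rightarrow(ii)$, suppose $M^\vphi$ is a factor. First, $\Sd(\vphi)$ is a subgroup: given $\lambda,\mu\in\Sd(\vphi)$, polar decomposition produces partial-isometry eigenoperators $v\in M^{(\vphi,\lambda)}$ and $w\in M^{(\vphi,\mu)}$ whose source and range projections lie in $M^\vphi$, and factoriality of $M^\vphi$ lets one insert an element of $M^\vphi$ between them to produce a nonzero operator in $M^{(\vphi,\lambda\mu)}$ (and likewise in $M^{(\vphi,\lambda^{-1})}$), so $\lambda\mu,\lambda^{-1}\in\Sd(\vphi)$; countability is again from separability. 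The same factoriality argument, now compressing on both sides by an arbitrary nonzero projection $p\in M^\vphi$, shows $pM^{(\vphi,\lambda)}p\neq 0$ for every $\lambda\in\Sd(\vphi)$, so $\Gamma(\alpha)=\Sd(\vphi)$ by the description above. For $(ii)\Rightarrow(iii)$, assume $\Sd(\vphi)$ is a countable subgroup and $\Gamma(\alpha)=\Sd(\vphi)=\widehat{G}$. Because $M$ is a factor its center is trivial, so by the standard criterion relating factoriality of a crossed product to fullness of the Connes spectrum, the discrete core $M\rtimes_\alpha G$ is a factor. As $G$ is compact, the averaging projection $e=\int_G u_g\,dg\in M\rtimes_\alpha G$ satisfies $e(M\rtimes_\alpha G)e\cong M^\alpha=M^\vphi$, and a corner of a factor is a factor; hence $M^\vphi$ is a factor.

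Finally, $(iii)\Rightarrow(i)$ under the assumption that $M$ is full is the \emph{main obstacle}. The inclusion $\Sd(M)\subseteq\Sd(\vphi)$ always holds, so it suffices to show $\Sd(\vphi)\subseteq\Sd(\psi)$ for every almost periodic weight $\psi$. Fix such a $\psi$ and set $\Lambda':=\langle\Sd(\vphi)\cup\Sd(\psi)\rangle$ (countable) and $G':=\widehat{\Lambda'}$. The eigenvalue description shows $\Gamma(\sigma^{(\vphi,\Lambda')})=\Gamma(\sigma^{(\vphi,\Lambda)})=\Sd(\vphi)$ by the already-established $(iii)\Rightarrow(ii)$. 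The Connes cocycle $[D\psi:D\vphi]$ makes $\sigma^\vphi$ and $\sigma^\psi$ cocycle equivalent as $\R$-actions; the role of fullness (equivalently, closedness of $\Inn(M)$ in $\Aut(M)$) is to force this cocycle to be almost periodic in $t$ and hence to extend to a cocycle equivalence of the $G'$-actions $\sigma^{(\vphi,\Lambda')}$ and $\sigma^{(\psi,\Lambda')}$. Granting this extension, cocycle invariance of the Connes spectrum yields
\[
\Sd(\vphi)=\Gamma(\sigma^{(\vphi,\Lambda')})=\Gamma(\sigma^{(\psi,\Lambda')})\subseteq\Sp(\sigma^{(\psi,\Lambda')})=\Sd(\psi),
\]
and intersecting over all $\psi$ gives $\Sd(\vphi)\subseteq\Sd(M)$. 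I expect the extension of the modular cocycle to the compact dual group $G'$ to be the only genuinely delicate point; alternatively, this implication may be quoted from Connes' structure theory for full factors in \cite[Theorem 4.7]{Con74}, which applies since a full factor admitting an almost periodic weight automatically has $\Sd(M)$ countable, hence $\Sd(M)\neq\R_+$.
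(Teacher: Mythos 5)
Your argument is correct overall, but it diverges from the paper's proof in the middle implication, so a comparison is in order. For $(i)\Rightarrow(ii)$ you use exactly the paper's sandwich $\Sd(M)\subset\Gamma(\sigma^{(\vphi,\Sd(\vphi))})\subset\Sp(\sigma^{(\vphi,\Sd(\vphi))})=\Sd(\vphi)$ via Lemma~\ref{lem:point_modular_spectrum_is_intersection_Connes_spectrum}. For $(ii)\Leftrightarrow(iii)$ the paper simply quotes Connes' criterion \cite[Theorem 2.4.1]{Con73} (factoriality of $M^\alpha$ is equivalent to $\Gamma(\alpha)=\Sp(\alpha)$) together with \cite[Equation (1)]{Con74}; you instead rebuild both directions by hand from the eigenvalue description $\Gamma(\alpha)=\{\lambda:pM^{(\vphi,\lambda)}p\neq0\ \forall p\}$. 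Your $(iii)\Rightarrow(ii)$ eigenoperator manipulations are sound but tacitly use that each $M^{(\vphi,\lambda)}$, $\lambda\in\Sd(\vphi)$, contains a nonzero partial isometry with source and range in $M^\vphi$ --- that is precisely the polar-decomposition step of Lemma~\ref{lem:spectral_projections_of_Delta}, which should be cited rather than waved at. Your $(ii)\Rightarrow(iii)$ via factoriality of $M\rtimes_\alpha G$ (Takesaki XI.2.8) and the averaging projection $e=\int_G u_g\,dg$ with $e(M\rtimes_\alpha G)e\cong M^\alpha$ is a valid alternative that essentially anticipates the paper's Lemma~\ref{lem:factor_levels}; what it buys is independence from \cite[Theorem 2.4.1]{Con73}, at the cost of length. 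For $(iii)\Rightarrow(i)$ your primary sketch has an honestly flagged gap --- extending the Connes cocycle $[D\psi:D\vphi]$ to a cocycle for the compact dual group is exactly \cite[Lemma 4.2]{Con74} and is where fullness genuinely enters --- and your fallback of quoting Connes is what the paper does, except the correct reference is \cite[Lemma 4.8]{Con74}: Theorem 4.7 there only compares two weights with factor centralizers, whereas one needs $\Sd(\vphi)\subset\Sd(\psi)$ for \emph{every} almost periodic $\psi$, extremal or not.
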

\begin{proof}
$(i)\Rightarrow(ii):$ First note that $\Sd(\vphi)=\Sd(M)$ is a subgroup by \cite[Theorem 1.3.(b)]{Con74} and is countable since $L^2(M,\vphi)$ is separable. We also have
    \[
        \Sd(\vphi)=\Sd(M) \subset \Gamma(\sigma^{(\vphi, \Sd(\vphi))}) \subset \Sp(\sigma^{(\vphi, \Sd(\vphi))}) = \Sd(\vphi).
    \]
where the first inclusion follows from Lemma~\ref{lem:point_modular_spectrum_is_intersection_Connes_spectrum}, the second inclusion is by definition of the Connes spectrum, and the last equality follows from \cite[Equation (1)]{Con74}. Hence $M^{\sigma^{(\vphi, \Sd(\vphi))}} = M^\vphi$ is a factor by \cite[Theorem 2.4.1]{Con73}, and consequently $\Gamma(\sigma^{(\vphi, \Sd(\vphi))}) =\Sp(\sigma^{(\vphi, \Sd(\vphi))})= \Sd(\vphi)$.\\

\noindent $(ii)\Leftrightarrow (iii):$ Recall that $M^{\sigma^{(\vphi,\Sd(\vphi))}}=M^\vphi$. So \cite[Theorem 2.4.1]{Con73} implies $M^\vphi$ is a factor if and only if $\Gamma(\sigma^{(\vphi, \Sd(\vphi))}) = \Sp(\sigma^{(\vphi, \Sd(\vphi)})$, but the latter condition is equivalent to (ii) by \cite[Equation (1)]{Con74}.\\

\noindent Finally, if $M$ is full then $(iii)\Rightarrow(i)$ follows from \cite[Lemma 4.8]{Con74}.
\end{proof}

Lastly we note that for $\vphi$ almost periodic one has $\S(\vphi) = \overline{\Sd(\vphi)}$ and hence
    \[
        \Sd(M)\subset \S(M) \subset \overline{\Sd(\vphi)}.
    \]
Thus the condition $\Sd(\vphi)=\Sd(M)$ in the previous lemma also implies $\overline{\Sd(\vphi)}=S(M)$.

\subsection{The basic construction,  Pimsner--Popa orthogonal bases, and index}\label{sec:basic_construction_PP_bases_and_index}

In this section we will quickly recall a few notions from subfactor theory. We refer the reader to \cite[Chapter 3]{Kos98} for additional details in the purely infinite setting. In particular, see \cite[Section 3.4]{Kos98} for details on Pimsner--Popa orthogonal bases and their relation to the index.

If $N\subset M$ is an inclusion of von Neumann algebras such that there exists a faithful normal conditional expectation $\E\colon M\to N$, then we write $N\overset{\E}{\subset} M$ and say the inclusion is \emph{with expectation}. Let $\vphi$ be any faithful normal semifinite weight on $N$ so that $L^2(N,\vphi)$ and $L^2(M,\vphi\circ \E)$ yield standard forms for $N$ and $M$, respectively. We can identify $L^2(N,\vphi)$ as a subspace of $L^2(M,\vphi\circ \E)$ and the projection $e_N$ onto this subspace is called the \emph{Jones projection} of $N\overset{\E}{\subset} M$. The \emph{basic construction} for $N\overset{\E}{\subset} M$ is then the von Neumann algebra generated by $M$ and $e_N$, which we denote $\<M,e_N\>$. One has $e_N xe_N = \E(x)e_N$ for all $x\in M$, which implies $\<M,e_N\>$ is independent of $\vphi$ and generated by $M+Me_N M$. It also implies $e_N \< M, e_N\> e_N = N e_N \cong N$. Another convenient presentation of the basic construction is $\<M,e_N\> = (J_{\vphi\circ \E} N J_{\vphi\circ \E})'$, where $J_{\vphi\circ \E}$ is the modular conjugation for $\vphi\circ \E$.

A \emph{Pimsner--Popa orthogonal system} for $N\overset{\E}{\subset}M$ is a family $\{m_i\in M\colon i\in I\}$ such that for all $i,j\in I$ one has
    \[
        \E(m_i^* m_j) = \delta_{i=j} p_i
    \]
for some projection $p_i\in N$. Note that if $e_N$ is the Jones projection, then this equivalent to saying that $\{m_i e_N\colon i\in I\}$ is a family of partial isometries in $\<M,e_N\>$ with orthogonal range projections. A \emph{Pimsner--Popa orthogonal basis} for $N\overset{\E}{\subset}M$ is a Pimsner--Popa orthogonal system $\{m_i\colon i\in I\}\subset M$ satisfying the further condition that
    \[
        x = \sum_{i\in I} m_i \E(m_i^* x)
    \]
for all $x\in M$, where the sum converges strongly. Note that this is equivalent to the family of partial isometries $\{m_i e_N\colon i\in I\}$ in $\<M,e_N\>$ satisfying
    \begin{align}\label{eqn:PP_basis_second_condition}
        \sum_{i\in I} m_i e_N m_i^* = 1.
    \end{align}
Pimsner--Popa orthogonal bases need not always exist (unless one is willing to allow $m_i$ to be unbounded operators affiliated with $M$). However, they always exist for inclusions of type $\mathrm{II}_1$ factors (see \cite[Proposition 5.21]{Bur17}), and we show in Proposition~\ref{prop:existence_PP_basis} below that this generalizes to inclusions admitting extremal almost periodic states that are compatible with the conditional expectation. 

As noted in the introduction, there are many equivalent ways to define the index of $\E$, but for our purposes the most convenient definition is in terms of Pimsner--Popa orthogonal bases as follows. Associated to the inclusion $N\overset{\E}{\subset} M$ is a unique faithful normal semifinite operator valued weight\footnote{See \cite[Definition IX.4.12]{Tak03}.} 
    \[
        \widehat{\E}\colon \<M,e_N\>_+ \to \widehat{M}_+ 
    \]
satisfying $\widehat{\E}(e_N)=1$, which we say is \emph{dual} to $\E$. Observe that (\ref{eqn:PP_basis_second_condition}) implies $\sum_{i\in I} m_im_i^* = \widehat{\E}(1)\in \widehat{(M'\cap M)}_+$. For a factor $M$ one then defines the \emph{index} of $\E$ to be
    \[
        \Ind{\E} := \sum_{i\in I} m_i m_i^* \in [0, +\infty]
    \]
if a Pimsner--Popa orthogonal basis $\{m_i\in M\colon i\in I\}$ exists, and otherwise $\Ind{\E}:=+\infty$. Note that in the case that $N$ and $M$ are type $\mathrm{II}_1$ factors and $\E$ is the unique trace preserving conditional expectation, one has $\Ind(\E)=[M\colon N]=\dim_N L^2(M)$.


\section{Modules and Dimension}\label{sec:modules_and_dimension}

In this section we will develop notions of modules and Murray--von Neumann dimension for von Neumann algebras equipped with faithful normal strictly semifinite weights. The key idea is to consider the basic construction associated to the centralizer subalgebra, so we require some preliminary results in this setting.

\subsection{Centralizer basic construction}

The following lemma shows how to conjugate spectral projections of the modular operator onto one another, which will be indispensable in the almost periodic case. We remark that the last part of the lemma has appeared as \cite[Lemma 4.9]{Dyk97}, though with a small discrepancy in the inequality.

\begin{lem}\label{lem:spectral_projections_of_Delta}
Let $M$ be a von Neumann algebra equipped with a faithful normal strictly semifinite weight $\vphi$. If $\lambda$ is an eigenvalue of the modular operator $\Delta_\vphi$, then there exists a family of partial isometries $\mathcal{V}_\lambda\subset M^{(\vphi,\lambda)}$ satisfying
    \[
        1_{\{\lambda\}}(\Delta_\vphi) = \sum_{v\in \mathcal{V}_\lambda} v 1_{\{1\}}(\Delta_\vphi)v^*
    \]
and 
    \[
        \sum_{v\in \mathcal{V}_\lambda} vv^* = \bigvee \{ww^*\colon w\in M^{(\vphi,\lambda)} \text{ is a partial isometry}\} \in (M^\vphi)'\cap M^\vphi.
    \]
If $M^\vphi$ is a factor, for any $\mu$ in the point spectrum of $\Delta_\vphi$ one has
    \begin{align}\label{eqn:spectral_compression_equality}
        1_{\{\lambda\mu\}}(\Delta_\vphi) = \sum_{v\in \mathcal{V}_\lambda} v 1_{\mu}(\Delta_\vphi)v^*,
    \end{align}
and when $\lambda\geq 1$ one can choose $\mathcal{V}_\lambda$ to consist of a single co-isometry.
\end{lem}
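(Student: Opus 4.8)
The plan is to work on the GNS space $L^2(M,\vphi)$, where $M^\vphi$ commutes with every spectral projection of $\Delta_\vphi$ (since $\sigma_t^\vphi=\operatorname{Ad}(\Delta_\vphi^{it})$ fixes $M^\vphi$ pointwise) and each eigenoperator intertwines them. Abbreviate $e_\lambda:=1_{\{\lambda\}}(\Delta_\vphi)$ and $f_\lambda:=\bigvee\{ww^*\colon w\in M^{(\vphi,\lambda)}\text{ a partial isometry}\}$. First I would record the elementary facts: for a partial isometry $v\in M^{(\vphi,\lambda)}$ both $v^*v$ and $vv^*$ lie in $M^\vphi$ (apply $\sigma_t^\vphi$), and evaluating $\Delta_\vphi^{it}v=\lambda^{it}v\Delta_\vphi^{it}$ on eigenvectors of $\Delta_\vphi$ gives the intertwining $v\,1_{\{\mu\}}(\Delta_\vphi)=1_{\{\lambda\mu\}}(\Delta_\vphi)\,v$ for every $\mu$. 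Combining these with the fact that $vv^*\in M^\vphi$ commutes with $1_{\{\lambda\mu\}}(\Delta_\vphi)$ shows that $v\,1_{\{\mu\}}(\Delta_\vphi)\,v^*=1_{\{\lambda\mu\}}(\Delta_\vphi)\,vv^*$ is a projection, dominated by $vv^*$ and by $1_{\{\lambda\mu\}}(\Delta_\vphi)$; in particular $v\,1_{\{1\}}(\Delta_\vphi)\,v^*=e_\lambda\,vv^*$.

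Next I would establish the two structural facts about $f_\lambda$. Conjugation by a unitary $u\in M^\vphi$ maps $M^{(\vphi,\lambda)}$ onto itself and sends range projections to range projections, so it permutes the suprema and fixes $f_\lambda$; hence $f_\lambda\in (M^\vphi)'\cap M^\vphi$. The crucial---and hardest---point is the inequality $e_\lambda\leq f_\lambda$, equivalently that the eigenvalue $\lambda$ is realized by enough bounded eigenoperators to exhaust the eigenspace. I would prove this by showing that $\operatorname{ran}(e_\lambda)$ is the closed span of $\{v\,\eta\colon v\in M^{(\vphi,\lambda)}\text{ a partial isometry},\ \eta\in\operatorname{ran}(1_{\{1\}}(\Delta_\vphi))\}$: the inclusion $\supseteq$ is immediate from the intertwining, while for $\subseteq$ I would take an eigenvector $\xi$ and use strict semifiniteness---via a net of projections $p_i\in M^\vphi\cap\dom(\vphi)$ increasing to $1$---to replace $\xi$ by the two-sided cut-downs $p_i\,\xi\,(J_\vphi p_j J_\vphi)$, which still lie in $\operatorname{ran}(e_\lambda)$ (as $p_i$ and $J_\vphi p_j J_\vphi$ commute with $\Delta_\vphi$) and converge to $\xi$. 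These cut-downs are left- and right-bounded vectors in the Tomita algebra, so each corresponds to a bounded eigenoperator $x\in M^{(\vphi,\lambda)}\cap\sqrt{\dom}(\vphi)$; its polar decomposition $x=w|x|$ has $|x|\in M^\vphi\cap\sqrt{\dom}(\vphi)$ and $w\in M^{(\vphi,\lambda)}$ a partial isometry, and the vector of $x$ equals $w$ applied to the vector of $|x|\in\operatorname{ran}(1_{\{1\}}(\Delta_\vphi))$, hence lies in $\operatorname{ran}(ww^*)\subseteq\operatorname{ran}(f_\lambda)$. Passing to the limit gives $\xi\in\operatorname{ran}(f_\lambda)$. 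This reduction to bounded eigenoperators is where I expect the real work to be, and it is the main obstacle.

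With $e_\lambda\leq f_\lambda$ and $f_\lambda\in(M^\vphi)'\cap M^\vphi$ in hand, the family $\mathcal{V}_\lambda$ is produced by Zorn's lemma: take a maximal family of partial isometries in $M^{(\vphi,\lambda)}$ with mutually orthogonal range projections, and set $q:=\sum_{v\in\mathcal{V}_\lambda}vv^*\leq f_\lambda$. If $q<f_\lambda$ then some partial isometry $w\in M^{(\vphi,\lambda)}$ has $ww^*\not\leq q$, and the polar part of $(1-q)w\in M^{(\vphi,\lambda)}$ is a nonzero partial isometry whose range projection is orthogonal to $q$, contradicting maximality; hence $\sum_v vv^*=f_\lambda$. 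Using $v\,1_{\{1\}}(\Delta_\vphi)\,v^*=e_\lambda\,vv^*$ and $e_\lambda\leq f_\lambda$ then gives $\sum_v v\,1_{\{1\}}(\Delta_\vphi)\,v^*=e_\lambda f_\lambda=e_\lambda$, the first assertion. When $M^\vphi$ is a factor, $f_\lambda\in\{0,1\}$, and $f_\lambda\neq 0$ since $e_\lambda\neq 0$; thus $\sum_v vv^*=1$, and pulling the $\Delta_\vphi$-commuting projections $vv^*$ through yields $\sum_v v\,1_{\{\mu\}}(\Delta_\vphi)\,v^*=1_{\{\lambda\mu\}}(\Delta_\vphi)\sum_v vv^*=1_{\{\lambda\mu\}}(\Delta_\vphi)$, which is (\ref{eqn:spectral_compression_equality}).

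Finally, for the single co-isometry when $\lambda\geq 1$ (the case $\lambda=1$ being trivial with $v=1$), I would record the KMS identity $\vphi(vv^*)=\lambda\,\vphi(v^*v)$, coming from $\sigma_{-i}^\vphi(v)=\lambda v$ for $v\in M^{(\vphi,\lambda)}$. Writing $\tau:=\vphi|_{M^\vphi}$, the sources $q_i:=v_i^*v_i$ and ranges $r_i:=v_iv_i^*$ of the family $\mathcal{V}_\lambda=\{v_i\}$ (with $\sum_i r_i=1$) then satisfy $\tau(q_i)=\lambda^{-1}\tau(r_i)\leq\tau(r_i)$, so $q_i\precsim r_i$ in the factor $M^\vphi$. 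Choosing partial isometries $t_i\in M^\vphi$ with $t_i^*t_i=q_i$ and $t_it_i^*\leq r_i$ makes the sources $t_it_i^*$ pairwise orthogonal, and $v:=\sum_i v_i t_i^*$ is then a single partial isometry in $M^{(\vphi,\lambda)}$ with orthogonal sources $t_it_i^*$ and orthogonal ranges $r_i$, whose range projection is $\sum_i r_i=1$. Thus $v$ is a co-isometry, and the identities $v\,1_{\{1\}}(\Delta_\vphi)\,v^*=e_\lambda$ and $v\,1_{\{\mu\}}(\Delta_\vphi)\,v^*=1_{\{\lambda\mu\}}(\Delta_\vphi)$ follow automatically from $vv^*=1$. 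The hypothesis $\lambda\geq 1$ enters precisely in the inequality $\tau(q_i)\leq\tau(r_i)$ that lets the sources be packed orthogonally beneath the $r_i$.
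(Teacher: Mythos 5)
Your overall architecture tracks the paper's quite closely: a maximal family of partial isometries in $M^{(\vphi,\lambda)}$ with orthogonal range projections, centrality of the supremum of such ranges via unitary conjugation, the intertwining identity $v\,1_{\{\mu\}}(\Delta_\vphi)\,v^*=1_{\{\lambda\mu\}}(\Delta_\vphi)\,vv^*$, and the trace-scaling $\vphi(vv^*)=\lambda\,\vphi(v^*v)$ driving the co-isometry construction. Those parts are sound (your co-isometry argument is in fact a cleaner packaging than the paper's two-case analysis, modulo some care when comparing two infinite-trace projections in a non-$\sigma$-finite $M^\vphi$).

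The gap sits exactly where you predicted it: the proof that $e_\lambda\leq f_\lambda$, equivalently that $\ker(\Delta_\vphi-\lambda)\cap\ker(q)=\{0\}$ for $q=\sum_{v}vv^*$. Your proposed device --- cutting an eigenvector $\xi$ down to $p_i\,\xi\,(J_\vphi p_j J_\vphi)$ and asserting these are left- and right-bounded vectors of the Tomita algebra --- fails. Compressing by a finite-weight projection of $M^\vphi$ and its mirror image does not bound a vector: already when $\vphi$ is a state one may take $p_i=1$, so the cut-down does nothing, and a generic eigenvector of $\Delta_\vphi$ does not come from an element of $\sqrt{\dom}(\vphi)$. (Concretely, in $\bigl(L^\infty[0,1]\bar\otimes B(\ell^2),\int\otimes\Tr\bigr)$ the compression of $L^2$ by a trace-one projection is $L^2[0,1]$, whose bounded elements form a proper dense subset.) Without this step you obtain only $\sum_v v\,1_{\{1\}}(\Delta_\vphi)\,v^*=e_\lambda q\leq e_\lambda$, and you have not produced even one nonzero partial isometry in $M^{(\vphi,\lambda)}$, so $\mathcal{V}_\lambda$ could a priori be empty.

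The paper's fix is to take the polar decomposition of the eigenvector itself rather than trying to bound it: via Haagerup's $L^p$-theory, $\xi$ is a $\tau$-measurable operator affiliated with the continuous core with $\xi^*=J_\vphi\xi$, so $|\xi|\in\ker(\Delta_\vphi-1)$ and the polar part $w$ of $\xi=w|\xi|$ is automatically a \emph{bounded} partial isometry in $M^{(\vphi,\lambda)}$ whose range projection is that of $\xi$. Applied to a nonzero $\xi\in\ker(\Delta_\vphi-\lambda)\cap\ker(q)$ this yields $qww^*=0$ with $w\neq 0$, contradicting maximality of $\mathcal{V}_\lambda$; applied to any eigenvector it shows $M^{(\vphi,\lambda)}$ contains a partial isometry at all. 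If you substitute this polar-decomposition step for your cut-down argument, the rest of your proof goes through.
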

\begin{proof}
First observe that by strict semifiniteness one can consider $L^2(M^\vphi,\vphi)$ and view it as a non-trivial subspace of $L^2(M,\vphi)$, and in fact $1_{\{1\}}(\Delta_\vphi)$ is precisely the (necessarily non-zero) projection onto this subspace. Now, suppose $\xi\in \ker(\Delta_\vphi - \lambda)\setminus\{0\}$. Note that $\Delta_\vphi J_\vphi = J_\vphi \Delta_\phi^{-1}$ implies $J_\vphi \xi\in \ker(\Delta_\vphi - \lambda^{-1})$. Using Haagerup's theory of $L^p$ spaces (see \cite{Haa79c}), one can identify $\xi$ and $J_\vphi \xi$ with certain $\tau$-measurable operators affiliated with the continuous core $c(M)$ of $M$, and in fact one has $\xi^* = J_\vphi \xi$. It follows that $|\xi|=((J_\vphi \xi)\xi)^{1/2}\in \ker(\Delta_\vphi - 1)$. Moreover, if $\xi=v|\xi|$ is the polar decomposition, then $v\in M^{(\vphi,\lambda)}$. In particular, $M^{(\vphi,\lambda)}$ contains at least one partial isometry and so by Zorn's lemma we may consider a maximal family $\mathcal{V}_\lambda\subset M^{(\vphi,\lambda)}$ of non-zero partial isometries with pairwise orthogonal range projections.

Now, denote $q:=\sum_{v\in \mathcal{V}_\lambda} vv^*$ and note that $q\in M^\vphi$. We first claim that $q$ is a central projection in $M^\vphi$. For now, we adopt the notation of $z(p\colon M^\vphi)$ for the central support of a projection $p\in M^\vphi$. It suffices to show $q= z(q\colon M^\vphi)$, so suppose towards a contradiction that $r:=z(q\colon M^\vphi) - q$ is non-zero. Observe that $r$ is \emph{not} centrally orthogonal (in $M^\vphi$) to each $vv^*$ for $v\in \mathcal{V}_\lambda$, since this would imply
    \[
        z(q\colon M^\vphi) = \bigvee_{v\in \mathcal{V}_\lambda} z(vv^*\colon M^\vphi) \leq z(q\colon M^\vphi) - z(r\colon M^\vphi),
    \]
forcing $z(r\colon M^\vphi)$ and hence $r$ to be zero. Thus there exists some $v\in \mathcal{V}_\lambda$ which is not centrally orthogonal to $r$ and we can therefore find a non-zero partial isometry $w\in M^\vphi$ satisfying $w^*w \leq vv^*$ and $ww^*\leq r$. But then $\mathcal{V}_\lambda \cup\{ wv\}$ contradicts the maximality of $\mathcal{V}_\lambda$.

Next, denote
    \[
        z:=\bigvee \{ww^*\colon w\in M^{(\vphi,\lambda)} \text{ is a partial isometry}\}.
    \]
Note that $z$ is a central projection in $M^\vphi$ since $uwu^*\in M^{(\vphi,\lambda)}$ for all unitaries $u\in M^\vphi$.  We claim that $q=z$. Indeed, we have
    \[
        q = \bigvee_{v\in \mathcal{V}_\lambda} vv^* \leq z,
    \]
so suppose, towards a contradiction, that $z-q\neq 0$. Then by definition of the supremum in the lattice of projections, there exists a partial isometry $w\in M^{(\vphi,\lambda)}$ so that $ww^* \not\leq q$. Equivalently, the projection $(z-q)ww^*$ is non-zero, but then $\mathcal{V}_\lambda \cup\{ (z-q)w\}$ contradicts the maximality of $\mathcal{V}_\lambda$.

Toward establishing the other claimed property of $\mathcal{V}_\lambda$ and (\ref{eqn:spectral_compression_equality}), consider $\eta\in \ker(\Delta_\vphi - \nu)$ for some $\nu>0$. Then $v^*\eta \in \ker(\Delta_\vphi - \frac{\nu}{\lambda})$ for each $v\in \mathcal{V}_\lambda$. Thus for $\mu>0$
    \[
        \sum_{v\in \mathcal{V}_\lambda} v 1_{\{\mu\}}(\Delta_\vphi)v^* \eta = \sum_{v\in \mathcal{V}_\lambda} v \delta_{\nu=\mu\lambda} v^*\eta = \delta_{\nu=\mu\lambda} \left(\sum_{v\in \mathcal{V}_\lambda} vv^*\right) \eta.
    \]
Letting $q=\sum_{v\in \mathcal{V}_\lambda} vv^* \in M^\vphi$ as above, note that $q$ commutes with $1_{\{\mu\lambda\}}(\Delta_\vphi)$ so that the above computation therefore gives
    \begin{align}\label{eqn:spectral_compression_inequality}
        \sum_{v\in \mathcal{V}_\lambda} v 1_{\{\mu\}}(\Delta_\vphi)v^* =q 1_{\{\mu\lambda\}}(\Delta_\vphi)  \leq 1_{\{\mu\lambda\}}(\Delta_\vphi).
    \end{align}
For $\mu=1$, the above inequality will be an equality provided $q$ has trivial kernel on the image of $1_{\{\lambda\}}(\Delta_\vphi)$; that is, provided $\ker(\Delta_\vphi-\lambda )\cap \ker( q)=\{0\}$. Let $\xi\in \ker(\Delta_\vphi-\lambda )\cap \ker( q)$ and let $\xi=w|\xi|$ be its polar decomposition. Then $q\xi=0$ implies $qww^*=0$ since $ww^*$ is the range projection of $\xi$. We have seen above that $w\in M^{(\vphi,\lambda)}$, and so $\mathcal{V}_\lambda \cup \{w\}$ contradicts the maximality of $\mathcal{V}_\lambda$ unless $w$, and hence $\xi=w|\xi|$, is zero. Thus $\mathcal{V}_\lambda$ has the claimed property.

Finally, suppose $M^\vphi$ is a factor. Then $q=1$ since we showed above that it is a central projection in  $M^\vphi$, and hence (\ref{eqn:spectral_compression_equality}) follows from (\ref{eqn:spectral_compression_inequality}). Now assume $\lambda\geq 1$. The first part of the proof implies there exists some $v_0\in M^{(\vphi,\lambda)}$. Let $\{(s_i,r_i) \in M^\vphi\times M^\vphi\colon i\in I\}$ be a maximal family of pairs of partial isometries satisfying $s_is_i^*=v_0^*v_0$, $r_i^*r_i=v_0v_0^*$, $\sum_{i\in I} s_i^*s_i=p$ for some projection $p\in M^\vphi$, and $\sum_{i\in I} r_ir_i^* =q$ for some projection $q\in M^\vphi$. Note that $w:=\sum_{i\in I} r_i v_0 s_i \in M^{(\vphi,\lambda)}$ satisfies
    \[
        w^*w = \sum_{i,j\in I} s_i^* v_0^* r_i^* r_j v_0 s_j = \sum_{i\in I} s_i^* v_0^* r_i^* r_i v_0 s_i = \sum_{i\in I} s_i^* v_0^*v_0 s_i = \sum_{i\in I} s_i^* s_i = p,
    \]
and
    \[
        ww^* = \sum_{i,j\in I} r_i v_0 s_is_j^* v_0^* r_j^* = \sum_{i\in I} r_i v_0 s_is_i^* v_0^* r_i^* = \sum_{i\in I} r_i v_0v_0^* r_i^* = \sum_{i\in I} r_ir_i^* = q.
    \]
We must now separately consider the two cases of $\vphi$ being finite or infinite. 

First suppose $\vphi(1)=\infty$. We claim that $\vphi(q)=\infty$. Indeed, if not then
    \[
        \vphi(p) = \vphi(w^*w) = \frac{1}{\lambda} \vphi(ww^*) = \frac{1}{\lambda} \vphi(q)<\infty.
    \]
Consequently $\vphi(1-p)=\vphi(1-q)=\infty$, and therefore  $v_0^*v_0 \preceq_{M^\vphi} 1-p$ and $v_0v_0^* \preceq_{M^\vphi} 1-q$. But this contradicts the maximality of $\{(s_i,r_i)\colon i\in I\}$. Hence $\vphi(q)=\infty$ so that $q \sim_{M^\vphi} 1$. Letting $u\in M^\vphi$ be such that $uu^*=q$ and $u^*u=1$, we can take $u^*w\in M^{(\vphi,\lambda)}$ to be the desired co-isometry. 

Now suppose $\vphi(1)<\infty$. Then using $\lambda \geq 1$ we have
    \begin{align*}
        \vphi(1-q) - \vphi(v_0v_0^*) &=  \vphi(1) - \vphi(ww^*) - \vphi(v_0v_0^*)\\
            &= \vphi(1) - \lambda \vphi(w^*w) - \lambda \vphi(v_0^*v_0) \leq \lambda[\vphi(1 - p) - \vphi(v_0^*v_0)].
    \end{align*}
Thus if the left-most quantity is strictly positive then so is the right-most quantity, giving $v_0v_0^* \preceq_{M^\vphi} 1-q$ and $v_0^*v_0\preceq_{M^\vphi} 1-p$ and contradicting the maximality of  $\{(s_i,r_i)\colon i\in I\}$. So we must have $\vphi(1-q) \leq \vphi(v_0v_0^*)$, and in particular $1-q\preceq_{M^\vphi} v_0v_0^*$. We can then find a partial isometry $r_0\in M^\vphi$ with $r_0r_0^* = 1-q$ and $r_0^*r_0\leq v_0v_0^*$. Note that $(r_0v_0)(r_0v_0)^* = 1-q$ and
    \[
        \vphi( (r_0 v_0)^*(r_0v_0)) = \frac{1}{\lambda} \vphi( 1-q) \leq \vphi(1-p).
    \]
Thus we can find a partial isometry $s_0\in M^\vphi$ with $s_0s_0^*=(r_0v_0)^*(r_0v_0)$ and $s_0^*s_0\leq 1-p$. Then $v:= w + r_0v_0s_0$ is the desired co-isometry.
\end{proof}

\begin{rem}\label{rem:Sd_is_subgroup}
Recall that $\mu>0$ is in the point spectrum of $\Delta_\vphi$ if and only if $1_{\{\mu\}}(\Delta_\vphi)>0$. Thus (\ref{eqn:spectral_compression_equality}) in the previous lemma implies the point spectrum of $\Delta_\vphi$ is a subgroup of $\R_+$ when $M^\vphi$ is a factor. In the almost periodic case, this can be deduced from work of Connes (see Lemma~\ref{lem:extremal_from_Gamma} and also \cite[Lemma 4.9]{Dyk97}). We also point out that $\sum_{v\in \mathcal{V}_\lambda} vv^* = S_\gamma(1)$ in the notation of \cite[Definition 2.5]{Dyk95}.$\hfill\blacksquare$
\end{rem}

Let $M$ be a von Neumann algebra equipped with a faithful normal strictly semifinite weight $\vphi$, and let $\E_\vphi\colon M\to M^\vphi$ be the unique faithful normal $\vphi$-preserving conditional expectation (see Lemma~\ref{lem:strictly_semifinite_characterization}). Let $e_\vphi$ and $\<M,e_\vphi\>$ denote the Jones projection and basic construction associated to $\E_\vphi$, respectively.
We will identify $M$ and $M^\vphi$ with their representations on $L^2(M,\vphi)$, in which case $e_\vphi$ equals the projection onto the subspace $L^2(M^\vphi,\vphi)\leq L^2(M,\vphi)$. In particular, one has $e_\vphi = 1_{\{1\}}(\Delta_\vphi)$.

\begin{prop}\label{prop:basic_construction}
Let $M$ be a von Neumann algebra equipped with a faithful normal strictly semifinite weight $\vphi$. Then:
    \begin{enumerate}[label=(\alph*)]
    \item in $B(L^2(M,\vphi))$ we have
    \[
        \<M,e_\vphi\> = (J_\vphi M^\vphi J_\vphi)' = (M\cup\{\Delta_\vphi^{it}\colon t\in \R\})'';
    \]
    
    \item the central support of $e_\vphi$ in $\<M,e_\vphi\>$ is $1$ and $(Me_\vphi M)''=\<M,e_\vphi\>$;
    
    \item\label{part:tracial_weight} $\<M,e_\vphi\>$ admits a unique faithful normal semifinite tracial weight $\tau$ satisfying $\tau(e_\vphi xe_\vphi)=\vphi(x)$ for all $x\in M_+$.
    \end{enumerate}
Moreover, if $\vphi$ is almost periodic then
     \[
        \bigcup_{\lambda\in \text{Sd}(\vphi)} \mathcal{V}_\lambda,
    \]
is a Pimsner--Popa orthogonal basis for $M^\vphi\overset{\E_\vphi}{\subset} M$, where $\mathcal{V}_\lambda$ is as in Lemma~\ref{lem:spectral_projections_of_Delta}. If one further assumes that $\vphi$ is extremal, then $\Ind{\E_\vphi} = |\Sd(\vphi)|$.
\end{prop}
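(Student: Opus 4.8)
The plan is to handle the three labeled items and the ``Moreover'' clause in turn, leaning on the description of the basic construction from Section~\ref{sec:basic_construction_PP_bases_and_index} and on the modular theory of operator-valued weights. For (a), the first equality is immediate from the presentation $\<M,e_N\>=(J_{\vphi\circ\E}NJ_{\vphi\circ\E})'$ recalled earlier, applied to $M^\vphi\overset{\E_\vphi}{\subset}M$ with the tracial weight $\vphi|_{M^\vphi}$ on $M^\vphi$: since $\vphi|_{M^\vphi}\circ\E_\vphi=\vphi$, the relevant modular conjugation is $J_\vphi$, giving $\<M,e_\vphi\>=(J_\vphi M^\vphi J_\vphi)'$. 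For the second equality I set $P:=(M\cup\{\Delta_\vphi^{it}\colon t\in\R\})''$ and argue by taking commutants, using $J_\vphi MJ_\vphi=M'$ and $J_\vphi\Delta_\vphi^{it}=\Delta_\vphi^{it}J_\vphi$. On one hand $M$ commutes with $J_\vphi M^\vphi J_\vphi\subset M'$, and each $\Delta_\vphi^{it}$ commutes with $J_\vphi M^\vphi J_\vphi$ because it commutes both with $M^\vphi$ (by definition of the centralizer) and with $J_\vphi$; hence $P\subseteq(J_\vphi M^\vphi J_\vphi)'$. On the other hand $P'=M'\cap\{\Delta_\vphi^{it}\}'=J_\vphi(M\cap\{\Delta_\vphi^{it}\}')J_\vphi=J_\vphi M^\vphi J_\vphi$, so $P=(J_\vphi M^\vphi J_\vphi)'$.

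For (b), I first show the central support $z(e_\vphi)$ equals $1$. Since $z(e_\vphi)$ is the projection onto $\overline{\<M,e_\vphi\>e_\vphi L^2(M,\vphi)}$ and $e_\vphi L^2(M,\vphi)=L^2(M^\vphi,\vphi)$, using $M\subset\<M,e_\vphi\>$ it suffices to prove $\overline{M\cdot\sqrt{\dom}(\vphi|_{M^\vphi})}=L^2(M,\vphi)$. This is precisely the density argument from the proof of Lemma~\ref{lem:ap_iff_eigenoperators_generate}: for projections $p_i\in\dom(\vphi|_{M^\vphi})$ increasing strongly to $1$ and $x\in\sqrt{\dom}(\vphi)$, one has $\widehat{xp_i}=J_\vphi p_iJ_\vphi\,\widehat{x}\to\widehat{x}$. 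Next, $Me_\vphi M$ is a $*$-subalgebra (because $e_\vphi ae_\vphi=\E_\vphi(a)e_\vphi$) that is a two-sided ideal in the $*$-algebra generated by $M$ and $e_\vphi$; hence its weak closure $(Me_\vphi M)''$ is a weakly closed two-sided ideal, so equals $z\<M,e_\vphi\>$ for a central projection $z\geq e_\vphi$. Then $z\geq z(e_\vphi)=1$ gives $(Me_\vphi M)''=\<M,e_\vphi\>$.

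Part (c) is the heart of the matter. Let $\widehat{\E_\vphi}\colon\<M,e_\vphi\>_+\to\widehat{M}_+$ be the dual operator-valued weight (so $\widehat{\E_\vphi}(e_\vphi)=1$) and set $\omega:=\vphi\circ\widehat{\E_\vphi}$, a faithful normal semifinite weight. By Haagerup's theorem computing the modular group of a weight composed with an operator-valued weight, $\sigma_t^\omega|_M=\sigma_t^\vphi=\Ad{\Delta_\vphi^{it}}|_M$; moreover the Jones projection lies in the centralizer of the dual weight, so $\sigma_t^\omega(e_\vphi)=e_\vphi=\Delta_\vphi^{it}e_\vphi\Delta_\vphi^{-it}$. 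Since $M$ and $e_\vphi$ generate $\<M,e_\vphi\>$ and $\Delta_\vphi^{it}\in\<M,e_\vphi\>$ by (a), this forces $\sigma_t^\omega=\Ad{\Delta_\vphi^{it}}$. Thus $\sigma^\omega$ is inner, implemented by the positive nonsingular self-adjoint operator $\Delta_\vphi$ affiliated with $\<M,e_\vphi\>$ (its spectral projections lie in $\{\Delta_\vphi^{it}\}''$), and $\Delta_\vphi$ commutes with $\sigma^\omega$. The Pedersen--Takesaki Radon--Nikodym theorem then yields that
    \[
        \tau:=\omega(\Delta_\vphi^{-1/2}\,\cdot\,\Delta_\vphi^{-1/2})
    \]
has modular group $\sigma_t^\tau=\Ad{\Delta_\vphi^{-it}}\circ\sigma_t^\omega=\mathrm{id}$, i.e. $\tau$ is a faithful normal semifinite trace. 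The defining identity follows from $\Delta_\vphi^{-1/2}e_\vphi=e_\vphi$, which gives $\tau(e_\vphi xe_\vphi)=\omega(e_\vphi xe_\vphi)=\vphi(\E_\vphi(x))=\vphi(x)$. For uniqueness, traciality and the identity force $\tau(ae_\vphi b)=\tau(e_\vphi bae_\vphi)=\vphi(ba)$ on the $\sigma$-weakly dense $*$-algebra $Me_\vphi M$ from (b), so normality determines $\tau$ uniquely. The main obstacle is exactly the identification $\sigma_t^\omega=\Ad{\Delta_\vphi^{it}}$: it is what makes $\<M,e_\vphi\>$ visibly semifinite and simultaneously produces the trace with the correct normalization.

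Finally, for the ``Moreover'' clause, assume $\vphi$ is almost periodic. Since $e_\vphi=1_{\{1\}}(\Delta_\vphi)$, Lemma~\ref{lem:spectral_projections_of_Delta} gives $1_{\{\lambda\}}(\Delta_\vphi)=\sum_{v\in\mathcal V_\lambda}ve_\vphi v^*$ for each $\lambda\in\Sd(\vphi)$. Each $ve_\vphi$ is a partial isometry in $\<M,e_\vphi\>$ with range projection $ve_\vphi v^*\leq 1_{\{\lambda\}}(\Delta_\vphi)$, and these range projections are pairwise orthogonal: within a fixed $\lambda$ because the $vv^*$ are orthogonal, and across distinct $\lambda$ because the spectral projections $1_{\{\lambda\}}(\Delta_\vphi)$ are. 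Hence $\bigcup_{\lambda\in\Sd(\vphi)}\mathcal V_\lambda$ is a Pimsner--Popa orthogonal system, and summing gives
    \[
        \sum_{\lambda\in\Sd(\vphi)}\sum_{v\in\mathcal V_\lambda}ve_\vphi v^*=\sum_{\lambda\in\Sd(\vphi)}1_{\{\lambda\}}(\Delta_\vphi)=1,
    \]
the last equality by diagonalizability of $\Delta_\vphi$; this is condition (\ref{eqn:PP_basis_second_condition}), so the system is a Pimsner--Popa orthogonal basis. For the index, $\Ind{\E_\vphi}=\sum_{\lambda\in\Sd(\vphi)}\sum_{v\in\mathcal V_\lambda}vv^*$; if $\vphi$ is extremal then $M^\vphi$ is a factor, so each $\sum_{v\in\mathcal V_\lambda}vv^*\in(M^\vphi)'\cap M^\vphi=\C$ is a nonzero projection and hence equals $1$. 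Therefore $\Ind{\E_\vphi}=\sum_{\lambda\in\Sd(\vphi)}1=|\Sd(\vphi)|$.
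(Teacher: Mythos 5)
Your proposal is correct in substance, and parts (a), (b), and the ``Moreover'' clause run essentially parallel to the paper's proof. (In (a) you close the argument by computing the commutant $P' = M'\cap\{\Delta_\vphi^{it}\}' = J_\vphi M^\vphi J_\vphi$ directly, whereas the paper instead notes $e_\vphi = 1_{\{1\}}(\Delta_\vphi)\in\{\Delta_\vphi^{it}\colon t\in\R\}''$ by Stone's theorem; both are one-line closings of the same sandwich of inclusions.) The genuine divergence is in (c). The paper constructs $\tau$ by hand: it takes a maximal family of partial isometries $v_i\in\<M,e_\vphi\>$ with $v_iv_i^*\leq e_\vphi$, uses the full central support of $e_\vphi$ from (b) to get $\sum_i v_i^*v_i=1$, defines $\tau(x):=\sum_i\vphi(v_ixv_i^*)$ under the identification $M^\vphi\cong e_\vphi\<M,e_\vphi\>e_\vphi$, and verifies traciality, normality, faithfulness, and semifiniteness directly, using only that $\vphi$ is tracial on $M^\vphi$. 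You instead set $\omega=\vphi\circ\widehat{\E}_\vphi$, identify $\sigma_t^\omega=\Ad{\Delta_\vphi^{it}}$ via Haagerup's theorem together with the fact that $e_\vphi$ lies in the centralizer of the dual weight (a fact the paper itself invokes later, in the proof of Theorem~\ref{thm:subfactors_give_modules}, citing the proof of \cite[Proposition 2.2]{ILP98}), and then twist by $\Delta_\vphi^{-1}$, affiliated with the centralizer of $\omega$, via Pedersen--Takesaki to kill the modular group. Your route requires heavier machinery (dual operator-valued weights and the Radon--Nikodym theorem for weights), but it buys the conceptual formula $\tau=\omega(\Delta_\vphi^{-1/2}\,\cdot\,\Delta_\vphi^{-1/2})$, which makes the semifiniteness of $\<M,e_\vphi\>$ and the normalization $\tau(e_\vphi xe_\vphi)=\vphi(\E_\vphi(x))=\vphi(x)$ transparent; the paper's route is more elementary and self-contained.

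One step you should tighten is uniqueness. As written, you evaluate $\tau$ on all of $Me_\vphi M$, but $Me_\vphi M\not\subset\dom(\tau)$ in general (already $\tau(e_\vphi)=\vphi(1)$ can be infinite; the paper is careful to assert only $\dom(\vphi)e_\vphi\dom(\vphi)\subset\dom(\tau)$), and in any case two normal semifinite weights that agree on a $\sigma$-weakly dense $*$-subalgebra need not coincide. The correct argument --- the one the paper uses --- is that the full central support of $e_\vphi$ together with traciality determines $\tau$ from its restriction to the corner $e_\vphi\<M,e_\vphi\>e_\vphi=M^\vphi e_\vphi$: with $v_i$ as above, normality and traciality give $\tau(x)=\sum_i\tau(v_ixv_i^*)$ for $x\geq 0$, and each $v_ixv_i^*$ lies in the corner, where the identity $\tau(e_\vphi xe_\vphi)=\vphi(x)$ pins $\tau$ down. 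Since you already established the full central support in (b), this is a local repair rather than a structural flaw.
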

\begin{proof}
(a): The first equality is true for any basic construction. For the second, note that
    \[
        M^\vphi = M\cap \{\Delta_\vphi^{it}\colon t\in \R\}',
    \]
and $J_\vphi \Delta_\vphi^{it} = \Delta_\vphi^{it} J_\vphi$ for all $t\in \R$ imply $M\cup \{\Delta_\vphi^{it}\colon t\in \R\}\subset (J_\vphi M^\vphi J_\vphi)'=\<M,e_\vphi\>$. Conversely, $e_\vphi = 1_{\{1\}}(\Delta_\vphi)\in \{\Delta_\vphi^{it}\colon t\in \R\}''$ by Stone's theorem.\\

\noindent(b): First note that since $M+Me_\vphi M$ is weakly dense in $\<M,e_\vphi\>$, the central support of $e_\vphi$ is given by the projection onto
    \[
        \overline{\<M,e_\vphi\>e_\vphi L^2(M,\vphi)} = \overline{Me_\vphi L^2(M,\vphi)}= \overline{M L^2(M^\vphi, \vphi)}.
    \]
By Lemma~\ref{lem:strictly_semifinite_characterization}, the strict semifiniteness of $\vphi$ gives a family $\{p_i\in M^\vphi\cap \dom(\vphi)\colon i\in I\}$ of pairwise orthogonal projections which sum to $1$. For $x\in \sqrt{\dom}(\vphi)$, we have
    \[
        x = \sum_{i\in I} J_\vphi p_i J_\vphi x =  \sum_{i\in I} x p_i \in \overline{ML^2(M^\vphi,\vphi)}.
    \]
Therefore $ML^2(M^\vphi,\vphi)$ is dense in $L^2(M,\vphi)$, and so the central support of $e_\vphi$ is $1$. Since $(M e_\vphi M)''$ is a closed ideal in $\<M,e_\vphi\>$, we have $(Me_\vphi M)''=\<M,e_\vphi\> z$ for some central projection $z$. Since this projection necessarily dominates $e_\vphi \in (M e_\vphi M)''$, we must have $z=1$.\\

\noindent(c): Let $\{v_i\in \<M,e_\vphi\>\colon i\in I\}$ be a maximal family of partial isometries with pairwise orthogonal source projections and $v_iv_i^*\leq e_\vphi$. Since $e_\vphi$ has full central support by the previous part, we must have
    \[
        \sum_{i\in I} v_i^*v_i =1.
    \]
For $x\in \<M,e_\vphi\>_+$ define
    \[
        \tau(x):=\sum_{i\in I} \vphi(v_i x v_i^*),
    \]
where we have identified $M^\vphi\cong M^\vphi e_\vphi = e_\vphi \<M,e_\vphi\> e_\vphi$. We first check that $\tau$ is tracial: for $x\in \<M,e_\vphi\>$ one has
    \[
        \tau(x^*x ) =\sum_{i\in I} \vphi(v_i x^*xv_i^*)= \sum_{i,j\in I} \vphi(v_i x^* v_j^* v_j x v_i^*) = \sum_{i,j\in I} \vphi( v_jxv_i^*v_i x^*v_j^*) = \sum_{j\in I} \vphi(v_jxx^* v_j^*) = \tau(xx^*),
    \]
where we have used that $\vphi$ is tracial on $M^\vphi$. Next, for $x\in M_+$ observe
    \[
        \tau(e_\vphi x e_\vphi) = \sum_{i\in I} \vphi( v_ie_\vphi \mathcal{E}_\vphi(x) e_\vphi v_i^*) = \sum_{i\in I} \vphi( \mathcal{E}_\vphi(x)^{\frac12} e_\vphi v_i^* v_i e_\vphi \mathcal{E}_\vphi(x)^{\frac12} ) = \varphi(\mathcal{E}_\vphi(x)) = \varphi(x).
    \]
This implies $e_\vphi \dom(\vphi) e_\vphi \subset \dom(\tau)$, and since $\tau$ is tracial we further have $\dom(\vphi)e_\vphi \dom(\vphi) \subset \dom(\tau)$. Recalling $(Me_\vphi M)'' = \<M,e_\vphi\>$ by the previous part, we see that the semifiniteness of $\tau$ follows from that of $\vphi$. The full central support of $e_\vphi$ also implies $\tau$ is determined by its restriction to $e_\vphi \<M,e_\vphi\> e_\vphi = M^\vphi e_\vphi$, and so the above equality gives the uniqueness of $\tau$. The normality of $\tau$ follows from that of each $\vphi(v_ixv_i^*)$. Finally, if $\tau(x^*x)=0$ then $v_ix^*xv_i^*=0$ for all $\in I$ by the faithfulness of $\vphi$ and consequently
    \[
        xx^* = \sum_{i\in I} xv_i^* v_i x^* = 0.
    \]
Hence $\tau$ is faithful.\\

\noindent
Finally, assume $\vphi$ is almost periodic, and for each $\lambda\in \Sd(\vphi)$ let $\mathcal{V}_\lambda$ be as in Lemma~\ref{lem:spectral_projections_of_Delta}. For $v\in \mathcal{V}_\lambda$ we have
    \[
        \E_\vphi(v^*v) = v^*v.
    \]
Suppose $w\in  \bigcup_{\lambda}\mathcal{V}_\lambda$ is distinct from $v$, say with $w\in \mathcal{V}_\mu$. If $\mu=\lambda$, then $vv^*$ and $ww^*$ are orthogonal by definition of $\mathcal{V}_\lambda$ and hence $\E_\vphi(v^*w)=0$. Otherwise $\lambda^{-1}\mu\neq 1$ so that $v^*w\in M^{(\vphi,\lambda^{-1}\mu)}$ implies $\E_\vphi(v^*w)=0$. Additionally, the almost periodicity of $\vphi$ and Lemma~\ref{lem:spectral_projections_of_Delta} gives
    \[
        1 = \sum_{\lambda\in \text{Sd}(\vphi)} 1_{\{\lambda\}}(\Delta_\vphi) = \sum_{\lambda\in \text{Sd}(\vphi)} \sum_{v\in \mathcal{V}_\lambda} v 1_{\{1\}}(\Delta_\vphi)v^* = \sum_{\lambda\in \text{Sd}(\vphi)} \sum_{v\in \mathcal{V}_\lambda} v e_\vphi v^*,
    \]
so that $\bigcup_\lambda \mathcal{V}_\lambda$ is a Pimsner--Popa orthogonal basis. If $\vphi$ is extremal, then this also implies
    \[
        \Ind{\E_\vphi} = \sum_{\lambda\in \Sd(\vphi)} \sum_{v\in \mathcal{V}_\lambda} vv^* = \sum_{\lambda\in \Sd(\vphi)} 1= |\Sd(\vphi)|,
    \]
where the second equality follows from Lemma~\ref{lem:spectral_projections_of_Delta}.
\end{proof}

In the next proposition we characterize when a representation of $M$ lifts to a representation of $\<M,e_\vphi\>$. Note that $M\subset B(L^2(M,\vphi))$ is an example of such a representation by the definition of the basic construction, and it turns out the underlying reason is that the set of fixed points under the unitary group $\{\Delta_\vphi^{it}\colon t\in \R\}$, namely $L^2(M^\vphi,\vphi)$, is non-trivial and has dense orbit under $M$.

\begin{prop}\label{prop:characterization_abstract_modules}
Let $M$ be a von Neumann algebra equipped with a faithful normal strictly semifinite weight $\vphi$. Denote by $\<M,e_\vphi\>$ the basic construction for the inclusion $M^\vphi\subset M$ and let $\H$ be a Hilbert space. Then the following are equivalent:
    \begin{enumerate}[label=(\roman*)]
        \item There exists a representation $\pi\colon \<M,e_\vphi\>\to B(\H)$;\label{part:rep}

        \item\label{part:group_rep} There exists a representation $\pi_0\colon M\to B(\H)$ and a strongly continuous unitary representation $U\colon \R\to B(\H)$ such that $\pi_0(M)\H^{U}$ is dense in $\H$ and
            \[
                U_t \pi_0(x) U_t^* = \pi_0(\sigma_t^\vphi(x)) \qquad \forall t\in \R,\ x\in M;
            \]
        
        \item\label{part:isom} There exists a Hilbert space $\K$ and an isometry $v\colon \H\to L^2(M,\vphi)\otimes \K$ satisfying
            \begin{align*}
                (x\otimes1)v\H &\subset v\H \qquad \forall x\in M,\\
                (\Delta_\vphi^{it}\otimes 1)v\H &\subset v\H \qquad \forall t\in \R.
            \end{align*}
    \end{enumerate}
In particular, given any of the above objects one can arrange for the others to satisfy $\pi|_M=\pi_0$, $\pi(\Delta_\vphi^{it})=U_t$ for all $t\in \R$, and $v\pi(x) = (x\otimes 1)v$ for all $x\in \<M,e_\vphi\>$.
\end{prop}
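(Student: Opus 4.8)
The plan is to establish the cycle of implications $(i)\Rightarrow(ii)\Rightarrow(iii)\Rightarrow(i)$, throughout leaning on Proposition~\ref{prop:basic_construction}: part (a) identifies $\<M,e_\vphi\> = (M\cup\{\Delta_\vphi^{it}\colon t\in\R\})''$ with $e_\vphi = 1_{\{1\}}(\Delta_\vphi)$, and part (b) provides that $e_\vphi$ has full central support. The implication $(iii)\Rightarrow(i)$ is the most direct: given the isometry $v$, the invariance conditions say exactly that $p:=vv^*$ commutes with $M\otimes 1$ and with $\{\Delta_\vphi^{it}\otimes1\}$, hence $p\in(\<M,e_\vphi\>\otimes\C1)' = (J_\vphi M^\vphi J_\vphi)\bar\otimes B(\K)$; then $\pi(y):=v^*(y\otimes1)v$ is readily checked to be a normal unital $*$-homomorphism of $\<M,e_\vphi\>$ (multiplicativity uses $v^*p = v^*$ and that $p$ commutes with $y\otimes 1$), which gives $(i)$ and simultaneously yields $v\pi(y)=(y\otimes1)v$.

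For $(i)\Rightarrow(ii)$, I would set $\pi_0:=\pi|_M$ and $U_t:=\pi(\Delta_\vphi^{it})$; strong continuity of $U$ and the covariance relation are immediate from normality of $\pi$ and $\Delta_\vphi^{it}x\Delta_\vphi^{-it}=\sigma_t^\vphi(x)$. The one point needing care is the density of $\pi_0(M)\H^U$. First, since $\pi$ restricts to a normal representation of the abelian algebra $\{\Delta_\vphi^{it}\}''$ carrying $\Delta_\vphi^{it}$ to $U_t$, functional calculus gives $\pi(e_\vphi)=\pi(1_{\{1\}}(\Delta_\vphi))$ equal to the spectral projection of $U$ for the eigenvalue $1$, i.e.\ the orthogonal projection $E$ onto $\H^U$. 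Then $\overline{\pi_0(M)\H^U}=\overline{\pi(Me_\vphi)\H}$, and using $e_\vphi x e_\vphi = \E_\vphi(x)e_\vphi$ one sees this closed subspace is invariant under both $\pi(M)$ and $\pi(e_\vphi)$, so its projection lies in $\pi(\<M,e_\vphi\>)'$ and dominates $\pi(e_\vphi)$. Since $e_\vphi$ has full central support and $\pi$ is normal, this forces the projection to be $\pi(1)=1$, giving density.

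The heart of the argument is $(ii)\Rightarrow(iii)$. Because $\pi_0(M^\vphi)$ commutes with $U$, it preserves $\H^U$, so $\pi_0|_{M^\vphi}$ compresses to a normal representation of $M^\vphi$ on $\H^U$. As $\vphi|_{M^\vphi}$ is semifinite, $M^\vphi$ is in standard form on $L^2(M^\vphi,\vphi)=e_\vphi L^2(M,\vphi)$, so by the standard theory of normal representations there is a Hilbert space $\K$ and an isometry $v_0\colon\H^U\to L^2(M^\vphi,\vphi)\otimes\K$ with $v_0\pi_0(y)=(y\otimes1)v_0$ for $y\in M^\vphi$. I then define $v$ on the dense set $\pi_0(M)\H^U$ by $v(\pi_0(x)\xi):=(x\otimes1)v_0\xi$ for $x\in M$, $\xi\in\H^U$. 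Unwinding inner products---using that $v_0\xi,v_0\eta$ lie in $\operatorname{ran}(e_\vphi\otimes1)$ and that $(e_\vphi\otimes1)(x^*y\otimes1)(e_\vphi\otimes1)=(\E_\vphi(x^*y)\otimes1)(e_\vphi\otimes1)$---reduces well-definedness and isometry of $v$ to the single identity
\[
    \langle\xi,\pi_0(a)\eta\rangle = \langle\xi,\pi_0(\E_\vphi(a))\eta\rangle\qquad(\xi,\eta\in\H^U,\ a\in M).
\]
Granting this, the two invariance conditions in $(iii)$ follow by direct computation: $(x'\otimes1)v(\pi_0(x)\xi)=v(\pi_0(x'x)\xi)$, while $(\Delta_\vphi^{it}\otimes1)v(\pi_0(x)\xi)=(\sigma_t^\vphi(x)\otimes1)v_0\xi = v(\pi_0(\sigma_t^\vphi(x))\xi)$, since $\Delta_\vphi^{it}\otimes1$ fixes $\operatorname{ran}(e_\vphi\otimes1)$.

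The main obstacle is thus the displayed identity, which I expect to prove via a mean ergodic theorem for the modular group. For $\xi,\eta\in\H^U$ the covariance relation gives $\langle\xi,\pi_0(a)\eta\rangle=\langle\xi,\pi_0(\sigma_t^\vphi(a))\eta\rangle$ for every $t$; averaging and using normality of $\pi_0$ yields $\langle\xi,\pi_0(a)\eta\rangle=\langle\xi,\pi_0(a_T)\eta\rangle$ where $a_T:=\frac{1}{2T}\int_{-T}^{T}\sigma_t^\vphi(a)\,dt\in M$. It then suffices to show $a_T\to\E_\vphi(a)$ $\sigma$-weakly. The averages are norm-bounded by $\|a\|$, so by weak-$*$ compactness it is enough to identify their unique cluster point: a standard estimate (comparing $\sigma_s^\vphi(a_T)$ with $a_T$) shows any cluster point $\bar a$ is $\sigma^\vphi$-invariant, hence lies in $M^\vphi$; and testing against functionals of the form $\psi\circ\E_\vphi$ with $\psi\in(M^\vphi)_*$, which are $\sigma^\vphi$-invariant because $\E_\vphi\circ\sigma_t^\vphi=\E_\vphi$, forces $\E_\vphi(\bar a)=\E_\vphi(a)$ and therefore $\bar a=\E_\vphi(a)$. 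Finally, the compatibility relations in the last sentence of the statement are read off from the intertwiners produced along the way---$v\pi(y)=(y\otimes1)v$ from $(iii)\Rightarrow(i)$, together with the choices $\pi|_M=\pi_0$ and $\pi(\Delta_\vphi^{it})=U_t$.
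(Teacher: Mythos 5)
Your proposal is correct, and for the key implication $(ii)\Rightarrow(iii)$ it takes a genuinely different route from the paper. The paper decomposes $\H$ into cyclic pieces $\overline{\pi_0(M)\xi_j}$ generated by a maximal orthogonal family of unit vectors $\xi_j\in\H^U$, observes that each vector state $\psi_j=\<\pi_0(\cdot)\xi_j,\xi_j\>$ is $\sigma^\vphi$-invariant, and then runs the Connes cocycle derivative machinery to produce $h_j$ affiliated with $M^\vphi$ with $\psi_j=\vphi(h_j^{1/2}\cdot h_j^{1/2})$ and $h_j^{1/2}\in L^2(M^\vphi,\vphi)$, so that $\pi_0(x)\xi_j\mapsto xh_j^{1/2}\otimes\delta_j$ is the desired isometry. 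You instead treat $\H^U$ all at once: the compression of $\pi_0|_{M^\vphi}$ to $\H^U$ is a normal representation of $M^\vphi$, hence admits an intertwining isometry $v_0$ into $L^2(M^\vphi,\vphi)\otimes\K$ by the general structure theory of normal representations, and the extension $v(\pi_0(x)\xi):=(x\otimes1)v_0\xi$ is isometric once one knows $\<\xi,\pi_0(a)\eta\>=\<\xi,\pi_0(\E_\vphi(a))\eta\>$ for $\xi,\eta\in\H^U$, which you correctly derive from a mean ergodic argument (Ces\`aro averages of $\sigma_t^\vphi(a)$ converge $\sigma$-weakly to $\E_\vphi(a)$, identified via $\sigma^\vphi$-invariance of cluster points and testing against the normal invariant functionals $\psi\circ\E_\vphi$). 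Your route trades the cocycle-derivative/Radon--Nikodym input for the representation theory of $M^\vphi$ plus an ergodic averaging lemma; it is arguably more conceptual and makes the role of $\E_\vphi$ explicit, while the paper's version has the advantage of producing the vectors $h_j^{1/2}$ concretely, which it reuses later (e.g.\ in the cyclic/separating vector theorem). The implications $(iii)\Rightarrow(i)$ and $(i)\Rightarrow(ii)$ in your write-up coincide with the paper's up to packaging: for density in $(ii)$ the paper invokes $1\in(Me_\vphi M)''$ directly, whereas you identify $\pi(e_\vphi)$ with $[\H^U]$ and use the full central support of $e_\vphi$; these are equivalent uses of Proposition~\ref{prop:basic_construction}(b).
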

\begin{proof}
\noindent $\ref{part:rep}\Rightarrow \ref{part:group_rep}:$ Set $\pi_0:=\pi|_M$ and $U_t:=\pi(\Delta_\vphi^{it})$ so that $\Ad{U_t}\circ \pi_0 = \pi\circ \Ad{\Delta_\varphi^{it}} = \pi\circ \sigma_t^\varphi$, and
    \[
        \pi(Me_\vphi M)\H \subset \pi_0(M) \H^U.
    \]
Since $1\in (Me_\vphi M)''$ by Proposition~\ref{prop:basic_construction}, we can find a net $(x_i)_{i\in I} \subset Me_\vphi M$ converging strongly to $1$. Thus
    \[
        \H \subset \overline{\pi(Me_\vphi M) \H},
    \]
and so $\pi_0(M)\H^U$ is dense.\\

\noindent$\ref{part:group_rep}\Rightarrow \ref{part:isom}:$ Let $\{\xi_i\in \H^U\colon i\in I\}$ be a maximal family of unit vectors such that $\pi_0(M)\xi_i\perp \pi_0(M)\xi_j$ for all $i\neq j$. Note that
    \[
       \K:= \overline{\text{span}}\left( \bigcup_{i\in I} \pi_0(M)\xi_i \right)
    \]
is invariant for $U$ since each $U_t$ normalizes $\pi_0(M)$ and fixes $\xi_i$, $i\in I$. Consequently, $[\K^\perp]\H^U \subset \H^U$ and so we must have $[\K^\perp]\H^U=\{0\}$ since otherwise we would contradict the maximality of $\{\xi_i\in \H^U\colon i\in I\}$. Thus $\H^U = [\K] \H^U\subset \K$ and therefore $\H= \overline{\pi_0(M)\H^U} =\K$. 

For each $j\in I$, observe that
    \[
        \psi_j(x):= \< \pi_0(x)\xi_j, \xi_j\>
    \]
satisfies
    \[ 
        \psi_j(\sigma_t^\vphi(x)) = \< U_t \pi_0(x) U_t^* \xi_j, \xi_j\> = \< \pi_0(x)\xi_j,\xi_j\> = \psi_j(x).
    \]
It follows that the support projection $s(\psi_j)\in M^\vphi$, and so the Connes cocycle derivatives $u_t^{(j)}:=(D\psi_j \colon D \vphi)_t$ are unitaries  on the subspace $s(\psi_j) L^2(M,\vphi)$ (see \cite[Theorem VIII.3.19]{Tak03}). Moreover, the uniqueness of $u_t^{(j)}$ along with the invariance of $\psi_j$ and $\vphi$ under $\sigma_s^\vphi$ implies $\sigma_s^\vphi( u_t^{(j)}) = u_t^{(j)}$ for all $s,t\in \R$. Thus 
    \[
        u_{s+t}^{{(j)}} = u_s^{(j)}  \sigma_s^\vphi(u_t^{(j)}) = u_s^{(j)} u_t^{(j)},
    \]
and so $t\mapsto u_t^{(j)}$ is a strongly continuous one parameter unitary group. Stone's theorem then yields a positive operator $h_j$ affiliated with $s(\psi_j) M^\vphi s(\psi_j)$ satisfying $h_j^{it} = u_t^{(j)}$ for all $t\in \R$. Since $\vphi(h_j^{\frac12}\cdot h_j^{\frac12})$ satisfies the same relative modular condition as $\psi_j$ with respect to $\vphi$, it follows that $\psi_j = \vphi(h_j^{\frac12}\cdot h_j^{\frac12})$. In particular, $h_j^{\frac12} \in L^2(M^\vphi,\vphi)$ since $\psi_j$ is a state. Therefore for finite subsets $F\subset I$ the map
    \[
        v\left(\sum_{j\in F} \pi_0(x_j)\xi_j\right):= \sum_{j\in F}(x_jh_j^{\frac12})\otimes \delta_j
    \]
extends to an $v\colon \H\to L^2(M,\vphi)\otimes \ell^2(I)$. Observe that $(x\otimes 1)v = v\pi_0(x)$ for all $x\in M$ and $(\Delta_\vphi^{it}\otimes 1)v = v U_t$ for all $t\in \R$, so that $v$ is the desired isometry.
\\

\noindent$\ref{part:isom}\Rightarrow \ref{part:rep}:$ Let $v$ be as in \ref{part:isom} and define $\pi(x):=v^*(x\otimes 1) v$ for $x\in \<M,e_\vphi\>\subset B(L^2(M,\vphi))$. The assumptions on $v$ along with Proposition~\ref{prop:basic_construction} imply that $v\H$ is reducing for $\<M,e_\vphi\>\otimes \C$, and hence 
    \[
        vv^*\in (\<M,e_\vphi\>\otimes \C)' = (J_\vphi M^\vphi J_\vphi)\bar\otimes B(\K).
    \]
Thus for $x,y\in \<M,e_\vphi\>$ we have
    \[
        \pi(x)\pi(y) = v^*(x\otimes 1)vv^* (y\otimes 1)v = v^*( (xy)\otimes 1)vv^* v= \pi(xy).
    \]
It follows that $\pi$ is a normal unital $*$-homomorphism. Finally, observe that
    \[
        v\pi(x) = vv^*(x\otimes 1)v = (x\otimes 1)vv^*v= (x\otimes 1)v
    \]
for all $x\in \<M,e_\vphi\>$.
\end{proof}

\begin{rem}\label{rem:intertwiner_amplification_space}
The Hilbert space $\K$ in Proposition~\ref{prop:characterization_abstract_modules}.\ref{part:isom} constructed in the above proof has dimension bounded by that of $\H$.  One can also replace (and even enlarge) $\K$ as follows. Suppose $v\colon \H\to L^2(M,\vphi)\otimes \K$ is as in Proposition~\ref{prop:characterization_abstract_modules}.\ref{part:isom}. Then for any isometry $w\colon \K\to \K'$ one has 
    \[
        (x\otimes 1) (1\otimes w)v\H = (1\otimes w)(x\otimes 1)v\H \subset (1\otimes w) v\H
    \]
for all $x\in \<M,e_\vphi\>$. Moreover $v^*(1\otimes w^*)(x\otimes 1)(1\otimes w)v = v^*(x\otimes 1)v$, so the isometry $(1\otimes w)v$ induces the same representation $\pi\colon \<M,e_\vphi\>\to B(\H)$ as $v$. Consequently, one can always take $\K=\H$, and if $\H$ is separable then we can take $\K=\ell^2$.$\hfill\blacksquare$
\end{rem}

\begin{rem}
Even without the density assumption in Proposition~\ref{prop:characterization_abstract_modules}.\ref{part:group_rep}, $\<M,e_\vphi\>$ will still admit a representation on the closure of $\pi_0(M) \H^U$. So long as $\H^U\neq \{0\}$, one can in this way always induce representations of $\<M,e_\vphi\>$ from representations of $M$.
$\hfill\blacksquare$
\end{rem}

\subsection{Definitions and general properties}\label{sec:definitions_and_properties}

In light of Proposition~\ref{prop:characterization_abstract_modules}, we make the following definition. 

\begin{defi}
Let $M$ be a von Neumann algebra equipped with a faithful normal strictly semifinite weight $\vphi$. We say a Hilbert space $\H$ is a \textbf{left $(M,\vphi)$-module} (resp. \textbf{right $(M,\vphi)$-module}) if there exists a representation
    \[
        \pi\colon \<M, e_\vphi\>\to B(\H) \qquad (\text{resp. } \pi\colon  \<M,e_\vphi\>^{op}\to B(\H)).
    \]
In this case we write ${_{(M,\vphi)}}(\H,\pi)$ (resp. $(\H,\pi)_{(M,\vphi)}$).$\hfill\blacksquare$
\end{defi}

When working with right $(M,\vphi)$-modules it is useful to note that $\<M,e_\vphi\>^{\text{op}}$ can be identified with $J_\vphi \<M,e_\vphi\> J_\vphi \subset B(L^2(M,\vphi))$ via $x^{\text{op}}\mapsto J_\vphi x^* J_\vphi$, in which case its commutant is nothing but $M^\vphi$. We will also see in Proposition~\ref{prop:left_vs_right_modules} below a one-to-one correspondence between left and right $(M,\vphi)$-modules.

For $\<M,e_\vphi\>\otimes \C$ invariant (resp. $J_\vphi \<M,e_\vphi\> J_\vphi\otimes \C$ invariant) subspaces $\H\leq L^2(M,\vphi)\otimes \K$  or more generally if $\pi$ is clear from context then we suppress the notation for it. We will also adopt the usual terminology for module maps, and in particular we will say two left (resp. right) $(M,\vphi)$-modules $(\H,\pi)$ and $(\K,\rho)$ are \emph{isomorphic} if there exists  a unitary $u\in B(\H,\K)$ satisfying $u\pi(x) = \rho(x) u$ for all $x\in \<M,e_\vphi\>$ (resp. $x\in \<M,e_\vphi\>^{op}$).

\begin{prop}\label{prop:dim_well_defined}
Let $M$ be a von Neumann algebra equipped with a faithful normal strictly semifinite weight $\vphi$, let $(\H,\pi)$ be a left (resp. right) $(M,\vphi)$-module, and let $\K_1$ and $\K_2$ be Hilbert spaces. For $i=1,2$, suppose $v_i\colon \H\to L^2(M,\vphi)\otimes \K_i$ is an isometry satisfying $v_i \pi(x) = (x\otimes 1)v_i$ for all $x\in \<M,e_\vphi\>$ (resp. $\forall x\in \<M,e_\vphi\>^{op}$). Then
    \[
        (\vphi\otimes \Tr)\left[(J_\vphi\otimes 1) v_1v_1^* (J_\vphi\otimes 1)\right] = (\vphi\otimes \Tr)\left[(J_\vphi\otimes 1) v_2v_2^* (J_\vphi\otimes 1)\right]
    \]
(resp. $(\vphi\otimes \Tr)(v_1v_1^*) = (\vphi\otimes \Tr)(v_2v_2^*)$).
\end{prop}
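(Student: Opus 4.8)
The plan is to recognize each side of the claimed identity as the trace of a projection in a semifinite \emph{tracial} von Neumann algebra, and then to exhibit a single partial isometry inside that algebra whose source and range projections are exactly the two projections being compared; the equality then follows from the trace property. Consider first the left case. Since $v_i\pi(x)=(x\otimes 1)v_i$ for all $x\in\<M,e_\vphi\>$, the range $v_i\H$ is invariant under $\<M,e_\vphi\>\otimes\C$, so the projection $v_iv_i^*$ onto it lies in
    \[
        (\<M,e_\vphi\>\otimes\C)'=(J_\vphi M^\vphi J_\vphi)\bar\otimes B(\K_i),
    \]
using the identification $\<M,e_\vphi\>=(J_\vphi M^\vphi J_\vphi)'$ from Proposition~\ref{prop:basic_construction}. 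Conjugating by $J_\vphi\otimes 1$ then places $(J_\vphi\otimes 1)v_iv_i^*(J_\vphi\otimes 1)$ in $M^\vphi\bar\otimes B(\K_i)$. Because $\vphi$ is tracial on $M^\vphi$ and $M^\vphi$ is semifinite by strict semifiniteness, $\vphi\otimes\Tr$ is a faithful normal semifinite trace on $M^\vphi\bar\otimes B(\K_i)$, so each side of the asserted equality is indeed the trace of a projection.

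Next I would compare the two projections through the partial isometry $w:=v_2v_1^*$. A direct adjoint manipulation of the intertwining relation $(x\otimes 1)v_i=v_i\pi(x)$ yields $v_1^*(x\otimes 1)=\pi(x)v_1^*$, whence $w(x\otimes 1)=(x\otimes 1)w$ for all $x\in\<M,e_\vphi\>$. Viewing $w$ as the off-diagonal corner of an operator on $L^2(M,\vphi)\otimes(\K_1\oplus\K_2)$ (embedding each $\K_i$ as the corresponding summand, so that the diagonal $\<M,e_\vphi\>$-action restricts to $x\otimes 1_{\K_i}$ on each piece), this shows $w\in(J_\vphi M^\vphi J_\vphi)\bar\otimes B(\K_1\oplus\K_2)$, and one has $w^*w=v_1v_1^*$ and $ww^*=v_2v_2^*$. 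Setting $W:=(J_\vphi\otimes 1)w(J_\vphi\otimes 1)\in M^\vphi\bar\otimes B(\K_1\oplus\K_2)$, the trace property gives
    \[
        (\vphi\otimes\Tr)(W^*W)=(\vphi\otimes\Tr)(WW^*).
    \]
This is precisely the desired equality, once one notes that $\Tr$ on $B(\K_1\oplus\K_2)$ restricts to $\Tr$ on each corner $B(\K_i)$, so passing to the direct sum leaves the value $(\vphi\otimes\Tr)[(J_\vphi\otimes 1)v_iv_i^*(J_\vphi\otimes 1)]$ unchanged.

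For the right case the argument is the same but slightly simpler. Identifying $\<M,e_\vphi\>^{op}$ with $J_\vphi\<M,e_\vphi\>J_\vphi$, the commutant of the representing algebra is $(J_\vphi\<M,e_\vphi\>J_\vphi\otimes\C)'=M^\vphi\bar\otimes B(\K_i)$, so $v_iv_i^*$ already lies in the tracial algebra and no conjugation by $J_\vphi$ is required; the same partial isometry $w=v_2v_1^*$ then links $v_1v_1^*$ and $v_2v_2^*$ directly, and traciality finishes the argument.

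The conceptual heart---and the only place a genuine hypothesis enters---is the traciality of $\vphi\otimes\Tr$ on $M^\vphi\bar\otimes B(\K)$, which rests on $\vphi$ restricting to a trace on the centralizer $M^\vphi$; this is exactly the feature that fails for general weights and is secured here by strict semifiniteness. The only technical care needed is bookkeeping: verifying that $w$ genuinely intertwines the diagonal $\<M,e_\vphi\>$-action and lands in the predicted commutant, and absorbing the two possibly distinct amplification spaces $\K_1,\K_2$ into their direct sum. I do not expect either to present a serious obstacle, so the proof should be short.
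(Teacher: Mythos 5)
Your proof is correct and takes essentially the same route as the paper's: both arguments hinge on the partial isometry $v_2v_1^*$, viewed as an element of $(J_\vphi M^\vphi J_\vphi)\bar\otimes B(\K_1\oplus\K_2)$ via the intertwining relations, with source projection $v_1v_1^*$ and range projection $v_2v_2^*$, and then invoke the traciality of $\vphi\otimes\Tr$ on $M^\vphi\bar\otimes B(\K_1\oplus\K_2)$. No gaps.
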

\begin{proof}
Observe that $v_2v_1^*\colon v_1\H \to v_2\H$ is an isometry which we can view as a partial isometry in $B(L^2(M,\vphi)\otimes (\K_1\oplus \K_2))$ with $(v_2v_1^*)^* v_2v_1^*=v_1v_1^*$ and $v_2v_1^*(v_2v_1^*)^* = v_2v_2^*$. Moreover, one has
    \[
        (x\otimes 1)v_2v_1^* = v_2\pi(x)v_1^* = v_2v_1^*(x\otimes 1)
    \]
for all $x\in \<M,e_\vphi\>$ (resp. $x\in \<M,e_\vphi\>^{\text{op}}$ after identifying $\<M,e_\vphi\>^{\text{op}}\cong J_\vphi \<M,e_\vphi\> J_\vphi$). Hence $v_2v_1^*\in (J_\vphi M^\vphi J_\vphi)\bar\otimes B(\K_1\oplus \K_2)$ (resp. $M^\vphi \bar\otimes B(\K_1\oplus \K_2)$). Similarly for $v_1v_1^*$ and $v_2v_2^*$. So using that $\vphi\otimes \Tr$ is tracial on $M^\vphi\bar\otimes B(\K_1\oplus \K_2)$, we have
    \begin{align*}
        (\vphi\otimes \Tr)\left[ (J_\vphi\otimes 1) v_2v_2^*(J_\vphi\otimes 1)\right]&= (\vphi\otimes \Tr)\left[ (J_\vphi\otimes 1) v_2v_1^*v_1v_2^*(J_\vphi\otimes 1)\right]\\
            &= (\vphi\otimes \Tr)\left[ (J_\vphi\otimes 1) v_1v_2^*v_2v_1^*(J_\vphi\otimes 1)\right] = (\vphi\otimes \Tr)\left[ (J_\vphi\otimes 1) v_1v_1^*(J_\vphi\otimes 1)\right].
    \end{align*}
In the case of a right $(M,\vphi)$-module, the same computation without the $J_\vphi\otimes 1$ terms yields $(\vphi\otimes \Tr)(v_2v_2^*) = (\vphi\otimes \Tr)(v_1v_1^*)$.
\end{proof}

In other words, the previous proposition tells us that the quantity $(\vphi\otimes \Tr)\left[(J_\vphi\otimes 1) v_1v_1^* (J_\vphi\otimes 1)\right]$ is an invariant of $_{(M,\vphi)} (\H,\pi)$. When $\vphi$ is a tracial state, this quantity is simply the Murray--von Neumann dimension of $(\H,\pi)$. We therefore adopt the following terminology.

\begin{defi}\label{defi:MvN_dim}
Let $M$ be a von Neumann algebra equipped with a faithful normal strictly semifinite weight $\vphi$, and let $(\H,\pi)$ be a left (resp. right) $(M,\vphi)$-module. We say an isometry $v\colon \H\to L^2(M,\vphi)\otimes \K$ for some Hilbert space $\K$ is a \textbf{standard intertwiner} for $(\H,\pi)$ if
    \[
        v\pi(x) = (x\otimes 1)v \qquad \forall x\in \<M,e_\vphi\>
    \]
(resp. $\forall x\in \<M,e_\vphi\>^{op}$). The \textbf{Murray--von Neumann dimension} of $(\H,\pi)$ over $(M,\vphi)$ is the quantity
    \[
            \dim_{(M,\vphi)}(\H,\pi) :=(\vphi\otimes \Tr)[(J_\vphi\otimes 1)vv^*(J_\vphi\otimes 1)]
    \]
(resp. $\dim (\H,\pi)_{(M,\vphi)}:= (\vphi\otimes \Tr)(vv^*)$). $\hfill\blacksquare$
\end{defi}

Proposition~\ref{prop:characterization_abstract_modules} implies $(M,\vphi)$-modules always admit standard intertwiners and Remark~\ref{rem:intertwiner_amplification_space} implies there are in fact always infinitely many. Nevertheless, Proposition~\ref{prop:dim_well_defined} tells us the Murray--von Neumann dimension is independent of the choice of standard intertwiner. 

As a first example, consider an $\<M,e_\vphi\>\otimes \C$ invariant subspace $\H\leq L^2(M,\vphi)\otimes \K$ for some Hilbert space $\K$. Then $\H$ is a left $(M,\vphi)$-module and the inclusion map $\iota\colon \H\to L^2(M,\vphi)\otimes \K$ gives a standard intertwiner with $\iota\iota^*=[\H]$. If $[\H]=\sum_{i\in I} (J_\vphi p_i J_\vphi)\otimes q_i$ for a family of projections $\{p_i\in M^\vphi \colon i\in I\}$ and an orthogonal family of minimal projections $\{q_i\in B(\K)\colon i\in I\}$ then $\H\cong \bigoplus_{i\in I} L^2(M,\vphi) p_i$ with
    \[
        \dim_{(M,\vphi)} (\H) = \sum_{i\in I} \vphi(p_i).
    \]
Similarly, if $\H$ is a $J_\vphi \<M,e_\vphi\> J_\vphi \otimes \C$ invariant subspace with $[\H] = \sum_{i\in I} p_i\otimes q_i$, then $\H\cong \bigoplus_{i\in I} p_i L^2(M,\vphi)$ with
    \[
        \dim(\H)_{(M,\vphi)} = \sum_{i\in I} \vphi(p_i).
    \]
In particular, if $\vphi$ is a state then the Murray-von Neumann dimension of $L^2(M,\vphi)^{\oplus d}$ as a left or right $(M,\vphi)$-module is $d$.

The following proposition implies it suffices to only consider left $(M, \vphi)$-modules, which we will do for the remainder of the article. Recall that for a Hilbert space $\H$ its conjugate Hilbert space is the set $\overline{\H}:=\{\bar\xi\colon \xi\in \H\}$ equipped with the vector space operations $\alpha \bar\xi + \bar\eta = \overline{ \bar\alpha \xi + \eta}$ and inner product $\<\bar\xi,\bar\eta\>:= \<\eta, \xi\>$. The map $\xi\mapsto \bar\xi$ defines a canonical conjugate linear unitary $j\colon \H \to \overline{\H}$.

\begin{prop}\label{prop:left_vs_right_modules}
Let $M$ be a von Neumann algebra equipped with a faithful normal strictly semifinite weight $\vphi$ and let $(\H,\pi)$ be a left $(M ,\vphi)$-module. Define 
    \begin{align*}
        \overline{\pi}\colon \<M,e_\vphi\>^{\text{op}} &\to B(\overline{\H})\\
             x^{\text{op}} &\mapsto j \pi(x^*) j^*.
    \end{align*}
Then $(\overline{\H}, \overline{\pi})$ is a right $(M,\vphi)$-module with
    \[
        \dim(\overline{\H},\overline{\pi})_{(M,\vphi)} =\dim_{(M,\vphi)}(\H,\pi).
    \]
\end{prop}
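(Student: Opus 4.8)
The plan is to treat the two assertions separately: that $\overline{\pi}$ is a genuine representation of $\<M,e_\vphi\>^{\text{op}}$, and that the two dimensions agree. Throughout I would use the canonical antiunitary $j\colon\H\to\overline{\H}$, with $j^*j=\text{id}_\H$ and $\<j\xi,j\eta\>=\overline{\<\xi,\eta\>}$.

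For the first assertion, I would isolate the conjugate-linear map $\Phi\colon B(\H)\to B(\overline{\H})$, $\Phi(T)=jTj^*$. From the two displayed identities for $j$ one checks directly that $\Phi$ is conjugate-linear, multiplicative, normal, unital, and satisfies $\Phi(T^*)=\Phi(T)^*$. Since $\overline{\pi}(x^{\text{op}})=\Phi(\pi(x^*))$, the representation property becomes bookkeeping: the conjugate-linearity of $\Phi$ cancels the scalar conjugation in $x\mapsto x^*$; its multiplicativity combined with the order reversal of $x\mapsto x^*$ reproduces the order reversal in $\<M,e_\vphi\>^{\text{op}}$; and $\Phi(T^*)=\Phi(T)^*$ together with $\pi(x^*)^*=\pi(x)$ gives the $*$-property. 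Normality and unitality are immediate, so $(\overline{\H},\overline{\pi})$ is a right $(M,\vphi)$-module. This is routine and not the main difficulty.

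For the dimension, fix a standard intertwiner $v\colon\H\to L^2(M,\vphi)\otimes\K$ for $(\H,\pi)$ and let $j_\K\colon\K\to\overline{\K}$ be the canonical antiunitary. I would set $\bar v:=(J_\vphi\otimes j_\K)\,v\,j^*\colon\overline{\H}\to L^2(M,\vphi)\otimes\overline{\K}$, which is a linear isometry as a composite of two antiunitaries and one isometry. The crucial step is verifying that $\bar v$ is a standard intertwiner for the right module, i.e. $\bar v\,\overline{\pi}(x^{\text{op}})=(J_\vphi x^* J_\vphi\otimes 1)\bar v$ (recall $x^{\text{op}}$ acts on $L^2(M,\vphi)$ as $J_\vphi x^* J_\vphi$). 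Unwinding $\overline{\pi}(x^{\text{op}})=j\pi(x^*)j^*$ and using $j^*j=\text{id}_\H$ and $v\pi(x^*)=(x^*\otimes 1)v$, this reduces to the identity $(J_\vphi\otimes j_\K)(x^*\otimes 1)=(J_\vphi x^* J_\vphi\otimes 1)(J_\vphi\otimes j_\K)$, which is exactly where $J_\vphi^2=1$ enters.

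Finally I would compute $\bar v\bar v^*=(J_\vphi\otimes j_\K)(vv^*)(J_\vphi\otimes j_\K)^*$, recalling $(J_\vphi\otimes j_\K)^*=J_\vphi\otimes j_\K^*$. Factoring $J_\vphi\otimes j_\K=(J_\vphi\otimes 1)(1\otimes j_\K)$ separates this conjugation into the $J_\vphi\otimes 1$ piece already present in $\dim_{(M,\vphi)}(\H,\pi)$ and the map $\Psi:=\text{id}\otimes(j_\K\,\cdot\,j_\K^*)$ acting only on the $B(\K)$-leg; since these act on different tensor factors they commute, giving $\bar v\bar v^*=\Psi\big[(J_\vphi\otimes 1)vv^*(J_\vphi\otimes 1)\big]$. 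The only genuinely substantive point is that conjugation by $j_\K$ preserves the trace on positive operators, $\Tr(j_\K b j_\K^*)=\overline{\Tr(b)}=\Tr(b)$ for $b\geq 0$, so $\Psi$ is a normal conjugate-linear $*$-isomorphism $M^\vphi\bar\otimes B(\K)\to M^\vphi\bar\otimes B(\overline{\K})$ carrying $\vphi\otimes\Tr$ to $\vphi\otimes\Tr$. Applying $\vphi\otimes\Tr$ then yields $\dim(\overline{\H},\overline{\pi})_{(M,\vphi)}=(\vphi\otimes\Tr)(\bar v\bar v^*)=(\vphi\otimes\Tr)[(J_\vphi\otimes 1)vv^*(J_\vphi\otimes 1)]=\dim_{(M,\vphi)}(\H,\pi)$. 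The main obstacle throughout is keeping the conjugate-linear maps and their adjoints straight; the trace-invariance of the $B(\K)$-conjugation is the sole place requiring more than formal manipulation.
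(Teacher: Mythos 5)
Your proof is correct and follows essentially the same route as the paper's: both construct the standard intertwiner for $(\overline{\H},\overline{\pi})$ by composing $j^*$, the given intertwiner $v$, and conjugation by $J_\vphi$ on the $L^2(M,\vphi)$ leg, then conclude by invariance of $\vphi\otimes\Tr$ under that conjugation. The only difference is cosmetic: you make the antiunitary $j_\K$ on the auxiliary factor explicit where the paper writes $(J_\vphi\otimes 1)vj^*$ with the second leg left implicit, which is a reasonable extra precaution but not a different argument.
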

\begin{proof}
That $\overline{\pi}$ is representation of $\<M,e_\vphi\>^{\text{op}}$ on $\overline{\H}$ follows readily from the definition. Let $v\colon \H\to L^2(M,\varphi)\otimes \K$ be a standard intertwiner of $(\H,\pi)$. Identify $\<M,e_\vphi\>^{op}\cong J_\vphi \<M,e_\vphi\>J_\vphi$ via $x^{\text{op}}\mapsto J_\vphi x^* J_\vphi$ so that $\overline{\pi}(J_\vphi x^* J_\vphi) = j\pi(x^*) j^*$ for all $x\in \<M,e_\vphi\>$. Observe that the isometry $w:= (J_\vphi \otimes 1)v j^*$ is a standard intertwiner for $(\overline{\H},\overline{\pi})$:
    \[
        w \overline{\pi}(J_\vphi x^* J_\vphi) = [(J_\vphi\otimes 1) v j^*] [j\pi(x^*) j^*] = (J_\vphi \otimes 1) (x^* \otimes 1) vj^* = [(J_\vphi x^* J_\vphi)\otimes 1] w,
    \]
for all $x\in \<M,e_\vphi\>$. Consequently,
    \[
        \dim(\mathcal{\overline{H}}, \overline{\pi})_{(M,\varphi)} = (\varphi \otimes \text{Tr})(ww^*) =(\varphi \otimes \text{Tr})( (J_\varphi \otimes 1) v v^*(J_\varphi \otimes 1)) = \dim_{(M,\varphi)} (\mathcal{H}, \pi),
    \]
as claimed.
\end{proof}

The rest of the subsection is devoted to showing that Murray-von Neumann dimension over $(M,\vphi)$ is well-behaved with respect to natural operations on modules. All of these results are extensions of corresponding results for tracial states (see \cite[Section 2.1.4]{BenThesis}, for example).

\begin{prop}\label{prop:bnded_opts_between_left_Hilbert_modules}
Let $M$ be a von Neumann algebra equipped with a faithful normal strictly semifinite weight $\vphi$, let $(\H, \pi)$ and $(\K, \rho)$ be left $(M,\vphi)$-modules, and let $T \in B(\mathcal{H},\mathcal{K})$ be such that $T\pi(x) = \rho(x) T$ for all $x \in \langle M, e_\vphi\rangle$.
    \begin{enumerate}[label=(\alph*)] 
    \item If $T$ has dense image, then $\dim_{(M,\varphi)}(\mathcal{K}, \rho ) \leq \dim_{(M,\varphi)}(\mathcal{H}, \pi)$. 
    
    \item If $T$ is injective, then we have $\dim_{(M,\varphi)}(\mathcal{H}, \pi) \leq \dim_{(M,\varphi)}(\mathcal{K},\rho).$ 
    
    \item If $T$ is injective with dense range, then $(\mathcal{H},\pi)$ and $(\mathcal{K},\rho)$ are isomorphic as left $(M,\varphi)$-modules with $\dim_{(M,\varphi)}(\mathcal{H}, \pi) = \dim_{(M,\varphi)}(\mathcal{K},\rho)$.
\end{enumerate}
Additionally, if $\vphi$ is extremal then $(\mathcal{H},\pi)$ and $(\mathcal{K},\rho)$ are isomorphic as left $(M,\varphi)$-modules if and only if $\dim_{(M,\varphi)}(\mathcal{H}, \pi) = \dim_{(M,\varphi)}(\mathcal{K},\rho)$.
\end{prop}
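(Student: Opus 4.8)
The plan is to reduce all four statements to the polar decomposition $T=u\lvert T\rvert$, which is automatically compatible with the module structure. Since $T\pi(x)=\rho(x)T$ for all $x\in\langle M,e_\vphi\rangle$, taking adjoints gives $T^*\rho(x)=\pi(x)T^*$, so $T^*T=\lvert T\rvert^2$ commutes with $\pi(\langle M,e_\vphi\rangle)$; by continuous functional calculus so does $\lvert T\rvert$, and the partial isometry $u$ in $T=u\lvert T\rvert$ then satisfies $u\pi(x)=\rho(x)u$. Its initial projection $u^*u$ is the projection onto $(\ker T)^\perp$ and its final projection $uu^*$ is the projection onto $\overline{\operatorname{im} T}$. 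First I would fix standard intertwiners $v_\H\colon\H\to L^2(M,\vphi)\otimes\K_\H$ and $v_\K\colon\K\to L^2(M,\vphi)\otimes\K_\K$ as provided by Proposition~\ref{prop:characterization_abstract_modules}.

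For (a), density of the image forces $uu^*=1_\K$, so $u^*$ is an isometry with $u^*\rho(x)=\pi(x)u^*$; hence $w:=v_\H u^*$ is a standard intertwiner for $(\K,\rho)$, and since $ww^*=v_\H(u^*u)v_\H^*\le v_\H v_\H^*$, the monotonicity of the positive functional $X\mapsto(\vphi\otimes\Tr)[(J_\vphi\otimes1)X(J_\vphi\otimes1)]$ yields $\dim_{(M,\vphi)}(\K,\rho)\le\dim_{(M,\vphi)}(\H,\pi)$. Symmetrically, for (b) injectivity forces $u^*u=1_\H$, so $w:=v_\K u$ is a standard intertwiner for $(\H,\pi)$ with $ww^*=v_\K(uu^*)v_\K^*\le v_\K v_\K^*$, giving the reverse inequality. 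Part (c) is then immediate: under both hypotheses $u$ is a unitary intertwiner, so the two modules are isomorphic and (a) together with (b) forces equality of dimensions.

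For the final equivalence, the forward direction is just the invariance of dimension under isomorphism (the special case of (c) with $u$ the given unitary). The reverse direction is where extremality enters. Using Remark~\ref{rem:intertwiner_amplification_space} I would arrange both standard intertwiners to land in a common $L^2(M,\vphi)\otimes\K_0$, so that $p_\H:=v_\H v_\H^*$ and $p_\K:=v_\K v_\K^*$ are projections in $\mathcal N:=(J_\vphi M^\vphi J_\vphi)\,\bar\otimes\,B(\K_0)$, which is $(\langle M,e_\vphi\rangle\otimes\C)'$ by Proposition~\ref{prop:basic_construction}. A short computation with the intertwiners shows that $(\H,\pi)\cong(\K,\rho)$ as left $(M,\vphi)$-modules if and only if $p_\H$ and $p_\K$ are Murray--von Neumann equivalent in $\mathcal N$: a module isomorphism $W$ transports to the partial isometry $v_\K W v_\H^*\in\mathcal N$ implementing $p_\H\sim p_\K$, and conversely any such partial isometry $w$ yields the module isomorphism $v_\K^* w v_\H$. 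Moreover $\operatorname{Tr}_\vphi(X):=(\vphi\otimes\Tr)[(J_\vphi\otimes1)X(J_\vphi\otimes1)]$ is a faithful normal semifinite trace on $\mathcal N$, being the composition of $\vphi\otimes\Tr$ (tracial on $M^\vphi\,\bar\otimes\,B(\K_0)$) with the $*$-isomorphism $\operatorname{Ad}(J_\vphi\otimes1)$, and by definition $\dim_{(M,\vphi)}(\H,\pi)=\operatorname{Tr}_\vphi(p_\H)$ and $\dim_{(M,\vphi)}(\K,\rho)=\operatorname{Tr}_\vphi(p_\K)$.

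The hard part will be converting equality of these traces into equivalence of projections, and this is exactly what extremality buys: when $\vphi$ is extremal, $M^\vphi$ is a factor, hence so is $J_\vphi M^\vphi J_\vphi$, and therefore $\mathcal N$ is a semifinite factor. In a semifinite factor equipped with a faithful normal semifinite trace, two projections are Murray--von Neumann equivalent precisely when they have the same trace (projections are totally ordered under $\preceq$ and the trace is a complete invariant of their equivalence classes). Thus $\dim_{(M,\vphi)}(\H,\pi)=\dim_{(M,\vphi)}(\K,\rho)$ gives $\operatorname{Tr}_\vphi(p_\H)=\operatorname{Tr}_\vphi(p_\K)$, hence $p_\H\sim p_\K$, hence $(\H,\pi)\cong(\K,\rho)$. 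I expect the only genuine subtlety to be bookkeeping the $J_\vphi$-conjugations so that $p_\H$, $p_\K$, and the trace all live on the same algebra $\mathcal N$; factoriality of $\mathcal N$ must fail in general when $M^\vphi$ is not a factor, which is precisely why the extremality hypothesis cannot be dropped in the converse.
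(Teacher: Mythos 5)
Your proposal is correct and follows essentially the same route as the paper: a polar decomposition producing an intertwining partial isometry, monotonicity of $(\vphi\otimes\Tr)[(J_\vphi\otimes 1)\,\cdot\,(J_\vphi\otimes 1)]$ for (a)--(c), and, for the converse under extremality, the fact that $(J_\vphi M^\vphi J_\vphi)\,\bar\otimes\,B(\K_0)$ is a semifinite factor in which equality of traces of projections gives Murray--von Neumann equivalence. The only (cosmetic) difference is that you take the polar decomposition of $T$ itself on $\H\to\K$ and then compose with standard intertwiners, whereas the paper first transports $T$ to $S=wTv^*$ (extended by zero) inside $(J_\vphi M^\vphi J_\vphi)\,\bar\otimes\,B(\H\oplus\K)$ and decomposes there; both yield the same initial and final projections relative to $v_\H v_\H^*$ and $v_\K v_\K^*$.
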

\begin{proof}
Denote $\mathcal{L}:=\H\oplus \K$. Then by Remark~\ref{rem:intertwiner_amplification_space} we can find standard intertwiners $v\colon \H\to L^2(M,\vphi)\otimes \mathcal{L}$ and $w\colon \K\to L^2(M,\vphi)\otimes \mathcal{L}$ for $(H, \pi)$ and $(\K,\rho)$, respectively. The map $w  T  v^*: vv^*(L^2(M,\vphi) \otimes \mathcal{L}) \to ww^*(L^2(M,\vphi) \otimes \mathcal{L})$ satisfies
    \[ 
        w  T v^* ( x \otimes 1) = w T \pi(x) v^* = w \rho(x)Tv^*= (x\otimes 1)wTv^*,
    \]
for all $x \in \langle M, e_\varphi\rangle$. Let $S$ denote the extension of this map to $L^2(M,\vphi) \otimes \mathcal{L}$ given by setting it equal to zero on $(1-vv^*)(L^2(M,\vphi) \otimes \mathcal{L})$. It follows that $S \in  (\langle M , e_\varphi\rangle \otimes \C) '= (J_\vphi M^\vphi J_\vphi)\bar\otimes B(\mathcal{L})$. Let $S = u |S|$ be the polar decomposition. Note that $u\in (J_\vphi M^\vphi J_\vphi)\bar\otimes B(\mathcal{L})$ with $u^*u = [\ker(S)^\perp] \leq vv^*$ and $uu^* = [\text{ran}(S)] \leq ww^*$.\\

\noindent(a): If $T$ has dense range, then the range of $S$ is dense in $ww^* (L^2(M,\vphi) \otimes \mathcal{L})$ since $v$ is an isometry. Thus we have that $uu^* = ww^*$, and Proposition~\ref{prop:dim_well_defined} gives
    \[ 
        \dim_{(M,\varphi)}(\mathcal{K}, \rho) =\varphi\otimes \Tr((J_\varphi \otimes 1) uu^*( J_\varphi \otimes 1))=\varphi\otimes \Tr((J_\varphi \otimes 1)u^*u(J_\varphi \otimes 1))\leq  \dim_{(M,\varphi)}(\mathcal{H}, \pi).  
    \]\\

\noindent(b): If $T$ is injective, then $\ker(S)= (1-vv^*)L^2(M,\vphi)\otimes \mathcal{L}$ since $w$ is an isometry. Thus we have $u^*u = vv^*$, and Proposition~\ref{prop:dim_well_defined} gives
    \[ 
        \dim_{(M,\varphi)}(\mathcal{H}, \pi) = \varphi\otimes \Tr((J_\varphi \otimes 1) u^*u( J_\varphi \otimes 1))=\varphi\otimes \Tr((J_\varphi \otimes 1)u^*u(J_\varphi \otimes 1))\leq \dim_{(M,\varphi)}(\mathcal{K}, \rho).  
    \]\\

\noindent(c): If $T$ is injective with dense range, then previous parts imply
    \[
        \dim_{(M,\varphi)}(\mathcal{H}, \pi)=\dim_{(M,\varphi)}(\mathcal{K}\rho).
    \] 
It also follows that $w^*uv \colon \H\to \K$ is a unitary satisfying $\rho(x) w^*uv = w^*uv\pi(x)$ for all $x \in \langle M, e_\vphi\rangle$.\\

\noindent Lastly, assume $\vphi$ is extremal. The ``only if'' part of the final statement follows from (c). Conversely, suppose $\dim_{(M,\varphi)}(\mathcal{H}, \pi) = \dim_{(M,\varphi)}(\mathcal{K},\rho)$. Since $(J_\vphi M^\vphi J_\vphi)\bar\otimes B(\mathcal{L})$ is a semifinite factor in this case, the equality of dimensions implies $vv^*$ and $ww^*$ are equivalent in this factor. Let $u\in (J_\vphi M^\vphi J_\vphi)\bar\otimes B(\mathcal{L})$ be a partial isometry satisfying $u^*u = vv^*$ and $uu^* = ww^*$. Then $w^*uv \colon \H\to \K$ is a unitary satisfying
    \[
        w^* u v \pi(x) = w^* u (x\otimes 1) v = w^*(x\otimes 1) uv = \rho(x) w^* u v
    \]
for all $x\in \<M,e_\vphi\>$, so that $(\H,\pi)$ and $(\K,\rho)$ are isomorphic.
\end{proof}

Suppose $(\H,\pi)$ is a left $(M,\vphi)$-module and $\K\leq \H$ is a closed, $\pi(\<M,e_\vphi\>)$-invariant subspace. Then $(\K,\pi^\K)$ is a left $(M,\vphi)$-submodule of $(\H,\pi)$ where $\pi^\K(x):=\pi(x)|_\K$ for $x\in \<M,e_\vphi\>$. However, in the remainder of this section we will abuse notation and simply write $\pi$ for $\pi^\K$.

\begin{lem}\label{lem:dim_finite_sum}
Let $M$ be a von Neumann algebra equipped with a faithful normal strictly semifinite weight $\vphi$, let $(\H, \pi)$ be a left $(M,\vphi)$-module, and let $(\K,\pi)$ be a left $(M,\vphi)$-submodule. Then $(\H\ominus \K,\pi)$ is a left $(M,\vphi)$-submodule and
    \[
        \dim_{(M,\vphi)}(\H,\pi) = \dim_{(M,\vphi)}(\K,\pi) + \dim_{(M,\vphi)}(\H\ominus \K,\pi).
    \]
\end{lem}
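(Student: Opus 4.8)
The plan is to construct standard intertwiners for both submodules by restricting a single standard intertwiner for $(\H,\pi)$, and then to conclude by additivity of the tracial weight $\vphi\otimes\Tr$.

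First I would verify that $\H\ominus\K$ is a submodule. Since $\pi$ is a $*$-representation and $\K$ is invariant under the $*$-closed set $\pi(\<M,e_\vphi\>)$, the orthogonal projection $p:=[\K]\in B(\H)$ lies in the commutant $\pi(\<M,e_\vphi\>)'$. Hence $1-p=[\H\ominus\K]$ also commutes with $\pi(\<M,e_\vphi\>)$, so $\H\ominus\K$ is $\pi(\<M,e_\vphi\>)$-invariant and $(\H\ominus\K,\pi)$ is a left $(M,\vphi)$-submodule.

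Next I would fix a standard intertwiner $v\colon\H\to L^2(M,\vphi)\otimes\mathcal{L}$ for $(\H,\pi)$, which exists by Proposition~\ref{prop:characterization_abstract_modules}, and write $\iota_\K\colon\K\hookrightarrow\H$ and $\iota_{\H\ominus\K}\colon\H\ominus\K\hookrightarrow\H$ for the inclusions. Each of $v\iota_\K$ and $v\iota_{\H\ominus\K}$ is an isometry, and the relation $v\pi(x)=(x\otimes1)v$ restricts to the invariant subspaces, so these are standard intertwiners for $(\K,\pi)$ and $(\H\ominus\K,\pi)$, respectively. Since $\iota_\K\iota_\K^*=p$ and $\iota_{\H\ominus\K}\iota_{\H\ominus\K}^*=1-p$, one has $(v\iota_\K)(v\iota_\K)^*=vpv^*$ and $(v\iota_{\H\ominus\K})(v\iota_{\H\ominus\K})^*=v(1-p)v^*$, so that by Proposition~\ref{prop:dim_well_defined} the dimensions of the submodules are computed as $(\vphi\otimes\Tr)[(J_\vphi\otimes1)vpv^*(J_\vphi\otimes1)]$ and $(\vphi\otimes\Tr)[(J_\vphi\otimes1)v(1-p)v^*(J_\vphi\otimes1)]$.

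Finally I would assemble the identity. Because $p\in\pi(\<M,e_\vphi\>)'$, the computation $(x\otimes1)vpv^*=v\pi(x)pv^*=vpv^*(x\otimes1)$ for all $x\in\<M,e_\vphi\>$ shows $vpv^*$ and $v(1-p)v^*$ both lie in $(J_\vphi M^\vphi J_\vphi)\bar\otimes B(\mathcal{L})$, so conjugating by $J_\vphi\otimes1$ places them in $M^\vphi\bar\otimes B(\mathcal{L})$, where $\vphi\otimes\Tr$ is defined. From $vv^*=vpv^*+v(1-p)v^*$ and additivity of the weight on positive elements I would then obtain
\[
\dim_{(M,\vphi)}(\H,\pi)=\dim_{(M,\vphi)}(\K,\pi)+\dim_{(M,\vphi)}(\H\ominus\K,\pi).
\]
The only point requiring any care is that this additivity must hold even when some of the dimensions are $+\infty$, but this is immediate since additivity on positive elements is part of the definition of a weight; I do not expect any genuine obstacle here.
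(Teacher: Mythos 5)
Your proof is correct and follows essentially the same route as the paper's: restrict a single standard intertwiner for $(\H,\pi)$ to $\K$ and $\H\ominus\K$, observe that the resulting range projections $v[\K]v^*$ and $v[\H\ominus\K]v^*$ sum to $vv^*$, and conclude by additivity of $\vphi\otimes\Tr$. The extra details you supply (that $[\K]$ lies in the commutant, and that the summands lie in $M^\vphi\bar\otimes B(\mathcal{L})$ after conjugation by $J_\vphi\otimes 1$) are correct and merely make explicit what the paper leaves implicit.
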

\begin{proof}
First, the $\pi(\<M,e_\vphi\>)$-invariance of $\K$ implies that of $\H\ominus \K$, and hence $(\H\ominus \K,\pi)$ is a left $(M,\vphi)$-submodule. Now, if $v\colon \H\to L^2(M,\vphi)\otimes \H$ is a standard intertwiner, then $v|_K$ and $v|_{\H\ominus\K}$ are standard intertwiners for $(\K, \pi)$ and $(\H\ominus \K,\pi)$, respectively, and they satisfy $v|_K v|_K^* = v[K]v^*$ and $v|_{\H\ominus \K} v|_{\H\ominus \K}^* = v[\H\ominus \K]v^*$. Noting that these projections sum to $vv^*$ gives the claimed equality.
\end{proof}

\begin{prop}\label{prop:net_of_submodules}
Let $M$ be a von Neumann algebra equipped with a faithful normal strictly semifinite weight $\vphi$, let $(\H, \pi)$ be a left $(M,\vphi)$-module, and let $(\mathcal{H}_i,\pi)_{i \in I}$ be a net of left $(M,\vphi)$-submodules.
\begin{enumerate}[label=(\alph*)] 
\item If $(\mathcal{H}_i)_{ i \in I}$ is an increasing net, then 
    \[
        \dim_{(M,\varphi)}\left( \overline{\bigcup_{i\in I} \mathcal{H}_i}, \pi\right)= \sup_{i \in I}\,\dim_{(M,\varphi)}(\mathcal{H}_i, \pi).
    \]
\item If $(\mathcal{H}_i)_{ i \in I}$ is a decreasing net with $\dim_{(M,\varphi)}(\mathcal{H}_{i_0})< \infty$ for some $i_0 \in I$, then 
    \[
        \dim_{(M,\varphi)}\left(\bigcap_{i \in I}\mathcal{H}_i, \pi \right)= \inf_{i\in I}\,\dim_{(M,\varphi)}(\mathcal{H}_i, \pi).
    \]
\item If $\H_i\perp \H_j$ for all distinct $i,j\in I$, then
    \[
        \dim_{(M,\varphi)}\left (\bigoplus_{i\in I} \mathcal{H}_i, \pi \right)= \sum_{ i \in I} \dim_{(M,\varphi)}(\mathcal{H}_i, \pi).
    \]
\end{enumerate}
\end{prop}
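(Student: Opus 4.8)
The plan is to fix a single standard intertwiner for the ambient module and transport all three statements into continuity properties of a fixed normal trace. Concretely, I would choose by Remark~\ref{rem:intertwiner_amplification_space} a standard intertwiner $v\colon \H\to L^2(M,\vphi)\otimes\K$ for $(\H,\pi)$, and set
\[
    \Phi(\,\cdot\,):=(\vphi\otimes\Tr)\big[(J_\vphi\otimes1)(\,\cdot\,)(J_\vphi\otimes1)\big],
\]
which is a faithful normal semifinite trace on $(J_\vphi M^\vphi J_\vphi)\bar\otimes B(\K)$ exactly as in Proposition~\ref{prop:dim_well_defined} and Definition~\ref{defi:MvN_dim}. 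The key observation is that for \emph{any} $\pi$-invariant closed subspace $\K'\leq\H$, the restriction $v|_{\K'}$ is a standard intertwiner for the submodule $(\K',\pi)$ with $v|_{\K'}(v|_{\K'})^*=v[\K']v^*$, so that $\dim_{(M,\vphi)}(\K',\pi)=\Phi\big(v[\K']v^*\big)$. Since $v$ is a fixed isometry, conjugation $p\mapsto vpv^*$ is normal and carries suprema of increasing nets, infima of decreasing nets, and orthogonal sums of projections in $\pi(\<M,e_\vphi\>)'$ to the corresponding projections in $(J_\vphi M^\vphi J_\vphi)\bar\otimes B(\K)$. Thus each of (a)--(c) reduces to a standard continuity or additivity property of the trace $\Phi$ applied to the projections $p_i:=v[\H_i]v^*$.

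For (a), the subspace $\overline{\bigcup_i\H_i}$ has projection $\sup_i[\H_i]$, hence $v(\sup_i[\H_i])v^*=\sup_i p_i$, and normality of $\Phi$ gives $\Phi(\sup_i p_i)=\sup_i\Phi(p_i)$, which is the claim. For (c), I would first iterate Lemma~\ref{lem:dim_finite_sum} to obtain $\dim_{(M,\vphi)}(\bigoplus_{i\in F}\H_i,\pi)=\sum_{i\in F}\dim_{(M,\vphi)}(\H_i,\pi)$ for every finite $F\subset I$, and then apply part (a) to the increasing net of these finite direct sums, whose closed union is $\bigoplus_{i\in I}\H_i$; this yields $\dim_{(M,\vphi)}(\bigoplus_{i\in I}\H_i,\pi)=\sup_F\sum_{i\in F}\dim_{(M,\vphi)}(\H_i,\pi)=\sum_{i\in I}\dim_{(M,\vphi)}(\H_i,\pi)$.

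Part (b) is where the finiteness hypothesis is essential and is the main point to handle with care: infima of decreasing nets are continuous for a normal trace only after one arranges to subtract from a finite-trace element. I would fix $i_0$ with $\Phi(p_{i_0})=\dim_{(M,\vphi)}(\H_{i_0})<\infty$ and pass to the cofinal decreasing subnet $\{i\geq i_0\}$, for which $p_i\leq p_{i_0}$. Then $p_{i_0}-p_i$ is an increasing net of positive elements with supremum $p_{i_0}-\inf_i p_i=p_{i_0}-v(\inf_i[\H_i])v^*$, so normality of $\Phi$ gives $\Phi(p_{i_0})-\Phi(\inf_i p_i)=\sup_{i\geq i_0}\big(\Phi(p_{i_0})-\Phi(p_i)\big)$. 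Because $\Phi(p_i)\leq\Phi(p_{i_0})<\infty$ for $i\geq i_0$, all these quantities are finite and may be cancelled, leaving $\Phi(\inf_i p_i)=\inf_i\Phi(p_i)$; since $\inf_i[\H_i]$ is the projection onto $\bigcap_i\H_i$, this is precisely $\dim_{(M,\vphi)}(\bigcap_i\H_i,\pi)=\inf_i\dim_{(M,\vphi)}(\H_i,\pi)$.

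The only genuine obstacle is this finiteness bookkeeping in part (b); once the common intertwiner $v$ and the trace $\Phi$ are in place, the remaining content is a direct transcription of the finite von Neumann algebra arguments, with the traciality and normality of $\Phi$ playing the role that $\tau\otimes\Tr$ plays in the classical case.
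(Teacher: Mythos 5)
Your proposal is correct and follows essentially the same route as the paper: fix one standard intertwiner $v$ for the ambient module, note that its restrictions to the invariant subspaces are standard intertwiners with $v|_{\H_i}(v|_{\H_i})^*=v[\H_i]v^*$, and then invoke normality of $\vphi\otimes\Tr$ for (a), subtraction from the finite-dimension module $\H_{i_0}$ (via Lemma~\ref{lem:dim_finite_sum}) for (b), and finite additivity plus part (a) for (c). Your explicit passage to the cofinal subnet $\{i\geq i_0\}$ in (b) is a minor tidying of a step the paper leaves implicit; otherwise the two arguments coincide.
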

\begin{proof}
(a): Set $\mathcal{K} := \overline{ \bigcup_{i \in I}\mathcal{H}_i}$ and fix a standard intertwiner $v\colon\H\to L^2(M,\vphi)\otimes \mathcal{L}$ for $(\H,\pi)$. Note that $v_0:=v|_K$ and $v_i:=v|_{\H_i}$ are standard intertwiners for $(\K,\pi)$ and $(\H_i, \pi)$, respectively, with $v_0v_0^* = \sup_{i\in I} v_i v_i^*$. Hence
    \begin{align*}
        \dim_{(M,\vphi)}(\K,\pi) &= (\vphi\otimes \Tr)[(J_\vphi\otimes 1) v_0v_0^* (J_\vphi\otimes 1)] \\
        &= \sup_{i\in I}\, (\vphi\otimes \Tr)[(J_\vphi\otimes 1) v_iv_i^* (J_\vphi\otimes 1)]  = \sup_{i\in I}\, \dim_{(M,\vphi)}(\H_i,\pi)
    \end{align*}
by the normality of $\vphi\otimes \Tr$.\\
    
\noindent(b): For each $i\in I$ set $\K_i = \H_{i_0} \ominus \H_i$. For $\K:=\bigcup_{i\in I}\K_i$ we then have $\K= \H_{i_0}\ominus \bigcap_{i\in I} \H_i$. Since $(\K_i)_{i\in I}$ is an increasing net, part (a) and Lemma~\ref{lem:dim_finite_sum} imply
    \begin{align*}
        \dim_{(M,\vphi)}(\H_{i_0},\pi) - \dim_{(M,\vphi)}\left(\bigcap_{i\in I}\H_i,\pi \right) &= \dim_{(M,\vphi)}(\K,\pi) \\
        &= \sup_{i\in I}\, \dim_{(M,\vphi)}(\K_i,\pi)\\
        &= \dim_{(M,\vphi)}(\H_{i_0},\pi) - \inf_{i\in I}\, \dim_{(M,\vphi)}(\H_i,\pi)
    \end{align*}
The desired equality follows from the above after subtracting the finite quantity $\dim_{(M,\vphi)}(\H_{i_0},\pi)$.\\

\noindent(c): For each finite subset $F\Subset I$, denote $\K_F:= \bigoplus_{i\in F} \H_i$. Then $(\K_F)_{F\Subset I}$ is an increasing net with $\overline{\bigcup_{F\Subset I}\K_F} = \bigoplus_{i\in I} \H_i$. Part (a) and Lemma~\ref{lem:dim_finite_sum} then give
    \begin{align*}
        \dim_{(M,\vphi)}\left( \bigoplus_{i\in I} \H_i,\pi\right) = \sup_{F\Subset I} \dim_{(M,\vphi)}(\K_F,\pi) = \sup_{F\Subset I} \sum_{i\in F} \dim_{(M,\vphi)}(\H_i,\pi) = \sum_{i\in I} \dim_{(M,\vphi)}(\H_i,\pi),
    \end{align*}
as claimed.
\end{proof}

Note that if $\{(\H_i,\pi_i)\colon i\in I\}$ is a family of left $(M,\vphi)$-modules then 
    \[
        \dim_{(M,\vphi)}\left( \bigoplus_{i\in I} \H_i, \bigoplus_{i\in I} \pi_i \right) = \sum_{i\in I} \dim_{(M,\vphi)}(\H_i,\pi_i)
    \]
by Propositions~\ref{prop:bnded_opts_between_left_Hilbert_modules}.(c) and \ref{prop:net_of_submodules}.(c). In particular, for a fixed left $(M,\vphi)$-module $(\H,\pi)$ one has
    \[
        \dim_{(M,\vphi)}(\H^{\oplus n},\pi^{\oplus n}) = n \dim_{(M,\vphi)}(\H,\pi)
    \]
for all $n\in \N$.

We next show how Murray--von Neumann dimension over $(M,\vphi)$ behaves under compressions by projections $p\in M^\vphi$. Given that such projections might have $\vphi(p)=\infty$, we will not attempt to re-normalize $\vphi|_{pMp}$ in any way. The absence of such a re-normalization factor will cause the resulting formulas to differ from those in the tracial state case, but only superficially. Note also that in this case $(pMp)^{\vphi|_{pMp}} = pM^\vphi p$ and we can identify $\<pMp,e_{\vphi|_{pMp}}\>$ with $\<pMp, p e_\vphi\> \leq \<M,e_\vphi\>$. (Indeed, this identification follows from the proof of \cite[Lemma 2.4]{ILP98}, which as stated assumes requires that $pMp$ is a factor with separable predual but these hypotheses are not needed to prove the existence of an isomorphism $\<pMp,e_{\vphi|_{pMp}}\>\cong \<pMp, p e_\vphi\>$ that is the identity on $pMp$ and carries $e_{\vphi|_{pMp}}$ to $p e_\vphi$.) Thus any representation $\pi$ of $\<M,e_\vphi\>$ restricts to a representation of $\<pMp, e_{\vphi|_{pMp}}\>$, but for the remainder of this section we will abuse notation and simply write $\pi$ for this restriction.

\begin{prop}\label{prop:dimension_of_compressions}
Let $M$ be a von Neumann algebra equipped with a faithful normal strictly semifinite weight $\vphi$ and let $(\H,\pi)$ be a left $(M ,\vphi)$-module. Then for any non-zero projection $p\in M^\vphi$ one has
    \[
        \dim_{(pMp,\vphi|_{pMp})}(\pi(p)\H,\pi) = \dim_{(zMz, \vphi|_{zMz})}(\pi(z)\H, \pi),
    \]
where $z\in M^\vphi$ is the central support of $p$ in $M^\vphi$.
\end{prop}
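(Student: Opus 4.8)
The plan is to produce the required equality by constructing an explicit standard intertwiner for the $z$-compression out of one for the $p$-compression, exploiting the fact that $z$ is tiled by copies of subprojections of $p$. First I would record the comparison-theoretic input: since $z$ is the central support of $p$ in the semifinite algebra $M^\vphi$, a maximality argument based on the comparison theorem yields a family of partial isometries $\{w_k\}_{k\in K}\subset M^\vphi$ with $w_k^*w_k\leq p$, with $\{w_kw_k^*\}_{k\in K}$ pairwise orthogonal, and with $\sum_{k\in K} w_kw_k^*=z$. As each $w_k\in M^\vphi\subset \<M,e_\vphi\>$, the operator $\pi(w_k)$ is a partial isometry restricting to a unitary from $\pi(w_k^*w_k)\H\subseteq\pi(p)\H$ onto $\pi(w_kw_k^*)\H$, so that $\pi(z)\H=\bigoplus_{k\in K}\pi(w_kw_k^*)\H$ realizes the $z$-compression as assembled from pieces of the $p$-compression.

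Next, using Proposition~\ref{prop:characterization_abstract_modules} applied to $(pMp,\vphi|_{pMp})$, I would fix a standard intertwiner $v_p\colon \pi(p)\H\to L^2(pMp,\vphi|_{pMp})\otimes\K_p$, recalling that $L^2(pMp,\vphi|_{pMp})=pp'L^2(M,\vphi)$ with $p'=J_\vphi pJ_\vphi$ and that $J_{\vphi|_{pMp}}$ is the restriction of $J_\vphi$. Writing $B:=(J_\vphi\otimes 1)v_pv_p^*(J_\vphi\otimes 1)$, which lies in $pM^\vphi p\,\bar\otimes B(\K_p)=(pMp)^{\vphi|_{pMp}}\bar\otimes B(\K_p)$, Definition~\ref{defi:MvN_dim} gives $\dim_{(pMp,\vphi|_{pMp})}(\pi(p)\H,\pi)=(\vphi\otimes\Tr)(B)$. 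I would then define $V:=\bigoplus_{k\in K}(w_k\otimes 1)\,v_p\,\pi(w_k^*)$ on $\pi(z)\H$. Each summand is an isometry; the ranges lie in $L^2(zMz,\vphi|_{zMz})\otimes\K_p$ and are pairwise orthogonal (being supported under the orthogonal projections $w_kw_k^*$ acting on the left), so $V$ is a candidate standard intertwiner with $VV^*=\sum_k(w_k\otimes1)v_pv_p^*(w_k^*\otimes1)$.

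The crux is verifying that $V$ intertwines the whole of $\<zMz,ze_\vphi\>$, not merely $pMp$. Since this algebra is generated by $zMz$ together with the Jones projection $ze_\vphi$, it suffices to check $V\pi(x)=(x\otimes1)V$ for $x\in zMz$ and for $x=ze_\vphi$. For $x\in zMz$ I would insert $z=\sum_l w_lw_l^*$, use that $w_l^*xw_k\in pMp$ is intertwined by $v_p$, and collapse the sum via $w_kp=w_k$ and $\sum_l w_lw_l^*=z$; for $x=ze_\vphi$ one additionally uses that $e_\vphi=1_{\{1\}}(\Delta_\vphi)$ commutes with $M^\vphi$ (hence with each $w_k$) and that $w_l^*w_k=w_l^*(w_lw_l^*)(w_kw_k^*)w_k=0$ for $l\neq k$, which kills all cross terms. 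With $V$ confirmed to be a standard intertwiner, I would finish by conjugating: $(J_\vphi\otimes1)(w_k\otimes1)=(w_k'\otimes1)(J_\vphi\otimes1)$ with $w_k':=J_\vphi w_kJ_\vphi\in J_\vphi M^\vphi J_\vphi$, and since $w_k'$ commutes with $B\in M^\vphi\bar\otimes B(\K_p)$ one gets $(J_\vphi\otimes1)VV^*(J_\vphi\otimes1)=\sum_k B\,(w_k'w_k'^*\otimes1)=B\,(z'\otimes1)=B$, where $z'=J_\vphi zJ_\vphi=\sum_k w_k'w_k'^*$ and the last step uses that $B$ is already supported under $p'\leq z'$. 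Applying $\vphi\otimes\Tr$ then yields $\dim_{(zMz,\vphi|_{zMz})}(\pi(z)\H,\pi)=(\vphi\otimes\Tr)(B)=\dim_{(pMp,\vphi|_{pMp})}(\pi(p)\H,\pi)$.

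I expect the main obstacle to be precisely the impossibility of arguing in the naive direction: one would like to restrict a standard intertwiner of $(\H,\pi)$ down to the corner, but compression onto $L^2(pMp,\vphi|_{pMp})\otimes\K$ by the right-multiplication projection $p'\otimes1$ fails to be isometric, since $p'$ need not commute with $vv^*$ when $p$ is not central in $M^\vphi$. This is what forces the opposite, inductive construction above, and the two delicate points to get right are the $e_\vphi$-intertwining relation and the vanishing of the off-diagonal ($l\neq k$) contributions.
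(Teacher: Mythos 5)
Your argument is correct, and it is built on the same core input as the paper's proof: the family of partial isometries in $M^\vphi$ witnessing that $z$ is tiled by copies of subprojections of $p$, combined with the traciality of $\vphi$ on $M^\vphi$. The difference is that you run the construction in the opposite direction and on the opposite side. The paper starts from a standard intertwiner $v$ for the $z$-compression, expresses $(J_\vphi\otimes 1)vv^*(J_\vphi\otimes 1)$ as a matrix of entries $J_\vphi z x_{a,b} z J_\vphi$, and produces a standard intertwiner for the $p$-compression by post-composing with the \emph{right}-multiplication map $\xi\mapsto (J_\vphi w_i J_\vphi\xi)_i$; since $J_\vphi M^\vphi J_\vphi$ lies in the commutant of $\<M,e_\vphi\>$, equivariance of that map is automatic and the whole proof reduces to a matrix-entry trace computation. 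You instead go from $p$ to $z$, assembling $V=\sum_k (w_k\otimes 1)v_p\pi(w_k^*)$ with \emph{left} multiplications; this obliges you to verify the intertwining relation on the generators $zMz$ and $ze_\vphi$ by hand, and your verification is sound --- the cross terms die because $w_l^*w_k=0$ for $l\neq k$, and the identity $w_k^*e_\vphi=(pe_\vphi)w_k^*$ (valid since $e_\vphi=1_{\{1\}}(\Delta_\vphi)$ commutes with $M^\vphi$) handles the Jones projection --- after which the trace computation collapses cleanly to $B(z'\otimes 1)=B$ because $B\leq pp'\otimes 1$. Your closing remark about why naive compression by $p'\otimes 1$ fails for non-central $p$ is also correct, and is precisely why the paper's Remark~\ref{rem:computation_of_dimension_under_central_corner} records the compression formula only for the central projection $z$.
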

\begin{proof}
Let $v:\pi(z)\H \to L^2(zMz,\varphi) \otimes \K$ be a standard intertwiner for $(\pi(z)\H,\pi)$ as an $(zMz, \vphi|_{zMz})$-module. Since $vv^* \in (J_\vphi zM^\vphi z J_\vphi) \bar\otimes B(\K)$, if we fix a family of matrix units $\{e_{a,b}: a,b \in A\}$ for $B(\K)$ then there exists $x_{a,b} \in M^\vphi$ such that 
\[ 
    (1 \otimes e_{a,a})vv^*(1 \otimes e_{b,b}) = J_\vphi zx_{a,b}z J_\vphi \otimes e_{a,b}
\]
for all $a,b\in A$. That is, $vv^*$ can be identified with the matrix over $A$ whose $(a,b)$-entry is $J_\vphi zx_{a,b}z J_\vphi$ and consequently
    \begin{align}\label{eqn:dimension_under_central_corner} 
        \dim_{(zMz,\vphi|_{zMz})}(\pi(z)\H, \pi) = \sum_{a \in A} \varphi(zx_{a,a}z)=\sum_{a \in A} \varphi(x_{a,a}z).
    \end{align}

Now, since $z$ is the central support of $p$ in $M^\vphi$, there exists a family of partial isometries $\{ w_i \colon i\in I\} \subset M^\vphi$ satisfying $w_iw_i^* \leq p$ and  $\sum_{i\in I} w_i^*w_i=z$. We can then define an isometry as follows:
\begin{align*}
    w:pL^2(zMz,\varphi) &\to  \bigoplus_{i \in I} J_\vphi w_iw_i^*J_\vphi L^2(pMp,\varphi)\\
      \xi &\mapsto (J_\varphi w_i J_\vphi \xi)_{i \in I}.
\end{align*} 
Since $p \in zM^\vphi z$, we have that $v\pi(p) = (p \otimes 1)v$ and  $v\pi(p):\pi(p)\H \to pL^2(z M z, \vphi) \otimes \K$ is the restriction of an isometry and hence an isometry itself. Noting that $w$ is $\< pMp, pe_\vphi\>$-equivariant, we see that $(w\otimes 1)v \pi(p)=((wp)\otimes 1)v$ is a standard intertwiner for $(\pi(p)\H, \pi)$. Moreover, $((wp)\otimes 1)vv^* ((pw^*)\otimes 1)$ can be identified with the matrix over $I\times A$ whose $((i,a),(j,b))$-entry is
    \[
        (J_\vphi w_i J_\vphi p)J_\vphi zx_{a,b}zJ_\vphi (p J_\vphi w_j^* J_\vphi )=J_\vphi w_i x_{a,b} w_j^* J_\vphi p.
    \]
Identifying the above as an operator on $L^2(pMp,\vphi)$ it equals $J_\vphi w_i x_{a,b} w_j^* J_\vphi$ and thus
\begin{align*}
    \dim_{(pMp, \vphi|_{pMp})}(\pi(p)\H, \pi) &= \sum_{(i,a) \in I\times A} \vphi(w_i x_{a,a} w_i^*)\\
    &=\sum_{(i,a) \in I \times A} \vphi(x_{a,a} w_i^*w_i)\\
    &= \sum_{ a\in A} \vphi(x_{a,a}z)=\dim_{(zMz,\vphi|_{zMz})}(\pi(z)\H, \pi),
\end{align*}
where we have used $w_i\in M^\vphi$ in the second equality.
\end{proof}

\begin{rem}\label{rem:computation_of_dimension_under_central_corner}
Note that (\ref{eqn:dimension_under_central_corner}) in the proof of the previous proposition gives
    \[
        \dim_{(pMp,\vphi|_{pMp})}(\pi(p)\H,\pi) = \dim_{(zMz, \vphi|_{zMz})}(\pi(z)\H, \pi) = (\varphi\otimes \Tr)\left[(J_\vphi\otimes 1)vv^* (J_\vphi\otimes 1)(z\otimes 1)\right],
    \]
for any standard intertwiner $v\colon \H\to L^2(M,\vphi)\otimes \K$. This computation will be useful later in the article.$\hfill\blacksquare$
\end{rem}

The compression formula in Proposition~\ref{prop:dimension_of_compressions} also yields the following amplification formula. 

\begin{prop}\label{prop:amplification_formula}
Let $M$ be a von Neumann algebra equipped with a faithful normal strictly semifinite weight $\vphi$. Given a left $(M,\vphi)$-module $(\H,\pi)$ and a Hilbert space $\K$, one has that $(\H\otimes \K,\pi\otimes1)$ is a left $(M\otimes B(\K), \vphi\otimes \Tr)$-module with
    \[
        \dim_{(M\bar\otimes B(\K), \vphi\otimes \Tr)}(\H\otimes \K,\pi\otimes1) = \dim_{(M,\vphi)}(\H,\pi).
    \]
\end{prop}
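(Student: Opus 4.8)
The plan is to reduce the amplification statement to the already-established compression formula of Proposition~\ref{prop:dimension_of_compressions}. The key observation is that amplifying by $\K$ and compressing by a projection in the centralizer are inverse operations in a precise sense: if I first amplify $M$ to $M\bar\otimes B(\K)$ and then compress by a minimal projection $1\otimes q$ for a rank-one $q\in B(\K)$, I should recover $M$ with the original weight. So the strategy is to identify the relevant basic construction $\<M\bar\otimes B(\K), e_{\vphi\otimes\Tr}\>$, verify that $(\H\otimes\K,\pi\otimes 1)$ is indeed a left $(M\bar\otimes B(\K),\vphi\otimes\Tr)$-module, and then apply the compression formula to relate its dimension back to $\dim_{(M,\vphi)}(\H,\pi)$.

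First I would check the module structure. Since $(M\bar\otimes B(\K))^{\vphi\otimes\Tr} = M^\vphi\bar\otimes B(\K)$ (the centralizer of a tensor product weight is the tensor product of centralizers, using that $\Tr$ is tracial so $B(\K)^{\Tr}=B(\K)$), and the modular operator factorizes as $\Delta_{\vphi\otimes\Tr}=\Delta_\vphi\otimes 1$, one has $\<M\bar\otimes B(\K),e_{\vphi\otimes\Tr}\> = \<M,e_\vphi\>\bar\otimes B(\K)$ by Proposition~\ref{prop:basic_construction}(a), and $e_{\vphi\otimes\Tr}=e_\vphi\otimes 1_{\text{corner}}$ appropriately. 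Then $\pi\otimes 1$ is a genuine representation of this algebra on $\H\otimes\K$, establishing that $(\H\otimes\K,\pi\otimes1)$ is a left $(M\bar\otimes B(\K),\vphi\otimes\Tr)$-module.

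Next, for the dimension computation, I would fix a standard intertwiner $v\colon\H\to L^2(M,\vphi)\otimes\K_0$ for $(\H,\pi)$. Using $L^2(M\bar\otimes B(\K),\vphi\otimes\Tr)\cong L^2(M,\vphi)\otimes\HS(\K)$ (Hilbert--Schmidt operators), where the left and right $M\bar\otimes B(\K)$-actions and the modular conjugation $J_{\vphi\otimes\Tr}=J_\vphi\otimes *$ are all explicit, I would build a standard intertwiner $\tilde v$ for $(\H\otimes\K,\pi\otimes1)$ out of $v$. Concretely, fixing a unit vector $\eta\in\K$ and the associated rank-one projection, the amplified module should be isometrically identified with a corner, and applying Remark~\ref{rem:computation_of_dimension_under_central_corner} / Proposition~\ref{prop:dimension_of_compressions} with $p=1\otimes q$ (a minimal projection whose central support is $1$, since $B(\K)$ is a factor) yields
    \[
        \dim_{(M\bar\otimes B(\K),\vphi\otimes\Tr)}(\H\otimes\K,\pi\otimes 1) = \dim_{((1\otimes q)(M\bar\otimes B(\K))(1\otimes q),\,(\vphi\otimes\Tr)|)}\bigl(\pi(1\otimes q)(\H\otimes\K),\pi\bigr),
    \]
and the corner on the right is canonically isomorphic to $(M,\vphi)$ with the compressed module isomorphic to $(\H,\pi)$, giving the claimed equality.

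The main obstacle I anticipate is bookkeeping the identification of $L^2(M\bar\otimes B(\K),\vphi\otimes\Tr)$ with $L^2(M,\vphi)\otimes\HS(\K)$ and tracking how $J_{\vphi\otimes\Tr}$, the projection $e_{\vphi\otimes\Tr}$, and the spectral data of $\Delta_{\vphi\otimes\Tr}$ decompose under this isomorphism—particularly when $\K$ is infinite-dimensional, so that $\vphi\otimes\Tr$ genuinely has infinite weight on the identity and $\HS(\K)$ is the correct (rather than $B(\K)$) completion. The invariance of the dimension under the choice of standard intertwiner (Proposition~\ref{prop:dim_well_defined}) means I am free to choose the most convenient $\tilde v$, which should keep the computation manageable; the conceptual content is entirely carried by the compression formula, with the rest being the careful verification that amplification is compression by a minimal central-support projection in reverse.
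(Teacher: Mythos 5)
Your proposal is correct and follows essentially the same route as the paper: identify $\<M\bar\otimes B(\K), e_{\vphi\otimes\Tr}\>$ with $\<M,e_\vphi\>\bar\otimes B(\K)$ to get the module structure, then apply the compression formula of Proposition~\ref{prop:dimension_of_compressions} to a minimal projection $1\otimes q$ with full central support and identify the resulting corner with $(M,\vphi)$ and the compressed module with $(\H,\pi)$. The extra bookkeeping you anticipate with $\HS(\K)$ and explicit standard intertwiners is not needed once the compression formula is invoked.
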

\begin{proof}
Noting that
    \[
        (M\bar\otimes B(\K))^{\vphi\otimes \Tr} = M^\vphi \bar\otimes B(\K),
    \]
it follows that $\<M\bar\otimes B(\K),e_{\vphi\otimes \Tr}\> = \<M,e_\vphi\>\bar\otimes B(\K)$. Consequently, $(\H\otimes \K,\pi\otimes 1)$ is indeed a left $(M\bar\otimes B(\K),\vphi\otimes \Tr)$-module. Now for a minimal projection $e\in B(\K)$, the central support of $1\otimes e$ in $M^\vphi \bar\otimes B(\K)$ is $1\otimes 1$ and hence Proposition~\ref{prop:dimension_of_compressions} implies
    \[
        \dim_{(M\bar\otimes B(\K),\vphi\otimes \Tr)}(\H\otimes\K, \pi\otimes 1) = \dim_{( M \otimes \C e, \vphi\otimes \Tr|_{M \otimes \C e})}( \H\otimes (e\K), \pi\otimes 1).
    \]
The claimed formula then holds by identifying $( M \otimes \C e, \vphi\otimes \Tr|_{M \otimes \C e})\cong (M,\vphi)$, under which we have the identification $( \H\otimes (e\K), \pi\otimes 1) \cong (\H,\pi)$.
\end{proof}

The inclusion $\<M,e_\vphi\> \cong \<M,e_\vphi\>\otimes \C\subset \<M\bar\otimes B(\K),e_{\vphi\otimes \Tr}\>$ established in the previous proof implies we can regard $(\H\otimes \K, \pi\otimes 1)$ as an $(M,\vphi)$-module. In particular, for recall from the discussion following Proposition~\ref{prop:net_of_submodules} that for $\K=\C^n$ one has
    \[
        \dim_{(M,\vphi)}(\H\otimes \C^n , \pi\otimes 1) = \dim_{(M,\vphi)}(\H^{\oplus n}, \pi^{\oplus n}) = n \dim_{(M,\vphi)}(\H,\pi).
    \]
Thus Proposition~\ref{prop:amplification_formula} implies
    \[
        \dim_{(M,\vphi)}(\H\otimes \C^n, \pi\otimes 1) = n \dim_{(M\otimes M_n(\C), \vphi \otimes \Tr)}(\H\otimes \C^n, \pi\otimes 1).
    \]
Note that the factor of $n$ above can be identified with the index of the inclusion $M\otimes \C \subset M\otimes M_n(\C)$ (with respect to usual conditional expectation $1\otimes (\frac1n\Tr)$). We will generalize this in Proposition~\ref{prop:finite_index_amplification_formula} below.

Our next result demonstrates how to use actions of locally compact groups to induce modules over the associated crossed product. Note this can be used to give an alternate proof of the above amplification formula by using a trivial action of a discrete group $\Gamma$ with $\ell^2(\Gamma)\cong \K$, iterating with the dual action of $\widehat{\Gamma}$, and appealing to Takesaki duality (see \cite{Tak73}).

\begin{prop}\label{prop:crossed_product_modules}
Let $M$ be a von Neumann algebra equipped with a faithful normal strictly semifinite weight $\vphi$. Suppose $G\overset{\alpha}{\curvearrowright} M$ be an action of a locally compact group and let $\widetilde{\vphi}$ be the weight on $M\rtimes_\alpha G$ dual to $\vphi$. Then every left $(M,\vphi)$-module $(\H,\pi)$ induces a left $(M\rtimes_\alpha G, \widetilde{\vphi})$-module $(L^2(G)\otimes \H, \widetilde{\pi})$ satisfying
    \[
        [\widetilde{\pi}(x) \xi](s) = \pi(\alpha_{s^{-1}}(x))\xi(s) \qquad \text{ and  } \qquad [\widetilde{\pi}(\lambda(t))\xi](s) = \xi(t^{-1} s)
    \]
for all $x\in M$, $s,t\in G$, and $\xi\in L^2(G)\otimes \H$. When $G$ is discrete, one also has
    \[
        \dim_{(M\rtimes_\alpha G,\widetilde{\vphi})}(L^2(G)\otimes \H, \widetilde{\pi}) = \dim_{(M,\vphi)}(\H,\pi).
    \]
\end{prop}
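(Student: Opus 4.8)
The plan is to realize $(L^2(G)\otimes\H,\widetilde\pi)$ through a standard intertwiner manufactured from one for $(\H,\pi)$, invoking the isometry characterization of Proposition~\ref{prop:characterization_abstract_modules}.\ref{part:isom}. Fix a standard intertwiner $v\colon\H\to L^2(M,\vphi)\otimes\K$ for $(\H,\pi)$ and recall that the GNS space of the dual weight is $L^2(M\rtimes_\alpha G,\widetilde\vphi)\cong L^2(G)\otimes L^2(M,\vphi)$, on which $x\in M$ acts by $(\pi_{\widetilde\vphi}(x)\eta)(s)=\alpha_{s^{-1}}(x)\eta(s)$ and $\lambda(t)$ acts by left translation. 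I would set $\widetilde v:=1_{L^2(G)}\otimes v\colon L^2(G)\otimes\H\to L^2(G)\otimes L^2(M,\vphi)\otimes\K$ and check that $\widetilde v$ intertwines the representation defined by the displayed formulas with $\pi_{\widetilde\vphi}$; this is immediate on the generators $M$ and $\lambda(G)$, using $v\pi(\alpha_{s^{-1}}(x))=(\alpha_{s^{-1}}(x)\otimes1)v$. It then remains to verify the hypotheses of Proposition~\ref{prop:characterization_abstract_modules}.\ref{part:isom}, and since $\widetilde v\widetilde v^*=1\otimes vv^*$ this amounts to showing $1\otimes vv^*$ commutes with $\<M\rtimes_\alpha G,e_{\widetilde\vphi}\>\otimes\C$.

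By Proposition~\ref{prop:basic_construction}(a), $\<M\rtimes_\alpha G,e_{\widetilde\vphi}\>$ is generated by $M\rtimes_\alpha G$ and $\{\Delta_{\widetilde\vphi}^{it}\}$. Commutation with $M\rtimes_\alpha G$ is easy: $vv^*\in(J_\vphi M^\vphi J_\vphi)\bar\otimes B(\K)\subset M'\bar\otimes B(\K)$ commutes with each fibre operator $\alpha_{s^{-1}}(x)\otimes1_\K$, and $1\otimes vv^*$ trivially commutes with the translations. For the modular unitaries I would invoke the modular theory of the dual weight: $\Delta_{\widetilde\vphi}^{it}$ decomposes over $G$ into fibre operators of the form $\delta_G(s)^{it}\,[D(\vphi\circ\alpha_s):D\vphi]_t\,\Delta_\vphi^{it}$, each of which lies in $\<M,e_\vphi\>=(M\cup\{\Delta_\vphi^{it}\})''$. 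Since $vv^*\in(J_\vphi M^\vphi J_\vphi)\bar\otimes B(\K)=(\<M,e_\vphi\>\otimes\C)'$, it commutes with every such fibre, whence $1\otimes vv^*$ commutes with $\Delta_{\widetilde\vphi}^{it}\otimes1$. Proposition~\ref{prop:characterization_abstract_modules} then produces $\widetilde\pi$ with standard intertwiner $\widetilde v$, and the displayed covariance formulas hold because $\widetilde v$ is an isometry intertwining them with $\pi_{\widetilde\vphi}$.

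For the dimension formula (now $G$ discrete) I would exploit that the dual weight is $\widetilde\vphi=\vphi\circ E$, where $E\colon M\rtimes_\alpha G\to M$ is the canonical conditional expectation with $E\circ\pi=\mathrm{id}_M$; in particular $\widetilde\vphi\circ\pi=\vphi$, and $\widetilde\vphi$ is strictly semifinite since the projections witnessing strict semifiniteness of $\vphi$ lie in $\pi(M^\vphi)\subset(M\rtimes_\alpha G)^{\widetilde\vphi}$, so the framework applies. Using $\widetilde v=1\otimes v$, the operator $(J_{\widetilde\vphi}\otimes1)(1\otimes vv^*)(J_{\widetilde\vphi}\otimes1)$ is diagonal over $G$ (because $1\otimes vv^*$ is and $J_{\widetilde\vphi}$ preserves diagonality) and lies in $(M\rtimes_\alpha G)^{\widetilde\vphi}\bar\otimes B(\K)$, hence in $\pi(M^\vphi)\bar\otimes B(\K)$, as the only diagonal elements of $M\rtimes_\alpha G$ are those of $\pi(M)$. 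Computing its $e$-fibre through $J_{\widetilde\vphi}|_{\delta_e\otimes(\cdot)}=J_\vphi$ identifies it as $(\pi\otimes\mathrm{id})(y)$ with $y=(J_\vphi\otimes1)vv^*(J_\vphi\otimes1)\in M^\vphi\bar\otimes B(\K)$, and therefore
\[
    \dim_{(M\rtimes_\alpha G,\widetilde\vphi)}(L^2(G)\otimes\H,\widetilde\pi)=(\widetilde\vphi\otimes\Tr)\big[(\pi\otimes\mathrm{id})(y)\big]=(\vphi\otimes\Tr)(y)=\dim_{(M,\vphi)}(\H,\pi),
\]
where the middle equality is $\widetilde\vphi\circ\pi=\vphi$ tensored with $\Tr$.

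The main obstacle is the modular theory of the dual weight: pinning down the fibrewise action over $G$ of $\Delta_{\widetilde\vphi}^{it}$ (and, for the dimension step, of $J_{\widetilde\vphi}$). I would either cite the standard references for dual weights or derive what is needed from the GNS picture, where $S_{\widetilde\vphi}(\delta_h\otimes\Lambda_\vphi(a))=\delta_{h^{-1}}\otimes\Lambda_\vphi(\alpha_h(a^*))$ for $a\in\sqrt{\dom}(\vphi)$; polar decomposition then reads off that the $h$-fibre of $\Delta_{\widetilde\vphi}^{it}$ is $\Delta(\vphi\circ\alpha_h,\vphi)^{it}=[D(\vphi\circ\alpha_h):D\vphi]_t\Delta_\vphi^{it}$ and that the $e$-fibre of $J_{\widetilde\vphi}$ is $J_\vphi$. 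The only genuinely delicate bookkeeping is the Connes cocycles and, for non-unimodular $G$, the modular-function factor $\delta_G(s)^{it}$; but these never leave $\<M,e_\vphi\>$, so they do not disturb the commutation argument.
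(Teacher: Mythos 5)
Your proposal is correct and follows essentially the same route as the paper: both take $1\otimes v$ as the standard intertwiner, reduce everything to the modular theory of the dual weight on $L^2(G)\otimes L^2(M,\vphi)$, and obtain the dimension formula from $\widetilde{\vphi}\circ\pi = \vphi$. The only tactical difference is that you check commutation of $1\otimes vv^*$ with the generators $M\rtimes_\alpha G$ and $\{\Delta_{\widetilde{\vphi}}^{it}\}$ via the fibrewise decomposition of $\Delta_{\widetilde{\vphi}}^{it}$ (Connes cocycles times $\Delta_\vphi^{it}$, all inside $\<M,e_\vphi\>$), whereas the paper conjugates once by the explicit formula $J_{\widetilde{\vphi}}=U^*(V\otimes J_\vphi)$ from Takesaki to land directly in $(M\rtimes_\alpha G)^{\widetilde{\vphi}}\bar\otimes B(\K)$, a single computation that is then reused verbatim for the dimension count.
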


\begin{proof}
Let $v\colon \H\to L^2(M,\vphi)\otimes \K$ be a standard intertwiner for $(\H,\pi)$. We will show that $1\otimes v \colon L^2(G)\otimes \H\to L^2(G)\otimes L^2(M,\vphi)\otimes \K$ gives a standard intertwiner for $(L^2(G)\otimes \H, \widetilde{\pi})$ (recall that one of the properties of the dual weight is that $L^2(M\rtimes_\alpha G, \widetilde{\vphi})\cong L^2(G)\otimes L^2(M,\vphi)$). First, let $G\overset{u}{\curvearrowright} L^2(M,\vphi)$ be the unitary implementation of $G\overset{\alpha}{\curvearrowright} M$: $u_s x u_s^* = \alpha_s(x)$ for $x\in M$, and $u_s J_\vphi u_s^* = J_\vphi$ (see \cite[Theorem IX.1.15]{Tak03}). We can then define a unitary $U\in B(L^2(G)\otimes L^2(M,\vphi))$ by $ [U\xi](s) = u_s \xi(s)$, so that
    \[
        M\rtimes_\alpha G = (L(G)\otimes \C) \vee U^*(\C\otimes M) U \subset B(L^2(G)\otimes L^2(M,\vphi)).
    \]
Also define a unitary $V\in B(L^2(G))$ by $[Vf](s) = \Delta_G(s)^{-\frac12} f(s^{-1})$, where $\Delta_G\colon G\to \R_+$ is the modular function. Then $V^*=V$ and $(V\otimes 1)U=U^*(V\otimes 1)$. With this notation, the modular conjugation for the dual weight $\widetilde{\vphi}$ is given by
    \[
        J_{\widetilde{\vphi}} = U^*(V\otimes J_\vphi) = (V\otimes J_\vphi) U
    \]
(see \cite[Lemma X.1.13]{Tak03}). Consequently, we have
    \begin{align}\label{eqn:lifted_std_intertwiner}
        (J_{\widetilde{\vphi}}\otimes 1)(1\otimes vv^*)(J_{\widetilde{\vphi}}\otimes 1) = (U^*\otimes 1)(1\otimes \left[ (J_\vphi\otimes 1)vv^*(J_\vphi\otimes 1)\right])(U\otimes 1).
    \end{align}
Since $U^*(\C\otimes M)U$ is the copy of $M$ inside $M\rtimes_\alpha G$, since $U^*(\C\otimes M^\vphi)U \subset (M\rtimes_\alpha G)^{\widetilde{\vphi}}$ by \cite[Theorem X.1.17.(ii)]{Tak03}, and since $(J_\vphi\otimes 1) vv^* (J_\vphi\otimes 1) \in M^\vphi\bar\otimes B(\K)$, we see that the above element lies in $(M\rtimes_\alpha G)^{\widetilde{\vphi}}\otimes B(\K)$. Therefore $1\otimes vv^*$ commutes with $\<M\rtimes_\alpha G, e_{\widetilde{\vphi}}\>\otimes \C$ and so we can define a representation
    \begin{align*}
    \widetilde{\pi}: \langle M \rtimes_\alpha G, e_{\tilde{\varphi}} \rangle &\to B(L^2(G) \otimes \H)\\
      x &\mapsto (1 \otimes v^*)(x\otimes 1) (1 \otimes v),
\end{align*}
for which $1\otimes v$ is a standard intertwiner. (Note that $x\otimes 1$ is supported in the first two tensor factors of $L^2(G)\otimes L^2(M,\vphi)\otimes \K$, while $1\otimes v$ is valued in the second two tensor factors.) Observe that for $x\in M$, $s,t\in G$, and  $\xi\in L^2(G)\otimes \H$ one has
    \[
        [\widetilde{\pi}(x)\xi](s) = v^*(\alpha_{s^{-1}}(x)\otimes 1) v \xi(s) = \pi(\alpha_{s^{-1}}(x)) \xi(s),
    \]
and
    \[
        [\widetilde{\pi}(\lambda(t))\xi](s) = v^* v \xi(t^{-1}s) = \xi(t^{-1}s),
    \]
as claimed.

Finally, if $G$ is discrete then we recall that $\widetilde{\vphi}|_M=\vphi$. More precisely, $\widetilde{\vphi}(U^*(1\otimes x)U) = \varphi(x)$ for $x\in M$. So it follows from this and (\ref{eqn:lifted_std_intertwiner}) above that
    \begin{align*}
        \dim_{(M\rtimes_\alpha G, \widetilde{\vphi})}(L^2(G)\otimes \H, \widetilde{\pi}) &= \widetilde{\vphi}\otimes \Tr\left[ (J_{\widetilde{\vphi}}\otimes 1)(1\otimes vv^*)(J_{\widetilde{\vphi}}\otimes 1)\right]\\
            &= \widetilde{\vphi}\otimes \Tr\left[ (U^*\otimes 1)(1\otimes \left[ (J_\vphi\otimes 1)vv^*(J_\vphi\otimes 1)\right])(U\otimes 1)\right]\\
            &= \varphi\otimes \Tr\left[(J_\vphi\otimes 1)vv^*(J_\vphi\otimes 1)\right]\\
            &= \dim_{(M,\vphi)}(\H,\pi),
    \end{align*}
as claimed.
\end{proof}

\begin{rem}
We have not attempted to describe $\widetilde{\pi}(e_{\widetilde{\vphi}})$ in the previous proposition because, while $\Delta_{\widetilde{\vphi}}$ can be computed in terms of the modular operators for $\vphi$ and $\vphi\circ \alpha_s$ (see \cite[Lemma X.1.13]{Tak03}), in general the spectral projection $e_{\widetilde{\vphi}}$ is not so easy to describe in terms of $e_\vphi$. However, if $\vphi\circ \alpha_s = \delta(s) \vphi$ for some continuous homomorphism $\delta\colon G\to \R_+$, then $\Delta_{\widetilde{\vphi}} = (\delta \Delta_G )\otimes \Delta_\vphi$ and
    \[
        \widetilde{\pi}(e_{\widetilde{\vphi}}) = \int_{\R_+} 1_{\{s\in G\colon \delta(s)\Delta_G(s)=\frac1t\}}\ d(1\otimes \pi(P))(t),
    \]
where $P(E)=1_{E}(\Delta_\vphi)$ is a projection-valued measure for $\Delta_\vphi$. In particular, if $\vphi$ is almost periodic then one has
    \[
        \widetilde{\pi}(e_{\widetilde{\vphi}}) =\sum_{\lambda\in \Sd(\vphi)} 1_{\{s\in G\colon \delta(s)\Delta_G(s)=\frac1\lambda\}}\otimes \pi(1_{\{\lambda\}}(\Delta_\vphi)),
    \]
or if $G$ is unimodular and $\delta\equiv 1$ then $\widetilde{\pi}(e_{\widetilde{\vphi}})=1\otimes \pi(e_\vphi)$.$\hfill\blacksquare$
\end{rem}

We are grateful to Dima Shlyakhtenko for bringing our attention to the following example, which shows that the continuous core can be used to compute our Murray--von Neumann dimension by way of the previous proposition.

\begin{ex}
Let $M$ be a von Neumann algebra equipped with a faithful normal strictly semifinite weight $\vphi$. For the action $\R\overset{\sigma^\vphi}{\curvearrowright}M$ of the modular automorphism group, the associated crossed product $c(M):=M\rtimes_{\sigma^\vphi} \R$ is called the \emph{continuous core} of $M$. If we let $C_c(\R,M)$ denote the continuous compactly supported functions from $\R$ to $M$ equipped with the $\sigma$-strong* topology, then $c(M)$ admits a faithful normal semifinite tracial weight $\tau$ satisfying
    \[
        \tau(\lambda(f)^* \lambda(f)) = \int_\R \vphi\left(\hat{f}(s)^* \hat{f}(s)\right) e^{2\pi s}\ ds
    \]
for $f\in C_c(\R,M)$, where
    \[
        \lambda(f)=\int_{\R} \lambda(s) f(s)\ ds \qquad \text{ and } \qquad \hat{f}(s) = \int_{\R} e^{-2\pi i st} f(t)\ dt.
    \]
(see \cite[Section 2.1]{Kos98}; note the different choice of Fourier transform results in a discrepancy in the formulas). In particular, for $f\in C_c(\R)$ and $x\in M_+$ one has
    \[
        \tau(\lambda(f)x \lambda(f)^*)=\tau(x^{\frac12}\lambda(f)^* \lambda(f) x^{\frac12}) = \vphi(x) \int_\R |\hat{f}(s)|^2 e^{2\pi s}\ ds.
    \]
Recall that conjugation by the Fourier transform identifies $L(\R)\cong L^\infty(\R)$. So, if we let $p\in L(\R)$ be the projection identified with $1_{[0,s_0]}\in L^\infty(\R)$ for $s_0:=\frac{1}{2\pi}(\log(2\pi)+1)$, then the above equation implies
    \begin{align}\label{eqn:recover_weight_in_core_corner}
        \tau(p x p) = \vphi(x)
    \end{align}
for all $x\in M_+$.

Now, suppose $(\H,\pi)$ is a left $(M,\vphi)$-module with standard intertwiner $v\colon \H\to L^2(M,\vphi)\otimes \K$ so that $(L^2(\R)\otimes \H, \widetilde{\pi})$ is a left $(c(M), \tilde{\vphi})$-module with standard intertwiner $1\otimes v$ by Proposition~\ref{prop:characterization_abstract_modules} (and its proof), where $\widetilde{\vphi}$ is the weight dual to $\vphi$. Define a strongly continuous unitary representation $\rho\colon \R\to \mathcal{U}(L^2(\R)\otimes\H)$ by
    \[
        [\rho_t \xi](s) = \pi(\Delta_\vphi^{-it})\xi(s-t).
    \]
A few straightforward computations reveal that $\rho$ commutes with $\tilde{\pi}$ and that
    \[
        (1\otimes v) \rho_t (1\otimes v^*) = (J_{\widetilde{\vphi}}\otimes 1) (\lambda(t)^*\otimes  1)(J_{\widetilde{\vphi}}\otimes 1)
    \]
for all $t\in \R$. Thus if $p\in L(\R)\subset c(M)$ is as above, then $q:=(1\otimes v^*) (J_{\widetilde{\vphi}}\otimes 1) (p\otimes  1)(J_{\widetilde{\vphi}}\otimes 1)(1\otimes v) $ lies in the commutant of $\widetilde{\pi}(\<c(M),e_{\widetilde{\vphi}}\>)$. We can therefore view $(q(L^2(\R)\otimes\H), \widetilde{\pi})$ as a left $(c(M),\widetilde{\vphi})$-submodule, and, in particular, it is a left $(c(M),\tau)$-module. Since $L^2(c(M),\widetilde{\vphi})\cong L^2(c(M),\tau)$ as left $c(M)$-modules, it follows that
    \begin{align*}
        \dim_{(c(M),\tau)}(q(L^2(\R)\otimes\H), \widetilde{\pi}) &=(\tau\otimes \Tr)\left[(J_{\widetilde{\vphi}}\otimes 1)(1\otimes v)q(1\otimes v^*) (J_{\widetilde{\vphi}}\otimes 1)\right]\\
            &=(\tau\otimes \Tr)\left[(p\otimes 1)(J_{\widetilde{\vphi}}\otimes 1)(1\otimes vv^*) (J_{\widetilde{\vphi}}\otimes 1)(p\otimes 1)\right]\\
            &=(\vphi\otimes \Tr)\left[(J_\vphi\otimes 1) vv^* (J_\vphi\otimes 1)\right] = \dim_{(M,\vphi)}(\H,\pi),
    \end{align*}
where we have used (\ref{eqn:lifted_std_intertwiner}) and (\ref{eqn:recover_weight_in_core_corner}) in the last equality.$\hfill\blacksquare$
\end{ex}

\begin{rem}
The computation in the previous example suggests a potential definition for Murray--von Neumann dimension with respect to \emph{integrable} weights on properly infinite von Neumann algebras, which are never strictly semifinite but share many of the same properties (see \cite[Chapter II]{CT77}). A faithful normal semifinite weight $\vphi$ on $M$ is said to be integrable if the faithful normal operator-valued weight from $M$ to $M^\vphi$ given by
    \[
        \E_\vphi(x):=\int_\R \sigma_t^{\vphi}(x)\ dt \qquad x\in M_+,
    \]
is semifinite. When $M$ is a type $\mathrm{III}_1$ factor (so that the continuous core $c(M)$ is itself a factor), a particularly nice example is the dual weight $\vphi:=\widetilde{\tau}$ to a faithful normal semifinite tracial weight $\tau$ on $c(M)$, which is also known as a \emph{dominant} weight. For this weight one has $M^\vphi\cong c(M)$ and $(M\cup \{\Delta_\vphi^{it}\colon t\in \R\})''\cong c(M)$, and so the Murray--von Neumann dimension theory for modules over $c(M)$ could be used to define such a theory for $(M,\vphi)$-modules in analogy with Theorem~\ref{thm:dimension_amplification_compression_formula}. $\hfill\blacksquare$
\end{rem}

The last property that we highlight is the ability to detect cyclic vectors and separating vectors using Murray--von Neumann dimension. This is analogous to the behavior for coupling constants of finite von Neumann algebras (see \cite[Proposition V.3.13]{Tak02}).

\begin{thm}
Let $M$ be a von Neumann algebra with an extremal faithful normal state $\vphi$, and let $(\H,\pi)$ be a left $(M,\vphi)$-module.
    \begin{enumerate}[label=(\alph*)]
    \item $\dim_{(M,\vphi)}(\H,\pi) \leq 1$ if and only if $\pi(e_\vphi)\H$ contains a cyclic vector for $M$.

    \item $\dim_{(M,\vphi)}(\H,\pi) \geq 1$ if and only if $\pi(e_\vphi)\H$ contains a separating vector for $M$. 
    \end{enumerate}
\end{thm}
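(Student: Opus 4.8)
The plan is to reduce both statements to the classical Murray--von Neumann coupling constant theory for the finite factor $M^\vphi$, via compression by $e_\vphi$. Since $\vphi$ is a faithful normal \emph{state}, its restriction $\vphi|_{M^\vphi}$ is a faithful normal tracial state, so $M^\vphi$ is a finite factor (type $\mathrm{I}_n$ or $\mathrm{II}_1$) and $e_\vphi L^2(M,\vphi)=L^2(M^\vphi,\vphi)$ is its standard form. Compressing turns $(\H,\pi)$ into a left $M^\vphi$-module: the corner $e_\vphi\<M,e_\vphi\>e_\vphi=M^\vphi e_\vphi\cong M^\vphi$ acts on $\pi(e_\vphi)\H$, the relation $\tau(y e_\vphi)=\vphi(y)$ from Proposition~\ref{prop:basic_construction} identifies its trace with $\vphi|_{M^\vphi}$, and faithfulness of this module rep follows since $M^\vphi$ is a factor and $\pi(e_\vphi)\neq 0$. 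Note $\hat 1\in L^2(M^\vphi,\vphi)$ is a cyclic and separating vector for $M$ on $L^2(M,\vphi)$ precisely because $\vphi$ is a faithful state, and this will be the key to the separating direction.

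First I would establish the dimension identity $\dim_{(M,\vphi)}(\H,\pi)=\dim_{M^\vphi}(\pi(e_\vphi)\H)$, where the right side is the usual coupling constant. Fixing a standard intertwiner $v\colon\H\to L^2(M,\vphi)\otimes\K$ with $p=vv^*\in(J_\vphi M^\vphi J_\vphi)\bar\otimes B(\K)$, the restriction $v|_{\pi(e_\vphi)\H}$ maps into $e_\vphi L^2(M,\vphi)\otimes\K=L^2(M^\vphi,\vphi)\otimes\K$ and is a standard intertwiner for the $M^\vphi$-module $\pi(e_\vphi)\H$, with range projection $p_0=(e_\vphi\otimes 1)p$. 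Writing $p$ in matrix form over $\K$ with entries $J_\vphi m_{ab}^* J_\vphi$ for $m_{ab}\in M^\vphi$, a direct computation shows that both $(J_\vphi\otimes 1)p(J_\vphi\otimes 1)$ and $(J_{M^\vphi}\otimes 1)p_0(J_{M^\vphi}\otimes 1)$ have diagonal entries $m_{aa}^*$; since $J_{M^\vphi}=J_\vphi|_{L^2(M^\vphi,\vphi)}$ and $\vphi|_{M^\vphi}$ is the trace in play, both dimensions equal $\sum_a\vphi(m_{aa}^*)$. The subtle point here is reinterpreting the corner on $e_\vphi L^2(M,\vphi)$ as left multiplication on the standard form, which is exactly what makes the two traces coincide.

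Next I would show that, for $\xi\in\pi(e_\vphi)\H$, cyclicity (resp.\ separating) for $\pi(M)$ on $\H$ is equivalent to cyclicity (resp.\ separating) for $\pi(M^\vphi)$ on $\pi(e_\vphi)\H$. For cyclicity, the Jones relation $e_\vphi x e_\vphi=\E_\vphi(x)e_\vphi$ gives $\pi(e_\vphi)\pi(M)\xi=\pi(M^\vphi)\xi$, while $\overline{\pi(Me_\vphi)\H}=\H$ follows from $e_\vphi$ having central support $1$ in $\<M,e_\vphi\>$ (Proposition~\ref{prop:basic_construction}); together these yield the two implications. The separating equivalence is the delicate step. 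One direction is immediate since $M^\vphi\subset M$. For the converse, passing to the genuine standard form $L^2(M^\vphi,\vphi)=e_\vphi L^2(M,\vphi)$ is essential, because $M^\vphi$ acts \emph{non}-standardly on $L^2(M,\vphi)$ and its commutant there is strictly larger than $J_\vphi M^\vphi J_\vphi$. Via a standard intertwiner, a separating vector $\xi$ for $\pi(M^\vphi)$ corresponds to components $\{\eta_a\}\subset L^2(M^\vphi,\vphi)$ that are jointly separating for $M^\vphi$, equivalently jointly cyclic for $(M^\vphi)'=J_\vphi M^\vphi J_\vphi$ on the standard form, hence with dense span. Since $J_\vphi M^\vphi J_\vphi\subset M'$ and $\hat 1$ is separating for $M$, any $x\in M$ with $\pi(x)\xi=0$ annihilates each $\eta_a$, hence their dense span, hence $\hat 1$, forcing $x=0$.

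Finally, I would invoke the classical coupling constant theorem for the finite factor $M^\vphi$ (the analogue of \cite[Proposition V.3.13]{Tak02}): the $M^\vphi$-module $\pi(e_\vphi)\H$ contains a cyclic vector iff $\dim_{M^\vphi}(\pi(e_\vphi)\H)\le 1$, and contains a separating vector iff $\dim_{M^\vphi}(\pi(e_\vphi)\H)\ge 1$. Combining this with the dimension identity and the cyclic/separating equivalences gives (a) and (b). I expect the separating direction to be the main obstacle, specifically the correct identification of the commutant of $M^\vphi$ after passing to its standard form and the use of the separating vector $\hat 1$; the trace bookkeeping in the dimension identity is the secondary point demanding care.
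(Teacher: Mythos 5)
Your proof is correct, but it takes a genuinely different route from the paper's. The paper argues directly on $\H$ with standard intertwiners $v\colon \H\to L^2(M,\vphi)\otimes \K$: for the ``only if'' directions it uses factoriality of $(J_\vphi M^\vphi J_\vphi)\bar\otimes B(\K)$ to arrange $vv^*\leq 1\otimes p$ (resp. $\geq 1\otimes p$) for a minimal projection $p$ and takes $v^*(\hat{1}\otimes \xi)$ as the cyclic (resp. separating) vector, while for the converse of (b) it applies the Radon--Nikodym machinery to the $\sigma^\vphi$-invariant faithful functional $\langle \pi(\,\cdot\,)\eta,\eta\rangle$ to produce $h\in M^\vphi_+$ with trivial kernel and an intertwiner whose range projection dominates $1\otimes[\xi]$. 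You instead reduce everything to the finite factor $M^\vphi$: your dimension identity is exactly the compression formula $\dim_{(M,\vphi)}(\H,\pi)=\dim_{(M^\vphi,\vphi|_{M^\vphi})}(\pi(e_\vphi)\H,\pi(\,\cdot\,e_\vphi))$ already established in Theorem~\ref{thm:dimension_amplification_compression_formula}, so the genuinely new content of your argument is the transfer lemma that a vector of $\pi(e_\vphi)\H$ is cyclic (resp. separating) for $M$ on $\H$ if and only if it is so for $M^\vphi$ on $\pi(e_\vphi)\H$. Both halves of that lemma are sound: cyclicity via $e_\vphi xe_\vphi=\E_\vphi(x)e_\vphi$ together with the full central support of $e_\vphi$, and the separating direction via the correct identification of $(M^\vphi)'$ on $L^2(M^\vphi,\vphi)$ with the restriction of $J_\vphi M^\vphi J_\vphi\subset M'$, so that any $x\in M$ killing the components of $v\xi$ kills all of $L^2(M^\vphi,\vphi)\ni\hat{1}$ and hence vanishes. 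What your route buys is transparency---the theorem becomes literally the classical coupling-constant statement for $M^\vphi$ pulled back through the basic construction---and it avoids redoing the Connes-cocycle/Radon--Nikodym step; what it costs is the external appeal to the classical theorem for finite factors plus the transfer lemma, whereas the paper's argument is self-contained and exhibits the relevant vectors and intertwiners explicitly.
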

\begin{proof}
Let $v\colon \H\to L^2(M,\vphi)\otimes \K$ be as standard intertwiner for $(\H,\pi)$.

    \begin{enumerate}[label=(\alph*):]
    \item Suppose $\dim_{(M,\vphi)}(\H,\pi)\leq 1$. Using the factoriality of $M^\vphi\bar\otimes B(\K)$ we may assume $vv^* \leq 1 \otimes p$ for some minimal projection $p\in B(\K)$. If $\xi\in p\K$ is spanning, then $v^*(1\otimes \xi)$ is the desired cyclic vector. Conversely, suppose $\eta\in \pi(e_\vphi) \H$ is a cyclic vector for $M$. Then from the proof of Proposition~\ref{prop:characterization_abstract_modules} we can choose $v$ with range in $L^2(M,\vphi)$ so that
        \[
            \dim_{(M,\vphi)}(\H,\pi) = \vphi(J_\vphi vv^* J_\vphi) \leq \vphi(1)= 1.
        \]

    \item Suppose $\dim_{(M,\vphi)}(\H,\pi) \geq 1$. Using factoriality of $M^\vphi\bar\otimes B(\K)$ we may assume $vv^* \geq 1\otimes p$ for some minimal projection $p\in B(\K)$. If $\xi\in p\K$ is spanning, then $v^*(1\otimes \xi)$ is the desired separating vector. Conversely, suppose $\eta\in \pi(e_{\vphi}) \H$ is a separating vector for $M$. Then $\psi=\<\,\cdot\,\eta,\eta\>$ is a faithful normal linear functional satisfying $\psi\circ\sigma_t^\vphi = \psi$ for all $t\in \R$. Thus $\psi = \vphi(h\cdot h)$ for some $h\in M^\vphi_+$ with trivial kernel. From the proof of Proposition~\ref{prop:characterization_abstract_modules} we can choose $v$ so that $v\pi(x)\eta = (x\otimes 1)(h\otimes \xi)$ for some $\xi\in \K$. Thus $vv^*$ is at least as big as $[(Mh)\otimes \xi]$. But since $h$ has trivial kernel $[(Mh)\otimes \xi] = 1\otimes [\xi]$, and therefore $\dim_{(M,\vphi)}(\H,\pi)\geq \vphi\otimes \Tr(1\otimes [\xi])=1$.\qedhere 
    \end{enumerate}
\end{proof}

\subsection{Connection to semifinite Murray--von Neumann dimension}
Let $(N,\tau)$ be a von Neumann algebra equipped with a faithful normal semifinite tracial weight. In \cite[Appendix B]{Pet13}, Petersen defined the Murray--von Neumann dimension for a left $(N,\tau)$-module $(\H,\pi)$ as
    \[
        \dim'_{(N,\tau)} (\H,\pi) := \sup\{\tau(p)\dim_{(pNp, \tau_p)}(\pi(p)\H, \pi|_{pMp} ) : p \in N \text{ with } \tau(p)<\infty\},
    \]
where $\tau_p(\cdot) = \tau(p)^{-1} \tau(p \cdot p)$  (see also \cite[Definition A.14]{KPV15}). In this subsection, we relate this definition to our definition of Murray--von Neumann dimension given in Definition~\ref{defi:MvN_dim}. 

First, we show our definition recovers Petersen's for $\vphi=\tau$ a faithful normal semifinite tracial weight. Recall from the discussion preceding Definition~\ref{defi:MvN_dim} that this already holds when $\tau$ is a state.

\begin{prop}\label{prop:semifinite_dimension_achived_by_projection}
Let $(N,\tau)$ be a von Neumann algebra with a normal semifinite faithful tracial weight and $(\mathcal{H}, \pi)$ be a left $(N,\tau)$-module. Then 
    \[
        \dim'_{(N,\tau)} (\H, \pi) = \dim_{(N,\tau)}(\H, \pi).
    \]
\end{prop}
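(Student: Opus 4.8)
The plan is to reduce everything to the compression formula already established in Proposition~\ref{prop:dimension_of_compressions}. The key simplification is that when $\varphi=\tau$ is tracial we have $\Delta_\tau=1$, so $N^\tau=N$, $e_\tau=1_{\{1\}}(\Delta_\tau)=1$, and hence $\langle N,e_\tau\rangle=N$. Thus a left $(N,\tau)$-module is simply a representation $\pi\colon N\to B(\H)$, a standard intertwiner $v\colon\H\to L^2(N,\tau)\otimes\K$ is an $N$-$N$ intertwiner in the usual sense, and $(J_\tau\otimes1)vv^*(J_\tau\otimes1)\in N\bar\otimes B(\K)$. I would fix one such $v$ for the whole argument.

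Next I would record a scaling lemma. For a nonzero projection $p\in N$ with $\tau(p)<\infty$, the algebra $pNp$ is finite and $\tau_p$ is a faithful normal tracial state, so $\dim_{(pNp,\tau_p)}$ is our dimension and coincides with the classical Murray--von Neumann dimension by the discussion following Definition~\ref{defi:MvN_dim}. Since $\tau|_{(pNp)_+}=\tau(p)\,\tau_p$, and since rescaling a tracial weight by a constant $c>0$ rescales the dimension of any module by $c$ (immediate from the defining formula, as the scaling $c^{-1/2}$ identifies $L^2(N,c\tau)\cong L^2(N,\tau)$, leaving $vv^*$ unchanged and multiplying $\tau\otimes\Tr$ by $c$), I obtain $\tau(p)\dim_{(pNp,\tau_p)}(\pi(p)\H,\pi)=\dim_{(pNp,\,\tau|_{pNp})}(\pi(p)\H,\pi)$. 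Combining this with Proposition~\ref{prop:dimension_of_compressions} and the formula in Remark~\ref{rem:computation_of_dimension_under_central_corner} (both applied with $\varphi=\tau$ and $p\in N^\tau=N$) gives, with $z$ the central support of $p$ in $N$,
\[
    \tau(p)\dim_{(pNp,\tau_p)}(\pi(p)\H,\pi)=(\tau\otimes\Tr)\big[(J_\tau\otimes1)vv^*(J_\tau\otimes1)(z\otimes1)\big].
\]

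Since $z\otimes1\leq1$ and $(J_\tau\otimes1)vv^*(J_\tau\otimes1)$ is positive, the right-hand side is bounded above by $(\tau\otimes\Tr)[(J_\tau\otimes1)vv^*(J_\tau\otimes1)]=\dim_{(N,\tau)}(\H,\pi)$; taking the supremum over all finite-trace $p$ yields $\dim'_{(N,\tau)}(\H,\pi)\leq\dim_{(N,\tau)}(\H,\pi)$. For the reverse inequality I would use semifiniteness of $\tau$ to pick an increasing net of projections $p_i\in N$ with $\tau(p_i)<\infty$ and $p_i\uparrow1$. Their central supports satisfy $z_i\geq p_i$, so $z_i\otimes1\uparrow1$, and normality of $\tau\otimes\Tr$ gives $(\tau\otimes\Tr)[(J_\tau\otimes1)vv^*(J_\tau\otimes1)(z_i\otimes1)]\uparrow\dim_{(N,\tau)}(\H,\pi)$. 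Hence these particular terms in Petersen's supremum converge to $\dim_{(N,\tau)}(\H,\pi)$, giving $\dim'_{(N,\tau)}(\H,\pi)\geq\dim_{(N,\tau)}(\H,\pi)$ and completing the proof.

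I expect the only genuinely fussy points to be the bookkeeping in the scaling lemma---matching $\tau_p$, a state, against $\tau|_{pNp}$, a finite trace, and verifying the dimension scales linearly in the weight---together with the fact that the compression formula naturally produces the central support $z$ rather than $p$ itself. This latter feature is what forces the normality and limiting argument over a net $p_i\uparrow1$ rather than an equality for a single $p$, and it is the main structural subtlety of the proof.
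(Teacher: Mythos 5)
Your proof is correct, and while the first half coincides with the paper's argument, the second half takes a genuinely different and cleaner route. For the inequality $\dim'_{(N,\tau)}\leq\dim_{(N,\tau)}$ you do exactly what the paper does: fix a standard intertwiner $v$, use Proposition~\ref{prop:dimension_of_compressions} together with Remark~\ref{rem:computation_of_dimension_under_central_corner} to identify the term associated to a finite-trace projection $p$ with $(\tau\otimes\Tr)\left[(J_\tau\otimes1)vv^*(J_\tau\otimes1)(z\otimes1)\right]$ for $z$ the central support of $p$, and bound by dropping $z\otimes 1$. (Your explicit scaling lemma reconciling Petersen's normalized $\tau(p)\dim_{(pNp,\tau_p)}$ with $\dim_{(pNp,\tau|_{pNp})}$ is a point the paper glosses over, and it is worth making explicit as you do.) For the reverse inequality the paper argues by contradiction: assuming $\dim'<\infty$ it extracts a countable near-maximizing sequence $(p_n)$, forms $z=\bigvee_n z_n$, and, supposing $(\tau\otimes\Tr)(q(1-z))>0$, produces a finite-trace projection centrally orthogonal to every $p_n$ whose addition to $p_n$ would overshoot the supremum. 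You instead take an exhausting increasing net $p_i\uparrow 1$ of finite-trace projections (available by semifiniteness, e.g.\ via partial sums in Lemma~\ref{lem:strictly_semifinite_characterization}), note $z_i\geq p_i$ forces $z_i\uparrow 1$, and conclude by normality of $\tau\otimes\Tr$ applied to $q^{1/2}(z_i\otimes 1)q^{1/2}\uparrow q$. This is a direct monotone-convergence argument that avoids the case split on finiteness of $\dim'$ and the contradiction machinery entirely; it buys simplicity, while the paper's argument is structured to work from an arbitrary near-maximizing family rather than a specially chosen exhausting net. Both are valid.
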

\begin{proof}
Let $v \colon\H \to L^2(N,\tau)\otimes \K$ be a standard intertwiner for $(\H, \pi)$ and denote
    \[
        q:=(J_\tau\otimes 1)vv^*(J_\tau\otimes 1) \in N\bar\otimes B(\K).
    \]
By Definition~\ref{defi:MvN_dim} we have $\dim_{(N,\tau)}(\H, \pi)=(\tau\otimes \Tr)(q)$. For any $p \in \dom(\tau)$ with central support $z$ in $N$, Remark~\ref{rem:computation_of_dimension_under_central_corner} implies
    \[
        \dim_{(pNp,\tau|_{pNp})}(\pi(p)\H, \pi)=\dim_{(Nz,\tau|_{Nz})}(\pi(z)\H,\pi)= (\tau \otimes \Tr)(q (z \otimes 1))\leq (\tau\otimes \Tr)(q).
    \]
It follows that $\dim'_{(N,\tau)} (\H, \pi) \leq \dim_{(N,\tau)}(\H,\pi)$. If $\dim'_{(N,\tau)}(\H, \pi)=+\infty$ then we have the claimed equality, so now suppose that $\dim'_{(N,\tau)}(\H,\pi) < \infty$. Then by definition of $\dim'_{(N,\tau)}$ there exists a countable family of projections $\{p_n\}_{n\in \N} \subset \dom(\tau)$ with
    \[
        \dim'_{(N,\tau)}(\H, \pi) - \frac{1}{n} \leq \dim'_{(p_n N p_n, \tau|_{p_n N p_n})}(\pi(p_n) \H, \pi) \leq \dim'_{(N,\tau)}(\H,\pi). 
    \]
Using Remark~\ref{rem:computation_of_dimension_under_central_corner} again, if $z_n$ is the central support of $p_n$ in $N$ then the middle expression above equals $(\tau\otimes \Tr)(q(z_n\otimes 1))$ for all $n\in \N$. Setting $z := \bigvee_{n \in \N}z_n \otimes1 $, it follows that
    \[ 
       \dim'_{(N,\tau)}(\H, \pi) =  (\tau\otimes \Tr)(qz).
    \]
Assume to the contrary that $\dim'_{(N,\tau)}(\H, \pi) < (\tau\otimes \Tr)(q)=\dim_{(N,\tau)}(\H,\pi)$, in which case one has
    \[
        0<(\tau \otimes \Tr)(q(1-z)) = \sum_{a \in A} (\tau \otimes \Tr)\left[(1 \otimes e_{a,a})q(1-z)(1 \otimes e_{a,a})\right],
    \]
where $\{e_{a,b}\colon a,b\in A\}$ is a family of matrix units for $B(\K)$. So at least one term in the sum is non-zero, which implies 
for some $a\in A$ then there exists a non-zero projection $p \in \dom(\tau)$ such that $p\otimes e_{a,a} \leq (1\otimes e_{a,a})q(1-z)(1\otimes e_{a,a})$. Note that multiplying this inequality by $z_n\otimes 1$ gives $(pz_n)\otimes e_{a,a} \leq 0$, and hence $p$ is centrally orthogonal to $p_n$ for all $n\in \N$. So if $z_0$ is the central support of $p$ in $N$ then $z_0+z_n$ is the central support of $p+p_n\in \dom(\tau)$ for all $n\in \N$. Additionally, we have
    \[
        (\tau\otimes \Tr)(p\otimes e_{a,a}) = (\tau\otimes \Tr)(q(1-z) (p\otimes e_{a,a})) \leq (\tau\otimes \Tr)(q(1-z)(z_0\otimes 1)) = (\tau\otimes \Tr)(q(z_0\otimes 1))
    \]
Hence for all $n\in \N$, using Remark~\ref{rem:computation_of_dimension_under_central_corner} again as well as the above observations we have
    \begin{align*}
        \dim'_{(N,\tau)}(\H,\pi) &\geq \dim'_{((p+p_n)N(p+p_n),\tau|_{(p+p_n)N(p+p_n)})}(\pi(p+p_n)\H,\pi)\\
            &=  (\tau\otimes \Tr)(q(z_0+z_n)\otimes 1)\\
            &= (\tau\otimes \Tr)(q(z_0\otimes 1)) + (\tau\otimes \Tr)(q (z_n\otimes 1))\\
            &\geq (\tau\otimes \Tr)(p\otimes e_{a,a}) + \dim'_{(N,\tau)}(\H,\pi) - \frac1n.
    \end{align*}
Since $(\tau\otimes \Tr)(p\otimes e_{a,a})>0$, for sufficiently large $n$ the last expression above will be strictly larger than $\dim'_{(N,\tau)}(\H,\pi)$, which is a contradiction. Thus $\dim'_{(N,\tau)}(\H,\pi) = \dim_{(N,\tau)}(\H,\pi)$.
\end{proof}

Recall from Proposition~\ref{prop:basic_construction} that the basic construction $\<M,e_\vphi\>$ for $M^\vphi \subset M$ is a semifinite von Neumann algebra, and by definition any $(M,\vphi)$-module $(H,\pi)$ is an $\<M,e_\vphi\>$-module. The following result shows that one gets the same Murray--von Neumann dimension whether one views $(\H,\pi)$ as an $(M,\vphi)$-module or an $\<M,e_\vphi\>$-module provided $\<M,e_\vphi\>$ is equipped with the correct tracial weight.

\begin{thm}\label{thm:dimension_amplification_compression_formula}
Let $M$ be a von Neumann algebra equipped with a faithful normal strictly semifinite weight $\vphi$. Let $\tau$ denote the unique tracial weight on $\<M,e_\vphi\>$ satisfying $\tau(e_\vphi x e_\vphi) = \vphi(x)$ for all $x\in M_+$ (see Proposition~\ref{prop:basic_construction}). Then for any left $(M,\vphi)$-module $(\H,\pi)$ we have
    \[
        \dim_{(M,\vphi)}(\H,\pi)= \dim_{(\<M,e_\vphi\>,\tau)}(\H,\pi) = \dim_{(M^\vphi,\vphi|_{M^\vphi})}(\pi(e_\vphi)\H, \pi(\,\cdot\, e_\vphi)).
    \]
\end{thm}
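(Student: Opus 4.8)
The plan is to establish the two equalities separately, routing both through the middle algebra $M^\vphi$ equipped with its tracial weight $\vphi|_{M^\vphi}$, and to organize the argument so that the common term $\dim_{(M^\vphi,\vphi|_{M^\vphi})}(\pi(e_\vphi)\H, \pi(\,\cdot\, e_\vphi))$ is reached from both sides. Concretely, I would prove $\dim_{(\<M,e_\vphi\>,\tau)}(\H,\pi) = \dim_{(M^\vphi,\vphi|_{M^\vphi})}(\pi(e_\vphi)\H, \pi(\,\cdot\, e_\vphi))$ using the compression formula of Proposition~\ref{prop:dimension_of_compressions}, and $\dim_{(M,\vphi)}(\H,\pi) = \dim_{(M^\vphi,\vphi|_{M^\vphi})}(\pi(e_\vphi)\H, \pi(\,\cdot\, e_\vphi))$ by a direct manipulation of standard intertwiners. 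Throughout I would use the two facts recorded before Proposition~\ref{prop:basic_construction}: that $e_\vphi = 1_{\{1\}}(\Delta_\vphi)$ is the projection onto $L^2(M^\vphi,\vphi)\leq L^2(M,\vphi)$, and hence that $e_\vphi$ commutes with both $M^\vphi$ and $J_\vphi$ (the latter since $J_\vphi\Delta_\vphi^{it}=\Delta_\vphi^{it}J_\vphi$).

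For the equality $\dim_{(\<M,e_\vphi\>,\tau)}(\H,\pi) = \dim_{(M^\vphi,\vphi|_{M^\vphi})}(\pi(e_\vphi)\H,\pi(\,\cdot\, e_\vphi))$, I would set $N:=\<M,e_\vphi\>$ and observe that since $\tau$ is tracial one has $N^\tau=N$, so that $(\H,\pi)$ is a left $(N,\tau)$-module in our sense (the basic construction for the trivial inclusion $N\subset N$ being $N$ itself, with Jones projection $1$). The projection $e_\vphi$ lies in $N^\tau=N$ and has central support $1$ in $N$ by Proposition~\ref{prop:basic_construction}.(b). Applying Proposition~\ref{prop:dimension_of_compressions} with $(N,\tau)$ in place of $(M,\vphi)$ and $p=e_\vphi$ then yields
    \[
        \dim_{(e_\vphi N e_\vphi,\,\tau|_{e_\vphi N e_\vphi})}(\pi(e_\vphi)\H,\pi) = \dim_{(N,\tau)}(\H,\pi).
    \]
Finally I would identify $e_\vphi N e_\vphi = M^\vphi e_\vphi \cong M^\vphi$, under which $\tau|_{e_\vphi N e_\vphi}$ becomes $\vphi|_{M^\vphi}$ (since $\tau(e_\vphi a e_\vphi)=\vphi(a)$ for $a\in M^\vphi$) and the restricted representation becomes $\pi(\,\cdot\, e_\vphi)$, giving the desired equality.

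For the equality $\dim_{(M,\vphi)}(\H,\pi) = \dim_{(M^\vphi,\vphi|_{M^\vphi})}(\pi(e_\vphi)\H,\pi(\,\cdot\, e_\vphi))$, I would start from a standard intertwiner $v\colon \H\to L^2(M,\vphi)\otimes\K$ for $(\H,\pi)$ and produce one for the $M^\vphi$-module by compressing: set $v':=(e_\vphi\otimes 1)\,v\,\pi(e_\vphi)$, which maps $\pi(e_\vphi)\H$ into $e_\vphi L^2(M,\vphi)\otimes\K=L^2(M^\vphi,\vphi)\otimes\K$. Using $v\pi(x)=(x\otimes1)v$ for $x\in\<M,e_\vphi\>$ together with $e_\vphi a = a e_\vphi$ for $a\in M^\vphi$, one checks that $v'$ is an isometry with $v'\pi(ae_\vphi)=(a\otimes1)v'$, so it is a standard intertwiner for $(\pi(e_\vphi)\H,\pi(\,\cdot\, e_\vphi))$, and that $v'v'^*=(e_\vphi\otimes1)vv^*(e_\vphi\otimes1)$. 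The crux is then the short computation
    \[
        (J_{\vphi|_{M^\vphi}}\otimes1)v'v'^*(J_{\vphi|_{M^\vphi}}\otimes1) = (e_\vphi\otimes1)\,(J_\vphi\otimes1)vv^*(J_\vphi\otimes1)\,(e_\vphi\otimes1),
    \]
which uses that $J_\vphi$ commutes with $e_\vphi$ and restricts to $J_{\vphi|_{M^\vphi}}$ on $L^2(M^\vphi,\vphi)$. Writing $P:=(J_\vphi\otimes1)vv^*(J_\vphi\otimes1)\in M^\vphi\bar\otimes B(\K)$, the fact that $e_\vphi$ commutes with $M^\vphi$ lets me identify $(e_\vphi\otimes1)P(e_\vphi\otimes1)$ with $P$ acting in the GNS representation on $L^2(M^\vphi,\vphi)$, whence $(\vphi|_{M^\vphi}\otimes\Tr)$ of it equals $(\vphi\otimes\Tr)(P)=\dim_{(M,\vphi)}(\H,\pi)$.

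I expect the only genuinely delicate point to be the bookkeeping in the last step: one must verify that $P\in M^\vphi\bar\otimes B(\K)$ (where the definition of $\dim_{(M,\vphi)}$ places it) is evaluated against the correct tracial weight after compression by $e_\vphi$, i.e. that compressing $P$ by $e_\vphi$ and then applying $\vphi|_{M^\vphi}\otimes\Tr$ on $L^2(M^\vphi,\vphi)\otimes\K$ returns the same number as applying $\vphi\otimes\Tr$ on $L^2(M,\vphi)\otimes\K$. This is exactly the content of the $M^\vphi$-module identification $e_\vphi L^2(M,\vphi)\cong L^2(M^\vphi,\vphi)$ together with $\vphi|_{M^\vphi}(a)=\vphi(a)$; once this is in hand, everything else is routine.
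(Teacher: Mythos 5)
Your proposal is correct and takes essentially the same route as the paper's own proof: the first equality is obtained by applying Proposition~\ref{prop:dimension_of_compressions} to the projection $e_\vphi$ (which has full central support) inside $(\<M,e_\vphi\>,\tau)$ and identifying $e_\vphi\<M,e_\vphi\>e_\vphi = M^\vphi e_\vphi\cong M^\vphi$ with $\tau$ restricting to $\vphi|_{M^\vphi}$, and the second by noting that $(e_\vphi\otimes 1)v=v\pi(e_\vphi)$ is a standard intertwiner for $(\pi(e_\vphi)\H,\pi(\,\cdot\,e_\vphi))$ and using $e_\vphi L^2(M,\vphi)=L^2(M^\vphi,\vphi)$ together with $\vphi|_{M^\vphi}=\vphi$ on $M^\vphi_+$. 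The ``delicate point'' you flag at the end is exactly the bookkeeping the paper also performs, so no gap remains.
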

\begin{proof}
Recall from Proposition~\ref{prop:basic_construction} that $e_\vphi$ has full central support in $\<M,e_\vphi\>$, and so by Proposition~\ref{prop:dimension_of_compressions} we have
    \[
        \dim_{(\<M,e_\vphi\>,\tau)}(\H,\pi) = \dim_{(e_\vphi\<M,e_\vphi\>e_\vphi,\tau|_{e_\vphi\<M,e_\vphi\>e_\vphi})}(\pi(e_\vphi)\H, \pi). 
    \]
Identifying $e_\vphi \langle M, e_\vphi\rangle e_\vphi = M^\vphi e_\vphi \cong M^\vphi$, under which the restriction of $\tau$ becomes $\vphi$, the above equals  $\dim_{(M^\vphi,\vphi|_{M^\vphi})}(\pi(e_\vphi)\H, \pi(\,\cdot\, e_\vphi))$. On the other hand, if $v\colon \H\to L^2(M,\vphi)\otimes \K$ is standard intertwiner for $(\H, \pi)$ as an $(M,\vphi)$-module, then $(e_\vphi\otimes 1)v = v\pi(e_\vphi)$ is a standard intertwiner for $(\pi(e_\vphi)\H, \pi(\,\cdot\, e_\vphi))$ as an $(M^\vphi,\vphi|_{M^\vphi})$-module since
    \[
        v\pi(e_\vphi)v^*(L^2(M,\vphi) \otimes \K) = vv^*(e_\vphi \otimes 1) (L^2(M,\vphi) \otimes \K)= vv^* (L^2(M^\vphi,\vphi) \otimes \K),
    \]
and $vv^*\in (J_\vphi M^\vphi J_\vphi)\bar\otimes B(\K) = (M^\vphi\otimes \C)'\cap B(L^2(M^\vphi,\vphi)\otimes \K)$.
Therefore, we have
    \[ 
        \dim_{(M^\vphi,\vphi|_{M^\vphi})}(\pi(e_\vphi)\H, \pi(\,\cdot\, e_\vphi)) =(\varphi \otimes \Tr)\left[ ( J_\varphi \otimes 1)vv^* (J_\varphi \otimes 1) \right]= \dim _{(M,\varphi)}(\H,\pi),
    \]
as claimed.
\end{proof}

\begin{rem}
In light of the previous theorem, we can extend the definition of $(\H,\vphi)$-modules to include arbitrary $\<M,e_\vphi\>$-modules without a Hilbert space structure (i.e. \emph{abstract} $(M,\vphi)$-modules), by using a combination of  \cite[Definition 0.4]{Luc98} and \cite[Definitions B.13 and B.17]{Pet13}.$\hfill\blacksquare$
\end{rem}

\subsection{Extremal almost periodic advantages}

In this subsection, we highlight a few advantages for our Murray--von Neumann dimension in the case of extremal almost periodic weights. Recall that when $M$ admits an extremal almost periodic weight it is necessarily a factor by \cite[Theorem 10]{Con72}, and $\Sd(\vphi)$ is a subgroup of $\R_+$ by Remark~\ref{rem:Sd_is_subgroup}. The following proposition gives necessary and sufficient conditions for a representation of $M$ on $\H$ to extend to $\<M,e_\vphi\>$. This should be compared with Proposition~\ref{prop:characterization_abstract_modules}.\ref{part:group_rep}, where one assumes there is a $\sigma^\vphi$-covariant unitary representation of $\R$ on $\H$ that has sufficiently many fixed points so as to have dense orbit under $M$. By exchanging $\R$ with a compact group in the following proposition, this latter condition on fixed points holds automatically.

\begin{prop}
Let $M$ be a von Neumann algebra equipped with an extremal almost periodic weight $\vphi$ and let $\widehat{\Sd(\vphi)}\overset{\alpha}{\curvearrowright}M$ be the point modular extension of its modular automorphism group. For any non-trivial representation $\pi_0\colon M\to B(\H)$, the following are equivalent:
    \begin{enumerate}[label=(\roman*)]
        \item there exists a representation $\pi\colon \<M,e_\vphi\>\to B(\H)$ with $\pi|_M = \pi_0$;

        \item there exists a strongly continuous unitary representation $\widehat{\Sd(\vphi)}\overset{V}{\curvearrowright} \H$ satisfying
            \[
                V_s \pi_0(x) V_s^* = \pi_0( \alpha_s(x))
            \]
        for all $s\in \widehat{\Sd(\vphi)}$ and $x\in M$.
    \end{enumerate}
\end{prop}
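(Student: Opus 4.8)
The plan is to reduce both implications to the group-representation criterion of Proposition~\ref{prop:characterization_abstract_modules}.\ref{part:group_rep}, using the transpose $\hat\iota\colon\R\to\widehat{\Sd(\vphi)}$ of the inclusion $\Sd(\vphi)\hookrightarrow\R_+$ (Section~\ref{sec:discrete_cores}) to pass between $\R$ and the compact group $\widehat{\Sd(\vphi)}$; recall $\hat\iota$ has dense range and satisfies $\alpha_{\hat\iota(t)}=\sigma_t^\vphi$. Throughout I would write $U_s:=\sum_{\lambda\in\Sd(\vphi)}(s\mid\lambda)1_{\{\lambda\}}(\Delta_\vphi)$, which is a bounded Borel function of $\Delta_\vphi$ and hence, by Proposition~\ref{prop:basic_construction}(a), lies in $\{\Delta_\vphi^{it}\colon t\in\R\}''\subseteq\<M,e_\vphi\>$; moreover $s\mapsto U_s$ is a strongly continuous unitary representation of $\widehat{\Sd(\vphi)}$ implementing $\alpha$. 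The implication (i)$\Rightarrow$(ii) is then immediate: I would set $V_s:=\pi(U_s)$, noting that a normal $*$-homomorphism carries the weakly continuous unitary representation $s\mapsto U_s$ to a weakly---hence strongly---continuous one, and that $V_s\pi_0(x)V_s^*=\pi(U_sxU_s^*)=\pi_0(\alpha_s(x))$ since $\alpha_s(x)\in M$.

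The substantive direction is (ii)$\Rightarrow$(i). Given $V$, I would put $U_t:=V_{\hat\iota(t)}$, a strongly continuous unitary representation of $\R$ with $U_t\pi_0(x)U_t^*=\pi_0(\alpha_{\hat\iota(t)}(x))=\pi_0(\sigma_t^\vphi(x))$, so that the covariance hypothesis of Proposition~\ref{prop:characterization_abstract_modules}.\ref{part:group_rep} holds. What then remains is the density of $\pi_0(M)\H^{U}$, and here the compactness of $\widehat{\Sd(\vphi)}$ is the decisive advantage. Since $\hat\iota(\R)$ is dense in $\widehat{\Sd(\vphi)}$ and $V$ is strongly continuous, one has $\H^{U}=\H^{V}$; and $V$ decomposes $\H$ into character eigenspaces $\H_\lambda:=\{\xi\colon V_s\xi=(s\mid\lambda)\xi\ \forall s\}$ indexed by $\lambda\in\widehat{\widehat{\Sd(\vphi)}}=\Sd(\vphi)$ (Pontryagin duality), with $\H^{V}=\H_1$ and orthogonal projections $P_\lambda=\int_{\widehat{\Sd(\vphi)}}\overline{(s\mid\lambda)}\,V_s\,ds$.

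To establish density I would argue by contradiction. The set $\pi_0(M)\H^{V}$ is $V$-invariant, since $V_s\pi_0(x)\eta=\pi_0(\alpha_s(x))\eta$ for $\eta\in\H^{V}$, so each $P_\lambda$ preserves the orthocomplement of its closure; it therefore suffices to rule out a nonzero $\xi\in\H_\lambda$ orthogonal to $\pi_0(M)\H^{V}$. For such $\xi$ and any partial isometry $v\in M^{(\vphi,\lambda)}$, the covariance relation gives $\pi_0(v^*)\xi\in\H_1=\H^{V}$, whence orthogonality forces $\langle\xi,\pi_0(vv^*)\xi\rangle=\langle\xi,\pi_0(v)\pi_0(v^*)\xi\rangle=0$. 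Choosing the family $\mathcal{V}_\lambda\subset M^{(\vphi,\lambda)}$ of Lemma~\ref{lem:spectral_projections_of_Delta}---for which extremality of $\vphi$ (that is, $M^\vphi$ being a factor) forces $\sum_{v\in\mathcal{V}_\lambda}vv^*=1$---and summing over $v\in\mathcal{V}_\lambda$ using normality of $\pi_0$ yields $\|\xi\|^2=\langle\xi,\pi_0(1)\xi\rangle=0$, a contradiction. With density in hand, Proposition~\ref{prop:characterization_abstract_modules} produces a representation $\pi\colon\<M,e_\vphi\>\to B(\H)$ with $\pi|_M=\pi_0$, establishing (i).

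I expect the density verification to be the only real obstacle, and its two crucial inputs are precisely the identification $\H^{U}=\H^{V}$, which trades the noncompact $\R$ for the compact $\widehat{\Sd(\vphi)}$ and makes the isotypic decomposition available, and the partial isometries $\mathcal{V}_\lambda$ of Lemma~\ref{lem:spectral_projections_of_Delta} with $\sum vv^*=1$, which is the one place the extremality hypothesis is genuinely used.
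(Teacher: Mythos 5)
Your proof is correct and follows essentially the same route as the paper's: both directions reduce to Proposition~\ref{prop:characterization_abstract_modules} via the transpose $\hat\iota$, use compactness of $\widehat{\Sd(\vphi)}$ to obtain the isotypic decomposition $\H=\bigoplus_\lambda\H_\lambda$ with $\H^U=\H^V=\H_1$, and invoke the families $\mathcal{V}_\lambda$ of Lemma~\ref{lem:spectral_projections_of_Delta} with $\sum_{v\in\mathcal{V}_\lambda}vv^*=1$ from extremality. The only (cosmetic) difference is that the paper shows $\H_\lambda\subset\pi_0(M)\H_1$ directly, whereas you phrase the same computation as a contradiction against a vector orthogonal to $\pi_0(M)\H^V$.
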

\begin{proof}
$(i)\Rightarrow(ii):$ For $s\in \widehat{\Sd(\vphi)}$ define
    \[
        U_s:=\sum_{\lambda \in \Sd(\vphi)} (s\mid \lambda) 1_{\{\lambda\}}(\Delta_\vphi)
    \]
so that $U_s x U_s^* = \alpha_s(x)$ for all $x\in M$ (see Section~\ref{sec:discrete_cores}). Thus letting $V_s:=\pi(U_s)$ for each $s\in \widehat{\Sd(\vphi)}$ gives the desired covariance relation with $\pi_0=\pi|_M$. We also note that $V$ is a strongly continuous representation of $\widehat{\Sd(\vphi)}$ since $U$ is strongly continuous and $\pi$ is normal.\\

\noindent$(ii)\Rightarrow(i):$ Let $\hat{\iota}\colon \R\to \widehat{\Sd(\vphi)}$ be the transpose of the inclusion map $\iota\colon \Sd(\vphi)\to \R_+$ (see Section~\ref{sec:discrete_cores}). Observe that $W_t:=V_{\hat{\iota}(t)}$ defines a strongly continuous unitary representation $\R\overset{W}{\curvearrowright} \H$ satisfying
    \[
        W_t \pi_0(x) W_t^* = \pi_0(\alpha_{\iota(t)}(x)) = \pi_0(\sigma_t^\vphi(x)).
    \]
So by Proposition~\ref{prop:characterization_abstract_modules} it suffices to show $\pi_0(M)\H^W$ is dense in $\H$. Note that $\H^W=\H^V$, so we will argue that $\pi_0(M) \H^V$ is dense.

Since $\widehat{\Sd(\vphi)}$ is compact, we obtain an orthogonal decomposition
    \[
        \H = \bigoplus_{\lambda\in \Sd(\vphi)} \H_\lambda,
    \]
where $\H_\lambda = \{ \xi\in \H\colon V_s\xi = (s|\lambda) \xi\ \forall s\in \widehat{\Sd(\vphi)}\}$ (see \cite[Theorem 4.45]{Fol16} for example). Observe that for $\xi\in \H_\lambda$ and $x\in M^{(\vphi,\mu)}$ one has
    \[
        V_s \pi_0(x) \xi = \pi_0(\alpha_s(x)) V_s\xi =  (s|\mu) \pi_0(x) (s|\lambda) \xi = (s|\mu\lambda) \pi_0(x)\xi,
    \]
so that $x\H_\lambda \subset \H_{\mu\lambda}$. 

Now, for each $\lambda\in \Sd(\vphi)$ let $\mathcal{V}_\lambda \subset M^{(\vphi,\lambda)}$ be as in Lemma~\ref{lem:spectral_projections_of_Delta}, and note that $\sum_{v\in\mathcal{V}_\lambda} vv^*=1$ since $M^\vphi$ is a factor. Using $v^*\in M^{(\vphi, \frac1\lambda)}$ and the above observation we have
    \[
        \H_\lambda = \left( \sum_{v\in \mathcal{V}_\lambda} \pi_0(vv^*) \right) \H_\lambda \subset \sum_{v\in \mathcal{V}_\lambda} \pi_0(v) \H_{1} \subset  \pi_0(M) \H_1.
    \]
Noting that $\H_1= \H^V$, we see that this implies $\pi_0(M) \H^V$ is dense.
\end{proof}

Our next goal is to prove Theorem~\ref{thmalpha:theorem_A}, which shows for certain factors $M$ that the Murray--von Neumann dimension as a function on $(M,\vphi)$-modules varies only up to a constant according to the choice of an extremal almost periodic weight. This is an extension of the case for semifinite factors and their tracial weights, and can be thought of as the analogue of a Haar measure on a locally compact group being unique up to a scaling factor. We first require the following lemma, which may also be of independent interest.

\begin{lem}\label{lem:factor_levels}
Let $M$ be a factor with separable predual equipped with an almost periodic weight $\vphi$. Then the following are equivalent:
    \begin{enumerate}[label=(\roman*)]
        \item $M^\vphi$ is a factor;
        \item $\Sd(\vphi)$ is a group and $M\rtimes_{\alpha} \widehat{\Sd(\vphi)}$ is a factor, where $\widehat{\Sd(\vphi)}\overset{\alpha}{\curvearrowright}M$ is the point modular extension of the modular automorphism group;
        \item $\<M,e_\vphi\>$ is a factor.
    \end{enumerate}
In this case one has $M\rtimes_{\alpha} \widehat{\Sd(\vphi)}\cong \<M,e_\vphi\>\cong M^\vphi\bar\otimes B(\ell^2(\Sd(\vphi)))$.
\end{lem}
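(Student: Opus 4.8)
The plan is to isolate the one genuinely easy equivalence, $(i)\Leftrightarrow(iii)$, which is purely a statement about centers, and then route everything else through two concrete identifications of $\<M,e_\vphi\>$: with the amplification $M^\vphi\bar\otimes B(\ell^2(\Sd(\vphi)))$ and with the discrete core $M\rtimes_\alpha\widehat{\Sd(\vphi)}$. For $(i)\Leftrightarrow(iii)$ I would invoke Proposition~\ref{prop:basic_construction}(a) in the form $\<M,e_\vphi\>=(J_\vphi M^\vphi J_\vphi)'$. Taking commutants once more gives $\<M,e_\vphi\>'=J_\vphi M^\vphi J_\vphi$, so $Z(\<M,e_\vphi\>)=\<M,e_\vphi\>\cap\<M,e_\vphi\>'=Z(J_\vphi M^\vphi J_\vphi)=J_\vphi Z(M^\vphi)J_\vphi$; hence $\<M,e_\vphi\>$ is a factor exactly when $M^\vphi$ is. (Equivalently, one may use the corner description $e_\vphi\<M,e_\vphi\>e_\vphi\cong M^\vphi$ from Proposition~\ref{prop:basic_construction}, since a nonzero corner of a factor is a factor; I would use this direction to deduce $M^\vphi$ is a factor whenever $\<M,e_\vphi\>$ is.)

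Assuming now the equivalent conditions hold, so that $M^\vphi$ is a factor and, by Remark~\ref{rem:Sd_is_subgroup}, $\Sd(\vphi)$ is a subgroup of $\R_+$—hence either trivial or (by separability) countably infinite—I would build the amplification isomorphism. Proposition~\ref{prop:basic_construction}(b) gives that $e_\vphi=1_{\{1\}}(\Delta_\vphi)$ has full central support, and the corner is $\cong M^\vphi$, so it suffices to exhibit pairwise orthogonal projections in the factor $\<M,e_\vphi\>$, each equivalent to $e_\vphi$, summing to $1$ and indexed by a set of cardinality $|\Sd(\vphi)|$. If $\Sd(\vphi)=\{1\}$ then $e_\vphi=1$ and there is nothing to do. Otherwise $\Sd(\vphi)$ is countably infinite, and I would split on the type of $M^\vphi$. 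If $M^\vphi$ is properly infinite, then $e_\vphi$ is properly infinite in the factor $\<M,e_\vphi\>$, so $e_\vphi\sim 1$, while $M^\vphi\bar\otimes B(\ell^2(\Sd(\vphi)))\cong M^\vphi$ by stability; both sides equal $M^\vphi$. If $M^\vphi$ is finite (so $\vphi(1)<\infty$), then a co-isometry $v_\lambda\in M^{(\vphi,\lambda)}$ with $\lambda>1$ from Lemma~\ref{lem:spectral_projections_of_Delta} satisfies $\vphi(v_\lambda^*v_\lambda)=\lambda^{-1}\vphi(1)<\vphi(1)$, so $1$ is infinite and $\<M,e_\vphi\>$ is a properly infinite semifinite factor with finite corner $e_\vphi$; I would then use the trace $\tau$ of Proposition~\ref{prop:basic_construction}(c) to choose projections $q_m\uparrow 1$ with $\tau(q_m)=m\,\vphi(1)$, whose successive differences are countably many pairwise equivalent copies of $e_\vphi$ tiling $1$. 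In every case the amplifying space is $\ell^2$ of a countable set of cardinality $|\Sd(\vphi)|$, yielding $\<M,e_\vphi\>\cong M^\vphi\bar\otimes B(\ell^2(\Sd(\vphi)))$.

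To bring in the crossed product and settle $(ii)\Leftrightarrow(iii)$, I would use Proposition~\ref{prop:basic_construction}(a) in the form $\<M,e_\vphi\>=(M\cup\{\Delta_\vphi^{it}:t\in\R\})''$ together with $\Delta_\vphi^{it}=U_{\hat\iota(t)}$ and the density of $\hat\iota(\R)$ in $\widehat{\Sd(\vphi)}$, which give $\{\Delta_\vphi^{it}\}''=\{U_s:s\in\widehat{\Sd(\vphi)}\}''$. Thus $(\mathrm{id},U)$ is a covariant representation of $(M,\widehat{\Sd(\vphi)},\alpha)$ on $L^2(M,\vphi)$ and integrates to a normal surjection $\Phi\colon M\rtimes_\alpha\widehat{\Sd(\vphi)}\to\<M,e_\vphi\>$; showing $\Phi$ is an isomorphism makes the two algebras factors simultaneously, which gives $(ii)\Leftrightarrow(iii)$ (with $\Sd(\vphi)$ a group supplied through $(i)$ by Remark~\ref{rem:Sd_is_subgroup}) as well as the remaining isomorphism in the final clause. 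The main obstacle is precisely the injectivity of $\Phi$: since $\widehat{\Sd(\vphi)}$ is compact and $U$ realizes every character (each $1_{\{\lambda\}}(\Delta_\vphi)$ is nonzero, so $U$ has full spectrum $\Sd(\vphi)=\widehat{\widehat{\Sd(\vphi)}}$), no spectral subspace of the crossed product should be annihilated. I would make this rigorous either by transporting the canonical semifinite trace along $\Phi$—both algebras carry traces restricting to $\vphi$ on the common corner $M^\vphi$, and a surjective normal $*$-homomorphism preserving a faithful semifinite trace is injective—or by appealing to the discrete core framework of \cite{Dyk95}. A secondary point requiring care is the uniform handling of the finite versus properly infinite cases in the tiling argument of the previous paragraph.
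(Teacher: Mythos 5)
Your direct derivation of $(i)\Leftrightarrow(iii)$ from $Z(\<M,e_\vphi\>)=Z\left((J_\vphi M^\vphi J_\vphi)'\right)=J_\vphi Z(M^\vphi)J_\vphi$ is correct and cleaner than the paper, which uses the commutant picture only for $(iii)\Rightarrow(i)$ and reaches $(i)\Rightarrow(iii)$ through $(ii)$; your case analysis for the amplification isomorphism is also sound and more detailed than the paper's one-line justification. The genuine gap is the injectivity of $\Phi\colon M\rtimes_\alpha\widehat{\Sd(\vphi)}\to\<M,e_\vphi\>$, on which your whole treatment of $(ii)\Leftrightarrow(iii)$ rests. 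The heuristic you offer --- each $1_{\{\lambda\}}(\Delta_\vphi)$ is nonzero, so no spectral subspace of the crossed product should be annihilated --- is false as a general principle. Take $M=B(\ell^2(S))$ and $\vphi=\Tr(h^{1/2}\,\cdot\,h^{1/2})$ with $h$ diagonal with simple eigenvalue list $\{\lambda^n\colon n\in S\}$ for $S\subset \Z$ satisfying $S-S=\Z$ but $S\neq\Z$ (e.g.\ $S=2\Z\cup\{1\}$): then $\Sd(\vphi)=\lambda^{\Z}$ is a group and every spectral projection of $\Delta_\vphi$ is nonzero, but $\alpha$ is inner, the crossed product is $M\bar\otimes L(\widehat{\lambda^\Z})$ with center $\C\otimes\ell^\infty(\Z)$, and $\Phi$ kills the nonzero minimal central projection at $k$ whenever $-k\notin S$. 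What injectivity actually requires is \emph{fullness} of the spectral subspaces, $\sum_{v\in\mathcal{V}_\lambda}vv^*=1$ for every $\lambda\in\Sd(\vphi)$, which by Lemma~\ref{lem:spectral_projections_of_Delta} is exactly what factoriality of $M^\vphi$ buys; your sketch never invokes that hypothesis. The trace-transport variant has the same hole: the corner $f\left(M\rtimes_\alpha\widehat{\Sd(\vphi)}\right)f\cong M^\vphi$ at $f=\int\lambda_s\,ds$ determines the trace only when $f$ has full central support in the crossed product, and that is again equivalent to fullness of the spectral subspaces.

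The repair is short and preserves your architecture. For $(ii)\Rightarrow(iii)$ no injectivity argument is needed at all: the kernel of a nonzero normal $*$-homomorphism out of a factor is trivial, which is precisely how the paper handles that implication after citing Paschke for surjectivity. For $(iii)\Rightarrow(ii)$ you may assume $(i)$ by your first step; then for $v\in M^{(\vphi,\lambda)}$ one has $v\lambda_s=\overline{(s\mid\lambda)}\,\lambda_s v$, hence $vfv^*=f_\lambda\,vv^*$ where $f_\lambda:=\int\overline{(s\mid\lambda)}\lambda_s\,ds$ is the minimal projection of $L(\widehat{\Sd(\vphi)})\cong\ell^\infty(\Sd(\vphi))$ at $\lambda$. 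Summing over $\mathcal{V}_\lambda$ via Lemma~\ref{lem:spectral_projections_of_Delta} gives $\sum_{v\in\mathcal{V}_\lambda}vfv^*=f_\lambda$, and summing over $\lambda$ gives $1$; so $f$ has full central support and a factorial corner $M^\vphi f$, the crossed product is a factor, and $\Phi$ is then automatically injective. Once patched, your route is a legitimate alternative to the paper's, which instead obtains $(i)\Rightarrow(ii)$ from the Connes-spectrum computation $\Gamma(\alpha)=\Sd(\vphi)$ in Lemma~\ref{lem:extremal_from_Gamma} together with \cite[Corollary XI.2.8]{Tak03}.
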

\begin{proof} 
$(i)\Rightarrow(ii):$ Lemma~\ref{lem:extremal_from_Gamma} implies $\Sd(\vphi)$ is a group and $\Gamma(\alpha)=\Sd(\vphi)$, and the latter in turn implies the crossed product
    \[
        M\rtimes_{\alpha} \widehat{\Sd(\vphi)}
    \]
is a factor by \cite[Corollary XI.2.8]{Tak03}.\\

\noindent $(ii)\Rightarrow (iii):$ Appealing to \cite[Theorem 4.2]{Pas77} we see that $M\rtimes_{\alpha} \widehat{\Sd(\vphi)}$ surjects onto
    \[
        (M\cup\{\Delta_\vphi^{it}\colon t\in \R\})''.
    \]
But Proposition~\ref{prop:basic_construction} tells us the above is $\<M,e_\vphi\>$, and since the crossed product is a factor the surjection is necessarily an isomorphism. In particular, we have that $\<M,e_\vphi\>$ is a factor.\\

\noindent $(iii)\Rightarrow (i):$ This follows from $\<M,e_\vphi\> = (J_\vphi M^\vphi J_\vphi)'$.\\

\noindent For the last statement, we have already witnessed the isomorphism between the crossed product and basic construction. Since $M^\vphi\cong e_\vphi \<M,e_\vphi\> e_\vphi$ and since $\Sd(\vphi)$ is either trivial or countably infinite, we see from Proposition~\ref{prop:basic_construction} that the basic construction is an $|\Sd(\vphi)|$-fold amplification of $M^\vphi$.
\end{proof}

We now prove our first main theorem.

\begin{thm}[{Theorem~\ref{thmalpha:theorem_A}}]\label{thm:dim_scaling_constant}
Let $M$ be a factor with separable predual, and let $\vphi$ and $\psi$ be extremal almost periodic weights. Suppose that either:
    \begin{enumerate}[label=(\alph*)]
    \item $M$ is type $\mathrm{III}_\lambda$ with $0<\lambda <1$; or
    \item $M$ is full.
    \end{enumerate}
Then there exists a $*$-isomorphism $\theta\colon \<M,e_\psi\> \to \<M,e_\vphi\>$ and a constant $c_{\psi,\vphi}>0$ so that for all left $(M,\vphi)$-modules $(\H,\pi)$ one has
    \[
        \dim_{(M,\psi)}(\H, \pi\circ \theta) = c_{\psi,\vphi} \dim_{(M,\vphi)}(\H, \pi).
    \]
\end{thm}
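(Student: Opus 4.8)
The plan is to push both basic constructions over to amplifications of the centralizers via Lemma~\ref{lem:factor_levels}, manufacture $\theta$ at that level, and then reduce the dimension identity to the uniqueness up to scaling of the tracial weight on a semifinite factor. First I would construct $\theta$. Since $\vphi$ and $\psi$ are extremal, $M^\vphi$ and $M^\psi$ are factors, so by Lemma~\ref{lem:factor_levels} each basic construction is a semifinite factor with
\[
    \<M,e_\vphi\>\cong M^\vphi\,\bar\otimes\, B(\ell^2(\Sd(\vphi))), \qquad \<M,e_\psi\>\cong M^\psi\,\bar\otimes\, B(\ell^2(\Sd(\psi))).
\]
Because $M$ is type $\mathrm{III}$, neither weight is tracial, so $\Sd(\vphi)$ and $\Sd(\psi)$ are nontrivial countable subgroups of $\R_+$ and hence countably infinite; thus both multiplicity spaces are separable and infinite-dimensional. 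I would then invoke Connes' uniqueness theorems---\cite[Theorem 4.3.2]{Con73} in case (a), and \cite[Theorem 4.7]{Con74} in case (b), the latter being applicable since fullness forces $\Sd(\vphi)=\Sd(M)=\Sd(\psi)$ via Lemma~\ref{lem:extremal_from_Gamma} and so makes $\Sd(M)$ countable---to conclude that $M^\vphi$ and $M^\psi$ are stably isomorphic. Amplifying both by $B(\ell^2)$ absorbs the auxiliary multiplicity spaces (as $B(H)\bar\otimes B(\ell^2)\cong B(\ell^2)$ for separable $H$), yielding a $*$-isomorphism $\theta\colon \<M,e_\psi\>\to \<M,e_\vphi\>$.

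Next I would compare traces. Let $\tau_\vphi$ and $\tau_\psi$ be the tracial weights of Proposition~\ref{prop:basic_construction}.\ref{part:tracial_weight} on $\<M,e_\vphi\>$ and $\<M,e_\psi\>$, respectively. Then $\tau_\psi\circ\theta^{-1}$ is a faithful normal semifinite tracial weight on the semifinite \emph{factor} $\<M,e_\vphi\>$, so by uniqueness up to scaling there is a constant $c_{\psi,\vphi}>0$ with $\tau_\psi\circ\theta^{-1}=c_{\psi,\vphi}\,\tau_\vphi$. Given a left $(M,\vphi)$-module $(\H,\pi)$, the pullback $(\H,\pi\circ\theta)$ is a left $(M,\psi)$-module, and Theorem~\ref{thm:dimension_amplification_compression_formula} identifies each side of the claimed identity with a dimension over the appropriate semifinite factor. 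Transporting a standard intertwiner through the trace-preserving isomorphism $\theta\colon(\<M,e_\psi\>,\tau_\psi)\to(\<M,e_\vphi\>,\tau_\psi\circ\theta^{-1})$ gives
\[
    \dim_{(M,\psi)}(\H,\pi\circ\theta)=\dim_{(\<M,e_\psi\>,\tau_\psi)}(\H,\pi\circ\theta)=\dim_{(\<M,e_\vphi\>,\,\tau_\psi\circ\theta^{-1})}(\H,\pi),
\]
and the linear dependence of the dimension on the tracial weight turns the right-hand side into $c_{\psi,\vphi}\dim_{(\<M,e_\vphi\>,\tau_\vphi)}(\H,\pi)=c_{\psi,\vphi}\dim_{(M,\vphi)}(\H,\pi)$, as desired.

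I expect the main obstacle to be the first step: extracting from Connes' classification the precise statement that $M^\vphi$ and $M^\psi$ are stably isomorphic while simultaneously controlling $\Sd(\vphi)$ and $\Sd(\psi)$, and correctly handling the distinction between finite and infinite weights (where only stable---rather than genuine---isomorphism of the centralizers is guaranteed, which is exactly why the amplification by $B(\ell^2)$ is needed). Once $\<M,e_\vphi\>$ is known to be a semifinite factor, the trace-comparison and scaling arguments are routine.
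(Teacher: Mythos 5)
Your proposal is correct, and interestingly it follows the route sketched in the paper's introduction rather than the one taken in the formal proof. The trace-comparison half is identical to the paper's: both obtain $\tau_\psi\circ\theta^{-1}=c_{\psi,\vphi}\tau_\vphi$ from uniqueness of the tracial weight on the semifinite factor $\<M,e_\vphi\>$ and then apply Theorem~\ref{thm:dimension_amplification_compression_formula} twice. The difference is in how $\theta$ is produced. You go through stable isomorphism of the centralizers $M^\vphi$ and $M^\psi$ (via Connes' uniqueness of extremal almost periodic weights up to scaling and inner automorphism) and then absorb the multiplicity spaces $\ell^2(\Sd(\vphi))$, $\ell^2(\Sd(\psi))$ using the tensor decomposition of Lemma~\ref{lem:factor_levels}. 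The paper instead first shows $\Sd(\vphi)=\Sd(\psi)=:\Lambda$ (via \cite[Theorem 4.2.6]{Con73} in case (a) and \cite[Lemma 4.8]{Con74} in case (b)), then shows the extended modular actions $\sigma^{(\vphi,\Lambda)}$ and $\sigma^{(\psi,\Lambda)}$ of $\widehat{\Lambda}$ are cocycle equivalent, and obtains $\theta$ from the resulting isomorphism of crossed products together with the identification $M\rtimes_{\sigma^{(\vphi,\Lambda)}}\widehat{\Lambda}\cong\<M,e_\vphi\>$ from Lemma~\ref{lem:factor_levels}. The cocycle-equivalence route sidesteps the finite-versus-infinite-weight bookkeeping you correctly flag as the delicate point of your approach, and it degenerates gracefully when $\Lambda$ is trivial. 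Your argument does have one small unjustified step: in case (b) you assert $M$ is type $\mathrm{III}$, but a full factor may be semifinite (e.g.\ a free group factor), in which case Remark~\ref{rem:extremal_ap_on_semifinite_is_tracial} forces both weights to be tracial, $\Sd(\vphi)=\Sd(\psi)=\{1\}$, and your claim that the multiplicity spaces are infinite-dimensional fails; that degenerate case must be handled separately (it reduces to the classical uniqueness of traces on a semifinite factor, with $\<M,e_\vphi\>=M=\<M,e_\psi\>$ and $\theta=\mathrm{id}$), but it is a trivial patch. Note also that since your trace-comparison argument works for \emph{any} $*$-isomorphism $\theta$, the weaker control you have over $\theta$ (compared with the paper's explicit crossed-product isomorphism) costs nothing for the statement as given.
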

\begin{proof}
We first claim $\Sd(\vphi)=\Sd(\psi)$. In case (a), this follows from \cite[Theorem 4.2.6]{Con73} which states that $\S(\vphi) = \S(M)= \{0\}\cup \lambda^\Z$ is a consequence of the extremality of $\vphi$. But then $\Sd(\vphi)=\lambda^\Z$ since these are the isolated points in the spectrum, and similarly for $\Sd(\psi)$. In case (b), this follows from \cite[Lemma 4.8]{Con74} which states that $\Sd(\vphi)=\Sd(M)=\Sd(\psi)$ is a consequence of their respective extremality. (Note that $\Sd(M) \neq \R_+$ since it has a separable predual and admits almost periodic weights by assumption.)

Let $\Lambda$ denote the group $\Sd(\vphi)=\Sd(\psi)$ and let $G:=\widehat{\Lambda}$ be its dual group. Note that the actions
    \[
        G\overset{\sigma^{(\vphi,\Lambda)}}{\curvearrowright}M \qquad \text{ and } \qquad G\overset{\sigma^{(\psi,\Lambda)}}{\curvearrowright}M
    \]
are cocycle equivalent: in case (a) where $G= \R/T\Z$ for $T=\frac{2\pi}{\log(1/\lambda)}$ this follows directly from the Connes cocycle derivative theorem \cite[Theorem VIII.3.3]{Tak03}; and in case (b) this follows from \cite[Lemma 4.2]{Con74}. Thus we have
    \[
        M\rtimes_{\sigma^{(\vphi,\Lambda)}} G \cong M\rtimes_{\sigma^{(\psi,\Lambda)}} G,
    \]
(see \cite[Theorem X.1.7.(ii)]{Tak03}). Using Lemma~\ref{lem:factor_levels} we obtain $*$-isomorphism $\theta\colon \<M,e_\psi\>\to \<M,e_\vphi\>$.

Finally, we use Proposition~\ref{prop:basic_construction}.\ref{part:tracial_weight} to produce a unique faithful normal semifinite tracial weight $\tau_\vphi$ on $\<M,e_\vphi\>$ satisfying $\tau_\vphi(e_\vphi x e_\vphi) = \vphi(x)$ for $x\in M_+$. If we define $\tau_\psi$ similarly, then we must have $\tau_\psi\circ \theta^{-1} = c_{\psi,\vphi} \tau_\vphi$ for some constant $c_{\psi,\vphi}>0$ because $\<M,e_\vphi\>$ is a semifinite factor. Thus if $(\H,\pi)$ is a left $(M,\vphi)$-module, then using Theorem~\ref{thm:dimension_amplification_compression_formula} twice we have
    \begin{align*}
        \dim_{(M,\psi)}(\H,\pi\circ \theta) &= \dim_{(\<M,e_\psi\>, \tau_\psi)}(\H, \pi\circ \theta)\\
            &= \dim_{(\<M,e_\vphi\>, \tau_\psi\circ \theta^{-1})}(\H,\pi)\\
            &= c_{\psi,\vphi} \dim_{(\<M,e_\vphi\>, \tau_\vphi)}(\H,\pi)= c_{\psi,\vphi} \dim_{(M,\vphi)}(\H,\pi),
    \end{align*}
as claimed.
\end{proof}

\begin{rem}
The main obstacle to proving Theorem~\ref{thm:dim_scaling_constant} in full generality is establishing cocycle equivalence of $\sigma^{(\vphi,\Lambda)}$ and $\sigma^{(\psi,\Lambda)}$ beyond cases (a) and (b). This is likely impossible to do for arbitrary $M$, but may be tractable for special cases like group von Neumann algebras.$\hfill\blacksquare$
\end{rem}


\section{Application to the Index for Subfactors}\label{sec:appliction_index_subfactors}

In this section we show that our extended Murray--von Neumann dimension can be used to estimate (and in some cases even compute exactly) the index for inclusions of type $\mathrm{III}$ factors with expectation. We begin with a few observations that will be used implicitly in the rest of the section.

Let $N\overset{\E}{\subset} M$ be an inclusion of factors and suppose $\vphi$ is a faithful normal strictly semifinite weight on $N$. Then $\vphi\circ \E$ is a faithful normal strictly semifinite weight on $M$ since $N^\vphi \subset M^{\vphi\circ \E}$. In this case
  \[
        \begin{array}{ccc}
        N & \subset & M\\
        \cup & & \cup\\
        N^{\vphi} & \subset &M^{\vphi\circ \E}
        \end{array}
    \]
forms a commuting square and $\Delta_\vphi$ can be identified with $e_N\Delta_{\vphi\circ \E}$, where $e_N$ denotes the Jones projection onto $L^2(N,\varphi)\leq L^2(M,\vphi\circ \E)$. In particular, one has $\Sd(\vphi)\subset \Sd(\vphi\circ \E)$. If one further assumes that $\vphi\circ \E$ is almost periodic then so is $\vphi$, and if both weights are extremal then $\text{Sd}(\vphi)$ is a subgroup of $\text{Sd}(\vphi\circ \E)$. We also need the following lemma, which is likely already known to experts.

\begin{lem}\label{lem:ext_ap_has_II_1_centralizer}
For a diffuse factor $M$ with an extremal almost periodic weight $\vphi$, $M^\vphi$ is a type $\mathrm{II}_1$ factor if $\vphi(1)<\infty$ and otherwise is a type $\mathrm{II}_\infty$ factor.
\end{lem}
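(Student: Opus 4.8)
The plan is to first show that $M^\vphi$ is a semifinite factor, then rule out that it is type $\mathrm{I}$, and finally read off the finite/infinite dichotomy from $\vphi(1)$. Since an almost periodic weight is strictly semifinite, Lemma~\ref{lem:strictly_semifinite_characterization} makes $\vphi|_{M^\vphi}$ a faithful normal semifinite tracial weight, so $M^\vphi$ is a semifinite von Neumann algebra; extremality of $\vphi$ means $M^\vphi$ is a factor. Thus $M^\vphi$ is a semifinite factor, hence type $\mathrm{I}$ or type $\mathrm{II}$. For the finiteness dichotomy I would use that $\vphi(1) = \vphi|_{M^\vphi}(1)$: if this is finite then $\vphi|_{M^\vphi}$ is a bounded trace and $M^\vphi$ is a finite factor, whereas if it is infinite then, by uniqueness up to scaling of the tracial weight on a factor, $M^\vphi$ cannot be finite. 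So once type $\mathrm{I}$ is excluded, $M^\vphi$ is type $\mathrm{II}_1$ when $\vphi(1)<\infty$ and type $\mathrm{II}_\infty$ otherwise.

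The heart of the argument is excluding type $\mathrm{I}$. Suppose toward a contradiction that $M^\vphi$ is type $\mathrm{I}$ and let $p\in M^\vphi$ be a minimal projection; as the atoms of a semifinite factor carry finite trace, $0<\vphi(p)<\infty$. Passing to the corner, Remark~\ref{rem:strictly_semifinite_corners} gives $(pMp)^{\vphi|_{pMp}} = pM^\vphi p = \C p$, and Remark~\ref{rem:almost_periodic_corners} shows $\psi:=\vphi|_{pMp}$ is an extremal almost periodic weight on $pMp$ with trivial centralizer. Since $(pMp)^\psi = \C p$ is a factor, Lemma~\ref{lem:spectral_projections_of_Delta} supplies, for each $\lambda\in\Sd(\psi)$ with $\lambda\geq 1$, a co-isometry $v\in (pMp)^{(\psi,\lambda)}$, i.e.\ $vv^*=p$; its source projection $v^*v$ lies in $(pMp)^\psi=\C p$ and is nonzero, so $v^*v=p$ and $v$ is in fact a unitary. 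Because $v^*v=p\in\dom(\psi)$ we have $v\in\sqrt{\dom}(\psi)$, so the eigenoperator scaling relation $\psi(v^*v)=\lambda^{-1}\psi(vv^*)$ appearing in the proof of Lemma~\ref{lem:spectral_projections_of_Delta} reads $\psi(p)=\lambda^{-1}\psi(p)$; since $0<\psi(p)=\vphi(p)<\infty$ this forces $\lambda=1$.

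Now $\Sd(\psi)$ contains $1$ and is stable under inversion (via $J_\psi \Delta_\psi J_\psi = \Delta_\psi^{-1}$), and every element is either $\geq 1$ or has inverse $\geq 1$, so the previous step yields $\Sd(\psi)=\{1\}$. As $\psi$ is almost periodic this means $\Delta_\psi=1$, hence $\psi$ is tracial and $pMp=(pMp)^\psi=\C p$; that is, $p$ is a minimal projection of $M$, contradicting the diffuseness of $M$. Therefore $M^\vphi$ is not type $\mathrm{I}$, and combined with the first paragraph this completes the proof.

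I expect the main obstacle to be the corner reduction: one must confirm that strict semifiniteness, almost periodicity, extremality, and the co-isometry machinery of Lemma~\ref{lem:spectral_projections_of_Delta} all descend to $pMp$, and that the finiteness $\vphi(p)<\infty$ (so that the scaling relation may be cancelled) is genuinely available for an atom of the semifinite factor $M^\vphi$. Once these are in place the contradiction is immediate, and everything else is bookkeeping with the corner remarks already established in the excerpt.
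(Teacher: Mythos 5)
Your proof is correct, and it rests on the same mechanism as the paper's: an eigenoperator partial isometry for an eigenvalue $\lambda\neq 1$ scales the trace by $\lambda$, which is incompatible with a minimal (hence finite-trace) projection, while the degenerate case $\Sd=\{1\}$ is handled by the diffuseness of $M$ itself. The difference is in the packaging. The paper argues directly in $M$: if $M^\vphi$ is not diffuse, it picks $\lambda\in\Sd(\vphi)\cap(1,\infty)$, a single partial isometry $v\in M^{(\vphi,\lambda)}$, and a minimal projection $p\leq vv^*$, and then $\vphi(v^*pv)=\lambda^{-1}\vphi(p)<\vphi(p)$ already contradicts minimality --- no corner reduction and no appeal to the co-isometry refinement of Lemma~\ref{lem:spectral_projections_of_Delta} is needed. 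Your route instead compresses to $pMp$, uses Remarks~\ref{rem:strictly_semifinite_corners} and \ref{rem:almost_periodic_corners} plus the co-isometry part of Lemma~\ref{lem:spectral_projections_of_Delta} to force $\Sd(\vphi|_{pMp})=\{1\}$, and concludes $pMp=\C p$ against the diffuseness of $M$. This costs a little more machinery (all of which is indeed available in the corner, as you note), but it has the mild advantage of isolating the clean statement that an extremal almost periodic weight with trivial centralizer on a factor must be a multiple of a tracial state on $\C$. The finiteness dichotomy via uniqueness of the tracial weight on the semifinite factor $M^\vphi$ is identical in both arguments.
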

\begin{proof}
Since $\vphi$ is extremal and strictly semifinite, we have that $M^\vphi$ is a semifinite factor. We will show that $M^\vphi$ is diffuse and hence a type $\mathrm{II}$ factor. The subtype is then determined by whether or not $1$ is a finite projection in $M^\vphi$, but this is equivalent to whether or not $\vphi(1)<\infty$ since $\vphi$ is the unique (up to scaling) faithful normal semifinite tracial weight on $M^\vphi$.
If $\Sd(\vphi)=\{1\}$ then $M^\vphi =M$ is a diffuse and we are done. Otherwise, there exists $\lambda\in \Sd(\vphi)\cap (1,+\infty)$ and hence we can find a partial isometry $v\in M^{(\vphi,\lambda)}$. If $M^\vphi$ is not diffuse, then we can find a minimal projection $p\in M^\vphi$ satisfying $p\leq vv^*$. But then
    \[
        \vphi(p) = \vphi(vv^*p) = \lambda \vphi(v^*p v) > \vphi(v^*pv);
    \]
that is, $v^*pv$ is a projection in $M^\vphi$ with trace strictly smaller than $p$, contradicting the minimality of $p$. Thus $M^\vphi$ must be diffuse.
\end{proof}

Our first step toward showing the index can be related to our extended Murray--von Neumann dimension is to establish when $L^2(M,\vphi\circ\E)$ is an $(N,\vphi)$-module. It turns out that there is a family of representations  of  $\<N,e_\vphi\>$ on $L^2(M,\vphi\circ\E)$ indexed by transversals for $\Sd(\vphi)\leq \Sd(\vphi\circ \E)$.

\begin{thm}\label{thm:subfactors_give_modules}
Let $N\overset{\E}{\subset} M$ be an inclusion of factors, and let $\vphi$ be an extremal almost periodic weight on $N$ such that $\vphi\circ \E$ is also an extremal almost periodic weight. For any transversal $C\subset \text{Sd}(\vphi\circ \E)$ of coset representatives for $\text{Sd}(\vphi)$ there exists a unique representation $\pi_C\colon \<N,e_\vphi\>\to \<M,e_{\vphi\circ\E}\>$ satisfying $\pi_C\mid_N =\text{id}$ and
    \[
        \pi_C(e_\vphi) = \sum_{\mu\in C} 1_{\{\mu\}}(\Delta_{\vphi\circ \E}).
    \]
Moreover, when $\vphi$ is a state one has
    \begin{align}\label{eqn:subfactor_dimension_formula}
        \dim_{(N,\vphi)}(L^2(M,\vphi\circ \E), \pi_C) = \left(\sum_{\mu\in C} \mu \right) [M^{\vphi\circ \E}\colon N^\vphi].
    \end{align}
\end{thm}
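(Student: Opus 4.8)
The plan is to push the dimension down to the centralizers via Theorem~\ref{thm:dimension_amplification_compression_formula} and then compute on each spectral subspace. Applying that theorem to the left $(N,\vphi)$-module $(L^2(M,\vphi\circ\E),\pi_C)$ gives
\[
    \dim_{(N,\vphi)}(L^2(M,\vphi\circ\E),\pi_C)=\dim_{(N^\vphi,\vphi|_{N^\vphi})}\!\big(\pi_C(e_\vphi)L^2(M,\vphi\circ\E),\ \pi_C(\,\cdot\,e_\vphi)\big).
\]
Since $\pi_C(e_\vphi)=\sum_{\mu\in C}1_{\{\mu\}}(\Delta_{\vphi\circ\E})$ and $N^\vphi\subset M^{\vphi\circ\E}$ commutes with $\Delta_{\vphi\circ\E}^{it}$, the module $\pi_C(e_\vphi)L^2(M,\vphi\circ\E)$ is the orthogonal direct sum $\bigoplus_{\mu\in C}\H_\mu$ of the spectral subspaces $\H_\mu:=1_{\{\mu\}}(\Delta_{\vphi\circ\E})L^2(M,\vphi\circ\E)$, each invariant under the left $N^\vphi$-action. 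By additivity of dimension (Proposition~\ref{prop:net_of_submodules}.(c)) it thus suffices to show $\dim_{(N^\vphi,\vphi|_{N^\vphi})}(\H_\mu)=\mu\,[M^{\vphi\circ\E}\colon N^\vphi]$ for every $\mu\in C$.

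I would establish this by factoring through the intermediate $\mathrm{II}_1$ factor $M^{\vphi\circ\E}$. By Lemma~\ref{lem:ext_ap_has_II_1_centralizer} both $N^\vphi$ and $M^{\vphi\circ\E}$ are type $\mathrm{II}_1$ factors, with normalized traces $\vphi|_{N^\vphi}$ and $\vphi\circ\E|_{M^{\vphi\circ\E}}$ (the former being the restriction of the latter, by uniqueness of the trace). As $M^{\vphi\circ\E}$ commutes with $\Delta_{\vphi\circ\E}^{it}$, each $\H_\mu$ is moreover a left $M^{\vphi\circ\E}$-module, and restricting such a module along the inclusion of $\mathrm{II}_1$ factors $N^\vphi\subset M^{\vphi\circ\E}$ multiplies Murray--von Neumann dimension by the Jones index $[M^{\vphi\circ\E}\colon N^\vphi]$. (This is the index--amplification relation; cf.\ Proposition~\ref{prop:finite_index_amplification_formula} in the finite-index case, and in general it follows in $[0,+\infty]$ by additivity from $\dim_{N^\vphi}L^2(M^{\vphi\circ\E})=[M^{\vphi\circ\E}\colon N^\vphi]$, or by splicing a Pimsner--Popa basis for $N^\vphi\subset M^{\vphi\circ\E}$ into the intertwiner below.) It therefore remains to prove $\dim_{(M^{\vphi\circ\E},\,\vphi\circ\E|_{M^{\vphi\circ\E}})}(\H_\mu)=\mu$.

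For this I would exhibit an explicit standard intertwiner. Since $\Sd(\vphi\circ\E)$ is a group we have $1/\mu\in\Sd(\vphi\circ\E)$, and because $M^{\vphi\circ\E}$ is a factor Lemma~\ref{lem:spectral_projections_of_Delta} provides a family $\mathcal V_{1/\mu}\subset M^{(\vphi\circ\E,\,1/\mu)}$ of partial isometries with $\sum_{c\in\mathcal V_{1/\mu}}cc^*=1$. Right multiplication commutes with the left $M^{\vphi\circ\E}$-action, and right multiplication by $c\in M^{(\vphi\circ\E,\,1/\mu)}$ sends the $\mu$-eigenspace $\H_\mu$ into the $1$-eigenspace $\H_1=L^2(M^{\vphi\circ\E})$; hence $V\xi:=\sum_{c\in\mathcal V_{1/\mu}}(\xi c)\otimes\delta_c$ defines a left-$M^{\vphi\circ\E}$-equivariant operator $\H_\mu\to L^2(M^{\vphi\circ\E})\otimes\ell^2(\mathcal V_{1/\mu})$. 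A short computation shows $V^*V$ is right multiplication by $\sum_c cc^*=1$, so $V$ is a standard intertwiner. Computing the dimension from Definition~\ref{defi:MvN_dim}, the off-diagonal matrix entries of $(J_{\vphi\circ\E}\otimes1)VV^*(J_{\vphi\circ\E}\otimes1)$ drop out under $\vphi\circ\E\otimes\Tr$ and one is left with $\dim_{(M^{\vphi\circ\E},\,\vphi\circ\E)}(\H_\mu)=\sum_{c\in\mathcal V_{1/\mu}}\vphi\circ\E(c^*c)$. Finally the eigenoperator identity $\vphi\circ\E(cc^*)=\tfrac1\mu\,\vphi\circ\E(c^*c)$ for $c\in M^{(\vphi\circ\E,\,1/\mu)}$ (which follows from $c^*=J_{\vphi\circ\E}\Delta_{\vphi\circ\E}^{1/2}c$), together with $\sum_c cc^*=1$ and $\vphi\circ\E(1)=1$, yields $\sum_c\vphi\circ\E(c^*c)=\mu\sum_c\vphi\circ\E(cc^*)=\mu$.

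The step I expect to be most delicate is the left/right bookkeeping, and in particular ensuring the scaling comes out as $\mu$ rather than $\mu^{-1}$: one must move $\H_\mu$ down to $\H_1$ using eigenoperators of eigenvalue $1/\mu$ via \emph{right} multiplication (so as to commute with the left $M^{\vphi\circ\E}$-action), and it is precisely the normalization $\vphi\circ\E(1)=1$---i.e.\ that $\vphi$ is a state---that makes $\sum_c cc^*=1$ carry the correct trace. Secondary technical points are verifying that $V$ is genuinely isometric (which relies on $M^{\vphi\circ\E}$ being a factor, so that $\sum_c cc^*=1$ in Lemma~\ref{lem:spectral_projections_of_Delta}) and treating the possibly infinite index $[M^{\vphi\circ\E}\colon N^\vphi]$ uniformly in the multiplicativity step, where the Pimsner--Popa basis route is the most robust.
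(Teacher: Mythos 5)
Your argument addresses only the ``Moreover'' clause. The first and most substantial claim of the theorem --- that there \emph{exists} a (unique) normal representation $\pi_C\colon \<N,e_\vphi\>\to\<M,e_{\vphi\circ\E}\>$ restricting to the identity on $N$ and sending $e_\vphi$ to $\sum_{\mu\in C}1_{\{\mu\}}(\Delta_{\vphi\circ\E})$ --- is nowhere established: you simply apply Theorem~\ref{thm:dimension_amplification_compression_formula} to ``the left $(N,\vphi)$-module $(L^2(M,\vphi\circ\E),\pi_C)$'' as if $\pi_C$ were already in hand. Uniqueness is cheap (by Proposition~\ref{prop:basic_construction}, $Ne_\vphi N$ is $\sigma$-weakly total in $\<N,e_\vphi\>$), but existence is not: one must verify that $x\mapsto x$, $e_\vphi\mapsto\sum_{\mu\in C}e_\mu$ is compatible with all the relations in the basic construction and extends normally. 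The paper devotes the bulk of its proof to exactly this, building isometries $W_\mu\colon L^2(\<N,e_\vphi\>,\widehat{\vphi})\to L^2(N_1,\widehat{\psi})$ from $ae_\vphi b\mapsto ae_\mu b$, where $\widehat{\vphi},\widehat{\psi}$ are composites with the dual operator-valued weights, checking $\<ae_\mu b,ce_\mu d\>_{\widehat\psi}=\<ae_\vphi b,ce_\vphi d\>_{\widehat\vphi}$ via $\widehat{\E}_\psi(e_\mu c^*ae_\mu)=\E_\vphi(c^*a)$, and showing $\sum_\mu W_\mu W_\mu^*=1$ before setting $\pi_C(x)=\sum_\mu W_\mu xW_\mu^*$. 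Without some version of this construction your proof is incomplete, and the dimension formula cannot even be stated, since $L^2(M,\vphi\circ\E)$ is not canonically an $(N,\vphi)$-module.

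The dimension computation itself is essentially correct and close in spirit to the paper's. Both reduce via Theorem~\ref{thm:dimension_amplification_compression_formula} and additivity to computing the dimension of each spectral subspace $\H_\mu=1_{\{\mu\}}(\Delta_{\vphi\circ\E})L^2(M,\vphi\circ\E)$, and both move $\H_\mu$ onto copies of $L^2(M^{\vphi\circ\E})$ by right multiplication by the partial isometries $\mathcal{V}_{\mu^{-1}}$ of Lemma~\ref{lem:spectral_projections_of_Delta}, with the trace-scaling identity $\psi(v^*v)=\mu\,\psi(vv^*)$ producing the factor $\mu$. The only difference is organizational: you first compute $\dim_{(M^{\vphi\circ\E},\psi)}(\H_\mu)=\mu$ and then multiply by $[M^{\vphi\circ\E}\colon N^\vphi]$ via restriction of scalars for the $\mathrm{II}_1$ inclusion $N^\vphi\subset M^{\vphi\circ\E}$, whereas the paper computes $\dim_{(N^\vphi,\vphi)}(\H_\mu)$ in one pass, absorbing the index through $\dim_{(N^\vphi,\vphi)}L^2(M^{\vphi\circ\E},\psi)=[M^{\vphi\circ\E}\colon N^\vphi]$. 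Your two-step version is fine provided you justify the restriction-of-scalars identity in $[0,+\infty]$ (your Pimsner--Popa splicing suggestion does this), but it buys nothing over the direct computation. The gap to repair is the construction of $\pi_C$.
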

\begin{proof}
Let us first fix some notation. Denote $\psi:=\vphi\circ \E$ and for each $\lambda\in \text{Sd}(\psi)$ let $e_\lambda:=1_{\{\lambda\}}(\Delta_\psi)$ (so that $e_1=e_\psi$). Let $\widehat{\E}_\vphi\colon \<N,e_\vphi\>\to \widehat{N}_+$ and $\widehat{\E}_\psi\colon \<M,e_\psi\>\to \widehat{M}_+$ be the operator-valued weights dual to $\E_\vphi$ and $\E_\psi$, respectively (see Section~\ref{sec:basic_construction_PP_bases_and_index}), and consider the faithful normal semifinite weights $\widehat{\vphi}:=\vphi\circ \widehat{\E}_\vphi$ and $\widehat{\psi}:=\psi\circ \widehat{\E}_\psi$. 

Denote by $N_1$ the subalgebra of $\<M,e_\psi\>$ generated by $N\cup \{\Delta_\psi^{it}\colon t\in \R\}$. Observe that
    \[
        N_1 = (N\cup \{e_\mu\colon \mu\in C\})''.
    \]
Indeed, for $\lambda\in \text{Sd}(\vphi)$ and $\mathcal{V}_\lambda\subset N^{(\vphi,\lambda)}$ as in Lemma~\ref{lem:spectral_projections_of_Delta} one has
    \[
        e_{\lambda \mu} = \sum_{v\in \mathcal{V}_\lambda} ve_\mu v^*.
    \]
Since $C\text{Sd}(\vphi)=\text{Sd}(\psi)$, it follows that $(N\cup \{e_\mu\colon \mu\in C\})''$ contains all the spectral projections of $\Delta_\psi$ and hence $\Delta_\psi^{it}$ for all $t\in \R$. So the claimed equality holds. 

Next note for $x\in M^{(\psi,\lambda)}$ and $\mu,\nu\in \text{Sd}(\psi)$ that $e_{\mu} x e_{\nu} = \delta_{\mu=\lambda \nu} x e_\mu$. Consequently, for any $x\in N^{(\vphi,\eig)}$ and $\mu,\nu\in C$ we have $e_\mu x e_\nu = \delta_{\mu=\nu} \E_\vphi(x) e_\mu$. It follows that $N_1$ is densely spanned by $N$ and $\{Ne_\mu N\colon \mu\in C\}$. Moreover, we have
    \[
        N_1 = \overline{\text{span}}\{ Ne_\mu N\colon \mu \in C\},
    \]
since the argument above showed that $1 = \sum_{\mu\in C} \sum_{\lambda\in \Sd(\vphi)} \sum_{v\in \mathcal{V}_\lambda} v e_\mu v^*$ belongs to the latter set.

Now, since $\widehat{\E}_\vphi(e_\vphi)=1$, we have
    \[
        A:=\sqrt{\dom}(\vphi)^*e_\vphi \sqrt{\dom}(\vphi) \subset \dom(\widehat{\vphi}).
    \]
Also note that $A''=\<N,e_\vphi\>$ by Proposition~\ref{prop:basic_construction}, and $A$ is globally invariant under $\sigma_t^{\widehat{\vphi}}$ since $\sigma_t^{\widehat{\vphi}}|_{N} = \sigma_t^\vphi$ and $\sigma_t^{\widehat{\vphi}}(e_\vphi) = e_\vphi$ (see the proof of \cite[Proposition 2.2]{ILP98}). Consequently $A$ is dense in $L^2(\<N,e_\vphi\>,\widehat{\vphi})$ by \cite[Lemma 2.1]{ILP98}. Similarly, for each $\lambda\in \text{Sd}(\psi)$ we have
    \[
        \widehat{\E}_\psi (e_\lambda) = \sum_{v\in \mathcal{V}_\lambda} v \widehat{\E}_\psi(e_\psi) v^* = \sum_{v\in \mathcal{V}_\lambda} v v^* = 1 
    \]
by Lemma~\ref{lem:spectral_projections_of_Delta}. Consequently $\sqrt{\dom}(\psi)^* e_\lambda \sqrt{\dom}(\psi) \subset \dom(\widehat{\psi})$ for all $\lambda\in \text{Sd}(\psi)$. In particular,
    \[
        B:=\text{span}\{ \sqrt{\dom}(\vphi)^* e_\mu \sqrt{\dom}(\vphi)\colon \mu\in C\} \subset \dom(\widehat{\psi}).
    \]
Since $B''=N_1$, it follows that $\widehat{\psi}$ is semifinite on $N_1$. Moreover, since it is globally invariant under $\sigma_t^{\widehat{\psi}}$, \cite[Lemma 2.1]{ILP98} implies $B$ is dense in $L^2(N_1,\widehat{\psi})$.

For $a,c\in N$, $b,d\in \sqrt{\dom}(\vphi)$, and $\mu\in C$ we have
    \[
        \< ae_\mu b, ce_\mu d\>_{\widehat{\psi}} = \psi( d^* \widehat{\E}_\psi(e_\mu c^*a e_\mu) b) = \psi( d^* \E_\vphi(c^*a) b) = \vphi( d^* \E_\vphi(c^*a) b) = \< a e_\vphi b, ce_\vphi d\>_{\widehat{\vphi}}.
    \]
The density of $A\subset L^2(\<N,e_\vphi\>,\widehat{\vphi})$ implies $a e_\vphi b\mapsto a e_\mu b\in B$ extends to an isometry $W_\mu \colon L^2(\<N,e_\vphi\>,\widehat{\vphi}) \to L^2(N_1,\widehat{\psi})$ for each $\mu \in C$. Note that $W_\mu, W_\nu$ have orthogonal ranges for distinct $\mu,\nu\in C$ and so
    \[
        \sum_{\mu\in C} W_\mu W_\mu^*
    \]
is a projection. Since $B=\text{span}\{W_\mu A\colon \mu\in C\}$ lies in the image of this projection it must equal $1$. Hence
    \[
        \pi_C(x):= \sum_{\mu\in C} W_\mu x W_\mu^*
    \]
defines a normal unital $*$-homomorphism on $\<N,e_\vphi\>$. Viewing $a e_\varphi b \in L^2(\<N,e_\vphi\>, \widehat{\vphi})$, observe that for $x\in N$ we have
    \[
        W_\mu x a e_\vphi b =  x a e_\mu b = x W_\mu a e_\vphi b,
    \]
and
    \[
        W_\mu e_\vphi a e_\vphi b = W_\mu \E_\vphi(a) e_\vphi b= \E_\vphi(a) e_\mu b = e_\mu a e_\mu b = e_\mu  W_\mu a e_\vphi b.
    \]
Hence $\pi_C(x)=x$ for $x\in N$ and $\pi_C(e_\vphi) = \sum_{\mu\in C} e_\mu$. The uniqueness of $\pi_C$ is then immediate from the density of $Ne_\vphi N$ in $\<N,e_\vphi\>$.

Finally, assume $\vphi$ is a state. Note that $N^\vphi$ and $M^{\vphi\circ \E}$ are then type $\mathrm{II}_1$ factors by Lemma~\ref{lem:ext_ap_has_II_1_centralizer}. To show (\ref{eqn:subfactor_dimension_formula}) we first invoke Theorem~\ref{thm:dimension_amplification_compression_formula} and Proposition~\ref{prop:net_of_submodules} to see
    \[
        \dim_{(N,\vphi)}(L^2(M,\psi), \pi_C) = \dim_{(N^\vphi, \vphi)} \left(\pi_C(e_\vphi) L^2(M,\psi), \pi_C(\,\cdot\, e_\vphi)\right) = \sum_{\mu\in C} \dim_{(N^\vphi, \vphi)}\left( e_\mu L^2(M,\psi)\right).
    \]
Now, for each $\mu\in C$ let $\mathcal{V}_{\mu^{-1}}\subset M^{(\psi, \mu^{-1})}$ be as in Lemma~\ref{lem:spectral_projections_of_Delta}. It follows that
    \[
        e_\mu = J_\psi e_{\mu^{-1}} J_\psi = \sum_{v\in \mathcal{V}_{\mu^{-1}}} (J_\psi v J_\psi) e_\psi (J_\psi v^* J_\psi).
    \]
Thus
    \begin{align*}
        \dim_{(N^\vphi, \vphi)}\left( e_\mu L^2(M,\psi)\right) &= \sum_{v\in V_{\mu^{-1}}} \dim_{(N^\vphi, \vphi)} \left(  (J_\psi v J_\psi) e_\psi (J_\psi v^* J_\psi) L^2(M,\psi)\right)\\
            &= \sum_{v\in V_{\mu^{-1}}} \dim_{(N^\vphi, \vphi)} \left(  e_\psi (J_\psi v^* J_\psi) (J_\psi v J_\psi) e_\psi  L^2(M,\psi)\right) \\
            &= \sum_{v\in V_{\mu^{-1}}} \dim_{(N^\vphi, \vphi)} \left(  J_\psi v^*v J_\psi L^2(M^\psi,\psi) \right) \\
            &=  \sum_{v\in V_{\mu^{-1}}} \psi(v^*v) \dim_{(N^\vphi, \vphi)} L^2(M^\psi,\psi)\\
            &= \sum_{v\in V_{\mu^{-1}}} \mu \psi(vv^*) [M^\psi\colon N^\vphi] = \mu [M^\psi\colon N^\vphi],
    \end{align*}
where we have used $\psi(1)=1$ in the last equality. Summing over $\mu\in C$ then yields (\ref{eqn:subfactor_dimension_formula}).
\end{proof}

\begin{rem}\label{rem:subfactor_dimension_can_usually_be_made_finite}
In the setting of Theorem~\ref{thm:subfactors_give_modules}, when $\vphi$ is a state, observe that so long as $\text{Sd}(\vphi)\neq \{1\}$ one can always choose a transversal $C$ so that
    \[
        \sum_{\mu\in C} \mu < \infty. 
    \]
Indeed, in this case $\text{Sd}(\vphi)$ is a non-trivial subgroup of $\R_+$ and hence all of its cosets in $\text{Sd}(\vphi\circ \E)$ have arbitrarily small representatives. Hence if $\vphi$ is a state, $\text{Sd}(\vphi)\neq \{1\}$, and $[M^{\vphi\circ \E}\colon N^\vphi] <\infty$, then there always exists a choice of transversal so that the quantity in (\ref{eqn:subfactor_dimension_formula}) is finite.$\hfill\blacksquare$
\end{rem}

By adapting the proof of \cite[Proposition 5.21]{Bur17}, which relies on a result communicated by Jesse Peterson, we show that extremal almost periodic inclusions of factors always admit Pimsner--Popa orthogonal bases and moreover they can be chosen to consist of eigenoperators. This observation will be useful in the proof of our second main theorem, particularly in the infinite index case.

\begin{prop}\label{prop:existence_PP_basis}
Let $N\overset{\E}{\subset} M$ be an inclusion of diffuse factors with separable preduals, and let $\vphi$ be an extremal almost periodic state on $N$ such that $\vphi\circ \E$ is also an extremal almost periodic state. For any transversal $C\subset \text{Sd}(\vphi\circ \E)$ of coset representatives for $\text{Sd}(\vphi)$, there exists Pimsner--Popa orthogonal basis for $N\overset{\E}{\subset}M$ consisting of elements from $\bigcup_{\mu\in C} M^{(\vphi \circ \E,\mu)}$.
\end{prop}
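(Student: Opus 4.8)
The plan is to run the maximality argument of Peterson underlying \cite[Proposition 5.21]{Bur17}, but carried out equivariantly with respect to the grading of $\langle M,e_N\rangle$ induced by $\Delta_\psi$, where $\psi:=\vphi\circ\E$ and $e_N$ is the Jones projection for $N\overset{\E}{\subset}M$. First I would record the facts implicit in the commuting-square discussion preceding the statement: since $\psi\circ\E=\psi$, the expectation $\E$ commutes with $\sigma^\psi$, so $\sigma^\psi_t(N)=N$, the projection $e_N$ commutes with $\Delta_\psi$, the subspace $L^2(N,\vphi)$ reduces $\Delta_\psi$ with $\Delta_\psi|_{L^2(N,\vphi)}=\Delta_\vphi$, and $\E$ maps $M^{(\psi,\lambda)}$ into $N^{(\vphi,\lambda)}$ (which vanishes unless $\lambda\in\Sd(\vphi)$). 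In particular every $m\in M^{(\psi,\mu)}$ satisfies $m\,L^2(N,\vphi)\subseteq F_\mu L^2(M,\psi)$, where $F_\mu:=\sum_{\lambda'\in\Sd(\vphi)}1_{\{\mu\lambda'\}}(\Delta_\psi)$; by almost periodicity and $\Sd(\psi)=\bigsqcup_{\mu\in C}\mu\Sd(\vphi)$ the projections $F_\mu$ ($\mu\in C$) are pairwise orthogonal and sum to $1$. Note also that $\psi$ is a state, so $e_N$ is a finite projection in $\langle M,e_N\rangle$ (of trace $1$), and $N^\vphi$, $M^{\psi}$ are type $\mathrm{II}_1$ factors by Lemma~\ref{lem:ext_ap_has_II_1_centralizer}.

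The technical heart, and the step I expect to be the main obstacle, is the following room lemma: for every $\mu\in C$ one has $\overline{M^{(\psi,\mu)}L^2(N,\vphi)}=F_\mu L^2(M,\psi)$. The inclusion $\subseteq$ is the observation above. For $\supseteq$ it suffices to reach each eigenspace $1_{\{\mu\lambda'\}}(\Delta_\psi)L^2=\overline{M^{(\psi,\mu\lambda')}\Omega_\psi}$ (with $\lambda'\in\Sd(\vphi)$ and $\Omega_\psi$ the GNS vector, which lies in $L^2(N^\vphi)$). Here I would exploit that $N^\vphi$ is a factor: applying Lemma~\ref{lem:spectral_projections_of_Delta} inside $N$ at $\lambda'^{-1}$ gives $\mathcal{V}^N_{\lambda'^{-1}}$ with $\sum_{v}vv^*=1$, and taking adjoints $b_v:=v^*$ produces a family $\{b_v\}\subseteq N^{(\vphi,\lambda')}$ with orthogonal source projections satisfying $\sum_v b_v^*b_v=1$. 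Then for $y\in M^{(\psi,\mu\lambda')}$ one has $yb_v^*\in M^{(\psi,\mu)}$ and $b_v\Omega_\psi\in L^2(N,\vphi)$, whence $y\Omega_\psi=\sum_v (yb_v^*)(b_v\Omega_\psi)\in\overline{M^{(\psi,\mu)}L^2(N,\vphi)}$. This is precisely the ingredient that replaces the "full central support of $e_N$'' input of the unrestricted Peterson argument: it guarantees there is always room to enlarge a family of eigenoperator partial isometries \emph{within a fixed modular sector} $F_\mu$.

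With the room lemma available I would carry out the maximality argument in $P:=\langle M,e_N\rangle$, graded by the $\alpha$-action implemented by $\{\Delta_\psi^{it}\}$ (which fixes $e_N$ and restricts to $\sigma^\psi$ on $M$, so $M^{(\psi,\mu)}\subseteq P^{(\alpha,\mu)}$ and $e_N\in P^{\alpha}$). By Zorn's lemma choose a maximal family $\{w_i=m_ie_N\}$ of partial isometries in $P$ with $m_i\in\bigcup_{\mu\in C}M^{(\psi,\mu)}$ and pairwise orthogonal ranges. Each range $w_iw_i^*$ lies under $F_{\mu_i}$, so $r:=1-\sum_i w_iw_i^*$ is $\alpha$-invariant and commutes with every $F_\mu$; if $r\neq 0$ then $r_\mu:=rF_\mu\neq 0$ for some $\mu\in C$. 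The room lemma gives $r_\mu M^{(\psi,\mu)}e_N\neq 0$, so for a bounded $m\in M^{(\psi,\mu)}$ the element $y:=r_\mu me_N\in P^{(\alpha,\mu)}$ is nonzero; its polar decomposition $y=w|y|$ yields a partial isometry $w\in P^{(\alpha,\mu)}$ with source under $e_N$ and range under $r_\mu$. Since $e_N$ is finite and $\widehat{\E}(w^*w)\leq\widehat{\E}(e_N)=1$, the pull-down lemma for the basic construction (cf. \cite[Chapter 3]{Kos98}) realizes $w=m'e_N$ with $m'\in M$ bounded, and $\alpha$-equivariance together with injectivity of $m\mapsto me_N$ (faithfulness of $\E$) forces $m'\in M^{(\psi,\mu)}$. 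Adjoining $w$ contradicts maximality, so $\sum_i w_iw_i^*=1$.

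Finally I would translate back. The $\{m_i\}$ lie in $\bigcup_{\mu\in C}M^{(\psi,\mu)}$, and by the dictionary recorded in Section~\ref{sec:basic_construction_PP_bases_and_index} the facts that each $w_i=m_ie_N$ is a partial isometry and that the ranges $w_iw_i^*$ are pairwise orthogonal say exactly that $\E(m_i^*m_j)=\delta_{i=j}p_i$ for projections $p_i\in N$, while $\sum_i m_ie_N m_i^*=1$ is the completeness condition $x=\sum_i m_i\E(m_i^*x)$. Hence $\{m_i\}$ is a Pimsner--Popa orthogonal basis for $N\overset{\E}{\subset}M$ of the required form. The only genuinely new work beyond the type $\mathrm{II}_1$ argument of \cite{Bur17} is the room lemma and the bookkeeping needed to keep every partial isometry a modular eigenoperator with eigenvalue in $C$.
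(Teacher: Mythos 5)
Your equivariant bookkeeping is sound: the ``room lemma'' $\overline{\operatorname{span}}\,M^{(\psi,\mu)}L^2(N,\vphi)=F_\mu L^2(M,\psi)$ is correct (and is essentially how the paper verifies the spanning condition, via $1=\sum_{\mu\in C}\sum_{\lambda\in\Sd(\vphi)}\sum_{v,w}(J_\psi wJ_\psi)vm_{v,i}e_Nm_{v,i}^*v^*(J_\psi w^*J_\psi)$), and the orthogonality across sectors and the identification of $\alpha$-eigen-partial-isometries with eigenoperators are all fine. The genuine gap is the pull-down step. You write that since $\widehat{\E}(w^*w)\leq\widehat{\E}(e_N)=1$, the pull-down lemma realizes $w=m'e_N$ with $m'\in M$ \emph{bounded}. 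But $\widehat{\E}(w^*w)\leq 1$ controls the \emph{source} of $w$, and that hypothesis only yields $w=\hat m'e_N$ for a square-integrable operator $\hat m'$ affiliated with $M$; boundedness of the pulled-down element is governed by the \emph{range}, i.e.\ by whether $w^*\in\sqrt{\dom}(\widehat{\E})$, equivalently whether $\widehat{\E}(ww^*)$ is bounded, and you have no control on that. A minimal counterexample to the step as stated: take $N=\C\subset M$ a $\mathrm{II}_1$ factor with $\E=\tau$, so $\<M,e_N\>=B(L^2(M))$ and $e_N=[\hat 1]$; any unit vector $\xi\in L^2(M)\setminus M\hat 1$ gives a rank-one partial isometry $w=\xi\otimes\overline{\hat 1}$ with $w^*w=e_N$ that is not of the form $m'e_N$ for any bounded $m'$. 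This is precisely the difficulty that makes the infinite-index case of \cite[Proposition 5.21]{Bur17} nontrivial, so the one-line appeal to a pull-down lemma cannot be right.

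The repair is exactly the extra work in \cite[Lemmas 5.18 and 5.19]{Bur17}, which the paper imports: before (or instead of) adjoining $w$, one must cut its range by projections lying in the domain of the dual operator-valued weight. Concretely, using separability of the predual one finds $x\in\dom(\widehat{\E})_+$ with trivial kernel, forms the spectral projections $q_n=1_{(\frac1n,\infty)}(ww^*\,x\,ww^*)$ and the differences $p_n=q_{n+1}-q_n\in\dom(\widehat{\E})_+$ summing to $ww^*$, and only then pulls down $p_nw$, for which $(p_nw)^*\in\sqrt{\dom}(\widehat{\E})$ guarantees a bounded representative; one must also check this can be done $\alpha$-equivariantly (i.e.\ inside the fixed-point algebra, which is possible since $ww^*$ is $\alpha$-invariant) so the new elements remain eigenoperators with eigenvalue in $C$. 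The paper sidesteps the equivariance issue entirely by a different organization: it first reduces to the type $\mathrm{II}_1$ inclusion $N^\vphi\subset M^{\psi}$, runs the Burns--Peterson construction there to produce families $\{m_{v,i}\}\subset M^\psi$ with $\sum_im_{v,i}e_{N^\vphi}m_{v,i}^*=v^*v$ for each $v\in\mathcal V_\mu$, and then takes the products $vm_{v,i}$ as the basis. Either route works, but your proof is incomplete without this additional range-cutting argument.
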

\begin{proof}
Denote $\psi:=\vphi\circ \E$. Fix a transversal $C\subset \text{Sd}(\psi)$, and for each $\mu\in C$ let $\mathcal{V}_\mu\subset M^{(\psi, \mu)}$ be as in Lemma~\ref{lem:spectral_projections_of_Delta}. Since $N^\vphi$ and $M^\psi$ are type $\mathrm{II}_1$ factors by Lemma~\ref{lem:ext_ap_has_II_1_centralizer}, any projection in $M^\psi$ is equivalent via unitary conjugation to a projection in $N^\vphi$, and hence we may assume $v^*v\in N^\vphi$ for all $v\in \mathcal{V}_\mu$ by precomposing with a suitable family of unitaries in $M^\psi$.

We first claim that for each $v\in \bigcup_{\mu\in C} \mathcal{V}_\mu$ there exists a family $\{m_{v,i}\in M^\psi \colon i\in I_v\}$ satisfying
    \begin{align}\label{eqn:PP_subbasis_spanning}
        \sum_{i\in I_v} m_{v,i} e_{N^\vphi} m_{v,i}^* = v^*v,
    \end{align}
and
    \begin{align}\label{eqn:PP_subbasis_orthogonal}
        \E(m_{v,i}^* v^*v m_{v,j}) = \delta_{i=j} p_{v,i},
    \end{align}
for some projection $p_{v,i}\in N^\vphi$. Toward constructing this family, let $T\colon \<M^\psi, e_{N^\vphi}\> \to \widehat{M^\psi}_+$ be the unique faithful normal semifinite operator-valued weight satisfying $T(e_{N^\vphi}) = 1$. Note that $\tau:=\psi\circ T$ is then the unique (up to scaling) faithful normal semifinite tracial weight on $\<M^\psi, e_{N^\vphi}\>$. 

If one has $v^*v\in \dom(T)$ (for example, if $[M^\psi\colon N^\vphi]<\infty$), then letting $\{w_{v,i}\in \<M^\psi, e_{N^\vphi}\>\colon i\in I_v\}$ be a family of partial isometries satisfying $w_{v,i}^* w_{v,i}\leq e_{N^\vphi}$ and $\sum_i w_{v,i} w_{v,i}^* = v^*v$ we set $m_{v,i}:= T(w_{v,i})$. Then $m_{v,i} e_{N^\vphi}= w_{v,i}$ (see \cite[Section 1.1.5]{Pop95book}), so that (\ref{eqn:PP_subbasis_spanning}) is immediate and
    \[
        \E(m_{v,i}^* v^*v m_{v,j}) e_{N^\vphi} =  e_{N^\vphi} m_{v,i}^* v^*v m_{v,j} e_{N^\vphi} = w_{v,i}^* v^*v w_{v,j} = w_{v,i}^* w_{v,j} = \delta_{i=j} w_{v,i}^* w_{v,i}.
    \]
Applying $T$ to the above yields (\ref{eqn:PP_subbasis_orthogonal}) since $N^\psi\ni x\mapsto x e_{N^\vphi} \in e_{N^\vphi}\< M^\psi, e_{N^\vphi}\> e_{N^\vphi}$ is an isomorphism. 

Otherwise, $v^*v\not\in \dom(T)$ implies $\tau(v^*v)=\psi(T(v^*v))=\infty$ and we instead follow the strategy outlined in \cite{Bur17}. By \cite[Lemma 5.19]{Bur17} there exists $x\in \dom(T)_+$ with $\ker{x}=\{0\}$, and since $T$ is $M^\psi$-bimodular we have $v^*v x v^*v\in \dom(T)_+$ with $\ker(v^*v x v^*v)= \ker(v^*v)$. (Note that the existence of such an $x$ is where our assumption of separable preduals is used; see the proof of \cite[Lemma 5.19]{Bur17}.) Letting $q_n:= 1_{(\frac1n, \infty)}( v^*v x v^*v)$ for each $n\in \N$, it follows that $q_n\leq n v^*v x v^*v$ so that $q_n\in \dom(T)$, and as $n\to\infty$ we have that $q_n$ increases to:
    \[
        1 - [\ker{(v^*v x v^*v)}] = 1- [\ker{(v^*v)}] = 1- (1- v^*v) = v^*v.
    \]
Setting $p_n:= q_{n+1} - q_n$, we have $p_n \in\dom(T)_+$ for all $n\in \N$ with $\sum_n p_n = v^*v$. Note that the normality of $\tau$ implies
    \[
        \sum_{n\in \N} \tau(p_n) = \tau(v^*v) = \infty,
    \]
and of course $\tau(p_n)= \psi( T(p_n)) <\infty$. Hence we may assume $\tau(p_n)=1$ by adding finite sets of projections and taking subprojections since these operations preserve $\dom(T)$. As $\tau(e_{N^\vphi}) = \psi(1) =1$, it follows that there exist partial isometries $w_{v,n}\in \<M^\psi, e_{N^\psi}\>$ satisfying $w_{v,n}w_{v,n}^* =p_n$ and $w_{v,n}^* w_{v,n} = e_{N^\vphi}$ for each $n\in \N$. Then $w_{v,n}^* = e_{N^\vphi} w_{v,n}^* p_n \in \sqrt{\dom}(T)$ implies by \cite[Lemma 5.18]{Bur17} that $m_{v,n}:=T(w_{v,n} e_{N^\vphi})\in M^\psi$ satisfies $w_{v,n} = w_{v,n} e_{N^\vphi}= m_{v,n} e_{N^\vphi}$. Then (\ref{eqn:PP_subbasis_spanning}) follows from
    \[
        \sum_{n\in \N} m_{v,n} e_{N^\vphi} m_{v,n}^* = \sum_{n\in \N} w_{v,n} w_{v,n}^* = \sum_{n\in \N} p_n = v^*v.
    \]
We also have
    \[
        \E(m_{v,n}^* v^*v m_{v,k}) e_{N^\vphi} = e_{N^\vphi}m_{v,n}^* v^*v m_{v,k}e_{N^\vphi} = w_{v,n}^* v^*v w_{v,k} = w_{v,n}^* w_{v,k} = \delta_{n=k} w_{v,n}^* w_{v,n},
    \]
so that (\ref{eqn:PP_subbasis_orthogonal}) follows by applying $T$ to the above.

With our first claim in hand, we next claim that
    \[
        \left\{ v m_{v,i}\colon v\in \bigcup_{\mu\in C} \mathcal{V}_\mu,\ i\in I_v\right\}
    \]
is the desired Pimsner--Popa orthogonal basis for $N\overset{\E}{\subset}M$. First observe $m_{v,i}\in M^\psi$ implies $v m_{v,i} \in M^{(\psi, \mu)}$ for $v\in M^{(\psi,\mu)}$. Next, to see that the system is orthogonal, note that for distinct $v,w\in \bigcup_{\mu\in C} \mathcal{V}_\mu$ we have $v^*w = 0$ if both belong to $\mathcal{V}_\mu$, and otherwise $v^*w \in M^{(\psi, \lambda)}$ for some $\lambda\notin \text{Sd}(\vphi)$ so that $\E(m_{v,i}^* v^* w m_{w,j}) =0$. For a single $v\in \bigcup_{\mu\in C} \mathcal{V}_\mu$, the orthogonality follows from (\ref{eqn:PP_subbasis_orthogonal}). It remains to show that the set is spanning.

Note that $N\cap M^\psi= N^\varphi$ implies that $e_N e_\psi = e_{N^\varphi}$ when $L^2(M^\psi,\psi)$ is identified as a subspace of $L^2(M,\psi)$. Thus in the context of $\<M,e_N, e_\psi\>$, (\ref{eqn:PP_subbasis_spanning}) becomes
    \begin{align}\label{eqn:PP_spanning_1}
        \sum_{i\in I_v} m_{v,i} e_{N}e_{\psi} m_{v,i}^* = v^*v e_\psi.
    \end{align}
Next, for each $\lambda\in \text{Sd}(\vphi)$ fix some $\mathcal{V}_\lambda\in N^{(\vphi,\lambda)}$ as in Lemma~\ref{lem:spectral_projections_of_Delta}. Recall that this same lemma gives
    \[
        \sum_{\lambda\in \text{Sd}(\vphi)} \sum_{w\in \mathcal{V}_\lambda} w e_\vphi w^*= \sum_{\lambda\in \text{Sd}(\vphi)} 1_{\lambda}(\Delta_\vphi) =1.
    \]
The above equality is occurring as operators on $L^2(N,\varphi)$, and when this is identified as a subspace of $L^2(M,\psi)$ one has $e_\varphi = e_N e_\psi$ and the right-hand side becomes $e_N$. Thus $\sum_\lambda \sum_w w (e_N e_\psi) w^* = e_N$, which further yields
    \begin{align}\label{eqn:PP_spanning_2}
        \sum_{\lambda\in \text{Sd}(\vphi)} \sum_{w\in \mathcal{V}_\lambda} (J_\psi w J_\psi) e_{N} e_\psi (J_\psi w^* J_\psi) = e_N,
    \end{align}
where we have used that $e_N$ and $e_\psi$ both commute with $J_\psi$. Finally, we compute
    \begin{align*}
        \sum_{\mu\in C} \sum_{v\in \mathcal{V}_\mu }    \sum_{i\in I_v} v m_{v,i} e_N m_{v,i}^* v^* &=  
 \sum_{\mu\in C} \sum_{v\in \mathcal{V}_\mu } \sum_{i\in I_v} \sum_{\lambda \in \text{Sd}(\vphi)} \sum_{w\in \mathcal{V}_\lambda} v m_{v,i} (J_\psi w J_\psi)e_N e_\psi (J_\psi w^* J_\psi )m_{v,i}^* v^*\\
    &= \sum_{\mu\in C} \sum_{v\in \mathcal{V}_\mu } \sum_{i\in I_v} \sum_{\lambda \in \text{Sd}(\vphi)} \sum_{w\in \mathcal{V}_\lambda} (J_\psi w J_\psi )v m_{v,i}e_N e_\psi m_{v,i}^* v^* (J_\psi w^* J_\psi )\\
    &=  \sum_{\mu\in C} \sum_{v\in \mathcal{V}_\mu }  \sum_{\lambda \in \text{Sd}(\vphi)} \sum_{w\in \mathcal{V}_\lambda} (J_\psi w J_\psi )v v^* v e_\psi v^*   (J_\psi w^* J_\psi ) \\
    &= \sum_{\mu\in C} \sum_{\lambda \in\text{Sd}(\vphi)} \sum_{w\in \mathcal{V}_\lambda} (J_\psi w J_\psi ) 1_{\{\mu\}}(\Delta_\psi) (J_\psi w^* J_\psi )\\
    &= \sum_{\mu\in C} \sum_{\lambda \in \text{Sd}(\vphi)} 1_{\mu\lambda}(\Delta_\psi)= 1,
    \end{align*}
where we have used (\ref{eqn:PP_spanning_2}) in the first equality, (\ref{eqn:PP_spanning_1}) in the third equality, and (\ref{eqn:spectral_compression_equality}) from Lemma~\ref{lem:spectral_projections_of_Delta} in the fourth and fifth equalities.
\end{proof}

We now prove our second main theorem.

\begin{thm}[{Theorem~\ref{thmalpha:theorem_B}}]\label{thm:dimension_vs_index}
Let $N\overset{\E}{\subset} M$ be an inclusion of diffuse factors with separable preduals, and let $\vphi$ be an extremal almost periodic state on $N$ such that $\vphi\circ \E$ is also an extremal almost periodic state. Let $C\subset \text{Sd}(\vphi\circ \E)$ be a transversal of coset representatives for $\text{Sd}(\vphi)$ and let $\pi_C \colon \<N,e_\vphi\> \to \<M,e_{\vphi\circ \E}\>$ be the unique representation satisfying $\pi_C|_N=\text{id}$ and
    \[
        \pi_C(e_\vphi) = \sum_{\mu\in C} 1_{\{\mu\}}(\Delta_{\vphi\circ \E})
    \]
(see Theorem~\ref{thm:subfactors_give_modules}). Then one has
    \begin{align}\label{ineq:dimension_index_relation}
        \inf(C) \Ind{\E} \leq \dim_{(N,\vphi)}(L^2(M,\vphi\circ \E),\pi_C) \leq \sup(C) \Ind{\E}.
    \end{align}
Moreover, the following are equivalent:
    \begin{enumerate}[label=(\roman*)]
    \item $\Ind{\E}<\infty$;
    \item $[\text{Sd}(\vphi\circ \E)\colon \text{Sd}(\vphi)]<\infty$ and $\dim_{(N,\vphi)}(L^2(M,\vphi\circ \E), \pi_C)<\infty$;
    \item $[\text{Sd}(\vphi\circ \E)\colon \text{Sd}(\vphi)]<\infty$ and $[M^{\vphi\circ \E}\colon N^\vphi]<\infty$.
    \end{enumerate}
\end{thm}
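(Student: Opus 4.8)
The plan is to reduce everything to the single exact formula
\[
    \Ind{\E} = [\Sd(\vphi\circ \E)\colon \Sd(\vphi)]\,[M^{\vphi\circ \E}\colon N^\vphi],
\]
which, together with the dimension formula (\ref{eqn:subfactor_dimension_formula}), yields both (\ref{ineq:dimension_index_relation}) and the three equivalences with essentially no further work. Throughout write $\psi:=\vphi\circ \E$ and recall from Lemma~\ref{lem:ext_ap_has_II_1_centralizer} that $N^\vphi$ and $M^{\psi}$ are type $\mathrm{II}_1$ factors, so that $\psi|_{M^\psi}$ is the unique tracial state and the inclusion $N^\vphi\subset M^\psi$ carries a trace-preserving expectation with Jones projection $e_{N^\vphi}$ and dual operator-valued weight $T$ satisfying $T(e_{N^\vphi})=1$ and $T(1)=[M^{\psi}\colon N^\vphi]$.

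First I would fix the Pimsner--Popa orthogonal basis $\{v\,m_{v,i}\}$ produced by Proposition~\ref{prop:existence_PP_basis}, indexed by $v\in\bigcup_{\mu\in C}\mathcal{V}_\mu$ (with $\mathcal{V}_\mu\subset M^{(\psi,\mu)}$ as in Lemma~\ref{lem:spectral_projections_of_Delta}) and $i\in I_v$, where $m_{v,i}\in M^\psi$ and, after the unitary adjustment in that proof, $v^*v\in N^\vphi$. Since $M$ is a factor, $\Ind{\E}$ is a scalar, so evaluating against the faithful normal state $\psi$ and using normality gives $\Ind{\E}=\sum_{\mu,v,i}\psi\big((v m_{v,i})(v m_{v,i})^*\big)$. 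As $v m_{v,i}\in M^{(\psi,\mu)}$, the eigenoperator KMS relation $\psi(yy^*)=\mu\,\psi(y^*y)$ turns each summand into $\mu\,\psi(m_{v,i}^* v^*v\, m_{v,i})$. For fixed $v$, the traciality of $\psi$ on $M^\psi$, the normality of $\psi$ and $T$, and the bimodularity of $T$ applied to the spanning relation $\sum_i m_{v,i}e_{N^\vphi}m_{v,i}^*=v^*v$ give
\[
    \sum_{i\in I_v}\psi(m_{v,i}^* v^*v\, m_{v,i})=\psi\Big(v^*v\sum_i m_{v,i}m_{v,i}^*\Big)=\psi\big(v^*v\,T(v^*v)\big)=[M^{\psi}\colon N^\vphi]\,\psi(v^*v),
\]
where the last equality uses $T(v^*v)=[M^\psi\colon N^\vphi]\,v^*v$. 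Summing over $v\in\mathcal{V}_\mu$ and invoking $\psi(v^*v)=\mu^{-1}\psi(vv^*)$ together with $\sum_{v\in\mathcal{V}_\mu}vv^*=1$ (Lemma~\ref{lem:spectral_projections_of_Delta}, as $M^\psi$ is a factor) collapses the level-$\mu$ contribution to $\mu\cdot[M^\psi\colon N^\vphi]\cdot\mu^{-1}=[M^{\psi}\colon N^\vphi]$; summing over the $|C|=[\Sd(\psi)\colon\Sd(\vphi)]$ cosets gives the formula above.

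Granting the formula, the inequality (\ref{ineq:dimension_index_relation}) is arithmetic: (\ref{eqn:subfactor_dimension_formula}) reads $\dim_{(N,\vphi)}(L^2(M,\psi),\pi_C)=\big(\sum_{\mu\in C}\mu\big)[M^{\psi}\colon N^\vphi]$, and the elementary bounds $\inf(C)\,|C|\le \sum_{\mu\in C}\mu\le \sup(C)\,|C|$ multiply through by $[M^{\psi}\colon N^\vphi]$ and rewrite $|C|\,[M^\psi\colon N^\vphi]=\Ind{\E}$. For the equivalences, $(i)\Leftrightarrow(iii)$ is immediate because each factor $[\Sd(\psi)\colon\Sd(\vphi)]$ and $[M^{\psi}\colon N^\vphi]$ is at least $1$, so their product is finite exactly when both are finite; and $(ii)\Leftrightarrow(iii)$ follows because under the shared hypothesis $[\Sd(\psi)\colon\Sd(\vphi)]<\infty$ the sum $\sum_{\mu\in C}\mu$ is a finite sum of positive reals, hence lies in $(0,\infty)$, so the dimension is finite if and only if $[M^{\psi}\colon N^\vphi]<\infty$.

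The main obstacle will be making the index computation rigorous in the infinite-index regime, where $T$ is unbounded and $\<M^\psi,e_{N^\vphi}\>$ is type $\mathrm{II}_\infty$: there one must read $\sum_{i}\psi(m_{v,i}^* v^*v\,m_{v,i})=[M^{\psi}\colon N^\vphi]\,\psi(v^*v)$ as an identity of extended positive reals and justify reorganizing the (possibly infinite) triple sum defining $\Ind{\E}$ by modular eigenvalue levels. Both points are handled by the normality of $T$ and of $\psi$, but they are the steps requiring genuine care rather than bookkeeping.
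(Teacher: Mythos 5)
Your proposal is correct, but it takes a genuinely different route from the paper's. The paper never isolates the exact identity $\Ind{\E}=[\Sd(\vphi\circ\E)\colon\Sd(\vphi)]\,[M^{\vphi\circ\E}\colon N^\vphi]$ inside the proof of Theorem~\ref{thm:dimension_vs_index}; instead it builds an explicit standard intertwiner $\xi\mapsto\sum_i(e_NJ_\psi m_i^*J_\psi\xi)\otimes\delta_i$ from the Pimsner--Popa basis of Proposition~\ref{prop:existence_PP_basis} (indexed by $C^{-1}$), computes $\dim_{(N,\vphi)}=\sum_i\nu(i)^{-1}\psi(m_im_i^*)$, and reads off the two inequalities in (\ref{ineq:dimension_index_relation}) term by term; the equivalences are then obtained from the crude bound $\Ind{\E}\geq|C|$ (via the orthogonal system $\bigcup_\mu\mathcal V_\mu$) together with (\ref{eqn:subfactor_dimension_formula}). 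You instead prove the multiplicative index formula first by grouping the basis by modular eigenvalue level and showing each coset contributes exactly $[M^{\vphi\circ\E}\colon N^\vphi]$, after which everything is arithmetic. Your computation checks out: the KMS relation $\psi(yy^*)=\mu\psi(y^*y)$, the identification $\sum_im_{v,i}m_{v,i}^*=T(v^*v)=[M^{\psi}\colon N^\vphi]v^*v$ (valid because $v^*v$ is a projection in $N^\vphi$, and read in the extended positive cone when the index is infinite), and the collapse via $\sum_{v\in\mathcal V_\mu}vv^*=1$ are all legitimate, and the nonnegativity of all terms licenses the regrouping of the triple sum. What your route buys is the sharper statement --- which the paper only records as a rough sketch in the remark following the theorem, there derived via Takesaki duality and multiplicativity of the index over the intermediate subalgebra --- and a more transparent derivation of (i)$\Leftrightarrow$(iii). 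What it costs is a heavier reliance on the internal structure of the proof of Proposition~\ref{prop:existence_PP_basis} (the spanning relation (\ref{eqn:PP_subbasis_spanning}), the normalization $v^*v\in N^\vphi$, and the dual weight $T$), none of which appears in that proposition's statement, so you should either restate those properties as part of the proposition or re-derive them; you have correctly flagged the extended-positive-real bookkeeping in the infinite-index regime as the only genuinely delicate point.
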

\begin{proof}
As usual, denote $\psi:=\vphi\circ \E$. Also denote $C^{-1}:=\{\mu^{-1}\colon \mu\in C\}$, which also forms a transversal of coset representatives. From Proposition~\ref{prop:existence_PP_basis}, there exists a Pimsner--Popa orthogonal basis $\{m_i\colon i\in I\} \in \bigcup_{\mu\in C^{-1}} M^{(\psi,\mu)}$ for $N\overset{\E}{\subset} M$. Thus we can define an isometry by
    \begin{align*}
        v\colon L^2(M,\psi) &\to L^2(N,\vphi)\otimes \ell^2(I)\\
            \xi &\mapsto \sum_{i\in I} (e_N J_\psi m_i^* J_\psi \xi)\otimes \delta_i,
    \end{align*}
where we have identified $L^2(N,\vphi)\cong e_N L^2(M,\psi)$. (Note that under this identification one has $e_\vphi = e_N e_\psi$.)  For $x\in N$ one has $vx = (x\otimes 1) v$ by definition of $v$. Additionally, if $m_i\in M^{(\psi, \nu)}$ for $\nu\in C^{-1}$ then one has
    \begin{align*}
        e_N J_\psi m_i^* J_\psi \pi_C(e_\vphi) &=\sum_{\mu \in C} e_N J_\psi m_i^* J_\psi 1_{\{\mu\}}(\Delta_\psi)\\
        &= \sum_{\mu\in C} e_N 1_{\{\nu\mu\}}(\Delta_\psi) J_\psi m_i^* J_\psi \\
        &= e_N e_\psi J_\psi m_i^* J_\psi = e_\vphi e_N J_\psi m_i^* J_\psi.
    \end{align*}
It follows that $v\pi_C(e_\vphi) = (e_\vphi\otimes 1)v$, and hence $v$ is a standard intertwiner for $(L^2(M,\psi),\pi_C)$ as a left $(N,\vphi)$-module.

Now, the $(i,j)$-entry of $vv^*$ is
    \[
        (e_N J_\psi m_i^* J_\psi) (J_\psi m_j J_\psi e_N) = J_\psi\E(m_i^* m_j)J_\psi e_N = \delta_{i=j} J_\psi\E(m_i^* m_i)J_\psi e_N,
    \]
which equals $\delta_{i=j} J_\vphi \E(m_i^* m_i) J_\vphi$ under the identification $L^2(N,\vphi)\cong e_N L^2(M,\psi)$. Letting $\nu(i) \in C^{-1}$ be such that $m_i\in M^{(\psi, \nu(i))}$ for each $i\in I$, we compute
    \begin{align*}
        \dim_{(N,\vphi)}(L^2(M,\psi), \pi_C) &= (\vphi\otimes \Tr)\left( (J_\vphi\otimes 1) vv^* (J_\vphi\otimes 1) \right)\\
            &= \sum_{i\in I} \vphi(\E(m_i^*m_i))\\
            &= \sum_{i\in I} \nu(i)^{-1} \psi(m_im_i^*)\\
            &\leq \sup(C) \psi\left( \sum_{i\in I} m_i m_i^*\right) = \sup(C) \Ind{\E}.
    \end{align*}
The lower bound follows similarly.

Toward proving the final statement, for each $\mu\in C$ let $\mathcal{V}_\mu\subset M^{(\psi, \mu)}$ be as in Lemma~\ref{lem:spectral_projections_of_Delta}. As in the proof of Proposition~\ref{prop:existence_PP_basis}, we may assume $v^*v\in N^\vphi$ for all $v\in \mathcal{V}_\mu$. Consequently, $\bigcup_{\mu\in C} \mathcal{V}_\lambda$ forms a Pimsner--Popa orthogonal system of $N\overset{\E}{\subset} M$. Therefore
    \[
        \Ind{\E} \geq \sum_{\mu\in C} \sum_{v\in\mathcal{V}_\mu}  vv^* = \sum_{\mu\in C} 1 = |C|,
    \]
Thus if $\Ind{\E}<\infty$, then $|C|<\infty$ and using (\ref{ineq:dimension_index_relation}) one has $\dim_{(N,\vphi)}(L^2(M,\psi),\pi_C)<\infty$. This gives (i)$\Rightarrow $(ii) and the converse direction follows immediately from (\ref{ineq:dimension_index_relation}). Finally, the equivalence (ii)$\Leftrightarrow$(iii) follows from (\ref{eqn:subfactor_dimension_formula}) in Theorem~\ref{thm:subfactors_give_modules} because the sum over $C$ in this formula is necessarily finite.
\end{proof}

\begin{rem}
An alternative proof of the inequality (\ref{ineq:dimension_index_relation}) in Theorem~\ref{thm:dimension_vs_index} can be given as roughly as follows. Denote $\Lambda:=\Sd(\vphi)$ and $\Gamma:=\Sd(\vphi\circ \E)$ and let $H$ and $G$ denote their respective dual groups. Let $H\overset{\alpha}{\curvearrowright} N$ and $G\overset{\beta}{\curvearrowright} M$ be the point modular extensions of the modular automorphism groups of $\vphi$ and $\vphi\circ \E$, respectively, and recall from Lemma~\ref{lem:factor_levels} that we have
    \[
        N\rtimes_\alpha H\cong N^\vphi\bar\otimes B(\ell^2 \Lambda) \qquad \text{ and } \qquad M\rtimes_\beta G \cong M^{\vphi\circ \E} \bar\otimes B(\ell^2 \Gamma).
    \]
By Takesaki duality, one then has
    \[
        N\bar\otimes B(\ell^2 \Lambda) \cong \left(N^\vphi\bar\otimes B(\ell^2 \Lambda)\right) \rtimes_{\widehat{\alpha}} \Lambda \qquad \text{ and } \qquad M\bar\otimes B(\ell^2 \Gamma) \cong \left(M^{\vphi\circ \E} \bar\otimes B(\ell^2 \Gamma)\right)\rtimes_{\widehat{\beta}} \Gamma,
    \]
where $\widehat{\alpha}$ and $\widehat{\beta}$ are the actions dual to $\alpha$ and $\beta$, respectively. Assume $\Lambda$ (and hence $\Gamma$) is non-trivial so that $B(\ell^2\Lambda)\cong  B(\ell^2\Gamma) \cong B(\ell^2)$. Then the inclusion
    \[
        N\bar\otimes B(\ell^2) \overset{\E\otimes 1}{\subset} M\bar\otimes B(\ell^2)
    \]
can be identified with an inclusion of the form
    \[
        \left(N^\vphi\bar\otimes B(\ell^2)\right) \rtimes_{\widehat{\alpha}} \Lambda \subset \left(M^{\vphi\circ \E} \bar\otimes B(\ell^2)\right)\rtimes_{\widehat{\beta}} \Gamma,
    \]
(note that $\widehat{\beta}$ extends $\widehat{\alpha}$ since $\Lambda\leq \Gamma$). Since $\E\otimes 1$ is the unique $(\vphi\otimes \Tr)$-preserving faithful normal conditional expectation onto $N\bar\otimes B(\ell^2)$, there is a unique faithful normal conditional expectation $\mathcal{F}$ corresponding to the above inclusion and one has
    \[
        \Ind{\E} = \Ind{(\E\otimes 1)} = \Ind{\mathcal{F}}.
    \]
Now, since the above inclusion admits $\left(M^{\vphi\circ\E}\bar\otimes B(\ell^2)\right) \rtimes_{\widehat{\alpha}} \Lambda$ as an intermediate algebra, it follows that
    \[
        \Ind{\E}=\Ind{\mathcal{F}} = [\Gamma\colon \Lambda][M^{\vphi\circ \E}\colon N^\vphi].
    \]
Consequently, (\ref{ineq:dimension_index_relation}) follows from (\ref{eqn:subfactor_dimension_formula}) and the inequality
    \[
        \inf(C)[\Gamma\colon \Lambda] \leq \sum_{\mu\in C} \mu \leq \sup(C)[\Gamma\colon \Lambda],
    \]
for a transversal $C\subset \Gamma$ of coset representatives for $\Lambda$. If $\Lambda$ is trivial, then slight modifications to the above argument can also be made to cover the two cases when $\Gamma$ is trivial or not.$\hfill\blacksquare$
\end{rem}

\begin{rem}\label{rem:tight_bounds}
Under the hypotheses of Theorem~\ref{thm:dimension_vs_index}, suppose $\Sd(\vphi)$ is dense in $\R_+$. Then all of its cosets in $\Sd(\vphi\circ \E)$ will be dense, and so given $\epsilon>0$ one can choose a transversal satisfying $C\subset (1-\epsilon, 1+\epsilon)$. Consequently, (\ref{ineq:dimension_index_relation}) implies
    \[
        (1-\epsilon) \Ind{\E} \leq \dim_{(N,\vphi)}(L^2(M,\varphi\circ \E),\pi_c) \leq (1+\epsilon) \Ind{\E}.
    \]
However, unless $C$ is a finite set then $\dim_{(N,\vphi)}(L^2(M,\varphi\circ \E),\pi_c)$ (and hence $\Ind{\E}$) will be infinite by (\ref{eqn:subfactor_dimension_formula}) in Theorem~\ref{thm:subfactors_give_modules}.$\hfill\blacksquare$
\end{rem}

The inequality (\ref{ineq:dimension_index_relation}) in Theorem~\ref{thm:dimension_vs_index} reduces to an equality when $\Sd(\vphi)=\Sd(\vphi\circ\E)$ and one chooses the transversal $C=\{1\}$. In this case, the representation $\pi_C$ from Theorem~\ref{thm:subfactors_give_modules} allows one to identify $\<N,e_\vphi\>$ with $\<N,e_{\vphi\circ\E}\>$ inside $\<M,e_{\vphi\circ \E}\>$, and moreover the index and Murray--von Neumann dimension both agree with $[M^{\vphi\circ \E}\colon N^\vphi]$ by (\ref{eqn:subfactor_dimension_formula}) in Theorem~\ref{thm:subfactors_give_modules}. In the next few corollaries, we consider a few situations where these conditions follow from natural assumptions.

\begin{cor}\label{cor:index_with_same_Sd_inv}
Let $N\overset{\E}{\subset} M$ be an inclusion of diffuse factors with separable preduals. Suppose $\vphi$ is an almost periodic state  on $M$ satisfying $\vphi\circ \E = \vphi$ and $\Sd(N)=\Sd(\vphi) = \Sd(M)$. Then $\vphi$ and $\vphi|_N$ are both extremal and one has
    \[
        \Ind{\E} = \dim_{(N,\vphi|_N)} L^2(M,\vphi)=[M^{\vphi}\colon N^{\vphi|_N}],
    \]
where $\<N,e_{\vphi|_N}\>$ is represented as $\<N,e_\vphi\> \subset B(L^2(M,\vphi))$.
\end{cor}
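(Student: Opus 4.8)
The plan is to deduce the entire statement from Theorem~\ref{thm:subfactors_give_modules} and Theorem~\ref{thm:dimension_vs_index} (Theorem~B) by checking that the hypotheses align and, crucially, that the trivial transversal $C=\{1\}$ is the relevant one. To invoke those results with the state $\vphi|_N$ on $N$, I would first record that $\E$ being $\vphi$-preserving and $\vphi$ a state forces $\vphi|_N$ to be a state with $(\vphi|_N)\circ\E=\vphi$. By the commuting-square observations at the start of Section~\ref{sec:appliction_index_subfactors}, the almost periodicity of $\vphi=(\vphi|_N)\circ\E$ then forces $\vphi|_N$ to be almost periodic, with $\Delta_{\vphi|_N}=e_N\Delta_\vphi$; the same observation yields the inclusion $\Sd(\vphi|_N)\subseteq\Sd(\vphi)$.

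The key step is to pin down $\Sd(\vphi|_N)$ exactly. The definition of the point modular spectrum as an intersection over all almost periodic weights on $N$ gives $\Sd(N)\subseteq\Sd(\vphi|_N)$, while the previous paragraph gives $\Sd(\vphi|_N)\subseteq\Sd(\vphi)$. Combined with the hypothesis $\Sd(N)=\Sd(\vphi)$, these squeeze to
\[
    \Sd(\vphi|_N)=\Sd(N)=\Sd(\vphi)=\Sd(M).
\]
Two consequences follow at once. Applying Lemma~\ref{lem:extremal_from_Gamma} to $N$ (using $\Sd(\vphi|_N)=\Sd(N)$) and to $M$ (using $\Sd(\vphi)=\Sd(M)$) shows $N^{\vphi|_N}$ and $M^\vphi$ are factors, so both $\vphi|_N$ and $\vphi$ are extremal, giving the first assertion of the corollary. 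Moreover, since $\Sd(\vphi|_N)=\Sd\big((\vphi|_N)\circ\E\big)$, the quotient is trivial and the singleton $C=\{1\}$ is a valid transversal of coset representatives.

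It then remains to apply Theorem~\ref{thm:subfactors_give_modules} and Theorem~\ref{thm:dimension_vs_index} to the data $N\overset{\E}{\subset}M$, $\vphi|_N$, with this $C$. With $C=\{1\}$ the representation $\pi_C$ restricts to the identity on $N$ and sends $e_{\vphi|_N}$ to $1_{\{1\}}(\Delta_\vphi)=e_\vphi$, which is exactly the stated identification of $\<N,e_{\vphi|_N}\>$ with $\<N,e_\vphi\>\subset B(L^2(M,\vphi))$ (here $L^2(M,(\vphi|_N)\circ\E)=L^2(M,\vphi)$). Since $\inf(C)=\sup(C)=1$, the bounds in (\ref{ineq:dimension_index_relation}) collapse to the equality $\Ind{\E}=\dim_{(N,\vphi|_N)}(L^2(M,\vphi),\pi_C)$, and formula (\ref{eqn:subfactor_dimension_formula}) with $\sum_{\mu\in C}\mu=1$ evaluates this dimension as $[M^\vphi\colon N^{\vphi|_N}]$, completing the triple equality.

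I do not expect serious difficulty, since the heavy lifting is already contained in the cited theorems; the one point that demands care is the squeeze $\Sd(N)\subseteq\Sd(\vphi|_N)\subseteq\Sd(\vphi)$, where one must correctly invoke the definition of $\Sd(N)$ as an intersection together with the relation $\Delta_{\vphi|_N}=e_N\Delta_\vphi$, and confirm $(\vphi|_N)\circ\E=\vphi$. Getting the two inclusions in the right direction is precisely what legitimizes the trivial transversal and hence the reduction of (\ref{ineq:dimension_index_relation}) to an equality.
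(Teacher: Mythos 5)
Your proposal is correct and follows essentially the same route as the paper: the squeeze $\Sd(N)\subseteq\Sd(\vphi|_N)\subseteq\Sd(\vphi)=\Sd(N)$, extremality via Lemma~\ref{lem:extremal_from_Gamma}, and then the collapse of (\ref{ineq:dimension_index_relation}) together with (\ref{eqn:subfactor_dimension_formula}) for the trivial transversal $C=\{1\}$, exactly as in the discussion preceding the corollary in the paper. Your write-up merely spells out a few details the paper leaves implicit (that $(\vphi|_N)\circ\E=\vphi$ and that $\Sd(N)\subseteq\Sd(\vphi|_N)$ comes from the definition of $\Sd(N)$ as an intersection).
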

\begin{proof}
The extremality of $\vphi$ follows immediately from Lemma~\ref{lem:extremal_from_Gamma}. Noting that $\Sd(N)\subset \Sd(\vphi|_N) \subset \Sd(\vphi) = \Sd(N)$, we see that $\vphi|_N$ is also extremal by Lemma~\ref{lem:extremal_from_Gamma} and that $\Sd(\vphi|_N) =\Sd(\vphi)$. Hence the claimed equalities follow from the discussion preceding the statement of the corollary.
\end{proof}

\begin{cor}
Let $N\overset{\E}{\subset} M$ be an inclusion of full factors with separable preduals satisfying $\Sd(N)=\Sd(M)$. If $\vphi$ is an extremal almost periodic state on $M$ satisfying $\vphi\circ \E = \vphi$, then $\vphi|_N$ is extremal and
    \[
        \Ind(\E) = \dim_{(N,\vphi|_N)}L^2(M, \vphi) = [M^\vphi\colon N^{\vphi|_N}],
    \]
where $\<N,e_{\vphi|_N}\>$ is represented as $\<N,e_\vphi\>\subset B(L^2(M,\vphi))$.
\end{cor}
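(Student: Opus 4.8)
The plan is to reduce to the $C=\{1\}$ case of Theorems~\ref{thm:subfactors_give_modules} and \ref{thm:dimension_vs_index}, exactly as in the proof of Corollary~\ref{cor:index_with_same_Sd_inv}; the only new work is verifying that $\vphi|_N$ is an extremal almost periodic state on $N$ with $\Sd(\vphi|_N)=\Sd(\vphi)$. Granting this, $\{1\}$ is a valid transversal of $\Sd(\vphi|_N)\le\Sd((\vphi|_N)\circ\E)$, and by Theorem~\ref{thm:subfactors_give_modules} the induced representation satisfies $\pi_{\{1\}}(e_{\vphi|_N})=1_{\{1\}}(\Delta_\vphi)=e_\vphi$; this is precisely the identification of $\<N,e_{\vphi|_N}\>$ with $\<N,e_\vphi\>\subset B(L^2(M,\vphi))$ recorded in the statement. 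Since $\inf(\{1\})=\sup(\{1\})=1$, the two bounds in (\ref{ineq:dimension_index_relation}) of Theorem~\ref{thm:dimension_vs_index} both collapse to $\Ind{\E}$, while (\ref{eqn:subfactor_dimension_formula}) of Theorem~\ref{thm:subfactors_give_modules} evaluates the common value of the dimension as $1\cdot[M^\vphi\colon N^{\vphi|_N}]$, giving all three equalities. (Here $\vphi$ being a state makes $\vphi|_N$ a state, and full factors with separable predual are diffuse, so the standing hypotheses of Theorem~\ref{thm:dimension_vs_index} are met.)

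The substance is therefore the spectral bookkeeping. Because $\vphi\circ\E=\vphi$ we have $(\vphi|_N)\circ\E=\vphi$, so the observations opening Section~\ref{sec:appliction_index_subfactors} apply to the weight $\vphi|_N$ on $N$: they show $\vphi|_N$ is almost periodic (its composite $(\vphi|_N)\circ\E=\vphi$ being so) and give $\Sd(\vphi|_N)\subset\Sd((\vphi|_N)\circ\E)=\Sd(\vphi)$. Next, since $M$ is full and $\vphi$ is extremal, the implication $(iii)\Rightarrow(i)$ of Lemma~\ref{lem:extremal_from_Gamma}---exactly the direction requiring fullness---gives $\Sd(\vphi)=\Sd(M)$. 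Combining with the hypothesis $\Sd(N)=\Sd(M)$ and the containment $\Sd(N)\subset\Sd(\vphi|_N)$ valid for any almost periodic weight on $N$, we obtain the squeeze
\[
\Sd(N)\subset\Sd(\vphi|_N)\subset\Sd(\vphi)=\Sd(M)=\Sd(N),
\]
forcing $\Sd(\vphi|_N)=\Sd(\vphi)=\Sd(N)$. In particular $\Sd(\vphi|_N)=\Sd(N)$ is condition (i) of Lemma~\ref{lem:extremal_from_Gamma} for $\vphi|_N$, and the always-valid implication $(i)\Rightarrow(iii)$ then shows $N^{\vphi|_N}$ is a factor, i.e.\ $\vphi|_N$ is extremal.

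With $\vphi|_N$ extremal almost periodic and $\Sd(\vphi|_N)=\Sd(\vphi)$ in hand, the reduction of the first paragraph completes the proof. I do not expect a genuine obstacle here: the argument is structurally identical to that of Corollary~\ref{cor:index_with_same_Sd_inv}, the sole new ingredient being the passage $\Sd(\vphi)=\Sd(M)$ extracted from extremality plus fullness of $M$. The two points deserving care are confirming $\vphi|_N$ is almost periodic before invoking $\Sd(\vphi|_N)$, and tracking which half of Lemma~\ref{lem:extremal_from_Gamma} needs fullness: fullness of $M$ is used genuinely (for $(iii)\Rightarrow(i)$ applied to $\vphi$), whereas fullness of $N$ serves only to place $N$ among the diffuse, separable-predual factors to which Theorems~\ref{thm:subfactors_give_modules} and \ref{thm:dimension_vs_index} apply.
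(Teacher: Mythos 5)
Your argument is correct and is essentially the paper's: the paper proves this corollary in one line by noting that fullness of $M$ plus extremality of $\vphi$ forces $\Sd(\vphi)=\Sd(M)$ (via \cite[Lemma 4.8]{Con74}, i.e.\ the $(iii)\Rightarrow(i)$ implication of Lemma~\ref{lem:extremal_from_Gamma}), which reduces the statement to Corollary~\ref{cor:index_with_same_Sd_inv}. You simply unfold the proof of that corollary (the squeeze $\Sd(N)\subset\Sd(\vphi|_N)\subset\Sd(\vphi)=\Sd(M)=\Sd(N)$ and the $C=\{1\}$ specialization of Theorems~\ref{thm:subfactors_give_modules} and \ref{thm:dimension_vs_index}) instead of citing it, with the same single new ingredient.
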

\begin{proof}
This is a special case of Corollary~\ref{cor:index_with_same_Sd_inv} by \cite[Lemma 4.8]{Con74}.   
\end{proof}

\begin{cor}
Let $N\overset{\E}{\subset} M$ be an inclusion of type $\mathrm{III}_\lambda$ factors for $0<\lambda <1$ with separable preduals. For any extremal almost periodic state $\vphi$ on $M$ satisfying $\vphi\circ \E = \vphi$, one has that $\vphi|_N$ is extremal and
    \[
        \Ind(\E) = \dim_{(N,\vphi|_N)}L^2(M, \vphi) = [M^\vphi\colon N^{\vphi|_N}],
    \]
where $\<N,e_{\vphi|_N}\>$ is represented as $\<N,e_\vphi\>\subset B(L^2(M,\vphi))$.
\end{cor}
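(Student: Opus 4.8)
The plan is to reduce to the equality case of Theorem~\ref{thm:dimension_vs_index} by showing that $\vphi|_N$ is an extremal almost periodic state on $N$ with $\Sd(\vphi|_N)=\Sd(\vphi)$. It is worth emphasizing that this does \emph{not} follow from Corollary~\ref{cor:index_with_same_Sd_inv}: that corollary demands $\Sd(N)=\Sd(\vphi)=\Sd(M)$, whereas for type $\mathrm{III}_\lambda$ factors the invariant $\Sd$ can be strictly smaller than $\lambda^\Z$ (for instance $\Sd=\{1\}$ on the hyperfinite type $\mathrm{III}_\lambda$ factor), so the proof of the preceding corollary cannot simply be copied. What rescues the argument is that the type $\mathrm{III}_\lambda$ hypothesis controls the relevant \emph{point} spectra even when $\Sd(N)$ and $\Sd(M)$ are not controlled.

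First I would dispatch the routine reductions. Since $\vphi\circ\E=\vphi$ we have $\vphi=(\vphi|_N)\circ\E$, so the commuting-square observations opening this section show that $\vphi|_N$ is a faithful normal almost periodic state on $N$ with $\Sd(\vphi|_N)\subseteq\Sd(\vphi)$. Moreover, because $\vphi$ is extremal almost periodic on the type $\mathrm{III}_\lambda$ factor $M$, the argument in case (a) of the proof of Theorem~\ref{thm:dim_scaling_constant} (via \cite[Theorem 4.2.6]{Con73}) gives $\S(\vphi)=\S(M)=\{0\}\cup\lambda^\Z$, and hence $\Sd(\vphi)=\lambda^\Z$ upon passing to isolated points.

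The substance of the proof is to upgrade this to $\Sd(\vphi|_N)=\lambda^\Z$ together with extremality of $\vphi|_N$. For the point spectrum I would combine $\Sd(\vphi|_N)\subseteq\lambda^\Z$ with almost periodicity, which gives $\S(\vphi|_N)=\overline{\Sd(\vphi|_N)}$, and with the always-valid inclusion $\S(N)\subseteq\S(\vphi|_N)$; since $N$ is type $\mathrm{III}_\lambda$ this yields $\{0\}\cup\lambda^\Z=\S(N)\subseteq\overline{\Sd(\vphi|_N)}\subseteq\{0\}\cup\lambda^\Z$, and the discreteness of $\lambda^\Z$ then forces each $\lambda^n$ to lie in $\Sd(\vphi|_N)$, so $\Sd(\vphi|_N)=\lambda^\Z=\Sd(\vphi)$. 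For extremality I would invoke Lemma~\ref{lem:extremal_from_Gamma}, so that $N^{\vphi|_N}$ being a factor is equivalent to $\Gamma(\sigma^{(\vphi|_N,\lambda^\Z)})=\lambda^\Z$. The point is that $\Sd(\vphi|_N)=\lambda^\Z$ makes $\Delta_{\vphi|_N}^{iT}=1$ for $T=\tfrac{2\pi}{\log(1/\lambda)}$, so the modular flow $\sigma^{\vphi|_N}$ is $T$-periodic and the point modular extension $\sigma^{(\vphi|_N,\lambda^\Z)}$ is precisely this flow regarded as an action of $\widehat{\lambda^\Z}=\R/T\Z$; its Connes spectrum therefore agrees with $\Gamma(\sigma^{\vphi|_N})=\S(N)\setminus\{0\}=\lambda^\Z$. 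I expect this last identification---matching the Connes spectrum of the compact action with that of the $\R$-flow---to be the main obstacle, since it requires checking that factoring a periodic action through the circle neither enlarges the fixed-point algebra nor shrinks the spectral subspaces.

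Once $\vphi|_N$ and $\vphi=(\vphi|_N)\circ\E$ are known to be extremal almost periodic states with $\Sd(\vphi|_N)=\Sd(\vphi)$, I would conclude exactly as in the two preceding corollaries. Choosing the transversal $C=\{1\}$ collapses both bounds in (\ref{ineq:dimension_index_relation}) to the single quantity $\dim_{(N,\vphi|_N)}(L^2(M,\vphi),\pi_{\{1\}})$; the representation $\pi_{\{1\}}$ identifies $\<N,e_{\vphi|_N}\>$ with $\<N,e_\vphi\>\subset B(L^2(M,\vphi))$; and formula (\ref{eqn:subfactor_dimension_formula}) with $C=\{1\}$ evaluates this quantity as $[M^\vphi\colon N^{\vphi|_N}]$. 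Therefore $\Ind{\E}=\dim_{(N,\vphi|_N)}L^2(M,\vphi)=[M^\vphi\colon N^{\vphi|_N}]$, as claimed.
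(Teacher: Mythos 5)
Your proof is correct, and its endgame---reducing to the equality case of Theorem~\ref{thm:dimension_vs_index} with transversal $C=\{1\}$ via the discussion preceding Corollary~\ref{cor:index_with_same_Sd_inv}---is exactly the paper's. Where you diverge is the middle step. The paper's proof is shorter: by \cite[Theorem 4.2.6]{Con73}, extremality of an almost periodic state on a type $\mathrm{III}_\lambda$ factor is \emph{equivalent} to the periodicity $\sigma_{t_0}=1$ with $t_0=\frac{2\pi}{\log\lambda}$, and then automatically $\Sd=\lambda^{\Z}$; since $\sigma_t^{\vphi|_N}=\sigma_t^{\vphi}|_N$ (because $\E$ is $\vphi$-preserving), the periodicity passes from $M$ to $N$ for free, and a second application of the same theorem to the type $\mathrm{III}_\lambda$ factor $N$ yields both the extremality of $\vphi|_N$ and $\Sd(\vphi|_N)=\lambda^{\Z}$ in one stroke. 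You instead extract $\Sd(\vphi|_N)=\lambda^{\Z}$ from $\{0\}\cup\lambda^{\Z}=\S(N)\subset\S(\vphi|_N)=\overline{\Sd(\vphi|_N)}$ together with the discreteness of $\lambda^{\Z}$, and then obtain extremality from Lemma~\ref{lem:extremal_from_Gamma} after identifying the Connes spectrum of the point modular extension (an action of $\R/T\Z$) with $\Gamma(\sigma^{\vphi|_N})=\S(N)\setminus\{0\}=\lambda^{\Z}$. That identification---agreement of the fixed-point algebras and of the spectral subspaces for a $T$-periodic flow and its factorization through the circle---is standard and does hold, so your argument closes; you correctly flag it as the one point requiring verification. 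The trade-off is that your route re-derives by hand, through the Arveson/Connes spectrum machinery, what the second citation of \cite[Theorem 4.2.6]{Con73} delivers immediately, while the paper's route is the intended shortcut. Your opening observation that Corollary~\ref{cor:index_with_same_Sd_inv} cannot simply be cited here (since $\Sd(N)$ and $\Sd(M)$ need not equal $\lambda^{\Z}$) is also correct and explains why the paper treats this case separately.
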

\begin{proof}
Set $t_0:=\frac{2\pi}{\log{\lambda}}$. By \cite[Theorem 4.2.6]{Con73}, the extremality of $\vphi$ implies $\sigma_{t_0}^\vphi=1$ and $\Sd(\vphi)=\lambda^\Z$. It follows that $\sigma_{t_0}^{\vphi|_N}=1$  so applying \cite[Theorem 4.2.6]{Con73} again gives that $\vphi|_N$ is extremal with $\Sd(\vphi|_N)=\lambda^\Z$. The equality then follows by the discussion preceding Corollary~\ref{cor:index_with_same_Sd_inv}.
\end{proof}

Recall that given a finite index inclusion of $\mathrm{II}_1$ factors $A\leq B$ and left $B$-module $(\H,\pi)$, one has that $(\H,\pi|_A)$ is a left $A$-module and
    \[
        \dim_{A}(\H,\pi|_A) = [B\colon A] \dim_{B}(\H,\pi).
    \]
The following result gives the analogue of this in the case of extremal almost periodic inclusions under the assumption $\Sd(\vphi)=\Sd(\vphi\circ \E)$. Note that the special case of $N\subset N\otimes M_n(\C)$ was already observed in the discussion following Proposition~\ref{prop:amplification_formula}.

\begin{prop}\label{prop:finite_index_amplification_formula}
Let $N\overset{\E}{\subset} M$ be an inclusion of diffuse factors with separable preduals, and let $\vphi$ be an extremal almost periodic state on $N$ such that $\vphi\circ \E$ is also an extremal almost periodic state with $\Sd(\vphi)=\Sd(\vphi\circ \E)$. Identify $\<N,e_\vphi\> \cong \<N,e_{\vphi\circ \E}\> \subset \<M,e_{\vphi\circ \E}\>$. If $(\H,\pi)$ is a left $(M,\vphi\circ \E)$-module, then
    \[
        \dim_{(N,\vphi)}(\H,\pi|_{\<N,e_\vphi\>}) = \Ind(\E) \dim_{(M,\vphi\circ\E)}(\H,\pi).
    \]
\end{prop}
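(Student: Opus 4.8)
The plan is to compute the left-hand side by transporting a standard intertwiner for $(\H,\pi)$ over $(M,\vphi\circ\E)$ into one over $(N,\vphi)$ via a Pimsner--Popa basis. Write $\psi:=\vphi\circ\E$; since $\Sd(\vphi)=\Sd(\psi)$, the transversal realizing the identification $\<N,e_\vphi\>\cong\<N,e_{\psi}\>\subset\<M,e_{\psi}\>$ is $C=\{1\}$, so that $e_\vphi$ is identified with $e_\psi=1_{\{1\}}(\Delta_\psi)$ and $\pi|_{\<N,e_\vphi\>}=\pi\circ\pi_{\{1\}}$. By Proposition~\ref{prop:existence_PP_basis} (with $C=\{1\}$, whence $C^{-1}=\{1\}$) there is a Pimsner--Popa orthogonal basis $\{m_i\colon i\in I\}\subset M^{(\psi,1)}=M^\psi$ for $N\overset{\E}{\subset}M$, satisfying $\sum_i m_im_i^*=\Ind{\E}$. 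Starting from any standard intertwiner $V\colon\H\to L^2(M,\psi)\otimes\mathcal{L}$ for $(\H,\pi)$, I would set $W:=(v\otimes 1_\mathcal{L})V$, where $v\colon L^2(M,\psi)\to L^2(N,\vphi)\otimes\ell^2(I)$ is the isometry $\xi\mapsto\sum_i(e_N J_\psi m_i^* J_\psi\xi)\otimes\delta_i$ from the proof of Theorem~\ref{thm:dimension_vs_index}. Since $v$ intertwines the $\<N,e_\vphi\>$-actions on $L^2(M,\psi)$ and $L^2(N,\vphi)\otimes\ell^2(I)$, while $V$ intertwines the $\<M,e_\psi\>$-actions, a direct check gives that $W$ is an isometric standard intertwiner for $(\H,\pi|_{\<N,e_\vphi\>})$ over $(N,\vphi)$.

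The key computation is then to evaluate $\dim_{(N,\vphi)}(\H,\pi|_{\<N,e_\vphi\>})=(\vphi\otimes\Tr)[(J_\vphi\otimes 1)WW^*(J_\vphi\otimes 1)]$. Writing $P:=(J_\psi\otimes 1)VV^*(J_\psi\otimes 1)=\sum_{a,b}x_{a,b}\otimes e_{a,b}\in M^\psi\bar\otimes B(\mathcal{L})$, so that $\dim_{(M,\psi)}(\H,\pi)=\sum_a\psi(x_{a,a})$, one has $WW^*=(vJ_\psi\otimes 1)P(J_\psi v^*\otimes 1)$, and the problem reduces to showing that for each $x\in M^\psi_+$ the quantity
\[
    T(x):=(\vphi\otimes\Tr)\left[(J_\vphi\otimes 1)(vJ_\psi xJ_\psi v^*)(J_\vphi\otimes 1)\right]
\]
equals $\Ind{\E}\,\psi(x)$. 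Using the Jones relation $e_N J_\psi aJ_\psi e_N=J_\psi\E(a)J_\psi e_N$ (valid because $J_\psi$ commutes with $e_N$), the $(i,j)$-entry of $vJ_\psi xJ_\psi v^*$ is $J_\vphi\E(m_i^*xm_j)J_\vphi$ on $L^2(N,\vphi)\cong e_N L^2(M,\psi)$, whence $T(x)=\sum_i\vphi(\E(m_i^*xm_i))=\sum_i\psi(m_i^*xm_i)$. The traciality of $\psi$ on $M^\psi$ together with $m_i\in M^\psi$ then give $\psi(m_i^*xm_i)=\psi(xm_im_i^*)$, so $T(x)=\psi\!\left(x\sum_i m_im_i^*\right)=\Ind{\E}\,\psi(x)$. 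Summing $T(x_{a,a})$ over $a$ (in $[0,\infty]$, using normality) yields the claimed identity.

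A more conceptual route, which I would mention as a sanity check, is to apply Theorem~\ref{thm:dimension_amplification_compression_formula} so that both dimensions collapse onto the common Hilbert space $\K:=\pi(e_\psi)\H$, viewed respectively as a module over the type $\mathrm{II}_1$ factors $N^\vphi$ and $M^\psi$ (these are $\mathrm{II}_1$ by Lemma~\ref{lem:ext_ap_has_II_1_centralizer}) with compatible normalized traces $\vphi|_{N^\vphi}=\psi|_{N^\vphi}$ and $\psi|_{M^\psi}$; the desired equality is then the classical relation $\dim_{N^\vphi}\K=[M^\psi\colon N^\vphi]\dim_{M^\psi}\K$ together with $[M^\psi\colon N^\vphi]=\Ind{\E}$, which follows from (\ref{eqn:subfactor_dimension_formula}) and (\ref{ineq:dimension_index_relation}) specialized to $C=\{1\}$. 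I expect the main obstacle to be the trace-scaling identity $T(x)=\Ind{\E}\,\psi(x)$ — that is, verifying that conjugating by the Pimsner--Popa intertwiner $v$ multiplies the trace on $M^\psi$ by the index — and in particular handling all the sums in $[0,\infty]$ so that the infinite-index case (where both sides are $+\infty$ unless $\H=0$) is covered uniformly rather than only under the classical finite-index formula recalled above.
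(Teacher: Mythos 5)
Your proposal is correct and follows essentially the same route as the paper's proof: both take a Pimsner--Popa orthogonal basis $\{m_i\}\subset M^{\vphi\circ\E}$ from Proposition~\ref{prop:existence_PP_basis}, compose a standard intertwiner for $(\H,\pi)$ over $(M,\vphi\circ\E)$ with the isometry $\xi\mapsto\sum_i(e_NJ_\psi m_i^*J_\psi\xi)\otimes\delta_i$ from the proof of Theorem~\ref{thm:dimension_vs_index}, and then use the Jones relation, traciality of $\psi$ on $M^\psi$, and $\sum_i m_im_i^*=\Ind{\E}$ to extract the index as a scaling factor. The matrix-entry computation and the handling of the sums in $[0,\infty]$ match the paper's argument.
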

\begin{proof}
As usual, denote $\psi:=\vphi\circ \E$. Let $v\colon \H\to L^2(M,\psi)\otimes \K$ be a standard intertwiner of $(\H,\pi)$ as a left $(M,\psi)$-module. We use Proposition~\ref{prop:existence_PP_basis} to find a Pimsner--Popa orthogonal basis $\{m_i\colon i\in I\}\subset M^{\psi}$ for $N\overset{\E}{\subset} M$. As in the proof of Theorem~\ref{thm:dimension_vs_index},
    \begin{align*}
        w\colon L^2(M,\psi) &\to L^2(N,\vphi)\otimes \ell^2(I)\\
            \xi &\mapsto \sum_{i\in I} (e_N J_\psi m_i^* J_\psi \xi)\otimes \delta_i\
    \end{align*}
defines a standard intertwiner for $L^2(M,\psi)$ as a left $(N,\vphi)$-module. Consequently, $(w\otimes 1)v$ is a standard intertwiner for $(\H,\pi)$ as a left $(N,\vphi)$-module. Fix a family of matrix units $\{e_{a,b}\colon a,b\in A\}$ for $B(\K)$ and let $x_{a,b}\in M^\psi$ be such that $(1\otimes e_{a,a}) vv^* (1\otimes e_{b,b}) = (J_\psi x_{a,b} J_\psi)\otimes e_{a,b}$. That is, $vv^*$ can be identified with the matrix over $A$ whose $(a,b)$-entry is $J_\psi x_{a,b} J_\psi$. Then $(w\otimes 1)vv^*(w^*\otimes 1)$ can be identified with the matrix (over $I\times A$) whose $((i,a),(j,b))$-entry is
    \[
        (e_N J_\psi m_i^* J_\psi)(J_\psi x_{a,b} J_\psi)(J_\psi m_j J_\psi e_N )= J_\psi \E(m_i^* x_{a,b} m_j) J_\psi e_N,
    \]
which equals $J_\vphi \E(m_i^* x_{a,b} m_j) J_\vphi$ under the identification $L^2(N,\vphi)\cong e_N L^2(M,\psi)$. Consequently, one has
    \begin{align*}
        \dim_{(N,\vphi)}(\H,\pi|_{\<N,e_\vphi\>}) &= \vphi\otimes \Tr\otimes \Tr( (J_\vphi\otimes 1\otimes 1)(w\otimes 1)vv^*(w^*\otimes 1) (J_\vphi\otimes 1\otimes 1))\\
            &= \sum_{(i,a)\in I\times A} \varphi( \E( m_i^* x_{a,a} m_i))\\
            &= \sum_{(i,a)\in I\times A} \psi( x_{a,a} m_i m_i^*)\\
            &= \Ind(\E)\sum_{a\in A} \psi(x_{a,a}) \\
            &= \Ind(\E)(\psi\otimes \Tr)( (J_\psi\otimes 1) vv^* (J_\psi \otimes 1)) = \Ind(\E) \dim_{(M,\psi)}(\H,\pi),
    \end{align*}
where in the third equality we have used $m_i \in M^\psi$, and in the fourth equality we have used $\sum_{i\in I} m_i m_i^*=\Ind(\E)$.
\end{proof}

\begin{rem}
Under the same hypotheses as in Proposition~\ref{prop:finite_index_amplification_formula} \emph{except} allowing for $\Sd(\vphi)\neq \Sd(\vphi\circ \E)$, one obtains
    \[
        \inf(C) \Ind{\E} \dim_{(M,\vphi\circ \E)}(\H,\pi) \leq \dim_{(N,\vphi)}(\H,\pi\circ \pi_C) \leq \sup(C) \Ind(\E) \dim_{(M,\vphi\circ \E)}(\H,\pi),
    \]
by the same argument as in the proof of (\ref{ineq:dimension_index_relation}) in Theorem~\ref{thm:dimension_vs_index}. In fact, (\ref{ineq:dimension_index_relation}) is then a special case of the above inequality for $\H=L^2(M,\vphi\circ \E)$. $\hfill\blacksquare$
\end{rem}

We conclude this section with an example showing that finite Murray--von Neumann dimension \emph{does not} imply finite index, even though the converse is true by Theorem~\ref{thm:dimension_vs_index}. Note that, by Theorem~\ref{thm:dimension_vs_index}, such an example requires $\text{Sd}(\vphi)\leq \text{Sd}(\vphi\circ \E)$ to be an infinite index inclusion of groups.

\begin{ex}
Let $0<\lambda,\gamma<1$ be such that $\log(\lambda)$ and $\log(\gamma)$ are linearly independent over $\Q$. Then $\Gamma:=\<\lambda, \gamma\> \leq \R_+$ is dense and $\Lambda:=\<\lambda\>$ is an infinite index subgroup of $\Gamma$. Let $(A,\tau)$ be a type $\mathrm{II}_\infty$ factor with faithful normal semifinite tracial weight that admits a $\tau$-scaling action $\Gamma\overset{\alpha}{\curvearrowright} A$: $\tau\circ \alpha_\mu = \mu \tau$ for all $\mu\in \Gamma$. (For example, by Takesaki duality one can take $A\cong (R_\lambda \bar\otimes R_\gamma)\rtimes_\beta \widehat{\Gamma}$, where $R_\lambda$ and $R_\gamma$ are the hyperfinite type $\mathrm{III}_\lambda$ and $\mathrm{III}_\gamma$ factors and the action $\beta$ by the dual group $\widehat{\Gamma}$ is point modular extension of the modular automorphism group of the tensor product of Powers' states $\phi_\lambda$ and $\phi_\gamma$.) If $\Phi$ is the faithful normal semifinite weight on $A\rtimes_\alpha \Gamma$ dual to $\tau$, then $\Phi|_A = \tau$ by the discreteness of $\Gamma$ and one has $(A\rtimes_\alpha \Gamma)^\Phi = A$. In particular, $\Phi$ is extremal and strictly semifinite. Furthermore, $\Phi$ is almost periodic by Lemma~\ref{lem:ap_iff_eigenoperators_generate} because the action being $\tau$-scaling implies $\C[\Gamma] \subset (A\rtimes_\alpha \Gamma)^{(\Phi,\eig)}$. Similarly, $\Phi$ is extremal almost periodic on $A\rtimes_{\alpha|_\Lambda} \Lambda$, which we view as a subalgebra of $A\rtimes_\alpha \Gamma$.

Now, fix $p\in A$ with $\Phi(p)=\tau(p)=1$ and define
    \begin{align*}
        M &:= p(A\rtimes_\alpha \Gamma)p\\
        N &:= p(A\rtimes_{\alpha|_\Lambda} \Lambda)p.
    \end{align*}
Then the state $\vphi:= \Phi(p\cdot p)$ is extremal almost periodic on both $M$ and $N$ by Remark~\ref{rem:almost_periodic_corners}. Since $N^{(\vphi|_N,\eig)}$ is dense in $N$, it follows that $N$ is globally invariant under the modular automorphism group of $\vphi$ and hence  there exists a faithful normal $\vphi$-preserving conditional expectation $\E\colon M\to N$ by \cite[Theorem IX.4.2]{Tak03}.  Additionally, one has
    \[
        M^\vphi = N^\vphi = pAp,
    \]
so that $[M^\vphi\colon N^\vphi]=1$. We also have $\text{Sd}(\vphi)=\Gamma$ and $\text{Sd}(\vphi|_N)=\Lambda$, and so by Remark~\ref{rem:subfactor_dimension_can_usually_be_made_finite} there exists a choice of transversal $C\subset \Gamma$ of coset representatives for $\Lambda$ so that
    \[
        \dim_{(N,\vphi)}(L^2(M,\vphi),\pi_C)<\infty,
    \]
where $\pi_C$ is as in Theorem~\ref{thm:subfactors_give_modules}. On the other hand, $[\Gamma \colon \Lambda]=\infty$ implies $\Ind{\E}=\infty$ by Theorem~\ref{thm:dimension_vs_index}.$\hfill\blacksquare$
\end{ex}

\section{Constructing Extremal Almost Periodic Inclusions}\label{sec:constructing_extremal_almost_periodic_inclusions}

We conclude this article by showing that the almost periodic hypothesis appearing in the results of the previous section is mild in the sense that many inclusions can be reduced to such inclusions (see Proposition~\ref{prop:reducing_to_ap_inclusions}). We also present a construction that builds a family of extremal almost periodic inclusions starting with the initial data of an irreducible inclusion (see Example~\ref{ex:building_extremal_ap_inclusions}).

\begin{prop}\label{prop:existence_of_almost_periodic_part}
Let $M$ be a von Neumann algebra equipped with a faithful normal strictly semifinite weight $\vphi$. Then the restriction of $\vphi$ to $(M^{(\vphi,\eig)})''$ is almost periodic and there exists a unique $\vphi$-preserving faithful normal conditional expectation from $M$ onto this subalgebra.
\end{prop}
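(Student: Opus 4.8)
The plan is to set $N:=(M^{(\vphi,\eig)})''$ and to realize both the conditional expectation and the almost periodicity of $\vphi|_N$ as consequences of Takesaki's theorem \cite[Theorem IX.4.2]{Tak03} and Lemma~\ref{lem:ap_iff_eigenoperators_generate}, once the relevant hypotheses on the inclusion $N\subset M$ are in place.

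First I would verify that $N$ is globally invariant under $\sigma^\vphi$. Each eigenoperator $x\in M^{(\vphi,\lambda)}$ satisfies $\sigma_t^\vphi(x)=\lambda^{it}x\in M^{(\vphi,\lambda)}$, so $\sigma_t^\vphi$ preserves the unital $*$-algebra $M^{(\vphi,\eig)}$; by normality it then preserves the generated von Neumann algebra $N$. Next I would check that $\vphi|_N$ is a faithful normal semifinite weight. Faithfulness and normality are inherited from $\vphi$. For semifiniteness, recall $M^\vphi=M^{(\vphi,1)}\cup\{0\}\subset M^{(\vphi,\eig)}\subset N$, so the pairwise orthogonal projections $\{p_i\in M^\vphi\cap\dom(\vphi)\colon i\in I\}$ summing to $1$ furnished by strict semifiniteness (Lemma~\ref{lem:strictly_semifinite_characterization}(ii)) already lie in $N$ with $\vphi(p_i)<\infty$, hence in $\sqrt{\dom}(\vphi|_N)$. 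The $\sigma$-weakly closed left ideal generated by $\sqrt{\dom}(\vphi|_N)$ has the form $N\bar e$ for a projection $\bar e\in N$, and since each $p_i\le\bar e$ while $\bigvee_i p_i=1$, we conclude $\bar e=1$; thus $\vphi|_N$ is semifinite.

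With invariance and semifiniteness established, Takesaki's theorem produces the unique faithful normal $\vphi$-preserving conditional expectation $\E\colon M\to N$. Moreover, $\vphi|_N$ is invariant under $\sigma^\vphi|_N$, which satisfies the modular (KMS) condition for $\vphi|_N$ by restriction, so uniqueness of the modular automorphism group gives $\sigma_t^{\vphi|_N}=\sigma_t^\vphi|_N$ for all $t$. From this I would read off $N^{\vphi|_N}=N\cap M^\vphi=M^\vphi$, so that $\vphi|_{N^{\vphi|_N}}=\vphi|_{M^\vphi}$ is semifinite and $\vphi|_N$ is strictly semifinite by Lemma~\ref{lem:strictly_semifinite_characterization}(iii). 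Likewise the eigenoperators of $\sigma^{\vphi|_N}$ are exactly $N\cap M^{(\vphi,\eig)}=M^{(\vphi,\eig)}$, which generate $N$ by construction; Lemma~\ref{lem:ap_iff_eigenoperators_generate} then yields that $\vphi|_N$ is almost periodic.

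I expect the main subtlety to be the identification $\sigma_t^{\vphi|_N}=\sigma_t^\vphi|_N$ rather than any single computation: it is precisely this identity that matches the centralizer and the eigenoperators of $\vphi|_N$ with those of $\vphi$, and hence that makes both Lemma~\ref{lem:strictly_semifinite_characterization}(iii) and Lemma~\ref{lem:ap_iff_eigenoperators_generate} applicable. The remainder is bookkeeping with these two lemmas and Takesaki's theorem.
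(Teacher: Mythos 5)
Your proposal is correct and follows essentially the same route as the paper: establish $\sigma^\vphi$-invariance of $N$ and semifiniteness of $\vphi|_N$, invoke \cite[Theorem IX.4.2]{Tak03} for the expectation, use $N^{\vphi|_N}=M^\vphi$ for strict semifiniteness, and conclude almost periodicity from Lemma~\ref{lem:ap_iff_eigenoperators_generate}. The paper's proof is simply a terser version of yours, leaving implicit the identification $\sigma_t^{\vphi|_N}=\sigma_t^\vphi|_N$ that you rightly flag as the point needing justification.
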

\begin{proof}
Denote $N:=(M^{(\vphi,\eig)})''$. Then $\vphi|_N$ is faithful and normal, and even strictly semifinite since $M^\vphi = N^{\vphi|_N}$. Consequently, $\vphi$ is almost periodic by Lemma~\ref{lem:ap_iff_eigenoperators_generate}. Finally, \cite[Theorem IX.4.2]{Tak03} yields the desired conditional expectation since $M^{(\vphi,\eig)}$ and hence $N$ are globally invariant under the modular automorphism group of $\vphi$.
\end{proof}

\begin{defi}
Given a von Neumann algebra $M$ equipped with a faithful normal strictly semifinite weight $\vphi$, we call
    \[
        (M^{(\vphi,\text{ap})},\vphi_{\text{ap}}) :=  \left(  (M^{(\vphi,\eig)})'', \vphi|_{(M^{(\vphi,\eig)})''} \right)
    \]
the \textbf{almost periodic part} of $(M,\vphi)$. We denote by $\E_{\text{ap}}\colon M\to M^{(\vphi,\text{ap})}$ the unique $\vphi$-preserving faithful normal conditional expectation onto the almost periodic part.$\hfill\blacksquare$
\end{defi}

Lemma~\ref{lem:ap_iff_eigenoperators_generate} implies $(M^{(\vphi,\text{ap})}, \vphi_{\text{ap}}) = (M,\vphi)$ if and only if $(M,\vphi)$ is almost periodic. Additionally, one always has $(M^{(\vphi,\text{ap})})^{\vphi_{\text{ap}}} = M^\vphi$, so that $\vphi_{\text{ap}}$ is extremal if and only if $\vphi$ is extremal. We suspect that in this case and when $M$ is also a factor, one has that $\Ind{\E_{\text{ap}}}$ is either $1$ or $+\infty$, but we are unable to prove it at this time. The following example illustrates this dichotomy.

\begin{ex}
Let $\R\overset{U}{\curvearrowright}\H_\R$ be an orthogonal action on a real Hilbert space, and let $(\Gamma(U)'',\vphi_U)$ be the associated free Araki--Woods factor equipped with its free quasi-free state which is known to be extremal (see \cite{Shl97}). By Zorn's Lemma, there exists a maximal invariant subspace $\K_\R\leq \H_\R$ so that the restricted action $\R\overset{V}{\curvearrowright}\K_\R$ is almost periodic. Then the almost periodic part of $(\Gamma(U)'',\vphi_U)$ is given by
    \[
        (\Gamma(V)'', \varphi_V), 
    \]
which is freely complemented by a diffuse factor $N$ when $\K_\R \neq \H_\R$ (see \cite{BC16}). For the conditional expectation $\E_{\text{ap}}\colon \Gamma(U)'' \to \Gamma(V)''$, if $U=V$ we of course have $\Ind{\E_{\text{ap}}}=1$. But otherwise the diffuse free complement $N$ forces $\Ind{\E_{\text{ap}}}=+\infty$; indeed, note that $\E_{\text{ap}}(x) = \varphi_U(x)$ for all $x\in N$ so that the diffuseness of $N$ gives an infinite index through its probabilistic definition (see \cite[Definition 1.1.1]{Pop95book}).
$\hfill\blacksquare$
\end{ex}

Recall from Remark~\ref{rem:strictly_semifinite_corners} that faithful normal strictly semifinite weights restrict to weights with the same properties on corners under projections from the centralizer.

\begin{lem}\label{lem:almost_periodic_parts_of_corners}
Let $M$ be a von Neumann algebra equipped with a faithful normal strictly semifinite weight $\vphi$. For $p\in M^\vphi$ one has
    \[
        (pMp)^{(\vphi|_{pMp}, \text{ap})} = pM^{(\vphi,\text{ap})} p
    \]
and
    \[
        (\vphi|_{pMp})_{\text{ap}} = (\vphi_{\text{ap}})|_{pM^{(\vphi,\text{ap})} p}.
    \]
\end{lem}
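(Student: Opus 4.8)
The plan is to reduce the statement to the single algebra identity $(pMp)^{(\vphi|_{pMp},\text{ap})} = pM^{(\vphi,\text{ap})}p$, since the weight identity is then automatic: both $(\vphi|_{pMp})_{\text{ap}}$ and $(\vphi_{\text{ap}})|_{pM^{(\vphi,\text{ap})}p}$ are by definition merely the restriction of $\vphi$ to the algebra in question, so once the algebras coincide the weights coincide. To make sense of the right-hand side I first note $p\in M^\vphi\subseteq M^{(\vphi,\eig)}\subseteq M^{(\vphi,\text{ap})}$, so that $pM^{(\vphi,\text{ap})}p$ is a genuine corner of $N:=M^{(\vphi,\text{ap})}=(M^{(\vphi,\eig)})''$. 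By Remark~\ref{rem:strictly_semifinite_corners} the weight $\vphi|_{pMp}$ is again faithful normal strictly semifinite, so its almost periodic part is defined.

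The key step is a computation at the level of eigenoperators. Because $p\in M^\vphi$, the modular automorphism group of $\vphi|_{pMp}$ is the restriction $\sigma_t^{\vphi|_{pMp}}=\sigma_t^\vphi|_{pMp}$; this is standard for projections in the centralizer (one checks that $\sigma^\vphi|_{pMp}$ satisfies the KMS condition for $\vphi|_{pMp}$ and invokes uniqueness of the modular automorphism group, refining the centralizer identity $pM^\vphi p=(pMp)^{\vphi|_{pMp}}$ recorded in Remark~\ref{rem:strictly_semifinite_corners}). Granting this, I claim $(pMp)^{(\vphi|_{pMp},\lambda)}=pM^{(\vphi,\lambda)}p$ for each $\lambda>0$: if $x\in M^{(\vphi,\lambda)}$ then $\sigma_t^\vphi(pxp)=p\,\sigma_t^\vphi(x)\,p=\lambda^{it}pxp$ since $p$ is fixed by $\sigma^\vphi$, giving the inclusion ``$\supseteq$''; conversely any $y\in pMp$ with $\sigma_t^{\vphi|_{pMp}}(y)=\lambda^{it}y$ satisfies $\sigma_t^\vphi(y)=\lambda^{it}y$ and $y=pyp$, giving ``$\subseteq$''. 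Taking spans over $\lambda>0$ then yields $(pMp)^{(\vphi|_{pMp},\eig)}=pM^{(\vphi,\eig)}p$, a unital $*$-subalgebra of $pMp$.

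Finally I pass from these $*$-algebras of eigenoperators to the von Neumann algebras they generate. On one side, $(pMp)^{(\vphi|_{pMp},\text{ap})}$ is by definition the $\sigma$-weak closure of $(pMp)^{(\vphi|_{pMp},\eig)}=pM^{(\vphi,\eig)}p$ inside $pMp$. On the other side, $M^{(\vphi,\eig)}$ is $\sigma$-weakly dense in $N=M^{(\vphi,\text{ap})}$ by definition, and the compression map $x\mapsto pxp$ is normal, hence $\sigma$-weakly continuous; therefore $pM^{(\vphi,\eig)}p$ is $\sigma$-weakly dense in $pNp=pM^{(\vphi,\text{ap})}p$, which is itself $\sigma$-weakly closed as a corner of a von Neumann algebra. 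Comparing the two closures gives the desired equality, and the weight identity follows as noted. The only genuinely delicate point is the modular-group identification $\sigma^{\vphi|_{pMp}}=\sigma^\vphi|_{pMp}$ used to match eigenoperators across the two weights; the remainder is a routine density argument exploiting the normality of the compression $x\mapsto pxp$.
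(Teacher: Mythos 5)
Your proof is correct and follows essentially the same route as the paper's: establish $pM^{(\vphi,\eig)}p=(pMp)^{(\vphi|_{pMp},\eig)}$ (the nontrivial inclusion coming from the KMS condition and $p\in M^\vphi$), then pass to the generated von Neumann algebras and observe the weight identity is automatic. Your version merely spells out the levelwise eigenoperator comparison and the normality of the compression, which the paper leaves implicit.
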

\begin{proof}
First note $pM^{(\vphi,\eig)}p \subset (pMp)^{(\vphi|_{pMp}, \eig)}$. The reverse inclusion also holds as a consequence of the KMS condition (and the fact that $p\in M^\vphi$). The equality of these $*$-algebras extends to the desired equality of von Neumann algebras, which in turn gives the equality of weights after deciphering the notation.
\end{proof}

Recall for an inclusion $N\overset{\E}{\subset} M$ that if $\vphi$ is a faithful normal strictly semifinite weight on $N$, then $\vphi\circ \E$ is a faithful normal strictly semifinite weight on $M$ since $N^\vphi\circ M^{\vphi\circ \E}$.

\begin{prop}\label{prop:reducing_to_ap_inclusions}
Let $N\overset{\E}{\subset} M$ be an inclusion of von Neumann algebras. If $\vphi$ is a faithful normal strictly semifinite weight on $N$, then 
    \[
        N^{(\vphi, \text{ap})} \subset M^{(\vphi\circ\E, \text{ap})},
    \]
with faithful normal conditional expectation given by the restriction of $\E$ and $(\vphi\circ \E)_{\text{ap}} = \vphi_{\text{ap}}\circ \E$.
\end{prop}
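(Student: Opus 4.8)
The plan is to transfer everything through Takesaki's theorem relating conditional expectations and modular automorphism groups. Throughout write $\psi := \vphi\circ\E$, which is a faithful normal strictly semifinite weight on $M$ (since $N^\vphi\subset M^\psi$, as noted before the statement), and observe $\psi|_N = \vphi$ because $\E|_N = \text{id}$. Since $\psi\circ\E = \vphi\circ\E\circ\E = \vphi\circ\E = \psi$, the expectation $\E$ is $\psi$-preserving, so \cite[Theorem IX.4.2]{Tak03} applies: $N$ is globally invariant under $\sigma^\psi$, the restriction satisfies $\sigma_t^\psi|_N = \sigma_t^{\psi|_N} = \sigma_t^\vphi$, and $\E$ commutes with the modular flow, $\E\circ\sigma_t^\psi = \sigma_t^\psi\circ\E$ for all $t\in\R$. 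These three facts carry all the weight of the argument.

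First I would establish the inclusion $N^{(\vphi,\text{ap})}\subset M^{(\psi,\text{ap})}$. For $x\in N$ the identity $\sigma_t^\psi|_N = \sigma_t^\vphi$ shows that $x$ is an eigenoperator of $\sigma^\vphi$ with eigenvalue $\lambda$ if and only if it is one for $\sigma^\psi$; hence $N^{(\vphi,\lambda)} = N\cap M^{(\psi,\lambda)}$, and in particular $N^{(\vphi,\eig)}\subset M^{(\psi,\eig)}$. Taking double commutants gives $N^{(\vphi,\text{ap})} = (N^{(\vphi,\eig)})'' \subset (M^{(\psi,\eig)})'' = M^{(\psi,\text{ap})}$.

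Next I would show $\E$ carries the almost periodic part of $M$ into that of $N$. If $x\in M^{(\psi,\lambda)}$, then using that $\E$ commutes with $\sigma^\psi$ and takes values in $N$ one computes $\sigma_t^\vphi(\E(x)) = \sigma_t^\psi(\E(x)) = \E(\sigma_t^\psi(x)) = \lambda^{it}\E(x)$, so $\E(x)\in N^{(\vphi,\lambda)}\cup\{0\}$. Thus $\E(M^{(\psi,\eig)})\subset N^{(\vphi,\eig)}$, and since $\E$ is normal, a Kaplansky density argument---approximating any $x\in M^{(\psi,\text{ap})}$ $\sigma$-weakly by a bounded net drawn from the $*$-algebra $M^{(\psi,\eig)}$---yields that $\E(x)$ lies in the $\sigma$-weak closure of $N^{(\vphi,\eig)}$, which is precisely $N^{(\vphi,\text{ap})}$. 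Since $\E$ restricts to the identity on $N^{(\vphi,\text{ap})}\subset N$, the map $\E|_{M^{(\psi,\text{ap})}}$ is a faithful normal norm-one projection of $M^{(\psi,\text{ap})}$ onto $N^{(\vphi,\text{ap})}$, hence a conditional expectation by Tomiyama's theorem, with faithfulness and normality inherited from $\E$.

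Finally, the weight identity is immediate once the target of $\E|_{M^{(\psi,\text{ap})}}$ is known: for $x\in M^{(\psi,\text{ap})}_+$ one has $\E(x)\in N^{(\vphi,\text{ap})}_+$ and therefore
\[
    (\vphi\circ\E)_{\text{ap}}(x) = \psi(x) = \vphi(\E(x)) = \vphi_{\text{ap}}(\E(x)) = (\vphi_{\text{ap}}\circ\E)(x),
\]
where the outer equalities are just the definitions $(\vphi\circ\E)_{\text{ap}} = \psi|_{M^{(\psi,\text{ap})}}$ and $\vphi_{\text{ap}} = \vphi|_{N^{(\vphi,\text{ap})}}$. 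The only genuine content is the containment $\E(M^{(\psi,\text{ap})})\subset N^{(\vphi,\text{ap})}$ established above; the main things to be careful about are the bookkeeping of which modular group acts where (exploiting $\sigma_t^\psi|_N = \sigma_t^\vphi$) and invoking normality correctly to pass the eigenoperator relation from the generating $*$-algebra to its weak closure.
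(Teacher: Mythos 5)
Your proposal is correct and follows essentially the same route as the paper: the inclusion comes from $\sigma_t^{\vphi\circ\E}|_N=\sigma_t^\vphi$, and the conditional expectation and weight identity come from $\E$ commuting with $\sigma^{\vphi\circ\E}$, which forces $\E$ to preserve eigenoperators and hence (by normality) the almost periodic parts. The paper's proof is just a compressed version of your argument, so no further comment is needed.
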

\begin{proof}
For all $t\in \R$ we have $\sigma_t^{\vphi\circ \E}|_N = \sigma_t^\vphi$, which yields the inclusion of the almost periodic parts. The fact that $\E$ commutes with $\sigma^{\vphi\circ \E}$ implies $\E|_{M^{(\vphi\circ \E,\text{ap})}}$ is valued in $N^{(\vphi,\text{ap})}$ as well as the equality of weights.
\end{proof}

For an inclusion  $N\overset{\E}{\subset} M$ of von Neumann algebras with separable preduals, suppose $\vphi$ is an extremal faithful normal strictly semifinite weight on $N$ such $\vphi\circ \E$ is also extremal. Then the previous proposition implies the inclusion $N^{(\vphi,\text{ap})} \subset M^{(\vphi\circ \E,\text{ap})}$ is still with expectation, and moreover $\vphi_{\text{ap}}$ and $\vphi_{\text{ap}}\circ \E$ are extremal almost periodic weights. In particular, $N^{(\vphi,\text{ap})}$ and $ M^{(\vphi\circ \E,\text{ap})}$ are factors. If we compress this inclusion by any non-zero projection $q\in N^\vphi\cap \dom(\vphi|_{N^\vphi})$ then by Lemma~\ref{lem:almost_periodic_parts_of_corners} and Remark~\ref{rem:almost_periodic_corners} we will obtain an inclusion $N_q:=q N^{(\vphi,\text{ap})} q \subset qM^{(\vphi\circ \E,\text{ap})}q=:M_q$ of factors with expectation $\E_q:=\E|_{M_q}$ and an extremal almost periodic state $\vphi_q:=\frac{1}{\vphi(q)} \vphi_{\text{ap}}|_{N_q}$ whose composition with $\E_q$ is an extremal almost periodic state. The index of $\E_q$ can be estimated using Theorem~\ref{thm:dimension_vs_index} if $N_q$ is diffuse. Otherwise, $N_q$ is a type $\mathrm{I}$ factor and so $\vphi_q$ is a tracial state by Remark~\ref{rem:extremal_ap_on_semifinite_is_tracial}. If $M_q$ is not a type $\mathrm{I}$ factor then necessarily $\Ind{\E_q}=\infty$, and otherwise $\vphi_q\circ \E_q$ is also tracial by Remark~\ref{rem:extremal_ap_on_semifinite_is_tracial} and hence $\Ind{\E_q}= \dim_{(N_q, \vphi_q)}L^2(M_q, \vphi_q\circ \E_q)$.

In our last example, we demonstrate how to construct extremal almost periodic inclusions as in Theorems~\ref{thm:subfactors_give_modules} and \ref{thm:dimension_vs_index} from irreducible inclusions.

\begin{ex}\label{ex:building_extremal_ap_inclusions}
Let $N_0 \overset{\E}{\subset} M$ be an irreducible inclusion of factors with separable preduals and $N_0$ semifinite. Let $\tau$ be a faithful normal semifinite tracial weight on $N_0$ and denote $\vphi:=\tau\circ \E$. Then $N_0\subset M^\vphi$ and so it follows that $\vphi$ is strictly semifinite. By replacing $(M,\vphi)$ with its almost periodic part, we assume $\vphi$ is almost periodic. In fact, it will be extremal almost periodic:
    \[
        (M^\vphi)'\cap M^\vphi \leq (N_0)'\cap M = \C.
    \]
Fix a $*$-subalgebra $A \subset M^{(\vphi,\eig)}$ and define $N:=(N_0\cup A)''$. Then $N$ is globally invariant under the modular automorphism group of $\vphi$ and $\vphi|_{N}$ is semifinite (since $N_0\leq N$), so there exists a faithful normal $\vphi$-preserving conditional expectation $\E\colon M\to N$ by \cite[Theorem IX.4.2]{Tak03}. Additionally, $\vphi|_N$ is extremal almost periodic since $N_0\cup A$ has $\sigma$-weakly dense span in $N$ and 
    \[
        (N^\vphi)'\cap N^\vphi \leq (N_0)'\cap M = \C.
    \]
Note that $\text{Sd}(\vphi|_N)$ is the subgroup of $\text{Sd}(\vphi)$ generated by the $\mu$ such that $A\cap M^{(\vphi,\mu)}$ is non-empty. So assuming $\text{Sd}(\vphi)$ has infinite index subgroups (equivalently, it is dense in $\R_+$), one can use this construction to build infinite index inclusions, even when $[M^{\vphi}\colon N^\vphi]<\infty$. On the other hand, if $[M^\vphi\colon N_0]<\infty$ then $[M^\vphi\colon N^\vphi]<\infty$ for any choice of $A$. In this case, Theorem~\ref{thm:dimension_vs_index} tells us $\Ind{\E}<\infty$ holds if and only if $A$ is chosen to give $[\text{Sd}(\vphi)\colon \text{Sd}(\vphi|_N)]<\infty$.
$\hfill\blacksquare$
\end{ex}

\begin{rem}
Suppose $N_0=M^\vphi$ in the previous example and as usual let $G\overset{\alpha}{\curvearrowright} M$ be the point modular extension of the modular automorphism group of $\vphi$. Then \cite[Theorem 3.15]{ILP98} implies every $N$ in the above example is of the form $M^H=\{x\in M\colon \sigma_s^{(\vphi,\Sd(\vphi))}(x)=x \ s\in H \}$ for some closed subgroup $H\leq G$.$\hfill\blacksquare$
\end{rem}


\bibliographystyle{amsalpha}
\bibliography{references}

\end{document}